\begin{document}
\title{Floer cohomology of real Lagrangians in the Fermat quintic threefold}
\author{Garrett Alston}
\maketitle

\abstract{Let $X=\sett{X_0^5+\cdots+X_4^5=0}$ be the Fermat quintic threefold.
The set of real solutions $L$ forms a Lagrangian submanifold of $X$.
Multiplying the homogeneous coordinates of $X$ by various fifth roots of unity gives automorphisms of $X$; the images of $L$ under these automorphisms defines a family of 625 different Lagrangian submanifolds, called real Lagrangians.
In this paper we try to calculate the Floer cohomology between all pairs of these Lagrangians.
We are able to complete most of the calculations, but there are a few cases we cannot do.

The basic idea is to explicitly describe some low energy moduli spaces and then use this knowledge to calculate the differential on the $E_2$ page of the standard spectral sequence for Floer cohomology.
It turns out that this is often enough to calculate the cohomology completely.
Several techniques are developed to help describe these low energy moduli spaces, including a formula for the Maslov index, a formula for the obstruction bundle, and a way to relate holomorphic strips and discs to holomorphic spheres.
The real nature of the Lagrangians is crucial for the development of these techniques.}

\tableofcontents
\part{Introduction, definitions and constructions}
\section{Introduction}
Let $X=\sett{X_0^5+\cdots+X_4^5=0}$ be the Fermat quintic threefold.
The set of real solutions $L$ forms a Lagrangian submanifold of $X$.
Multiplying the homogeneous coordinates of $X$ by various fifth roots of unity gives automorphisms of $X$; the images of $L$ under these automorphisms defines a family of 625 different Lagrangian submanifolds, called real Lagrangians.
In this paper we try to calculate the Floer cohomology between all pairs of these Lagrangians.
We are able to complete most of the calculations, but there are a few cases we cannot do.

The basic idea is to explicitly describe some low energy moduli spaces and then use this knowledge to calculate the differential on the $E_2$ page of the standard spectral sequence for Floer cohomology.
It turns out that this is often enough to calculate the cohomology completely.
Several techniques are developed to help describe these low energy moduli spaces, including a formula for the Maslov index, a formula for the obstruction bundle, and a way to relate holomorphic strips and discs to holomorphic spheres.
The real nature of the Lagrangians is crucial for the development of these techniques.

The problem, results, and approach will be discussed in more detail in the next few sections.
Right now, we would like to turn to putting Floer cohomology and this set of examples into a broader context and point out some interesting features.

First, note that this class of examples may be of interest to the story of homological mirror symmetry.
The homological mirror symmetry conjecture asserts that for any Calabi-Yau manifold $X$, there exists a mirror Calabi-Yau manifold $\check X$ such that the derived Fukaya category on $X$ is equivalent to the derived category of sheaves on $\check X$.
The Calabi-Yau manifolds of most interest are the threefolds, and the easiest ones to understand are probably the quintic hypersurfaces in $\cp^4$.
The Fukaya category is an $A_\infty$-category whose objects are Lagrangian submanifolds.
This means that set of morphisms between two objects $L_0$ and $L_1$ is a graded vector space $CF(L_0,L_1)$, and composition of morphisms is not associative but rather is given by multilinear maps $m_k$ that satsfy the $A_\infty$-axioms.
Under certain conditions (unobstructedness), the graded vector spaces $CF(L_0,L_1)$ become chain complexes with respect to the operator $m_1$, and hence one can take their cohomology.
This cohomology is the Floer cohomology of the pair $(L_0,L_1)$.
(The notion of Floer cohomology can be generalized by considering bounding cochains; we do not go into this here.)
Thus the real Lagrangians can be thought of as objects in the derived Fukaya category of the Fermat quintic, and their Floer cohomology gives the Hom spaces between them.

Next, note that real Lagrangians are the fixed point sets of anti-symplectic involutions, and such Lagrangians have been studied in the literature recently.
In particular, Solomon has shown that open Gromov-Witten invariants can be defined for them \cite{sol}.
Also, it is conjectured that some fixed point sets have special significance to mirror symmetry--for instance, they can be used to construct auto-equivalences of the Fukaya category that are thought to be mirror to twisting by a line bundle on the derived category side (see \cite{cbms}, although it should be noted that the real Lagrangians studied here are not the same as the Lagrangian sections constructed there).
The moduli space of holomorphic discs with boundary on a fixed-point sets has also been studied in \cite{foooasi}.

Finally, we would like to point out that finding interesting Lagrangians is in general a non-trivial task.
In the case of the quintic, only two other classes of examples come readily to mind.
One class of examples is given by the fixed point sets and Lagrangian torus fibrations constructed in \cite{cbms}.
To get a second class of examples, one could apply Seidel's Lefschetz fibration and Picard theory to the three-dimensional case; in \cite{seidel2} Seidel says that in principle the Lagrangian spheres obtained in this way should have the same significance that they do in the quartic surface case.
We do not know the relationship between the real Lagrangians studied here and these two examples.
\\
\\
\noindent{{\bf Acknowledgement:} This paper is a revised version of my dissertation at the University of Wisconsin-Madison and could not have been written without the help of my advisor Yong-Geun Oh.
I would like to thank him, as well as my many other friends and colleagues in Madison who made my time there so enjoyable.
}

\section{Overview}
\subsection{The problem and summary of results}\label{section:summary}
The Fermat quintic is the hypersurface in $\cp^4$ defined by
$$X=\sett{X_0^5+X_1^5+X_2^5+X_3^5+X_4^5=0}.$$
The set of real points 
$$L=\set{[X_0:\cdots:X_4]\in X}{X_i\in\rr}$$
is a Lagrangian submanifold.
$L$ is diffeomorphic to $\rp^3$ via the diffeomorphism $\rp^3\to L$ given by
$$[X_0:X_1:X_2:X_3]\mapsto [X_0:X_1:X_2:X_3:-(X_0^5+X_1^5+X_2^5+X_3^5)^\frac{1}{5}].$$
$L$ is the fixed point set of the complex conjugation map
$$\tau:X\to X,$$$$[X_0:\cdots:X_4]\mapsto [\bar X_0:\cdots:\bar X_4].$$
(The map $\tau$ will often be denoted by a bar, for example $p\mapsto \bar p$.)

Let $\gamma=e^{2\pi i/5}$ and let $\z_5$ be the cyclic group of order $5$ generated by $\gamma$.
Let $G$ be the group 
$$G=\z_5^5/\textrm{diagonal}\subset PU(5).$$
The elements
$$g_1=[(1,\gamma,1,1,1)],$$
$$g_2=[(1,1,\gamma,1,1)],$$
$$g_3=[(1,1,1,\gamma,1)],$$
$$g_4=[(1,1,1,1,\gamma)]$$
give an explicit isomorphism $G\cong \z_5^4$.
$G$ acts on $\cp^4$, and also on $X$ because it preserves the defining equation.
$G$ acts symplectically on $\cp^4$, so the $G$ action is also symplectic on $X$.
$G$ gives rise to a family of 625 different Lagrangian submanifolds
$$\mathcal L=\set{gL}{g\in G}.$$
Elements of $\mathcal L$ will be referred to as real Lagrangians, because they are the fixed point sets of the anti-holomorphic involutions $g\circ\tau\circ g^{-1}$.

In this paper we try to calculate the Floer cohomology $HF(L_1,L_2)$ for every $L_1,L_2\in\mathcal L$.
Clearly it suffices to calculate $HF(L,gL)$ for every $g\in G$.
Due to the standard spectral sequence for Floer cohomology, it is useful to think of the Floer cohomology as living between $0$ and the singular cohomolgy of $L\cap gL$, that is
\begin{displaymath}
0\leq HF(L,gL) \leq H(L\cap gL).
\end{displaymath}
Therefore it is natural break the problem down into cases depending on what $L\cap gL$ is diffeomorphic to.
Based on this classification, here is a summary of the results in this paper:


\begin{enumerate}
\item $L\cap gL\cong\rp^3$: $HF(L, gL)\cong H^*(\rp^3;\Lambda_{nov})$.
\item $L\cap gL\cong\rp^2$: $HF(L,gL)\cong H^*(\rp^2;\Det(T\rp^2)\otimes\Theta^-_{\rp^2};\Lambda_{nov})$ (i.e. cohomology in a local system),
\item $L\cap gL\cong \rp^1$: $HF(L,gL)$ is $0$ or unknown,
\item $L\cap gL\cong \rp^1\amalg \rp^0$: $HF(L\cap gL)$ is $H^1(\rp^1;\Lambda_{nov})$, $H^0(\rp^1;\Lambda_{nov})$, or has rank at least $1$.
\item $L\cap gL\cong \rp^0$: $HF(L,gL)\cong H^*(\rp^0;\Lambda_{nov})$.
\item $L\cap gL\cong \rp^0\amalg\rp^0$: $HF(L,gL)$ is $0$, $H^*(\rp^0\amalg\rp^0;\Lambda_{nov})$, or unknown.
\end{enumerate}
These results are contained in Theorems \ref{thm:case1-main_thm}, \ref{thm:case2-main_thm}, \ref{thm:case3-main_thm}, \ref{thm:case3-main_thm2}, \ref{thm:case4-main_thm_1}, \ref{thm:case4-main_thm_2}, \ref{thm:case4-main_thm_3}, \ref{thm:case4-main_thm_4}, \ref{thm:case5-main_thm}, \ref{thm:case6-main_thm_1}, \ref{thm:case6-main_thm_2}, and \ref{thm:case6-main_thm_3}.
Strictly speaking, Floer cohomology depends also on the choice of bounding cochains and spin structures.
The convention in this paper is that $0$ is always used for the bounding cochain, and $L$ is given an arbitrary spin sructure, $gL$ is given the push forward of this spin structure by $g$.
Moreover, $\Lambda_{nov}$ is used as the coefficient ring.
This will all be explained in more detail later.
\subsection{The method of attack}
The type of Floer cohomology used throughout this paper is the Bott-Morse Floer cohomology developed in \cite{fooo}.
Namely, to each Lagrangian $L$ is associated an $A_\infty$-algebra, and to each pair of Lagrangians $(L_1,L_2)$ is associated an $A_\infty$-bimodule.
$L_0$ and $L_1$ are not required to be transverse, rather, they are only required to have clean intersection, which means that $T(L_0\cap L_1)=TL_0\cap TL_1$.
The Floer cohomologies are then by definition the cohomologies of these algebras and bimodules.
Complete details of the definitions and constructions sketched in this paper can be found in \cite{fooo}.
One main difference, however, should be noted. 
In \cite{fooo}, the Floer cohomology is given a relative grading.
In this paper, the Floer cohomology is given an absolute grading (in the sense of \cite{seidel1}, see Section \ref{section:gradings}).
Therefore the grading parameter $e$ is dropped from the Novikov rings.

The coboundary operator in Floer cohomology is defined by counting (in a broad sense) holomorphic strips.
To gain information about holomorphic strips in the real Lagrangians, several techniques are developed:
\begin{enumerate}
\item A formula for the Maslov index,
\item an involution of the moduli space of holomorphic strips and a calculation of the sign of the involution,
\item a characterization of holomorphic strips as holomorphic spheres with certain symmetries,
\item and a characterization of cokernels as a subspace of certain sheaf cohomology groups.
\end{enumerate}

These techniques allow some Floer cohomologies to be computed easily.
Technique (1) allows the computation of some cohomologies purely from degree considerations, and technique (2) implies that in some cases the Floer coboundary operator is $0$.

In the more difficult cases, technique (3) has to be applied to explicitly describe the lowest energy moduli spaces.
The lowest energy moduli spaces allow the differential on the $E_2$ page of the spectral sequence to be calculated, and often this is enough to calculate the Floer cohomology.
Technique (4) is needed to determine if the moduli spaces are regular.

In the cases where these techniques are not enough to complete the calculation, the difficulty is that the higher energy moduli spaces need to be known.
These moduli spaces are probably impossible to explicitly describe using the techniques above (although techniques (3) and (4) relate these moduli spaces in some abstract sense to the moduli space of rational curves).

\subsection{Organization}
This paper is broken down into three parts.
Part I contains general definitions and constructions needed to define Floer cohomology.
Most of the sections are expository; however Sections 3 and 4 contain some useful formulas for the Maslov index that are usually not explicitly mentioned in the literature, as well as a generalization of Seidel's absolute index in Floer cohomology to the Bott-Morse case.
In Part II the discussion is specialized to real Lagrangians, and the rest of the techniques mentioned in the previous section are developed.
In Part III, the real Lagrangians are considered on a case by case basis, and the techniques are applied to calculate as many of the Floer cohomologies as possible.
An appendix on Kuranishi spaces is included, mainly to fix notation.

Sections \ref{section:local_systems} and \ref{section:conjugation-orientation} deal with orientation issues and are rather technical.
Readers not familiar with these issues may want to skip these sections, and take Propositions \ref{prop:conjugation-main_prop} and \ref{prop:conjugation-main_prop_2} on faith.

One of the inherent difficulties of Floer cohomology is that the moduli spaces have to be given the structure of some complicated type of space; following \cite{fooo}, as is done in this paper, the complicated space is a Kuranishi space.
A Kuranishi space is a space that is locally modelled on the zero set of a section of an orbibundle over an orbifold.
Kuranishi spaces have virtual fundamental classes, and are the main technical tool used in \cite{fooo} to allow intersection theory with moduli spaces of holomorphic curves to be done.
For most of the purposes of this paper, the reader can think of a Kuranishi space as the zero set of a vector bundle over a closed manifold, or even more simply just as a closed manifold.

In the next section we set some notation that will be used throughout the paper.

\subsection{Conventions and notation}\label{section:overview-conventions}
Several conventions will be used throughout.
(Periodically the symbols will refer to something else, in which case it should be clear from context.)
\begin{itemize}
\item $\tau:X\to X$ is complex conjugation,
\item $L$ is the set of real points of $X$,
\item $L$ has a fixed orientation $or(L)$ and a fixed spin structure $P_{Spin}(L)$,
\item $gL$ has the fixed pushforward orientation $or(gL)=g_*or(L)$ and the fixed pushforward spin structure $P_{Spin}(gL)=g_*P_{Spin}(L)$,
\item $gL$ has a fixed grading (see Definition \ref{dfn:real_lagrangians-grading_convention}),
\item $L_0$ and $L_1$ are arbitrary Lagrangians in an arbitrary symplectic manifold,
\item $L_0$ and $L_1$ are said to intersect cleanly if $L_0\cap L_1$ is a submanifold and $TL_0\cap TL_1=T(L_0\cap L_1)$,
\item $R$ or $R_h$ is a connected component of $L_0\cap L_1$,
\item $\omega$ and $J$ are the standard symplectic form and complex structure on $X$,
\item $g=\omega(\cdot,J\cdot)$ is the standard Riemannian metric on $X$,
\item $h=g-i\omega$ is the standard Hermitian form on $(TX,J)$, and
\item $\nabla$ is the Levi-Civita connection associated to $g$.    
\end{itemize}

Cohomology will usually be taken with coefficients in $\qq$ or one of the Novikov rings
\begin{displaymath}
\Lambda_{0,nov}=\set{\sum_{i=1}^\infty q_iT^{\lambda_i}}{q_i\in \qq,\,\lambda_i\in\rr_{\geq0}\textrm{ and }\lambda_i\to\infty}
\end{displaymath}
or
\begin{displaymath}
\Lambda_{nov}=\set{\sum_{i=1}^\infty q_iT^{\lambda_i}}{q_i\in\qq,\,\lambda_i\in\rr \textrm{ and }\lambda_i\to\infty}.
\end{displaymath}
If the Novikov ring is used then it will be included in the notation.
$H(\cdot)$ will always denote singular cohomology and $HF(\cdot)$ Floer cohomology.
An isomorphism written as $HF^*\cong H^*$ indicates a graded isomorphism.

The strip $S=\rr\times[0,1]\subset\cc$ will have coordinate $s+it$.
$S$ is biholomorphic to $S_0=\set{z=x+iy=re^{i\theta}\in\cc}{r>0,0\leq\theta\leq e^{2\pi i/10}}$.
Holomorphic strips are maps $u_0:S\to X$ such that the bottom boundary of $S$ is mapped into $L_0$ and the top boundary is mapped into $L_1$.
The domain of $u_0$ can be thought of as $S$ or $S_0$, whichever is more convenient.
Sometimes the domain will be thought of as $D^2\setminus\sett{-1,1}$.
With this identification, the domain points $\pm\infty$ correspond to $\pm1$.

The moduli space of holomorhic strips with bottom boundary on $L_0$ and top boundary on $L_1$ is denoted $\mathcal M(L_0,L_1)$.
If $R_h,R_{h'}$ are connected components of $L_0\cap L_1$, then $\mathcal M(L_0,L_1:R_h,R_{h'})$ denotes the moduli space of strips that start in $R_h$ and end in $R_{h'}$.
$\mathcal M(L_0,L_1:R_h,R_{h'}:E)$ denotes the subset of these strips that have energy $E\in\rr_+$.
The energy of a strip $u$ is
\begin{displaymath}
E(u)=\int_S|du|^2dsdt.
\end{displaymath}
$\mathcal M(L_0,L_1:R_h,R_{h'}:\beta)$ denotes strips of homotopy class $\beta$.
Sometimes $\mathcal M_{0,0}$ will be used instead of $\mathcal M$ to refer to these moduli spaces.

$W^{1,p;\delta}_\lambda(\Sigma,E)$ and $L^{p;\delta}(\Sigma,\Lambda^{0,1}\otimes E)$ are weighted Sobolev spaces of sections of the complex bundles $E$ and $\Lambda^{0,1}\otimes E$ over the Riemann surface $\Sigma$ (usually $\Sigma$ will be $S$, $S_0$, $D^2$, or $\cp^1$).
($\Lambda^{0,1}\otimes E$ denotes the complex bundle $\Lambda^{0,1}(T\Sigma)\otimes_\cc E$.)
The weights will be explained in more detail in Section \ref{section:moduli_spaces}.
The $\lambda$ subscript denotes some sort of Lagrangian boundary condition.
Also, $p>2$ to ensure that $W^{1,p}$ consists of maps that are continuous.
Operators of the form $\dbar:W^{1,p;\delta}_\lambda(\Sigma,E)\to L^{p;\delta}(\Sigma,\Lambda^{0,1}\otimes E)$ will be Cauchy-Riemann operators (the exact zeroeth order part will either not be relevant or clear from the context), that is of the form
\begin{displaymath}
\xi(s,t)\mapsto (\nabla_s\xi+J\nabla_t\xi)\otimes(ds-idt),
\end{displaymath}
where $\nabla$ is a connection on $E$.

A holomorphic map is said to be regular if the cokernel of the linearized $\dbar$ operator is $0$.
The cokernel of the operator is also called the obstruction bundle.

Let $M$ be a smooth manifold, and let $f:A\to M$ and $g:B\to M$ be smooth maps.
The fiber product of $A$ and $B$ is
\begin{displaymath}
A\times_M B=A\times_{f\times g}B=\set{(a,b)\in A\times B}{f(a)=g(b)}.
\end{displaymath}
If $f$ is transverse to $g$ then the fiber product is a smooth manfiold.
If $f$ is not transverse to $g$, then the fiber product is not necessarily a smooth manifold.
If $A$ and $B$ are Kuranishi spaces, the fiber product $A\times_M B$ has the natural structure of a Kuranishi space.

If $\mathcal M$ is a Kuranishi space (generally it will be some moduli space), and $f:\mathcal M\to X$ is a map, then $f_*[\mathcal M]$ denotes the virtual fundamental class of $\mathcal M$, which is a singular chain on $X$.

If $F:U\to V$ is a Fredholm map between Banach spaces, the determinant line $\Det(F)$ is defined to be $\Det(\ker(F))\otimes\Det(\textrm{coker}(F))^*$, where $\Det$ of a vector space is the top exterior power of the vector space.

\section{Maslov indices and Fredholm theory}
In this section we review the various Maslov indices and Fredholm operators that will be needed.
Also, we write down some useful formulas for the Maslov index that will be useful later on.
The Fredholm operators are all linearized versions of various $\dbar$ operators.

The following Maslov indices will be defined:
\begin{enumerate}
\item $\mu(u)$ for a disc $u$,
\item $\mu(u)$ for a strip $u$,
\item $\mu^{poly}(u)$ for a polygon $u$, 
\item $\mu(\lambda;\Lambda_0,\Lambda_1)$ for a path of Lagrangian subspaces from $\Lambda_0$ to $\Lambda_1$, 
\item $\mu(\lambda,\Lambda_0)$ for a Lagrangian path $\lambda$ and fixed Lagrangian subspace $\Lambda_0$, and
\item $\mu(\lambda_0,\lambda_1)$ for a pair of Lagrangian paths $\lambda_0$ and $\lambda_1$.
\end{enumerate}

The following Fredholm operators will be defined:
\begin{eqnarray}
\nonumber&&D^{disc}:W^{1,p}_\lambda(D^2,u^*TX)\to L^p(D^2,\Lambda^{0,1}\otimes u^*TX),\\
\nonumber&&D^{strip}:W^{1,p;\delta}_\lambda(\rr\times[0,1],u^*TX)\to L^{p;\delta}(\rr\times[0,1],\Lambda^{0,1}\otimes u^*TX),\\
\label{eq:index_theory-fredholm_operators}&&D^{poly}:W^{1,p;\delta}_\lambda(\Sigma_{k+1},u^*TX)\to L^{p;\delta}(\Sigma_{k+1},\Lambda^{0,1}\otimes u^*TX),\\
\nonumber&&\dbar_{\lambda,Z_{\pm}}:W^{1,p;\delta}_\lambda(Z_{\pm},T_{p}X)\to L^{p;\delta}(Z_{\pm},\Lambda^{0,1}\otimes T_{p}X).
\end{eqnarray}
All of the operators are Cauchy-Riemann operators.
The Banach spaces and other notation will be explained below.
The next two lemmas summarize the main relationships between these indices.

\begin{lemma}\label{lemma:index_theory-strip_poly}
If $u$ is a strip (i.e. a 2-gon, holomorphic or not) then
$$\mu(u)-\dim(R_h)=\mu^{poly}(u)-n,$$
where $R_h$ is the component of $L\cap gL$ that contains $u(-\infty,\cdot)$.
\end{lemma}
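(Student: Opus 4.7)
The plan is to interpret both sides of the claimed equality as the Fredholm index of a single Cauchy--Riemann operator attached to $u$, set up in two different ways, and then to show that these two Fredholm indices coincide.

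From the definitions of $\mu(u)$ and $\mu^{poly}(u)$ (given via the operators $D^{strip}$ and $D^{poly}$ listed above), I would extract the normalizations
\begin{align*}
\textrm{ind}(D^{strip}) &= \mu(u) - \dim R_h, \\
\textrm{ind}(D^{poly}) &= \mu^{poly}(u) - n.
\end{align*}
The strip offset $\dim R_h$ is the Bott--Morse correction reflecting the asymptotic freedom along $R_h$ inherent in the weighted Sobolev setup; the polygon offset $n$ reflects that the polygon setup treats each end as a labelled point of $X$, whose full $n$-dimensional tangent data enters the Fredholm count.

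The main step is then to show $\textrm{ind}(D^{strip}) = \textrm{ind}(D^{poly})$. Both operators are built from the same Cauchy--Riemann equation on the same underlying surface ($S\cong \Sigma_2$) with identical boundary data pulled back from $TL_0$ and $TL_1$; only the asymptotic behaviour enforced near $\pm\infty$ differs. I would compare the two indices either by constructing a continuous family of Fredholm operators interpolating between the two setups (for instance by continuously varying the weight $\delta$ together with a compatible family of exponential collars at each end) and invoking homotopy invariance of the Fredholm index, or by performing a direct local index computation at each puncture, comparing the weighted Sobolev asymptotic space to the polygon asymptotic space.

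The main obstacle is precisely this comparison step: carefully tracking the weights, the trivializations of $u^*TX$ near $\pm\infty$, and the asymptotic Lagrangian pairs, in order to verify explicitly that the two setups yield the same Fredholm index. Once this identification is in hand, equating the two expressions for the common Fredholm index produces the desired identity $\mu(u)-\dim R_h = \mu^{poly}(u) - n$.
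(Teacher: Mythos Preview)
Your proposal has a structural problem: it is circular relative to the paper's logical order. In this paper, $\mu(u)$ and $\mu^{poly}(u)$ are \emph{not} defined via the Fredholm operators $D^{strip}$ and $D^{poly}$; they are defined directly as Maslov indices of bundle pairs over the disc (Sections on the strip and polygonal Maslov index). The identities
\[
\Ind(D^{strip}) = \mu(u) - \dim R_h,\qquad \Ind(D^{poly}) = \mu^{poly}(u) - kn
\]
are not normalizations to be extracted from definitions but are the content of Lemma~\ref{lemma:index_theory-fredholm_indices}. And the proof of the polygon formula there proceeds by induction whose base case ($k+1=2$) explicitly invokes the present lemma: ``the formula is equivalent to $\Ind(D^{strip}) = \mu(u) - \dim R_h$'' precisely \emph{because} $\mu^{poly}(u)-n = \mu(u) - \dim R_h$. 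So using those two index formulas to prove this lemma would be using the lemma to prove itself.

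There is a second issue. Your ``main step'' --- showing $\Ind(D^{strip}) = \Ind(D^{poly})$ via a homotopy of weights or a local computation at the ends --- is in fact vacuous here: for a 2-gon the two operators are literally the same operator on the same weighted Sobolev spaces (the paper states this outright). So all of the actual content of your argument would have to live in independently establishing the polygon index formula for $k=1$, which you do not address and which, done honestly, amounts to the same topological comparison the paper carries out.

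The paper's own proof is much more direct and avoids Fredholm theory entirely. Both $\mu(u)$ and $\mu^{poly}(u)$ are Maslov indices of bundle pairs built from the same data over the same disc; they differ only in how the Lagrangian subbundle is closed up over the puncture at $-\infty$. For $\mu(u)$ one uses the positive definite path from $TL_0$ to $TL_1$ (traversed backwards along the boundary), while for $\mu^{poly}(u)$ one uses the positive definite path from $TL_1$ to $TL_0$ (traversed forwards). The difference $\mu^{poly}(u)-\mu(u)$ is therefore the Maslov index of a single positive definite loop at $p$; since only the part of $T_pX$ transverse to $T_pR_h$ moves, that loop has index $n - \dim R_h$. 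This one-line topological comparison is the entire proof.
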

\begin{lemma}\label{lemma:index_theory-fredholm_indices}
\begin{eqnarray*}
&&\Ind(D^{disc})=\mu(u)+n,\\
&&\Ind(D^{strip})=\mu(u)-\dim(R_h)=\mu^{poly}-n,\\
&&\Ind(D^{poly})=\mu^{poly}(u)-kn,\\
&&\Ind(\dbar_{\lambda,Z_-})=\mu(\lambda;T_pL_0,T_pL_1).
\end{eqnarray*}
\end{lemma}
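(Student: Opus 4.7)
The plan is to establish the four index formulas by first treating (1) and (4) as fundamental building blocks and then obtaining (3) and (2) by a gluing argument combined with Lemma \ref{lemma:index_theory-strip_poly}. For the disc formula (1), I would use the standard doubling trick: the Lagrangian boundary condition $u|_{\partial D^2}^*TL$ glues $u^*TX$ to its complex conjugate across $\partial D^2$ to produce a holomorphic vector bundle $\widetilde E\to\cp^1$ of rank $n$ with $c_1(\widetilde E)=\mu(u)$. By Riemann-Roch the complex index of a $\dbar$-operator on $\widetilde E$ is $c_1(\widetilde E)+n$, and under doubling the real index of $D^{disc}$ agrees with this complex index, yielding $\Ind(D^{disc})=\mu(u)+n$.

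For (4), the domain $Z_\pm$ is a half-strip, the boundary condition along its finite edges is the Lagrangian path $\lambda$ from $T_pL_0$ to $T_pL_1$, and the weight $\delta$ is chosen so that the asymptotic operator at the infinite end becomes invertible. This is precisely the setup of the Robbin-Salamon index theorem for linear Cauchy-Riemann operators with Lagrangian path boundary conditions, which identifies the Fredholm index with $\mu(\lambda;T_pL_0,T_pL_1)$.

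To obtain (3) I would cap each of the $k+1$ strip-like ends of $\Sigma_{k+1}$ by a copy of $Z_-$ carrying a Lagrangian path that interpolates between the two adjacent asymptotic Lagrangians in a (chosen) topologically controlled way. The capped surface is a disc, and the linear gluing theorem (additivity of indices under neck stretching) gives
\begin{equation*}
\Ind(D^{poly})+\sum_{i=0}^{k}\Ind(\dbar_{\lambda_i,Z_-})=\Ind(D^{disc}_{\text{capped}}).
\end{equation*}
Plugging in (1) and (4), observing that the Maslov index of the capped disc equals $\mu^{poly}(u)$ shifted by $\sum_i\mu(\lambda_i)$, and choosing the capping paths so that the shift totals $(k+1)n$, yields $\Ind(D^{poly})=\mu^{poly}(u)-kn$. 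The strip formula (2) is then the case $k=1$: we get $\Ind(D^{strip})=\mu^{poly}(u)-n$, and Lemma \ref{lemma:index_theory-strip_poly} rewrites this as $\mu(u)-\dim(R_h)$.

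The hard part will be keeping track of the weight $\delta$ in the Bott-Morse setting. Because $L_0\cap L_1$ is only clean, each asymptotic operator at a strip-like end has kernel of dimension $\dim(R_h)$ corresponding to the tangent directions along the intersection. The weight $\delta$ shifts this kernel off the imaginary axis so the operator becomes Fredholm on the weighted spaces, but this simultaneously alters the effective Maslov data: the gain of $\dim(R_h)$ eigenvalues of the shifted asymptotic operator is precisely what produces the correction term $-\dim(R_h)$ in (2) and (via $k+1$ ends) is absorbed into $-kn$ in (3). Verifying this matching, and checking that the path conventions used to define $\mu^{poly}$ and $\mu(\lambda;\Lambda_0,\Lambda_1)$ are consistent with the doubling and gluing orientations, is the delicate bookkeeping; the remainder is a matter of assembling essentially standard results from linear Fredholm theory.
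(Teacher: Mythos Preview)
Your approach is sound and the outline is correct, but it differs from the paper's in one structural way: the paper does not cap all $k+1$ ends simultaneously. Instead it takes (1) and (4) as given (citing Riemann--Roch and \cite{fooo} respectively), takes the strip formula (2) directly from \cite{fooo}, and then proves (3) by \emph{induction on $k$}: starting from the strip case $k=1$ and gluing a single cap $Z_-$ at the marked point $z_0$ to pass from a $(k+1)$-gon to a $k$-gon. Your all-at-once capping reaches the same answer but forces you to track $k+1$ correction terms simultaneously rather than one; the inductive route is cleaner for exactly the reason you flag as ``the hard part.''

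On that hard part, your bookkeeping sketch is not quite right. In the clean-intersection setting the linear gluing formula reads
\[
\Ind(D^{poly})+\sum_{i=0}^{k}\bigl(\Ind(\dbar_{\lambda_i,Z_-})+\dim R_{h_i}\bigr)=\Ind(D^{disc}_{\text{capped}}),
\]
the extra $\dim R_{h_i}$ appearing because the weighted end forbids motion along $R_{h_i}$ while the capped problem allows it. If you take each $\lambda_i$ to be the positive definite path (so $\Ind(\dbar_{\lambda_i,Z_-})=0$ by (4)), then the Maslov index of the capped bundle pair differs from $\mu^{poly}(u)$ by $\sum_i(n-\dim R_{h_i})$, not by $(k+1)n$: only the transverse part of the positive loop at each corner contributes. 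The $\dim R_{h_i}$ terms then \emph{cancel} between the gluing correction and the Maslov shift, leaving $\Ind(D^{poly})=\mu^{poly}(u)-(k+1)n+n=\mu^{poly}(u)-kn$. So these terms are not ``absorbed into $-kn$'' as you wrote; they disappear. With this correction your argument goes through, and deriving (2) as the case $k=1$ via Lemma~\ref{lemma:index_theory-strip_poly} is fine---indeed it reverses the paper's logic, which instead uses (2) as the base of the induction.
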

The first two formulas are well-known.
The third can be thought of as a generalization of the second formula to polygons.
(The notion of holomorphic polygons is not needed in this paper; however, it is anticipated that it will be useful in later work, and it is no more difficult to prove than the case of strips, so it is included here.)
The final formula is useful for proving other formulas.
The proofs will be given at the end of this section.

In constructing the indices the following problem will often be encountered:
Given a symplectic vector space V and two Lagrangian subspaces $\Lambda_0,\Lambda_1$, construct a path from $\Lambda_0$ to $\Lambda_1$ that moves in the positive definite direction (for the origin of the terminology ``positive definite direction'', see Section \ref{section:index_theory-maslov_index_paths}).
If $\Lambda_0$ and $\Lambda_1$ intersect transversely, the construction of such a path is well-known and unique up to homotopy with fixed endpoints.
The easiest way to construct the path is to choose a complex structure $J$ compatible with $\omega$ such that $J\cdot\Lambda_0=\Lambda_1$, then the path can be taken to be $t\mapsto e^{\pi Jt/2}\cdot\Lambda_0$.
The fact that this does not depend on the choice of $J$ follows from the fact that if $J_0$, $J_1$ are two such complex structures then there exists a one parameter family $J_t$ of complex structures that joins $J_0$ to $J_1$ and is such that $J_t\cdot\Lambda_0=\Lambda_1$ for all $t$.
The following alternate characterization of this path will be useful.
\begin{lemma}\label{lemma:index_theory-matrix}
Let $(V,\omega,J)$ be given.
Suppose $\Lambda_0$ and $\Lambda_1$ are two Lagrangian subspaces such that $\Lambda_0\cap\Lambda_1=0$.
Then there exists a unitary basis $e_1,\ldots,e_n$ of $V$ and a $U\in \textrm{Aut}(V,\omega,J)$ (i.e. $U$ is unitary) such that the $\rr$-span of $e_1,\ldots,e_n$ is $\Lambda_0$, $\Lambda_1=U\cdot\Lambda_0$, and the matrix representation of $U$ with respect to the basis $e_1,\ldots,e_n$ is
\begin{displaymath}
U=\left[
\begin{array}{ccc}
e^{2\pi i \alpha_1}&&0\\
&\ddots &\\
0&&e^{2\pi i\alpha_n}
\end{array}\right],
\end{displaymath}
where $0<\alpha_i<\frac{1}{2}$.
The positive definite path $\lambda:[0,1]\to\Lambda(V)$ from $\Lambda_0$ to $\Lambda_1$ can be taken to be $\lambda(t)=U^t\cdot L_0$, where $U^t$ is the $t^{th}$ power of $U$.
\end{lemma}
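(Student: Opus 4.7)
The plan is to reduce to a normal-form problem by parametrizing Lagrangians transverse to $\Lambda_0$ as graphs over $\Lambda_0$ inside the orthogonal splitting $V=\Lambda_0\oplus J\Lambda_0$, then diagonalize the resulting symmetric operator.

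First I would set up the splitting. Because $\Lambda_0$ is Lagrangian, $J\Lambda_0$ is the $g$-orthogonal complement of $\Lambda_0$, and is itself Lagrangian (since $J$ preserves $\omega$). Transversality $\Lambda_0\cap\Lambda_1=0$ together with dimension count implies that projection along $J\Lambda_0$ identifies $\Lambda_1$ with $\Lambda_0$, so we may write
\begin{displaymath}
\Lambda_1=\{\,v+Av:v\in\Lambda_0\,\},
\end{displaymath}
for a unique linear map $A:\Lambda_0\to J\Lambda_0$. Writing $A=JB$ for $B:\Lambda_0\to\Lambda_0$ and imposing the Lagrangian condition $\omega(v_1+JBv_1,v_2+JBv_2)=0$ together with the identity $g(\cdot,\cdot)=\omega(\cdot,J\cdot)$ forces $g(v_1,Bv_2)=g(Bv_1,v_2)$, i.e.\ $B$ is symmetric with respect to $g|_{\Lambda_0}$.

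Next I would apply the real spectral theorem to $B$ to produce a $g$-orthonormal basis $e_1,\ldots,e_n$ of $\Lambda_0$ with $Be_j=s_je_j$, $s_j\in\rr$. Since $e_j\in\Lambda_0$ and $e_j+s_jJe_j\in\Lambda_1$, if $s_j=0$ we would have $e_j\in\Lambda_0\cap\Lambda_1=0$, contradicting the transversality hypothesis; hence $s_j\neq 0$ for every $j$. Regarded as a complex (unitary) basis of $V$, the $e_j$'s span $\Lambda_0$ over $\rr$, and the vectors $(1+is_j)e_j$ form an $\rr$-basis of $\Lambda_1$. Now I would define $U\in\mathrm{Aut}(V,\omega,J)$ to be diagonal in this basis with eigenvalues
\begin{displaymath}
\lambda_j=\epsilon_j\,\frac{1+is_j}{|1+is_j|},\qquad \epsilon_j=\mathrm{sign}(s_j)\in\{\pm1\},
\end{displaymath}
so that each $\lambda_j$ lies on the unit circle with strictly positive imaginary part. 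Writing $\lambda_j=e^{2\pi i\alpha_j}$, this forces $0<\alpha_j<\tfrac12$. Because $\lambda_je_j$ is a nonzero real multiple of $(1+is_j)e_j$, we have $Ue_j\in\Lambda_1$, and varying $j$ gives $U\cdot\Lambda_0=\Lambda_1$ as needed.

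Finally, for the path statement, define $U^t$ by $U^te_j=e^{2\pi i\alpha_j t}e_j$; this is a smooth one-parameter subgroup of the unitary group connecting $I$ to $U$, so $\lambda(t)=U^t\cdot\Lambda_0$ is a Lagrangian path from $\Lambda_0$ to $\Lambda_1$. In each eigendirection $e_j$ the path sweeps the angle $2\pi\alpha_j t$ through the open upper half-plane, which is the meaning of moving in the positive definite direction at every instant. The main subtlety (and the only point I expect to require care) is checking that this alternate description agrees up to fixed-endpoint homotopy with the path $t\mapsto e^{\pi J't/2}\cdot\Lambda_0$ built from any compatible $J'$ with $J'\Lambda_0=\Lambda_1$; this follows because both paths are positive definite Lagrangian paths from $\Lambda_0$ to $\Lambda_1$, and the space of such paths is already known to be contractible from the discussion immediately preceding the lemma.
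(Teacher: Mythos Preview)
Your first step contains a genuine error. The assertion that $\Lambda_0\cap\Lambda_1=0$ implies ``projection along $J\Lambda_0$ identifies $\Lambda_1$ with $\Lambda_0$'' is false: that projection has kernel $J\Lambda_0$, so it restricts to an isomorphism on $\Lambda_1$ only when $\Lambda_1\cap J\Lambda_0=0$, which is \emph{not} the hypothesis. The simplest counterexample is $\Lambda_1=J\Lambda_0$ itself: here $\Lambda_0\cap\Lambda_1=0$, yet the projection of $\Lambda_1$ onto $\Lambda_0$ along $J\Lambda_0$ is the zero map, and no graph description $\Lambda_1=\{v+JBv:v\in\Lambda_0\}$ exists. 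Your subsequent ``$s_j\neq 0$'' step and the sign correction $\epsilon_j$ are artifacts of this wrong parametrization.

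The fix is to project the other way. Since $\Lambda_0\cap\Lambda_1=0$, projection along $\Lambda_0$ onto $J\Lambda_0$ \emph{is} an isomorphism on $\Lambda_1$, so one can write $\Lambda_1=\{Bw+Jw:w\in\Lambda_0\}$ with $B:\Lambda_0\to\Lambda_0$ symmetric; equivalently, in the model $(\cc^n,\rr^n)$, $\Lambda_1=(A+iI)\cdot\rr^n$ for a real symmetric matrix $A$. This is exactly the paper's approach. Diagonalizing $A$ gives eigenvectors $e_j$ and eigenvalues $\lambda_j\in\rr$, and the unitary $U=(A+iI)(A^2+I)^{-1/2}$ then has eigenvalues $(\lambda_j+i)(\lambda_j^2+1)^{-1/2}$, whose imaginary parts are automatically positive with no sign adjustment needed. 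Once you reverse the direction of your graph, your argument collapses to the paper's. Your treatment of the path at the end is a slightly informal version of what the paper does (the paper exhibits an explicit compatible $J'$ with $J'\Lambda_0=\Lambda_1$ and checks the homotopy directly), but that part carries no real difficulty.
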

\begin{proof}
Without loss of generality assume $(V,\omega,J)=(\cc^n,\omega_0,J_0)$ and $\Lambda_0=\rr^n$.
$\Lambda_1$ is transverse to $\rr^n$ so there exists an $n\times n$ symmetric matrix $A$ with real entries such that $\Lambda_1=(A+iI)\cdot\rr^n$.
Let $U=(A+iI)(A^2+I)^{-1/2}$. 
Then $U$ is unitary and $U\cdot\rr^n=\Lambda_1$.
Let $\lambda_1,\ldots,\lambda_n\in\rr$ be the eigenvalues of $A$ and $e_1,\ldots,e_n\in\rr^n$ a corresponding orthononormal set of eigenvectors.
Then $U$ is diagonalizable, with eigenvalues $(\lambda_i+\sqrt{-1})/(\lambda_i^2+1)^{-1/2}=e^{2\pi i\alpha_i}$ and eigenvectors $e_i$.
With respect to the basis $e_1,\ldots,e_n$, $U$ has the form above.

If $J'$ is a new complex structure defined by $J'e_i=Ue_i$, $J'Ue_i=-e_i$ then $\lambda$ is homotopic to $t\mapsto e^{\pi J't/2}\cdot\rr^n$.
This proves that $\lambda$ is the positive definite path, up to homotopy.
\end{proof}

Some other properties of the numbers $\alpha_1,\ldots,\alpha_n$ that will be useful later on are:
\begin{lemma}\label{lemma:index_theory-linear_algebra_1}
The linear transformation $U$ given in Lemma \ref{lemma:index_theory-matrix} is unique.
In particular, the numbers $\alpha_1,\ldots,\alpha_n$ are well-defined.
\end{lemma}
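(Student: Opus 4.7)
My plan is to prove uniqueness by showing that any $U$ satisfying the conclusion of Lemma \ref{lemma:index_theory-matrix} must equal the explicit $(A+iI)(A^2+I)^{-1/2}$ constructed in its proof, where $A$ is the unique real symmetric matrix with $\Lambda_1 = (A+iI)\rr^n$. After reducing to $(V,\omega,J)=(\cc^n,\omega_0,J_0)$ and $\Lambda_0 = \rr^n$, I would first verify that $A$ is uniquely determined by $\Lambda_1$: transversality of $\Lambda_1$ to $\rr^n$ identifies $\Lambda_1$ with the graph of a real linear map $i\rr^n\to\rr^n$ (sending the imaginary part of a vector to its real part), and the Lagrangian condition forces this map to be symmetric; matching real and imaginary parts shows it is unique.

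Next, let $U$ be any unitary from the lemma, with orthonormal real eigenvectors $e_j\in\rr^n$ and eigenvalues $e^{2\pi i\alpha_j}$, $\alpha_j\in(0,\tfrac{1}{2})$. For each such eigenvector $v$ with $Uv = e^{2\pi i\alpha}v$, the condition $Uv\in\Lambda_1$ together with the parametrization $\Lambda_1 = \{Ax+ix : x\in\rr^n\}$ forces $x = \sin(2\pi\alpha)v$ and then $Av = \cot(2\pi\alpha)v$; the $\sin$ never vanishes on $(0,\tfrac{1}{2})$. So every real eigenvector of $U$ is automatically an $A$-eigenvector, with eigenvalues related by $\mu = \cot(2\pi\alpha)$.

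To conclude, $\cot:(0,\pi)\to\rr$ is bijective, so $\alpha\mapsto\cot(2\pi\alpha)$ is injective on $(0,\tfrac{1}{2})$; combined with the fact that the $e_j$ span $\rr^n$, a dimension count forces the $U$-eigenspace for eigenvalue $e^{2\pi i\alpha}$ to equal the $A$-eigenspace for eigenvalue $\cot(2\pi\alpha)$. Hence the spectral decomposition of $U$ is determined by that of $A$ via $\alpha_j = \tfrac{1}{2\pi}\mathrm{arccot}(\mu_j)$ (principal branch), recovering $U = (A+iI)(A^2+I)^{-1/2}$ on each eigenspace, and in particular the $\alpha_j$ are well-defined. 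The main place to be careful is the real-and-imaginary-part matching step: it crucially uses both that the eigenvectors lie in $\Lambda_0 = \rr^n$ (which rules out the orthogonal ambiguity $U\mapsto UO$, $O\in O(n)$, present for a general unitary sending $\rr^n$ to $\Lambda_1$) and the interval restriction $\alpha_j\in(0,\tfrac{1}{2})$ (so that $\cot$-inversion and the branch of $\mathrm{arccot}$ are unambiguous). Everything else is bookkeeping.
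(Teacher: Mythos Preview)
Your argument is correct, but it takes a different route from the paper's. The paper compares two candidates $U$ and $U'$ directly: writing an eigenvector $e_i'$ of $U'$ in the $U$-eigenbasis $e_1,\ldots,e_n$ of $\Lambda_0$ and using $U'e_i'\in\Lambda_1=U\cdot\Lambda_0$, one gets coordinate equations $c_j e^{2\pi i\alpha_i'}=d_j e^{2\pi i\alpha_j}$ with $c_j,d_j$ real; the restriction $\alpha_i',\alpha_j\in(0,\tfrac12)$ then forces $\alpha_i'=\alpha_j$ whenever $c_j\neq 0$, so every $U'$-eigenvector is a $U$-eigenvector with the same eigenvalue, and symmetry gives $U=U'$.

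You instead anchor everything to the symmetric matrix $A$ from the existence proof: each real $U$-eigenvector is automatically an $A$-eigenvector via $Av=\cot(2\pi\alpha)v$, and injectivity of $\cot$ on $(0,\tfrac12)$ plus a dimension count identifies the eigenspaces, recovering $U=(A+iI)(A^2+I)^{-1/2}$. This buys an explicit formula for $U$ in terms of the canonical parametrization of $\Lambda_1$, at the cost of invoking that parametrization; the paper's argument is self-contained within the statement and never mentions $A$. Both hinge on the same mechanism (the interval constraint on the $\alpha_j$ removing the sign ambiguity), just packaged differently.
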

\begin{proof}
Let $U'$ be another unitary transformation of the form given in the lemma.
Then by assumption, $\Lambda_0$ has a basis $$e_1',\ldots,e_n'$$ of eigenvectors of $U'$, with eigenvalues $$e^{2\pi i\alpha_1'},\ldots,e^{2\pi i\alpha_n'}$$ with $0<\alpha_i'<\frac{1}{2}$.
Thus $U'e_i'=e^{2\pi i\alpha_i'}e_i'\in \Lambda_1$.
On the other hand, $e_i'=c_1e_1+\cdots+c_ne_n$ for some real numbers $c_i$, so
\begin{displaymath}
e^{2\pi i\alpha_i'}e_i'=c_1e^{2\pi i\alpha_i'}e_1+\cdots+c_ne^{2\pi i \alpha_i'}e_n\in\Lambda_1.
\end{displaymath}
Furthermore, $\Lambda_1=U\cdot \Lambda_0$, so
\begin{displaymath}
c_1e^{2\pi i\alpha_i'}e_1+\cdots+c_ne^{2\pi i \alpha_i'}e_n=d_1e^{2\pi i\alpha_1}e_1+\cdots+d_ne^{2\pi i\alpha_n}e_n
\end{displaymath}
for some real numbers $d_1,\ldots,d_n$.
Since $e_1,\ldots,e_n$ is a basis for $V$ over $\cc$, it follows that $c_je^{2\pi i\alpha_i'}=d_je^{2\pi i\alpha_j}$ for each $j$.
It follows that $\alpha_i'=\alpha_j$ for some $j$ and $e_i'$ is an eigenvector of $U$.

Switching the roles of $U$ and $U'$ shows that every eigenvector of $U$ is an eigenvector of $U'$ with the same eigenvalue.
It follows that $U=U'$.
\end{proof}

\begin{lemma}\label{lemma:index_theory-linear_algebra_2}
Suppose $U'$ is a unitary matrix, $\Lambda_0$ has a basis consisting of eigenvectors of $U'$, and $\Lambda_1=U'\cdot \Lambda_0$.
Then the eigenvectors of $U'$ are $\pm e^{2\pi i\alpha_1},\ldots,\pm e^{2\pi i\alpha_n}$.
\end{lemma}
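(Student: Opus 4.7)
The plan is to mimic the coefficient-matching argument used in the proof of Lemma~\ref{lemma:index_theory-linear_algebra_1}, but dropping the range restriction $0<\alpha_i'<\tfrac{1}{2}$; this relaxation is precisely what allows the $\pm$ sign in the conclusion. As before, I would fix the unitary basis $e_1,\dots,e_n$ produced by Lemma~\ref{lemma:index_theory-matrix}, so $\Lambda_0=\textrm{span}_\rr(e_1,\dots,e_n)$, $U e_j=e^{2\pi i\alpha_j}e_j$, and $U\cdot\Lambda_0=\Lambda_1$. By hypothesis there is a second basis $e_1',\dots,e_n'$ of $\Lambda_0$ consisting of eigenvectors of $U'$, with $U'e_i'=\mu_i e_i'$.

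Since both bases lie in the real subspace $\Lambda_0$, the change-of-basis coefficients are real:
\begin{displaymath}
e_i'=\sum_j c_{ij}e_j,\qquad c_{ij}\in\rr.
\end{displaymath}
On the other hand, $\mu_i e_i'=U'e_i'\in U'\cdot\Lambda_0=\Lambda_1=U\cdot\Lambda_0$, so there exist real numbers $d_{ij}$ with
\begin{displaymath}
\mu_i e_i'=\sum_j d_{ij}\,e^{2\pi i\alpha_j}e_j.
\end{displaymath}
Matching coefficients in the $\cc$-basis $e_1,\dots,e_n$ then yields $c_{ij}\mu_i=d_{ij}e^{2\pi i\alpha_j}$ for every $i,j$.

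For any $j$ with $c_{ij}\neq 0$ this exhibits $\mu_i$ as a real multiple of the unit complex number $e^{2\pi i\alpha_j}$; unitarity of $U'$ forces $|\mu_i|=1$, so $d_{ij}/c_{ij}=\pm 1$ and hence $\mu_i=\pm e^{2\pi i\alpha_j}$. Since $e_i'\neq 0$, at least one such $j$ exists, so each eigenvalue $\mu_i$ of $U'$ lies in $\{\pm e^{2\pi i\alpha_1},\dots,\pm e^{2\pi i\alpha_n}\}$, which is the claim. The only subtle point is that if $e_i'$ has more than one nonzero coefficient $c_{ij}$ then the equality $\mu_i=\pm e^{2\pi i\alpha_j}$ must hold simultaneously for each such $j$, forcing those $e^{2\pi i\alpha_j}$ to coincide up to sign; but this is automatic from the identity $c_{ij}\mu_i=d_{ij}e^{2\pi i\alpha_j}$, and so presents no real obstacle. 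In short, the proof is a direct specialization of the argument for Lemma~\ref{lemma:index_theory-linear_algebra_1}, and no new ideas are required.
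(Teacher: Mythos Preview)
Your proof is correct and follows essentially the same approach as the paper: both mimic the coefficient-matching argument from Lemma~\ref{lemma:index_theory-linear_algebra_1}, the only change being that without the constraint $0<\alpha_i'<\tfrac{1}{2}$ the relation $c_{ij}\mu_i=d_{ij}e^{2\pi i\alpha_j}$ with real $c_{ij},d_{ij}$ and $|\mu_i|=1$ yields $\mu_i=\pm e^{2\pi i\alpha_j}$ rather than forcing equality of the angles. The paper's proof says precisely this in one sentence, and your write-up simply spells out the details.
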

\begin{proof}
The proof is similar to the proof of the previous lemma, except near the end where $c_je^{2\pi i\alpha_i'}=d_je^{2\pi i\alpha_j}$ for each $j$ implies $\alpha_i'=\alpha_j$ for some $j$, the equation instead implies that $e^{2\pi i\alpha_i'}=\pm e^{2\pi i\alpha_j}$ for some $j$ (because it is no longer required that $0<\alpha_i'<\frac{1}{2}$ in this lemma).
\end{proof}

\begin{lemma}\label{lemma:index_theory-linear_algebra_3}
Let $\tau:V\to V$ be a linear transformation such that $\omega(\tau\cdot,\tau\cdot)=-\omega(\cdot,\cdot)$, $\tau\circ J=-J\circ\tau$, and $\tau^2=1$.
Let $\Lambda_0=\textrm{Fix}(\tau)$.
Suppose $U$ is a unitary transformation such that $\Lambda_1=U\cdot \Lambda_0$ is transverse to $\Lambda_0$.
Furthermore, suppose that $U\circ\tau=\tau\circ U^{-1}$.
Then $\Lambda_0$ has a basis consisting of eigenvectors of $U$.
\end{lemma}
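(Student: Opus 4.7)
The plan is to work in a real basis of $\Lambda_0$ so that everything reduces to a matrix statement about symmetric unitary matrices, and then to show that each eigenspace of $U$ is stable under $\tau$, which forces it to admit a real basis.

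First I would pick an orthonormal basis of $\Lambda_0$ (orthonormal with respect to the real part of the Hermitian form $h$); since $\Lambda_0$ is Lagrangian and $\tau J=-J\tau$, this simultaneously serves as a unitary $\mathbb{C}$-basis of $V$ in which $\tau$ is complex conjugation and $V\cong\mathbb{C}^n$ with $\Lambda_0=\mathbb{R}^n$. In this basis the condition $U\tau=\tau U^{-1}$ becomes $\tau U\tau=U^{-1}$, i.e.\ $\bar U=U^{-1}$. Combined with unitarity ($U^*=U^{-1}$) this gives $U^T=U$, so $U$ is a symmetric unitary matrix.

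Next I would analyze the eigenspaces. Since $U$ is unitary (in particular normal) its eigenvalues lie on the unit circle and $V=\bigoplus_\lambda V_\lambda$ as an orthogonal direct sum. The key observation is that each $V_\lambda$ is preserved by $\tau$: if $Uv=\lambda v$, then applying $\tau$ and using $\tau U=U^{-1}\tau$ gives $U^{-1}\tau v=\bar\lambda\,\tau v$, and since $|\lambda|=1$ this rearranges to $U(\tau v)=\lambda(\tau v)$. Thus $\tau$ restricts to an antilinear involution on each $V_\lambda$. Its fixed set $V_\lambda\cap\Lambda_0$ is therefore a real subspace of real dimension $\dim_\mathbb{C} V_\lambda$ whose complexification recovers $V_\lambda$. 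Concatenating real bases of $V_\lambda\cap\Lambda_0$ over all eigenvalues $\lambda$ yields the desired basis of $\Lambda_0$ consisting of eigenvectors of $U$.

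The only genuinely substantive step is the correct bookkeeping in the first paragraph: making sure that ``unitary'' with respect to $h$ in coordinates adapted to $\Lambda_0$ really does translate $U\tau=\tau U^{-1}$ into $U^T=U$, and that $\tau$ really becomes complex conjugation. Once that is in place the eigenspace argument in the second paragraph is immediate, and no appeal to Lemma \ref{lemma:index_theory-matrix} or to the transversality hypothesis is needed. (Transversality is not used in this particular lemma; it simply guarantees in the applications that the eigenvalues of $U$ avoid $\pm 1$, as Lemma \ref{lemma:index_theory-linear_algebra_2} records.)
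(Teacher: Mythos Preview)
Your proof is correct and reaches the same reduction as the paper does: both arguments pass to coordinates in which $V=\mathbb{C}^n$, $\Lambda_0=\mathbb{R}^n$, $\tau$ is complex conjugation, and the hypothesis $U\tau=\tau U^{-1}$ together with unitarity forces $U=U^T$. From there the two proofs diverge. The paper writes $U=A+iB$ with $A,B$ real symmetric, observes that $U\bar U=1$ forces $AB=BA$, and then invokes simultaneous diagonalizability of commuting real symmetric matrices. You instead argue directly on the eigenspaces of $U$: the relation $\tau U=U^{-1}\tau$ and $|\lambda|=1$ show each $V_\lambda$ is $\tau$-stable, so the antilinear involution $\tau|_{V_\lambda}$ has a real form $V_\lambda\cap\Lambda_0$ of the correct dimension, and concatenating these real bases gives the result.

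Both routes are short and elementary. Your eigenspace argument is perhaps slightly cleaner in that it never actually needs the intermediate fact $U=U^T$; once you know $\tau U=U^{-1}\tau$ and that $U$ is diagonalizable with unimodular eigenvalues, the $\tau$-invariance of $V_\lambda$ follows immediately. The paper's $A+iB$ decomposition, on the other hand, makes the connection to classical simultaneous diagonalization more explicit. Your remark that transversality is not used is also correct and matches the paper's proof, which likewise never invokes it.
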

\begin{proof}
Without loss of generality assume that $(V,\omega,J)=(\cc^n,\omega_0,J_0)$, $\Lambda_0=\rr^n$, and $\tau$ is the usual complex conjugation.
Then $U$ is a unitary matrix, so $U^{-1}=\bar U^T$.
The equation $U\circ\tau=\tau\circ U^{-1}=\tau\bar U^T$ implies 
\begin{displaymath}
U\bar v=\overline{\bar U^Tv}=U^T\bar v
\end{displaymath}
for all $v\in\cc^n$.
It follows that $U=U^T$.
Thus $U$ is of the form $U=A+iB$ where $A$ and $B$ are real symmetric matrices.
Thus
\begin{displaymath}
1=UU^{-1}=U\bar U=(A+iB)(A-iB)=A^2+B^2+i(AB-BA).
\end{displaymath}
Therefore $AB=BA$, and it follows that $A$ and $B$ are simultaneously diagonalizable.
That is, $\rr^n$ has a basis consisting of eigenvectors of $U$.
\end{proof}

If $\Lambda_0$ and $\Lambda_1$ do not intersect transversely, the positive definite path is defined as follows.
Let $N=(\Lambda_0+\Lambda_1)/(\Lambda_0\cap\Lambda_1)$, and view $N$ as a symplectic subspace of $V$.
Let $\bar\Lambda_i$ be the images of $\Lambda_i$.
Let $\bar\lambda$ be the positive definite path from $\bar\Lambda_0$ to $\bar\Lambda_1$ in $\Lambda(N)$.
Then the positive definite path from $\Lambda_0$ to $\Lambda_1$ is defined to be $\lambda=\bar\lambda+(\Lambda_0\cap\Lambda_1)$.

We now turn to the definitions of the Maslov indices and Fredholm operators.

\subsection{(1) The case of a disc}
The Maslov index $\mu(u)$ of a map $u:(D^2,\partial D^2)\to (X,L)$ is well-known, it is the Maslov index of the bundle pair $(u^*TX,u^*TL)$.
Linearizing the $\dbar$ operator at $u$ gives a Fredholm operator
$$D^{disc}=D_u\dbar:W^{1,p}_\lambda(D^2,u^*TX)\to L^p(D^2,\Lambda^{0,1}\otimes u^*TX).$$
The $\lambda$ subscript denotes the Lagrangian boundary conditions given by $u^*TL$.
The Riemann-Roch theorem states that $\Ind(D^{disc})=\mu(u)+n$, where $n=\dim(L)$.

\subsection{(2) The case of a strip}
Let $L_0,L_1$ be two Lagrangian submanifolds of $X$ that intersect cleanly.
Let $u:\rr\times[0,1]\to X$ be a map with top boundary on $L_1$, bottom boundary on $L_0$, and $$u(-\infty,\cdot),u(+\infty,\cdot)\in L_0\cap L_1.$$
Assume that $u$ converges in an appropriate sense at $\pm\infty$ (in particular, assume that $u(\pm\infty,t)$ does not depend on $t$).
Let $p=u(-\infty,\cdot)$ and $q=u(+\infty,\cdot)$.
By identifying the compactification of $\rr\times[0,1]$ with a square, $u^*TX$ becomes a symplectic vector bundle over the square.
A Lagrangian subbundle over the boundary of the square can then defined in a canonical way (up to homotopy):
The top boundary is $u(\cdot,1)^*TL_1$ and the bottom boundary is $u(\cdot,0)^*TL_0$.
For the left boundary choose a positive definite path from $T_pL_0$ to $T_pL_1$ in the Lagrangian grassmannian $\Lambda(T_pX)$, and for the right boundary choose a positive definite path from $T_qL_0$ to $T_qL_1$ in the Lagrangian grassmannian $\Lambda(T_qX)$.
The Maslov index $\mu(u)$ of the strip $u$ is defined to be the Maslov index of this bundle pair.

\subsection{(3) The polygonal Maslov index}
The polygonal Maslov index plays a role for holomorphic polygons similar to the role that the Maslov index of a strip plays for a strip.
(However, note that they are not the same for a 2-gon.
They in fact differ by a factor of $n$.)

Let $L_0,\ldots,L_k$ be Lagrangian submanifolds such that $L_{i-1}$ and $L_i$ intersect cleanly.
Let $u:D^2\to X$ be a map, and let $z_0,\ldots,z_k$ be distinct points on the boundary of $D^2$, labelled in counterclockwise cyclic order.
Suppose further that $u$ maps the boundary of $D^2$ between $z_{i-1}$ and $z_i$ to $L_i$.
$u^*TX$ is then a symplectic vector bundle over $D^2$, and $u^*TL_i$ is a Lagrangian subbundle over the boundary arc from $z_{i-1}$ to $z_i$.
This subbundle can be extended to a Lagrangian subbundle over the entire boundary of $D^2$ by slightly homotoping the given Lagrangian subbundles to be constant near the marked points, and then extending the bundle over the marked points by moving in the positive definite direction from $TL_{i-1}$ to $TL_i$.
$\mu^{poly}(u)$ is defined to be the Maslov index of this bundle pair.

\subsection{(4) The case $\mu(\lambda;\Lambda_0,\Lambda_1)$}
Let $(V,\omega)$ be a symplectic vector space.
Let $\Lambda_0$ and $\Lambda_1$ be Lagrangian subspaces, and let $\lambda:[0,1]\to\Lambda(V)$ be a path of Lagrangians from $\Lambda_0$ to $\Lambda_1$.
Let $\lambda':[0,1]\to\Lambda(V)$ be the positive definite path from $\Lambda_0$ to $\Lambda_1$.
Define $\tilde\lambda:S^1\to \Lambda(V)$ by
\begin{displaymath}
\tilde\lambda(e^{\pi i\theta})=\left\{
\begin{array}{ll}
\lambda'(\theta), & 0\leq \theta\leq 1,\\
\lambda(2-\theta), & 1\leq \theta\leq 2.
\end{array}
\right.
\end{displaymath}
Then $\mu(\lambda;\Lambda_0,\Lambda_1)$ is defined to be the Maslov index of the loop $\tilde\lambda$.

\subsection{(5-6) The cases $\mu(\lambda,\Lambda_0)$ and $\mu(\lambda_0,\lambda_1)$}\label{section:index_theory-maslov_index_paths}
These are the Maslov indices for paths, as defined in \cite{rs}, and they are half-integer valued.
We will need to calculate some of these indices, so we repeat here the definition.
Let $\lambda,\lambda_0,\lambda_1$ be paths of Lagrangian subspaces and let $\Lambda_0$ be a fixed Lagrangian subspace.
The index $\mu(\lambda,\Lambda_0)$ is a special case of the index $\mu(\lambda_0,\lambda_1)$, namely the case where $\lambda_0=\lambda$ and $\lambda_1\equiv \Lambda_0$.
Therefore it suffices to define $\mu(\lambda_0,\lambda_1)$.

Some preliminary definitions are needed first.
The tangent space of $\Lambda(V)$ at $\Lambda_0$ is isomorphic to the space of symmetric quadratic forms on $\Lambda_0$.
The isomorphism can be explicitly given:
Let $W$ be a fixed Lagrangian complement of $\Lambda_0$.
Let $\lambda(t)$ be a smooth path of Lagrangian subspaces with $\lambda(0)=\Lambda_0$.
For $v\in\Lambda_0$ and small $t$, there exists a unique vector $w(t)\in W$ such that $v+w(t)\in \lambda(t)$.
The quadratic form $Q(\Lambda_0,\dot\lambda(0))$ associated to the tangent vector $\dot\lambda(0)$ is given by the formula
\begin{displaymath}
Q(\Lambda_0,\dot\lambda(0))(v)=\frac{d}{dt}\biggr|_{t=0}\omega(v,w(t)).
\end{displaymath}
(Notice that if $\lambda(t)=e^{\pi J t/2}\cdot \Lambda_0$, then $Q(\Lambda_0,\dot\lambda(0))$ is positive-definite, hence the terminology ``positive definite direction''.)

If $\lambda(t)$ is any path of Lagrangian subspaces and $\Lambda_0$ is any Lagrangian subspace, the crossing form $\Gamma$ is defined by
\begin{displaymath}
\Gamma(\lambda,\Lambda_0,t)=Q(\lambda(t),\dot\lambda(t))|_{\lambda(t)\cap \Lambda_0}.
\end{displaymath}
This is a symmetric quadratic form on $\lambda(t)\cap \Lambda_0$, hence its signature (that is, the number of positive eigenvalues minus the number of negative eigenvalues) is well-defined.

Now the definition of $\mu(\lambda_0,\lambda_1)$ can be given:
\begin{eqnarray*}
\mu(\lambda_0,\lambda_1)&=&\frac{1}{2}\textrm{sign}(\Gamma(\lambda_0,\lambda_1(0),0)-\Gamma(\lambda_1,\lambda_0(0),0))+\\
&&\sum_{0<t<1}\textrm{sign}(\Gamma(\lambda_0,\lambda_1(t),t)-\Gamma(\lambda_1,\lambda_0(t),t))+\\
&&\frac{1}{2}(\textrm{sign}(\Gamma(\lambda_0,\lambda_1(1),1)-\Gamma(\lambda_1,\lambda_0(1),1)).
\end{eqnarray*}
For a generic perturbation of $\lambda_0$ and $\lambda_1$ with fixed endpoints, the sum becomes a finite sum and all the quadratic forms
$$\Gamma(\lambda_0,\lambda_1(t),t)-\Gamma(\lambda_1,\lambda_0(t),t)$$
are non-degenerate.
In this case the sum is well-defined, and it can be shown that the sum does not depend upon the choice of generic perturbation.





\subsection{Fredholm theory}
Now consider the Fredholm operators in (\ref{eq:index_theory-fredholm_operators}).
They are all defined to be Cauchy-Riemann operators.
The superscript $\delta$ on the Banach spaces means that weight\-ed norms are used.
The weight is $\delta(s)=e^{\delta_0|s|}$, where $\delta_0>0$ is sufficiently small, so the norms are of the form
\begin{displaymath}
\|\xi\|_{L^{p;\delta}}^p=\int |\xi|^pe^{\delta(s)} dsdt.
\end{displaymath}
The weights are needed to make the operators closed when the boundary conditions are not transverse at $\pm\infty$.
They are not needed if the boundary conditions are transverse; however if $\delta_0>0$ is sufficiently small then the inclusion of the weight does not affect the index.
The $\lambda$ subscript means that Lagrangian  boundary value conditions are imposed.
The conditions imposed are the obvious ones in the first three operators.
(In the definition of the third operator, $\Sigma_{k+1}$ stands for disc with $k+1$ marked boundary points removed.)
The last operator requires more explanation.
For $p\in R_h\subset L_0\cap L_1$, let $\lambda:[0,1]\to\Lambda(T_pX)$ be a path such that $\lambda(0)=T_pL_0$ and $\lambda(1)=T_pL_1$.
Let
\begin{eqnarray}\label{eq:index_theory-caps}
Z_-&=&\set{z\in\cc}{|z|\leq1}\cup\set{z\in\cc}{\textrm{Re } z\geq0,|\textrm{Im } z|\leq 1},\\
\nonumber Z_+&=&\set{z\in\cc}{|z|\leq1}\cup\set{z\in\cc}{\textrm{Re } z\leq0,|\textrm{Im } z|\leq 1}.
\end{eqnarray}
$\lambda$ gives Lagrangian boundary conditions along $Z_\pm$.
Namely, the top boundary condition is $T_{p}L_1$, the bottom is $T_{p}L_0$, and the semicircle is $\lambda$ (parameterizing the semicircle as $[0,1]$, with $0$ the bottom point and $1$ the top point).
Then associated to these boundary conditions are the operators
\begin{equation}\label{eq:index_theory-cap_operators}
\bar\partial_{\lambda,Z_{\pm}}:W^{1,p;\delta}_\lambda(Z_{\pm},T_{p}M)\to L^{p;\delta}(Z_{\pm},\Lambda^{0,1}(Z_{\pm})\otimes T_{p}M).
\end{equation}


\begin{proof}[Proof of Lemma \ref{lemma:index_theory-strip_poly}]
Let $u$ be a strip with $u(-\infty,\cdot)\in R_h$.
Let $TL_0$ be the bottom Lagrangian boundary condition, and $TL_1$ the top Lagrangian boundary condition.
The difference in the bundle pairs used to define $\mu$ and $\mu^{poly}$ lies in how the Lagrangian subbundles are extended over $-\infty$.
To define $\mu(u)$, the positive definite path is taken from $TL_0$ to $TL_1$, and then traversed backwards (because the orientation of the boundary is counter-clockwise direction).
To define $\mu^{poly}(u)$, the positive definite path is taken from $TL_1$ to $TL_0$, and then traversed forwards.
Thus $\mu^{poly}(u)-\mu(u)$ is equal to the Maslov index of a positive definite loop at $-\infty$. 
Since only the transverse part moves at $-\infty$, the positive definite loop has Maslov index $n-\dim(R_h)$.

\end{proof}

\begin{proof}[Proof of Lemma \ref{lemma:index_theory-fredholm_indices}]
The first statement is well-known.
The last statement is Lem\-ma 12.69 in \cite{fooo}.
The second statement,
$$\Ind(D^{strip})=\mu(u)-\dim(R_h),$$
follows from Lemmas 12.64 and 12.69 in \cite{fooo}.

Now consider the third statement,
\begin{displaymath}
\Ind(D^{poly})=\mu^{poly}(u)-kn.
\end{displaymath}
Recall that the domain of $u$ is $\Sigma_{k+1}$, which is a disc with $k+1$ boundary points removed.
The proof is by induction:
If $k+1=2$, then $D^{poly}=D^{strip}$ and the formula is equivalent to $\Ind (D^{strip})=\mu(u)-\dim(R_h)$.
Now suppose the statement is true for $k\geq 1$.
The proof for $k+1$ then follows from the gluing principle:
Suppose the marked points on $\partial D^2$ are $z_0,\ldots,z_k$, and suppose $z_0=-1$.
In strip-like coordinates near $z_0$, think of $-1$ as corresponding to $(-\infty,\cdot)$ in the strip.
The top boundary of the strip is $TL_0$, and the bottom boundary is $TL_1$.
Glue a cap $Z_-$ onto the $-\infty$ end of the strip-like end, and take the path along the semicircle end of $Z_-$ in the up direction to be the positive definite path from $TL_1$ to $TL_0$, and let $w$ denote this path.
The glued domain can be thought of as the domain of a $k$-gon $u_k$, and $\Ind (D^{poly}_{u_k})=\mu^{poly}(u_k)-(k-1)n$ by the induction hypothesis.
By the gluing principle,
\begin{displaymath}
\Ind (\dbar_{\lambda,Z_-})+\Ind (D^{poly})+\dim R_h=\Ind (D^{poly}_{u_k}).
\end{displaymath}
The term $\dim R_h$ appears on the left-hand side because Banach spaces with weights are used, so the strip like end at $z_0$ cannot ``move'' within $R_h$, but on the right-hand side the marked point $z_0$ is no longer present, so the values can ``move'' within $R_h$.
Therefore 
\begin{displaymath}
\Ind(D^{poly})=\mu^{poly}(u_k)-\mu(w;TL_1,TL_0)-\dim R_h-(k-1)n.
\end{displaymath}
By construction, $\mu(w;TL_1,TL_0)=0$, and it is straightforward to check that $$\mu^{poly}(u_k)=\mu^{poly}(u)-(n-\dim R_h),$$ thus finishing the proof.
\end{proof}

\section{Gradings}\label{section:gradings}
We begin this section with a brief review of graded Lagrangians in the context of Calabi-Yau manifolds.
Then some of the Maslov index theory is recast in this context.
The main point of this theory is that it allows an absolute grading to be put on the Floer cohomology.
This application will be developed later, in Section \ref{section:a_infinity-floer_cohomology}.
Most of the material of this section is from \cite{seidel1} or is a straightforward generalization thereof to the clean intersection case.

\subsection{Graded Lagrangians}\label{section:gradings-graded}
For the remainder of this section, let $(X,\omega,J,\Omega)$ be any Calabi-Yau n-fold.
$\Omega$ is a non-vanishing section of the canonical bundle $K_X$ (it is not required that $\nabla \Omega=0$).
Let $\mathcal L\to X$ be the fiber bundle whose fiber $\mathcal L_p$ over the point $p\in X$ is the Lagrangian Grassmannian $\Lambda(T_pX,\omega_p)$.
The section $\Omega$ can be used to define a map
\begin{displaymath}
\Det^2:\mathcal L\to S^1.
\end{displaymath}
Namely, if $\Lambda_p\in \mathcal L_p$ is a Lagrangian plane, let $e^{i\theta}$ be such that $\textrm{Im}(e^{-i\theta}\Omega_p)|_{\Lambda_p}=0$.
The square of $e^{i\theta}$ is uniquely determined and $\Det^2(\Lambda_p)$ is defined to be $e^{2i\theta}\in S^1$.
Let $L\subset X$ be a Lagrangian submanifold and let $s_L:L\to \mathcal L$ be the tautological map.
\begin{definition}
$L$ is ($\z$-)graded if there exists a function $\theta_L:L\to\rr$ such that $\Det^2\circ s_L=e^{2\pi i\theta_L}$.
$L$ has constant phase if $\theta_L$ is constant.
\end{definition}

\subsection{An absolute index: the transverse case}
Let $L_0$ and $L_1$ be two transversely intersecting Lagrangian submanifolds of $X$, and suppose they have gradings $\theta_{L_0}$ and $\theta_{L_1}$.
To each point $p\in L_0\cap L_1$ can be associated an integer $\tilde \mu(L_0,L_1;p)$, defined as follows.
Choose a path $(\lambda,\theta):[0,1]\to \mathcal L_p\times\rr$ such that $\lambda(0)=T_pL_0$, $\lambda(1)=T_pL_1$, $\theta(0)=\theta_{L_0}(p)$, $\theta(1)=\theta_{L_1}(p)$, and $e^{2\pi i\theta(t)}=\Det^2(\lambda(t))$.
\begin{definition}
The integer
\begin{displaymath}
\tilde\mu(L_0,L_1;p)=\frac{n}{2}-\mu(\lambda,T_pL_0)
\end{displaymath}
is called the (absolute) index between $(L_0,\theta_{L_0})$ and $(L_1,\theta_{L_1})$ at $p$.
\end{definition}
Equivalently, let $(\lambda_i,\theta_i):[0,1]\to\mathcal L_p\times\rr$ be such that $\lambda_0(0)=\lambda_1(0)$, $\theta_0(0)=\theta_1(0)$, $\theta_i(1)=\theta_{L_i}(p)$, $\lambda_0(1)=T_pL_0$, $\lambda_1(1)=T_pL_1$, and $\Det^2(\lambda_i(t))=e^{2\pi i\theta_{i}(t)}$.
Then
\begin{displaymath}
\tilde\mu(L_0,L_1;p)=\frac{n}{2}-\mu(\lambda_1,\lambda_0)=\frac{n}{2}+\mu(\lambda_0,\lambda_1).
\end{displaymath}
(Notice that the sign in front of $\mu(\lambda_0,\lambda_1)$ is different than that given in \cite{seidel1}.
The reason is that Seidel takes the Floer cochain complex to be the dual of the Floer chain complex.
In \cite{fooo} and this paper, cohomological grading is achieved not by dualizing but instead simply by redindexing.)
\begin{lemma}\label{lemma:gradings-index_formula}
Suppose $T_pL_1=U\cdot T_pL_0$ where $U$ is a unitary transformation of the form given in Lemma \ref{lemma:index_theory-matrix}.
Suppose the eigenvalues of $U$ are $e^{2\pi i\alpha_1},\ldots,e^{2\pi i\alpha_n}$, where $0<\alpha_i<\frac{1}{2}$.
Then
\begin{displaymath}
\tilde\mu(L_0,L_1;p)=\theta_{L_0}(p)-\theta_{L_1}(p)+2(\alpha_1+\cdots+\alpha_n).
\end{displaymath}
Moreover, $\tilde\mu(L_0,L_1;p)$ is an integer.
\end{lemma}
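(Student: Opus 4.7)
The plan is to exploit the explicit positive definite path $\lambda_+(t)=U^t\cdot T_pL_0$ furnished by Lemma \ref{lemma:index_theory-matrix}, compute separately its change in $\Det^2$-angle and its Robbin-Salamon index against $T_pL_0$, and then adjust by a loop in $\Lambda(T_pX)$ to obtain a path with the correct graded endpoints before applying the definition of $\tilde\mu$.

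First I would observe that in the unitary basis $e_1,\ldots,e_n$ of Lemma \ref{lemma:index_theory-matrix}, the tuple $(U^te_1,\ldots,U^te_n)$ is a unitary frame for $\lambda_+(t)$ of complex determinant $e^{2\pi i(\alpha_1+\cdots+\alpha_n)t}$. This gives $\Det^2(\lambda_+(t))=e^{4\pi i(\alpha_1+\cdots+\alpha_n)t}\Det^2(T_pL_0)$, so the continuous lift of $\theta$ along $\lambda_+$ starting at $\theta_{L_0}(p)$ terminates at $\theta_{L_0}(p)+2(\alpha_1+\cdots+\alpha_n)$. Setting
\begin{displaymath}
N=\theta_{L_1}(p)-\theta_{L_0}(p)-2(\alpha_1+\cdots+\alpha_n),
\end{displaymath}
the identity $\Det^2(T_pL_1)=(\det U)^2\Det^2(T_pL_0)=e^{4\pi i(\alpha_1+\cdots+\alpha_n)}\Det^2(T_pL_0)$ forces $e^{2\pi iN}=1$, so $N\in\z$. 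This already settles the integrality claim.

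Next I would take $\lambda$ to be the concatenation of $\lambda_+$ with a loop $\gamma$ at $T_pL_1$ of Maslov class $N$ (which exists by surjectivity of the Maslov class on $\pi_1(\Lambda(T_pX))$); the graded lift then has the prescribed endpoints, so $\lambda$ is a legitimate choice for computing $\tilde\mu$. The central step is the crossing-form calculation of $\mu(\lambda_+,T_pL_0)$. A point of $\lambda_+(t)$ has the form $\sum a_j'e^{2\pi i\alpha_jt}e_j$ and lies in $T_pL_0$ iff $a_j'\sin(2\pi\alpha_jt)=0$ for every $j$; the strict bound $0<\alpha_jt<1/2$ for $t\in(0,1]$ forces $a_j'=0$, so the only intersection is the full one at $t=0$. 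Taking the Lagrangian complement $W=J\cdot T_pL_0$ and unwinding the definition of the crossing form yields $\Gamma(\lambda_+,T_pL_0,0)(v)=\sum_j 2\pi\alpha_j a_j^2$ on $v=\sum a_je_j$, which is positive definite of signature $n$; as an endpoint it contributes $n/2$. Hence $\mu(\lambda_+,T_pL_0)=n/2$, and additivity under concatenation (valid because the junction value $T_pL_1$ is transverse to $T_pL_0$) combined with the standard fact that a loop's Robbin-Salamon index equals its Maslov class gives $\mu(\lambda,T_pL_0)=n/2+N$. The definition then produces $\tilde\mu(L_0,L_1;p)=n/2-\mu(\lambda,T_pL_0)=-N=\theta_{L_0}(p)-\theta_{L_1}(p)+2(\alpha_1+\cdots+\alpha_n)$.

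The hardest part is not any single step but consolidating sign conventions: the factor of $2$ between $\Det^2$-winding and the Maslov class, the minus sign in $\tilde\mu=\tfrac{n}{2}-\mu(\lambda,T_pL_0)$, and the $1/2$ endpoint contributions in the Robbin-Salamon formula must combine coherently. Once one commits to the positive definite path as the basic building block and handles the grading correction via an explicit loop, the remainder of the argument is an unwinding of the linear algebra already encoded in Lemma \ref{lemma:index_theory-matrix}.
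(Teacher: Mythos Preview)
Your argument is correct and follows essentially the same strategy as the paper: show $N=\theta_{L_1}(p)-\theta_{L_0}(p)-2\sum\alpha_j$ is an integer via $\Det^2$, build a path with the required graded endpoints from $U^t\cdot T_pL_0$ plus a Maslov-$N$ correction, and read off $\tilde\mu=n/2-(n/2+N)=-N$. The only cosmetic difference is that the paper packages the loop correction into a single explicit path $\lambda(t)=U^tE^t\cdot T_pL_0$ (with $E^t$ the diagonal matrix twisting one coordinate by $e^{\pi iNt}$) rather than concatenating, and then simply asserts the value of $\mu(\lambda,T_pL_0)$; your crossing-form computation for $\lambda_+$ together with loop additivity in fact supplies the justification the paper leaves implicit.
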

\begin{proof}
Let $\theta_0=\theta_{L_0}(p)$ and $\theta_1=\theta_{L_1}(p)$.
Let $N=\theta_1-\theta_0-2(\alpha_1+\cdots+\alpha_n)$.
Let $e_1,\ldots,e_n$ be an orthonormal frame of $T_pL_0$ such that $e_i$ is an eigenvector of $U$ with eigenvalue $e^{2\pi i\alpha_i}$.
Then $e_1\wedge\cdots \wedge e_n$ is the volume form on $T_pL_0$ and $Ue_1\wedge\cdots\wedge Ue_n$ is the volume form on $T_pL_1=U\cdot T_pL_0$.
$\Det^2(T_pL_0)=e^{2\pi i\theta_0}$ so $\textrm{Im }e^{-\pi i\theta_0}\Omega(e_1\wedge\cdots\wedge e_n)=0$.
Therefore
\begin{displaymath}
\textrm{Im }\Det(U)^{-1}e^{-\pi i \theta_0}\Omega(Ue_1\wedge \cdots\wedge Ue_n)=\textrm{Im }e^{-\pi i\theta_0}\Omega(e_1\wedge\cdots\wedge e_n)=0
\end{displaymath}
and it follows that $\Det(U)^2=e^{2\pi i(\theta_1-\theta_0)}$.
Therefore
\begin{displaymath}
e^{2\pi i 2(\alpha_1+\cdots+\alpha_n)}=e^{2\pi i(\theta_1-\theta_0)}
\end{displaymath}
and thus $N$ is an integer.

With respect to the basis $e_1,\ldots,e_n$, $U$ is diagonal and the $t^{th}$ power $U^t$  has the matrix form
\begin{displaymath}
U^t=\left[
\begin{array}{ccc}
e^{2\pi i\alpha_1t} &&0\\
&\ddots&\\
0&&e^{2\pi i\alpha_n t}
\end{array}
\right].
\end{displaymath}
Let $E^t$ be the matrix
\begin{displaymath}
E^t=\left[
\begin{array}{cccc}
e^{\pi itN} &&&0\\
&1&&\\
&&\ddots&\\
0&&&1
\end{array}
\right].
\end{displaymath}
Let $\lambda(t)=U^t E^t\cdot T_pL_0$.
Notice that the entries of $E^1$ consist of $\pm1$'s, so $E^1\cdot T_pL_0=T_pL_0$.
Therefore $\lambda(0)=T_pL_0$, $\lambda(1)=T_pL_1$, and
\begin{displaymath}
\Det^2(\lambda(t))=e^{2\pi i t((2\alpha_1+\cdots+2\alpha_n)+N)}\Det^2(T_pL_0)=e^{2\pi i (t(\theta_1-\theta_0)+\theta_0)}.
\end{displaymath}
Let $\theta(t)=\theta_0+t(\theta_1-\theta_0)$.
Then $(\lambda,\theta):[0,1]\to\Lambda(T_pX)\times\rr$ is a path with the required properties, so
\begin{displaymath}
\tilde\mu(L_0,L_1;p)=\frac{n}{2}-\mu(\lambda,T_pL_0)=-N=\theta_0-\theta_1+2(\alpha_1+\cdots+\alpha_n).
\end{displaymath}
\end{proof}

\begin{definition}
The angle between $L_0$ and $L_1$ at $p\in L_0\cap L_1$ is defined to be
\begin{displaymath}
\aangle{L_0}{L_1}{p}=\alpha_1+\cdots+\alpha_n,
\end{displaymath}
where $0<\alpha_i<\frac{1}{2}$ and $e^{2\pi i\alpha_1},\ldots,e^{2\pi i\alpha_n}$ are the eigenvalues of the unique unitary matrix $U$ such that $T_pL_1=U\cdot T_pL_0$, $T_pL_0$ has a basis consisting of eigenvectors of $U$, and the eigenvalues of $U$ lie in the upper half-plane (as in Lemma \ref{lemma:index_theory-linear_algebra_1}).
\end{definition}

Lemma \ref{lemma:gradings-index_formula} can be restated as
\begin{corollary}
\begin{displaymath}
\tilde\mu(L_0,L_1;p)=\theta_{L_0}(p)-\theta_{L_1}(p)+2\aangle{L_0}{L_1}{p}.
\end{displaymath}
\end{corollary}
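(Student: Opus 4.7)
The plan is to observe that this corollary is essentially a notational restatement of Lemma \ref{lemma:gradings-index_formula} once the angle $\aangle{L_0}{L_1}{p}$ has been defined. So the proof should consist of little more than unwinding definitions and citing Lemma \ref{lemma:gradings-index_formula}.

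More precisely, first I would note that the hypothesis $T_pL_0 \cap T_pL_1 = 0$ (transversality at $p$) is implicit here, since otherwise the unitary matrix $U$ from Lemma \ref{lemma:index_theory-matrix} and the angle $\aangle{L_0}{L_1}{p}$ are not defined in this form. Under this hypothesis, Lemma \ref{lemma:index_theory-linear_algebra_1} guarantees that the numbers $\alpha_1,\ldots,\alpha_n \in (0,\tfrac{1}{2})$ with $T_pL_1 = U \cdot T_pL_0$ are uniquely determined by $(T_pL_0, T_pL_1)$, so $\aangle{L_0}{L_1}{p} := \alpha_1 + \cdots + \alpha_n$ is well-defined.

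Next I would directly substitute the definition $\aangle{L_0}{L_1}{p} = \alpha_1 + \cdots + \alpha_n$ into the formula from Lemma \ref{lemma:gradings-index_formula}, yielding the stated equality. Since Lemma \ref{lemma:gradings-index_formula} has already been proved (by constructing the explicit path $\lambda(t) = U^t E^t \cdot T_p L_0$ and computing $\mu(\lambda, T_pL_0) = -N$), there is nothing further to verify.

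There is no real obstacle here; the only mild point worth flagging is the consistency between the two definitions of the $\alpha_i$: Lemma \ref{lemma:index_theory-matrix} produces the $\alpha_i$ as eigenvalue arguments of a specific unitary $U$, while the angle $\aangle{L_0}{L_1}{p}$ uses the unique such $U$ guaranteed by Lemma \ref{lemma:index_theory-linear_algebra_1}. Since both characterizations pick out the same matrix (and hence the same multiset of $\alpha_i$), the substitution is legitimate, and the corollary follows.
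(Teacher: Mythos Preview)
Your proposal is correct and matches the paper's approach exactly: the paper simply states that the corollary is Lemma \ref{lemma:gradings-index_formula} restated using the definition of the angle, with no further argument given. Your observation that Lemma \ref{lemma:index_theory-linear_algebra_1} ensures the $\alpha_i$ are well-defined is a reasonable clarification, but the paper does not even bother with this, treating the substitution as immediate.
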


The next lemma states that the Maslov index of a strip depends only on where the strip begins and ends.
This is very different from the non-graded case, where the Maslov index can depend also on the homotopy class of the strip.
\begin{lemma}
Let $p,q\in L_0\cap L_1$ and let $u$ be a holomorphic strip that connects $p$ to $q$.
Then 
\begin{displaymath}
\mu(u)=\tilde\mu(L_0,L_1;q)-\tilde\mu(L_0,L_1;p).
\end{displaymath}
\end{lemma}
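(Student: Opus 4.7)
The plan is to compute $\mu(u)$ as the winding number of $\Det^2$ applied to the Lagrangian boundary subbundle used in the definition of $\mu(u)$, and then read off the contributions of the four boundary edges of the compactified strip.

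First I would invoke the standard fact that in a Calabi-Yau manifold, for any symplectic bundle $E\to D^2$ equipped with a Lagrangian subbundle $\Lambda\to\partial D^2$ obtained as the restriction of $TX$ along a disc, the Maslov index of the pair $(E,\Lambda)$ equals the degree of the composition $\Det^2\circ \Lambda : \partial D^2\to S^1$. Applying this to the bundle pair on the compactified square $[-\infty,+\infty]\times[0,1]$ that was used to define $\mu(u)$ in Section (2), it suffices to compute the total winding of $\Det^2$ around the boundary, traversed counterclockwise.

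Next I would compute each edge separately. On the bottom edge $s\mapsto u(s,0)^*TL_0$, the composition $\Det^2\circ s_{L_0}$ has the continuous lift $e^{2\pi i\theta_{L_0}(u(s,0))}$ by definition of the grading, so this edge contributes $\theta_{L_0}(q)-\theta_{L_0}(p)$ to the winding. The top edge, which is traversed from $+\infty$ to $-\infty$, contributes $\theta_{L_1}(p)-\theta_{L_1}(q)$ by the same reasoning. For the right edge at $+\infty$, the positive definite path has the explicit form $t\mapsto U^t\cdot T_qL_0$ of Lemma \ref{lemma:index_theory-matrix}, and a direct determinant calculation (already carried out in the proof of Lemma \ref{lemma:gradings-index_formula}) gives $\Det^2(U^t\cdot T_qL_0)=e^{2\pi i\cdot 2t(\alpha_1+\cdots+\alpha_n)}\cdot \Det^2(T_qL_0)$, so the continuous winding contribution is $2\aangle{L_0}{L_1}{q}$. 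The left edge at $-\infty$ is the positive definite path from $T_pL_0$ to $T_pL_1$ traversed in reverse, and thus contributes $-2\aangle{L_0}{L_1}{p}$.

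Summing the four contributions and then grouping terms at $q$ and at $p$ separately yields
\begin{displaymath}
\mu(u)=\bigl[\theta_{L_0}(q)-\theta_{L_1}(q)+2\aangle{L_0}{L_1}{q}\bigr]-\bigl[\theta_{L_0}(p)-\theta_{L_1}(p)+2\aangle{L_0}{L_1}{p}\bigr],
\end{displaymath}
which is exactly $\tilde\mu(L_0,L_1;q)-\tilde\mu(L_0,L_1;p)$ by the corollary to Lemma \ref{lemma:gradings-index_formula}. The main obstacle is bookkeeping: correctly identifying the orientation of each boundary edge of the square (and in particular the direction in which the positive definite paths at $\pm\infty$ are traversed), so that the signs of the edge contributions combine with the right signs to reproduce the formula of Lemma \ref{lemma:gradings-index_formula} rather than, say, its negative. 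Once that is pinned down, the argument is a direct reduction to the definition of $\tilde\mu$ and the explicit determinant computation already done for the angle function.
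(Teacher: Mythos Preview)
Your argument is correct. It differs from the paper's proof in how the computation is organized, though both rely on the same underlying trivialization.

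The paper fixes a trivialization of $u^*TX$ with $\Omega\mapsto 1$ and then works entirely with the Robbin--Salamon index for pairs of paths: it identifies $\mu(u)$ with $\mu(\lambda_0,\lambda_1)$ for the bottom and top boundary paths, applies a concatenation identity $\mu(\lambda_0,\lambda_1)=-\mu(\gamma\#\lambda_1,\lambda_0)+\mu(\gamma,T_pL_0)$ with $\gamma$ a grading-compatible path at $p$, and then recognizes the two terms as $\tilde\mu(L_0,L_1;q)-n/2$ and $n/2-\tilde\mu(L_0,L_1;p)$ directly from the \emph{definition} of $\tilde\mu$. Your approach instead reads $\mu(u)$ as the total winding of $\Det^2$ around the square boundary and evaluates each edge using the grading functions $\theta_{L_i}$ and the angle; you then assemble the answer using the \emph{formula} $\tilde\mu=\theta_{L_0}-\theta_{L_1}+2\,\mathrm{angle}$ from the corollary to Lemma~\ref{lemma:gradings-index_formula}. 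Your route is more elementary and geometric, avoiding the Robbin--Salamon concatenation machinery, but it leans on the angle formula (itself proved via that machinery); the paper's route is shorter once one grants the identity $\mu(u)=\mu(\lambda_0,\lambda_1)$ and stays closer to the raw definition of $\tilde\mu$.
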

\begin{proof}
Fix a trivialization $u^*TX\cong (\rr\times[0,1])\times \cc^n$ such that $\Omega$ maps to $1$ under the induced trivialization of $\Lambda^{n}_{\cc}(u^*TX)$.
Then all of the grading data can be viewed as referring to Lagrangian subspaces of $\cc^n$.
In particular, the bottom boundary of the strip gives a path $\lambda_0$ from $T_pL_0$ to $T_qL_0$ and the top boundary gives a path $\lambda_1$ from $T_pL_1$ to $T_qL_1$.
Let $(\gamma,\theta):[0,1]\to\Lambda(\cc^n)\times\rr$ be a path such that $\gamma(0)=T_pL_0$, $\gamma(1)=T_pL_1$, $\theta(0)=\theta_{L_0}(p)$, $\theta(1)=\theta_{L_1}(p)$, and $\Det^2(\gamma(t))=e^{2\pi i\theta(t)}$.
Then
\begin{displaymath}
\mu(u)=\mu(\lambda_0,\lambda_1)=-\mu(\gamma\#\lambda_1,\lambda_0)+\mu(\gamma,\lambda_0(0))=\tilde\mu(L_0,L_1;q)-\tilde\mu(L_0,L_1;p).
\end{displaymath}

\end{proof}

\begin{corollary}
Suppose $L_0$ and $L_1$ have constant phases.
Let $u$ be a strip that connects $p$ to $q$.
Then 
\begin{displaymath}
\mu(u)=2(\aangle{L_0}{L_1}{q}-\aangle{L_0}{L_1}{p}).
\end{displaymath}
\end{corollary}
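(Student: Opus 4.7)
The plan is to deduce this corollary as an immediate consequence of the preceding lemma together with the corollary to Lemma \ref{lemma:gradings-index_formula}. First I would apply the preceding lemma, which asserts that for any strip $u$ from $p$ to $q$ we have
\begin{displaymath}
\mu(u) = \tilde\mu(L_0,L_1;q) - \tilde\mu(L_0,L_1;p),
\end{displaymath}
thereby reducing the task to computing the difference of absolute indices at the two endpoints.

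Next I would invoke the corollary to Lemma \ref{lemma:gradings-index_formula}, which gives the pointwise formula
\begin{displaymath}
\tilde\mu(L_0,L_1;r) = \theta_{L_0}(r) - \theta_{L_1}(r) + 2\aangle{L_0}{L_1}{r}
\end{displaymath}
for every intersection point $r \in L_0 \cap L_1$. Substituting this at $r = p$ and $r = q$ and subtracting yields
\begin{displaymath}
\mu(u) = \bigl(\theta_{L_0}(q) - \theta_{L_0}(p)\bigr) - \bigl(\theta_{L_1}(q) - \theta_{L_1}(p)\bigr) + 2\bigl(\aangle{L_0}{L_1}{q} - \aangle{L_0}{L_1}{p}\bigr).
\end{displaymath}

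Finally I would use the hypothesis that both $L_0$ and $L_1$ have constant phases, so $\theta_{L_0}$ and $\theta_{L_1}$ take the same values at $p$ and at $q$; the two grading differences on the right cancel, leaving exactly the claimed identity. There is no real obstacle here, since the constant-phase assumption is the single input that kills the grading terms; the work has all been done in the prior lemma and its corollary, and this corollary is essentially a bookkeeping step.
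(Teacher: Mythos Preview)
Your proof is correct and is exactly the argument the paper intends: the corollary is stated without proof because it follows immediately by combining the preceding lemma with the formula $\tilde\mu(L_0,L_1;p)=\theta_{L_0}(p)-\theta_{L_1}(p)+2\aangle{L_0}{L_1}{p}$ and canceling the constant grading terms.
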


\subsection{An absolute index: the Bott-Morse case}\label{section:gradings-bott_morse}
Now suppose that $L_0$ and $L_1$ are graded Lagrangians that intersect cleanly.
The results of the previous section can be generalized to this case.
That is, an integer $\tilde\mu(L_0,L_1;R_h)$ can be assigned to each connected component $R_h$ of $L_0\cap L_1$, as follows.
First, pick a point $p\in R_h$.
Let $g=\omega(\cdot,J\cdot)$ be the Riemannian metric on $TX$.
Let $N$ be the $g$-orthogonal complement of $T_p(L_0\cap L_1)$ in $T_pL_0+T_pL_1$.
Then $N$ is a complex and symplectic subspace of $TX$.
Let $\overline{ T_pL_0}=T_pL_0\cap N$ and $\overline{ T_pL_1}=T_pL_1\cap N$.
Let $e_1,\ldots,e_k$ be a basis of $T_p(L_0\cap L_1)$.
Then $\Omega_N=\Omega( e_1,\ldots,e_k,\cdot)$ is a form on $\Det_{\cc}(N)$, and hence $\Det^2:\Lambda(N)\to S^1$ is defined.
Moreover, if $\Lambda_0\in \Lambda(N)$, then $\Det^2(\Lambda_0+T_p(L_0\cap L_1))=\Det^2(\Lambda_0)$.
Let $(\lambda,\theta):[0,1]\to\Lambda(N)\times\rr$ be a path such that $\lambda(0)=\overline{T_pL_0}$, $\lambda(1)=\overline{T_pL_1}$, $\Det^2(\lambda(t))=e^{2\pi i\theta(t)}$, $\theta(0)=\theta_{L_0}(p)$, and $\theta(1)=\theta_{L_1}(p)$.
Let $\tilde\lambda:[0,1]\to\Lambda(T_pX)$ be the path $\tilde\lambda(t)=\lambda(t)+T_p(L_0\cap L_1)$.
\begin{definition}
The integer
\begin{displaymath}
\tilde\mu(L_0,L_1;R_h)=\frac{n-\dim R_h}{2}-\mu(\tilde\lambda,T_pL_0)
\end{displaymath}
is the absolute index between $(L_0,\theta_{L_0})$ and $(L_1,\theta_{L_1})$ at $R_h$.
(The right hand side depends continuously on the choice of $p$, and hence is independent of the choice of $p$.)
\end{definition}

The definition of the angle between $L_0$ and $L_1$ can also be extended.
Let $U$ be the unitary matrix with eigenvalues $1,\ldots,1,e^{2\pi i \alpha_1},\ldots,e^{2\pi i \alpha_{n-k}}$, $0<\alpha_i<\frac{1}{2}$ such that $U\cdot T_pL_0=T_pL_1$, $U|T_p(L_0\cap L_1)=Id$, and $T_pL_0\cap N$ has a basis consisting of eigenvalues of $U$.
\begin{definition}
The angle between $L_0$ and $L_1$ at $p\in R_h$ is defined to be 
\begin{displaymath}
\aangle{L_0}{L_1}{p}=\alpha_1+\cdots+\alpha_{n-k}.
\end{displaymath}
\end{definition}

The next two lemmas can be proved in exactly the same way as in the previous section.
\begin{lemma}\label{lemma:gradings-absolute_formula}
For any $p\in R_h$
\begin{displaymath}
\tilde\mu(L_0,L_1;R_h)=\theta_{L_0}(p)-\theta_{L_1}(p)+2\aangle{L_0}{L_1}{p}.
\end{displaymath}
\end{lemma}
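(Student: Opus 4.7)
The plan is to mimic the proof of Lemma \ref{lemma:gradings-index_formula}, carrying out the construction of the explicit positive-definite path inside the symplectic subspace $N \subset T_pX$ (where the transverse analogue applies) and then extending it trivially along the constant factor $T_p(L_0 \cap L_1)$.

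First I would apply Lemma \ref{lemma:index_theory-matrix} inside $N$ to obtain a unitary basis $e_1,\ldots,e_{n-k}$ of $N$ and a unitary $U$ on $N$, diagonal with eigenvalues $e^{2\pi i \alpha_1},\ldots,e^{2\pi i\alpha_{n-k}}$, $0<\alpha_i<\tfrac{1}{2}$, such that $\overline{T_pL_1} = U\cdot\overline{T_pL_0}$. Set $M=\theta_{L_1}(p)-\theta_{L_0}(p)-2(\alpha_1+\cdots+\alpha_{n-k})$. Using $\Omega_N$ and the compatibility $\Det^2(\Lambda_0+T_p(L_0\cap L_1))=\Det^2(\Lambda_0)$ noted in Section \ref{section:gradings-bott_morse}, the same determinant calculation as in the transverse case gives $\Det(U)^2=e^{2\pi i(\theta_{L_1}(p)-\theta_{L_0}(p))}$, so $M$ is an integer.

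Next, imitating Lemma \ref{lemma:gradings-index_formula}, define $\lambda(t)=U^t E^t\cdot\overline{T_pL_0}$ in $\Lambda(N)$, where $E^t$ is the diagonal matrix with first entry $e^{\pi i t M}$ and remaining entries $1$ in the basis $e_1,\ldots,e_{n-k}$. Put $\tilde\lambda(t)=\lambda(t)+T_p(L_0\cap L_1)$, a path in $\Lambda(T_pX)$ from $T_pL_0$ to $T_pL_1$, and lift the phase as $\theta(t)=\theta_{L_0}(p)+t(\theta_{L_1}(p)-\theta_{L_0}(p))$; by construction $\Det^2(\tilde\lambda(t))=\Det^2(\lambda(t))=e^{2\pi i\theta(t)}$, so $(\tilde\lambda,\theta)$ is an admissible lift for computing $\tilde\mu(L_0,L_1;R_h)$.

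The main step is then to verify the reduction $\mu(\tilde\lambda,T_pL_0)=\mu(\lambda,\overline{T_pL_0})$. Using the $g$-orthogonal splitting $T_pL_0=\overline{T_pL_0}\oplus T_p(L_0\cap L_1)$ one checks that, for every $t$,
\begin{displaymath}
\tilde\lambda(t)\cap T_pL_0=\bigl(\lambda(t)\cap\overline{T_pL_0}\bigr)\oplus T_p(L_0\cap L_1),
\end{displaymath}
and the Robbin--Salamon crossing form respects this direct sum. On the constant summand $T_p(L_0\cap L_1)$ the derivative of the path is zero, hence the quadratic form vanishes there; the nontrivial contributions come entirely from the $N$-factor. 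Feeding in the transverse-case identity $\mu(\lambda,\overline{T_pL_0})=\tfrac{n-k}{2}-\bigl(\theta_{L_0}(p)-\theta_{L_1}(p)+2\aangle{L_0}{L_1}{p}\bigr)$ (which follows from the proof of Lemma \ref{lemma:gradings-index_formula} applied inside $N$) and substituting into the definition $\tilde\mu(L_0,L_1;R_h)=\tfrac{n-k}{2}-\mu(\tilde\lambda,T_pL_0)$ yields the asserted formula. The main obstacle is the bookkeeping at the endpoints $t=0,1$, where $\tilde\lambda(t)\cap T_pL_0$ jumps in dimension from $n$ down to $k$: one must confirm that the half-signature terms pick up no spurious contribution from the constant $T_p(L_0\cap L_1)$ factor, which is precisely where the direct-sum decomposition of the crossing form and the vanishing of $\dot{\tilde\lambda}$ on that factor carry the argument.
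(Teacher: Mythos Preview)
Your proposal is correct and takes essentially the same approach as the paper, which simply remarks that this lemma ``can be proved in exactly the same way as in the previous section'' (i.e., by repeating the argument of Lemma~\ref{lemma:gradings-index_formula} inside the symplectic reduction $N$ and extending trivially along $T_p(L_0\cap L_1)$). Your reduction $\mu(\tilde\lambda,T_pL_0)=\mu(\lambda,\overline{T_pL_0})$ via the direct-sum splitting is exactly the missing step the paper leaves implicit; note that it also follows immediately from the Direct Sum and Zero axioms of the Robbin--Salamon index, so the endpoint bookkeeping you flag is not actually an obstacle.
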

\begin{corollary}
If $L_0,L_1$ have constant phase then $\aangle{L_0}{L_1}{p}$ does not depend upon the choice of $p\in R_h$.
Then, the notation $\aangle{L_0}{L_1}{R_h}$ is justified.
\end{corollary}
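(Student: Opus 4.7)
The plan is to read the corollary off directly from Lemma \ref{lemma:gradings-absolute_formula}, which does all the heavy lifting. That lemma asserts that for every $p \in R_h$,
\begin{displaymath}
\tilde\mu(L_0,L_1;R_h)=\theta_{L_0}(p)-\theta_{L_1}(p)+2\aangle{L_0}{L_1}{p}.
\end{displaymath}
The crucial feature is that the left-hand side is a function of $R_h$ alone, independent of the choice of $p$ (this is built into the definition of $\tilde\mu(L_0,L_1;R_h)$ and is asserted in the parenthetical remark immediately following that definition).

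Rearranging,
\begin{displaymath}
2\aangle{L_0}{L_1}{p} \;=\; \tilde\mu(L_0,L_1;R_h) \,+\, \theta_{L_1}(p) \,-\, \theta_{L_0}(p).
\end{displaymath}
The constant phase hypothesis says precisely that the functions $\theta_{L_0}$ and $\theta_{L_1}$ are constant on $L_0$ and $L_1$ respectively, hence constant on $R_h \subset L_0 \cap L_1$. Therefore the right-hand side is independent of $p \in R_h$, and so is the left-hand side. Dividing by $2$ gives the claim, and this justifies writing $\aangle{L_0}{L_1}{R_h}$ in place of $\aangle{L_0}{L_1}{p}$. There is no real obstacle here: once Lemma \ref{lemma:gradings-absolute_formula} is in hand, the corollary is immediate, and the only thing to check is that the term $\tilde\mu(L_0,L_1;R_h)$ genuinely is $p$-independent, which is part of the content of the preceding definition.
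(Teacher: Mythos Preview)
Your proof is correct and is exactly the intended argument: the corollary is placed immediately after Lemma \ref{lemma:gradings-absolute_formula} in the paper with no separate proof, precisely because it follows at once by rearranging that formula and invoking the constant-phase hypothesis together with the $p$-independence of $\tilde\mu(L_0,L_1;R_h)$.
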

\begin{lemma}\label{lemma:gradings-formula}
Let $u$ be a strip that connects $R_h$ to $R_{h'}$.
Then
\begin{displaymath}
\mu(u)=\tilde\mu(L_0,L_1;R_{h'})-\tilde\mu(L_0,L_1;R_h).
\end{displaymath}
If $L_0$ and $L_1$ have constant phase, this implies that
\begin{displaymath}
\mu(u)=2\bigl(\aangle{L_0}{L_1}{R_{h'}}-\aangle{L_0}{L_1}{R_h}\bigr).
\end{displaymath}
\end{lemma}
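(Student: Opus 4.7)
The plan is to imitate the proof given in the transverse case, adapting it to the clean-intersection setting by working in the symplectic orthogonal complements $N_p$ and $N_q$ used in the definition of $\tilde\mu$.

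First I would pick endpoints $p \in R_h$ and $q \in R_{h'}$ corresponding to $u(-\infty,\cdot)$ and $u(+\infty,\cdot)$, and choose a symplectic trivialization of $u^*TX$ over the compactified strip under which the induced trivialization of $\Lambda^n_\cc(u^*TX)$ sends $\Omega$ to the constant section $1$. Such a trivialization exists because the strip is contractible, and it reduces all of the grading data to paths of Lagrangian subspaces of $\cc^n$. The bottom boundary of $u$ becomes a path $\lambda_0$ from $T_pL_0$ to $T_qL_0$ and the top boundary becomes a path $\lambda_1$ from $T_pL_1$ to $T_qL_1$. By the definition of $\mu(u)$ for a clean-intersection strip (as in Section (2), where the capping paths at $\pm\infty$ are positive-definite and therefore contribute only to the identification of the cap contributions with the $\tilde\mu$'s), we have $\mu(u) = \mu(\lambda_0,\lambda_1)$ as pair-of-paths Maslov indices.

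Next, at each endpoint I would introduce the grading data used to define $\tilde\mu$. At $p$, pick a path $(\gamma_p,\theta_p):[0,1]\to \Lambda(N_p)\times\rr$ joining $\overline{T_pL_0}$ to $\overline{T_pL_1}$ with $\Det^2\circ\gamma_p = e^{2\pi i\theta_p}$ and endpoint phases $\theta_{L_0}(p),\theta_{L_1}(p)$, then extend it by $T_p(L_0\cap L_1)$ to a path $\tilde\gamma_p$ in $\Lambda(T_pX)$; do the same at $q$ to get $\tilde\gamma_q$. Using standard properties of the Robbin-Salamon pair-of-paths index (homotopy invariance rel endpoints, concatenation, and the identity $\mu(\lambda_0,\lambda_1) = -\mu(\tilde\gamma_p\#\lambda_1\#\tilde\gamma_q^{-1},\lambda_0) + \mu(\tilde\gamma_q,\lambda_0(1)) - \mu(\tilde\gamma_p,\lambda_0(0))$, which is the obvious Bott-Morse generalization of the identity used in the transverse case), together with the fact that $\tilde\gamma_p\#\lambda_1\#\tilde\gamma_q^{-1}$ is homotopic (in $\Lambda(T_pX)\simeq\Lambda(T_qX)$ under the trivialization) to $\lambda_0$ by a grading-preserving homotopy, the expression collapses to a difference of terms evaluated at $q$ and at $p$. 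Each such term is recognized, by the definition of $\tilde\mu(L_0,L_1;R_h)$ (and by the calculation already done in Lemma \ref{lemma:gradings-absolute_formula}), as $\tilde\mu(L_0,L_1;R_{h'})-\tilde\mu(L_0,L_1;R_h)$, yielding the first formula. The second formula is then immediate from Lemma \ref{lemma:gradings-absolute_formula}: the phase terms $\theta_{L_0}-\theta_{L_1}$ are constants and cancel in the difference, leaving $2(\aangle{L_0}{L_1}{R_{h'}}-\aangle{L_0}{L_1}{R_h})$.

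The main obstacle is bookkeeping at the non-transverse endpoints. Because $T_pL_0\cap T_pL_1 = T_p(L_0\cap L_1)\neq 0$, the positive-definite paths implicit in $\mu(u)$ and the grading paths $\tilde\gamma_p,\tilde\gamma_q$ only move in the symplectic complement $N$, and I must verify that the tangential part $T_p(L_0\cap L_1)$ contributes trivially to the Robbin-Salamon index at every stage of the concatenation. This is precisely the $\tfrac12(n-\dim R_h)$ normalization built into the definition of $\tilde\mu$, together with the parallel normalization in the definition of $\mu(u)$ from Section (2), so the contributions match up provided the positive-definite paths at $\pm\infty$ are chosen consistently with the grading paths used to define $\tilde\mu$; this consistency follows from the uniqueness up to homotopy of the positive-definite path in $\Lambda(N)$.
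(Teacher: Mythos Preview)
Your proposal is correct and follows essentially the same route as the paper, which simply states that the Bott-Morse case is proved ``in exactly the same way'' as the transverse case: trivialize $u^*TX$ so that $\Omega\mapsto 1$, express $\mu(u)$ as the Robbin-Salamon pair index $\mu(\lambda_0,\lambda_1)$ of the boundary paths, and use concatenation with a grading-compatible path at an endpoint to identify the result with a difference of absolute indices. The paper's version is marginally more economical in that it concatenates only at $p$ (writing $\mu(\lambda_0,\lambda_1)=-\mu(\gamma\#\lambda_1,\lambda_0)+\mu(\gamma,\lambda_0(0))$ and recognizing each term directly via the alternate definition of $\tilde\mu$), thereby avoiding your homotopy claim about $\tilde\gamma_p\#\lambda_1\#\tilde\gamma_q^{-1}$; but the two organizations are equivalent, and your discussion of the tangential part $T_p(L_0\cap L_1)$ contributing trivially is exactly the extra bookkeeping needed to make the transverse-case argument go through.
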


The previous lemma can be extended to the case of polygons:
  Let $u$ be a polygon, that is $u$ is a smooth map from $D^2\setminus\sett{z_0,\ldots,z_k}$ to $X$ such that the arc between $z_i$ and $z_{i+1}$ maps to the Lagrangian $L_{i+1}$.
  Moreover, assume $u$ extends continuously to $D^2$.
  Let $p_i=u(z_i)$ and let $R_i$ be the connected component of $L_i\cap L_{i+1}$ containing $p_i$.
  \begin{lemma}
Assume that $L_0,\ldots,L_k$ have constant phase.
Then
  $$\mu^{poly}(u)={2}(\aangle{L_0}{L_1}{R_0}+\cdots+\aangle{L_k}{L_0}{R_k}).$$
  \end{lemma}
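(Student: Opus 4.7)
The strategy is to compute $\mu^{poly}(u)$ as the winding number of $\Det^2$ applied to the boundary Lagrangian loop, and to show that all of this winding is concentrated at the corners thanks to the constant-phase hypothesis.

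Since $D^2$ is contractible I would choose a complex trivialization $u^*TX\cong D^2\times\cc^n$ whose induced trivialization of the top exterior power sends $\Omega$ to the constant section $1$. Under this trivialization the extended boundary Lagrangian subbundle becomes a loop $\Lambda\colon\partial D^2\to\Lambda(\cc^n)$, and $\mu^{poly}(u)$ equals the winding number of $\Det^2\circ\Lambda$; equivalently, $\mu^{poly}(u)$ is the total change of any continuous lift $\theta$ satisfying $\Det^2\circ\Lambda=e^{2\pi i\theta}$ as we traverse the loop once.

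I would then split this loop into the $k+1$ Lagrangian arcs (each lying in some $L_j$) together with the $k+1$ positive definite extensions at the marked points $z_i$. On an arc contained in $L_j$ the subbundle is $T_{u(z)}L_j$, so $\Det^2(T_{u(z)}L_j)=e^{2\pi i\theta_{L_j}}$ is constant by the constant-phase hypothesis; hence the continuous lift is constant and the arc contributes $0$ to the winding. At each corner $z_i$, the extension is the positive definite path from $T_{p_i}L_i$ to $T_{p_i}L_{i+1}$ (indices modulo $k+1$, so $L_{k+1}=L_0$). Plugging the explicit form $U^t\cdot\overline{T_{p_i}L_i}+T_{p_i}(L_i\cap L_{i+1})$ from Section 3.7 into $\Det^2$, complex multilinearity of $\Omega$ gives $\Det^2(\textrm{path at time }t)=e^{4\pi it\aangle{L_i}{L_{i+1}}{p_i}}\cdot\Det^2(T_{p_i}L_i)$, so the phase change across the corner is $2\aangle{L_i}{L_{i+1}}{p_i}$. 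By the corollary to Lemma \ref{lemma:gradings-absolute_formula}, under constant phase this number depends only on $R_i$; summing over $i$ yields $\mu^{poly}(u)=2\sum_{i=0}^{k}\aangle{L_i}{L_{i+1}}{R_i}$, which is the claimed formula.

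The main obstacle will be pinning down the orientation convention at each corner, i.e.\ confirming that the counterclockwise traversal of $\partial D^2$ sends the positive definite extension in the direction $TL_i\to TL_{i+1}$ rather than its reverse. This is the same sign subtlety that underlies the proof of Lemma \ref{lemma:index_theory-strip_poly}, and the strip case $k=1$ (together with Lemmas \ref{lemma:index_theory-strip_poly} and \ref{lemma:gradings-formula}) provides a consistency check that uniquely fixes the convention.
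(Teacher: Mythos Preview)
Your proposal is correct and follows essentially the same approach as the paper: trivialize $u^*TX$ so that $\Omega$ maps to $1$, observe that the constant-phase hypothesis makes $\Det^2$ constant along each arc, and compute the contribution at each corner $z_i$ via the explicit diagonal form $U^t$ of the positive definite path, yielding $2\aangle{L_i}{L_{i+1}}{R_i}$. The paper's proof is slightly terser and does not explicitly flag the orientation convention as an obstacle, but the substance is the same.
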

  \begin{proof}
Without loss of generality assume that $L_0$ has constant phase 1.
  Take a trivialization $u^*TX\to D^2\times\cc^n$ such that under the induced trivialization $\Det(u^*TX)\to \Det(\cc^n)=\cc$, the complex volume form $\Omega$ maps to 1.
Because the $L_i$'s have constant phase, it follows that $\rr\cdot\Det(u^*TL_0)$ maps to $\rr$ and the determinant of the Lagrangian boundary condition along the arc between $z_i$ and $z_{i+1}$ is constant for each $i$.

By the definition of the angle, there exists a unitary matrix $U_i$ such that $U_i\cdot T_{p_i}L_i=L_{i+1}$ and, in an appropriate basis for $T_{p_i}L_i$, $U_i$ is a diagonal matrix with eigenvalues $e^{2\pi i\alpha_1},\ldots,e^{2\pi i\alpha_{k_i}},1,\ldots,1$, where $\alpha_1+\cdots+\alpha_{k_i}=\aangle{L_i}{L_{i+1}}{R_i}$.
The positive definite path from $L_i$ to $L_{i+1}$ is then $U^t\cdot T_{p_i}L_i$, $0\leq t\leq1$.
  Taking $\Det^2$, it follows that the Maslov index of the path around the entire polygon is
\begin{displaymath}
\mu^{poly}(u)={2}(\aangle{L_0}{L_1}{R_0}+\cdots+\aangle{L_k}{L_0}{R_k}).
\end{displaymath}
  \end{proof}

\section{Moduli spaces}\label{section:moduli_spaces}
In this section we discuss the moduli spaces used to construct the $A_\infty$-structures on the Lagrangian submanifolds.
For more details see \cite{fooo}.
The moduli spaces come in essentially two different flavors: one, the moduli space of discs with boundary lying on a fixed Lagrangian; and two, the moduli space of strips (domains of which can be thought of as $D^2\setminus\sett{-1,1}$) with top and bottom boundaries lying on different Lagrangians.
In both cases, marked points can also be added to the boundary of the disc.
For this section let $X$ be an arbitrary symplectic manifold and $L,L_0,L_1$ be arbitrary Lagrangian submanifolds.
  \subsection{The moduli space of discs}
We start with a sketch of the moduli space of holomorphic discs $$(D^2,\partial D^2)\to(X,L)$$ with $k+1$ distinct marked boundary points $(z_0,\ldots,z_k)$ in counter-clockwise cyclic order.
  Let $W^{1,p}_{k+1}(X,L)$ denote the set of triples $((\overrightarrow{\mathbf{t}},\ell),w,\overrightarrow{p})$ such that
  \begin{eqnarray}
  \label{eq:moduli_spaces-triple_1}
  &&(\overrightarrow{\mathbf{t}},\ell)\in Gr_{k+1},\\
  \label{eq:moduli_spaces-triple_2}
  &&w:(X(\overrightarrow{\mathbf{t}},\ell),\partial X(\overrightarrow{\mathbf{t}},\ell))\to(X,L)\textrm{ is in }W^{1,p}_{loc},\\
  \label{eq:moduli_spaces-triple_3}
  &&\overrightarrow p=(p_0,\ldots,p_k)\in L^{k+1},\textrm{ and }\\
  \label{eq:moduli_spaces-triple_4}
  &&\int_{L_{e_i}}e^{\delta}|\nabla w|^pds dt+\int_{L_{e_i}}e^{\delta}\textrm{dist}(w(s,t),p_i)^pds dt<\infty\\
  &&\nonumber\textrm{ for every $e_i\in C^1_{ext}(\overrightarrow{\mathbf{t}})$, where $\delta(s)=\delta_0|s|$ and $\delta_0>0$ is small.}
  \end{eqnarray}
  Here 
\begin{equation}\label{eq:moduli_spaces-grk}
Gr_{k+1}=((\partial D^2)^{k+1}\setminus\textrm{(fat diagonal)}))/PSL(2,\rr)
\end{equation}
  denotes the moduli space of discs with $k+1$ marked boundary points removed.
  It is a manifold of dimension $k-2$.
  In \cite{fooo}, an explicit description of $Gr_{k+1}$ is given.
  Moreover, given $(\overrightarrow{\mathbf{t}},\ell)\in Gr_{k+1}$, an explicit Riemann surface $X(\overrightarrow{\mathbf{t}},\ell)$ representing the class
  $(\overrightarrow{\mathbf{t}},\ell)$ is constructed.
  Here $\overrightarrow{\mathbf{t}}$ is a tree, and each exterior edge corresponds to a marked point, and $\ell$ is a real-valued function on the set of edges of $\overrightarrow{\mathbf{t}}$.
The set of exterior edges of $\overrightarrow{\mathbf{t}}$ is denoted $C^1_{ext}(\overrightarrow{\mathbf{t}})$.
The exact construction of $X(\overrightarrow{\mathbf{t}},\ell)$ is not important for the purposes of this paper, other than the following fact:
If $D^2\setminus\sett{z_0,\ldots,z_{k}}$ is biholomorphic to $X(\overrightarrow{\mathbf{t}},\ell)$, then for each point $z_i$ an explicit biholomorphism of a neighborhood of the point is given with an open subset $L_{e_i}$ of $X(\overrightarrow{\mathbf{t}},\ell)$, and furthermore an explicit biholomorphism of $L_{e_i}$ with $(0,\infty)\times[0,1]$ is given.
The open subsets $L_{e_i}$ are called strip-like ends, and they come equipped with natural holomorphic coordinates $s+it$ given via the identification $L_{e_i}\cong(0,\infty)\times[0,1]$.

  Condition (\ref{eq:moduli_spaces-triple_4}) says that the map $w$ decays exponentially on each strip-like end.
Conditions (\ref{eq:moduli_spaces-triple_2}) and (\ref{eq:moduli_spaces-triple_4}) imply that $w$ extends continuously to the closure of $X(\vec{\bf t},\ell)$.
That is, if the domain of $w$ is thought of as the disc minus $k+1$ marked boundary points, then $w$ extends continuously to $D^2$.

  $W^{1,p}_{k+1}(X,L)$ is a Banach manifold and the tangent space at a point $((\overrightarrow{\mathbf{t}},\ell),w,\overrightarrow{p})$ is the set of all triples $(Y,\xi,\overrightarrow{V})$ such that
  \begin{itemize}
\item $Y\in T_{(\overrightarrow{\mathbf{t}},\ell)}Gr_{k+1},$
\item $\xi\textrm{ is a section of }W^{1,p}_{loc}(w^*TX),$
\item $\overrightarrow{V}=(V_0,\ldots,V_k),V_i\in T_{p_i}L,$
\item $\xi(z)\in T_{w(z)}L\textrm{ if }z\in\partial X(\overrightarrow{\mathbf{t}},\ell),$
\item $\int_{L_{e_i}}e^{\delta(s)}(|\nabla(\xi-\Par(V_i))|+|\xi-\Par(V_i)|)^pds dt<\infty$\\
 $\textrm{ for every $e_i\in C^1_{ext}(\overrightarrow{\mathbf{t}})$.}$
  \end{itemize}
Here $\Par$ denotes parallel translation in the horizontal direction ($s$ direction) over the strip like ends.

  The norm on the tangent space is
  \begin{eqnarray*}
  &&\|(Y,\xi,\overrightarrow{V})\|^p=|Y|^p+|\overrightarrow{V}|^p+\sum_{e\in C^1_{int}(\overrightarrow{\mathbf{t}})}\int_{L_e}(|\nabla \xi|^p+|\xi|^p)ds dt\\
  \nonumber&&+\sum_{e_i\in C^1_{ext}(\overrightarrow{\mathbf{t}})}\int_{ L_{e_i}}e^{\delta}(|\nabla(\xi-\Par(V_i))|+|\xi-\Par(v_i)|)^pds dt.
  \end{eqnarray*}
($Gr_{k+1}$ can be compactified in a standard way, so the precise metric used to define the first term $|Y|^p$ is not important.
See Section \ref{section:moduli_spaces-compactification} for more on the compactification of $Gr_{k+1}$.
)

  Over $W^{1,p}_{k+1}(X,L)$ is the Banach bundle $\mathcal E^{0,p}$ with fiber over the point 
$$((\overrightarrow{\mathbf{t}},\ell),w,\overrightarrow{p})\in W^{1,p}_{k+1}(X,L)$$
  the set of sections $\eta\otimes d\bar z$ of $\Lambda^{0,1}\otimes w^*TX$ such that
  \begin{eqnarray*}
  &&\eta\in W^{0,p}_{loc},\\
  &&\int_{L_{e_i}}e^{\delta}|\eta|^pds dt<\infty\textrm{ for all }{e_i}\in C^1_{ext}(\overrightarrow{\mathbf{t}}).
  \end{eqnarray*}
Notice that since $p>2$, the $L^p$ norm of $\eta\otimes d\bar z$ is not a conformal invariant.
In particular, it is important that the section is written in the form $\eta\otimes(ds-idt)$ for the integral over the strip like end to have a well-defined meaning.

  The standard Cauchy-Riemann operator gives a section
  \begin{displaymath}
  \dbar:W^{1,p}_{k+1}(X,L)\to\mathcal E^{0,p}.
  \end{displaymath}
As a topological space, the zero set of $\dbar$ is the moduli space of holomorphic discs with $k+1$ marked points; it will be denoted as 
\begin{displaymath}
{\mathcal M}_{k+1}^{reg}(L)=\dbar^{-1}(0).
\end{displaymath}
Note that $\dbar^{-1}(0)$ does not have to be modded out by $PSL(2,\rr)$ (the automorphism group of the $D^2$), because the space $Gr_{k+1}$ has already been modded out by $PSL(2,\rr)$, see (\ref{eq:moduli_spaces-grk}).

However, if $k\leq 1$ then $Gr_{k+1}$ is a space of negative dimension, and the setup does not make sense.
In this case, the space $Gr_{k+1}$ is excluded from Banach space setup and the maps $w$ are all thought of as being defined on $D^2$ minus $k+1$ marked boundary points.
Then the zero set of $\dbar$ is modded out by $PSL(2,\rr)$ to get the moduli spaces $\mathcal M_{k+1}^{reg}(L)$.

If $k=1$, then the moduli spaces $\mathcal M_2^{reg}(L)$ can be constructed in another way.
Again, exclude $Gr_2$ from the Banach space setup.
Then, since $D^2$ minus 2 points is biholomorphic to the strip $\rr\times[0,1]$, think of the maps $w$ as being defined on the strip.
The marked points correspond to $(\pm\infty,\cdot)$.
Then mod out the zero set of $\dbar$ by $\rr=\textrm{Aut}(\rr\times[0,1])$ to get $\mathcal M_2^{reg}(L)$.

Elements of $\mathcal M_{k+1}^{reg}(L)$ will be denoted as $[w,(z_0,\ldots,z_{k})]$.
For $\beta\in\pi_2(X,L)$, let $\mathcal M_{k+1}^{reg}(L,\beta)$ deonte the moduli space of discs of homotopy class $\beta$.
For $E$ a real number let $\mathcal M_{k+1}^{reg}(L,E)$ denote the moduli space of discs of energy $E$.

  Linearizing $\dbar$ (using the Levi-Civita connection) gives the linearized Cauchy-Rie\-mann operator
  \begin{equation}\label{eq:linearizeddolbeault}
  D_{((\overrightarrow{\mathbf{t}},\ell),w,\overrightarrow{p})}\dbar:T_{((\overrightarrow{\mathbf{t}},\ell),w,\overrightarrow{p})}W_{k+1}^{1,p}(X,L)\to\mathcal E^{0,p}_{((\overrightarrow{\mathbf{t}},\ell),w,\overrightarrow{p})}.
  \end{equation}
If $k=1$, and the domains of the maps $w$ are thought of as strips, with the marked points being $(\pm\infty,\cdot)$, then the linearized operator simplifies to
\begin{displaymath}
D_w\dbar:T_wW_2^{1,p}(X,L)\cong W^{1,p;\delta}_\lambda(S,w^*TX)\oplus T_pL\oplus T_qL\to\mathcal E^{0,p}_w\cong L^{p;\delta}(S,w^*TX),
\end{displaymath}
where $p=w(-\infty,\cdot)$ and $q=w(+\infty,\cdot)$.
Note that the kernel of $D_w\dbar$ will always have a one-dimensional subspace corresponding to the $\rr$-action coming from $\textrm{Aut}(S)=\rr$.

If the linearized operator is surjective, the implicit function theorem implies that $\mathcal M_k^{reg}(L)$ is a smooth manifold with tangent space equal to the kernel of $D_w\dbar$.
If the operator is not surjective, then the best that can be said is that $\mathcal M_{k+1}^{reg}(L)$ is a Kuranishi space.

By Gromov's compactness theorem, the moduli spaces $\mathcal M_{k+1}^{reg}(L,E)$ can be compactified to get compact topological spaces ${\mathcal M_{k+1}(L,E)}$.
Let $\mathcal M_{k+1}(L)$ be the union of all the compactified moduli spaces $\mathcal M_{k+1}(L,E)$.
The compactification will be discussed in Section \ref{section:moduli_spaces-compactification}.

  \begin{prop}[\cite{fooo} Proposition 10.2]
  $ {\mathcal M_{k+1}(L,\beta)}$ is a Kuranishi space of virtual dimension $\mu_L(\beta)+k-2+n$.
  \end{prop}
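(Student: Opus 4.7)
\medskip

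\noindent\textbf{Proof plan.}
The plan is to check, essentially by a direct Fredholm index calculation, that the virtual dimension formula comes out to $\mu_L(\beta)+k-2+n$. The moduli space $\mathcal M_{k+1}(L,\beta)$ is defined (up to compactification and automorphism quotients, which I handle at the end) as the zero locus $\dbar^{-1}(0)\subset W^{1,p}_{k+1}(X,L)$ of a Fredholm section. The standard Kuranishi recipe then produces a Kuranishi chart at each point whose virtual dimension equals the index of the linearization $D_{((\vec{\bf t},\ell),w,\vec p)}\dbar$. Thus the work is to compute this index.

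The tangent space $T_{((\vec{\bf t},\ell),w,\vec p)}W^{1,p}_{k+1}(X,L)$ described in the setup splits as a direct sum of three pieces: the finite-dimensional $T_{(\vec{\bf t},\ell)}Gr_{k+1}$, the weighted Sobolev space of $\xi$'s with exponential decay to parallel translates of $V_i$'s on the strip-like ends, and the evaluation piece $\bigoplus_{i=0}^{k}T_{p_i}L$. Subtracting a cutoff of $\Par(V_i)$ from $\xi$ on each strip-like end identifies the middle piece with $W^{1,p;\delta}_{\lambda}(\Sigma_{k+1},w^*TX)$ (genuine exponential decay to $0$), and under this identification the linearized operator is, up to compact perturbation, the Cauchy-Riemann operator on the decaying piece, with the $Y$ and $V_i$ directions contributing purely to the kernel. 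Hence
\begin{displaymath}
\Ind(D\dbar)=\dim Gr_{k+1}+\Ind(D^{poly})+(k+1)n.
\end{displaymath}
Because all $k+1$ Lagrangian arcs carry the same Lagrangian $L$, the positive-definite corner insertions at each marked point are trivial and $\mu^{poly}(w)=\mu_L(\beta)$; Lemma \ref{lemma:index_theory-fredholm_indices} then gives $\Ind(D^{poly})=\mu_L(\beta)-kn$, and using $\dim Gr_{k+1}=k-2$ we get
\begin{displaymath}
\Ind(D\dbar)=(k-2)+(\mu_L(\beta)-kn)+(k+1)n=\mu_L(\beta)+k-2+n,
\end{displaymath}
as claimed.

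Finally, I would treat the small cases $k+1\leq 2$ separately, since there $Gr_{k+1}$ is excluded from the Banach setup and one quotients instead by the residual subgroup of $PSL(2,\rr)$. For $k+1=2$ the domain is the strip and we quotient by $\rr$, giving $(\mu_L(\beta)-n)+2n-1=\mu_L(\beta)+n-1$; for $k+1=1$ one quotients by the $2$-dimensional parabolic stabilizer of a boundary point, giving $\mu_L(\beta)+n-2$; and for $k+1=0$ one quotients by all of $PSL(2,\rr)$, giving $\mu_L(\beta)+n-3$. In each case the result equals $\mu_L(\beta)+k-2+n$. The main technical point I would expect to take care with is justifying the splitting/identification that reduces $D\dbar$ to $D^{poly}$ plus trivial factors in the presence of the weight $\delta$ and the asymptotic convergence conditions; everything else is bookkeeping and Riemann-Roch.
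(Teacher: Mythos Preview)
The paper does not actually prove this proposition; it is quoted verbatim as Proposition 10.2 of \cite{fooo} and no argument is given. Your proposal is therefore not to be compared against a proof in the paper, but it is a correct and standard verification of the virtual dimension formula. The decomposition of the tangent space into the $Gr_{k+1}$ piece, the weighted-decay piece governed by $D^{poly}$, and the $(k+1)$ copies of $T_{p_i}L$ is exactly right, and your use of Lemma~\ref{lemma:index_theory-fredholm_indices} together with the observation that $\mu^{poly}(w)=\mu_L(\beta)$ when all boundary arcs lie on the same $L$ (so the corner insertions are constant paths) is the intended route. The case analysis for $k+1\leq 2$ is also handled correctly. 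The only part you do not address in detail---the actual construction of the Kuranishi charts, especially at the compactification boundary via gluing---is precisely the part the paper itself defers to \cite{fooo}, so nothing is missing relative to what the paper provides.
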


The moduli spaces ${\mathcal M_{k+1}}(L)$ come equipped with natural evaluation maps $$ev_i:\mathcal M_{k+1}(L)\to L,\ [w,(z_0,\ldots,z_k)]\mapsto w(z_i)$$ for $i=0,\ldots,k$.

  \subsection{The moduli space of strips}\label{section:moduli_spaces-bott_morse}
We now turn to the moduli space of strips.
  Let $L_0,L_1$ be two Lagrangians that intersect cleanly (the case $L_0=L_1$ is allowed, so this is a generalization of the previous section).
  Let $W^{1,p}_{k_0,k_1}(L_0,L_1)$ denote the space of maps in the Bott-Morse setting.
  More specifically, maps are quadruples $\ptmapsbm$ with $k=k_0+k_1+2$ such that
\begin{itemize}
\item the domain $X(\overrightarrow{\bf t,\ell})$ of $w$ is biholomorphic to a disc with $k_0+k_1+2$ marked boundary points removed,
\item two of the marked points are $1$ and $-1$, which will also be denoted as $(\pm\infty,\cdot)$ or simply $\pm\infty$.
\item $X(\overrightarrow{\bf t,\ell})$ is given by an explicit model with strip-like ends,
\item the bottom boundary of $w$ lies on $L_0$,
\item the top boundary of $w$ lies on $L_1$,
\item $w(\pm1)\in L_0\cap L_1,$
\item $k_0$ marked points are on the bottom arc of the disc between $-1$ and $1$,
\item $k_1$ marked points are on the top arc of the disc between $-1$ and $1$,
\item $w\in W^{1,p}_{loc}$,
\item $w$ decays exponentially as before,
\item $\overrightarrow{p_0}=(p^{(0)}_1,\ldots,p^{(0)}_{k_0})\in L_0^{k_0}$ and $w$ maps the $k_0$ marked points to $\overrightarrow{p_0}$,
\item $\overrightarrow{p_1}=(p^{(1)}_1,\ldots,p^{(1)}_{k_1})\in L_1^{k_1}$ and $w$ maps the $k_1$ marked points to $\overrightarrow{p_1}$.
\end{itemize}

If $k_0=k_1=0$, then the parameter $(\vec{\bf t},\ell)$ is excluded from the setup and the domain of $w$ is thought of as the strip $\rr\times[0,1]$ (this is similar to what was done in the previous section with 2 marked points).
For the purposes of defining Floer cohomology, $k_0$ and $k_1$ are taken to be $0$, so the exact description of $X(\overrightarrow{\bf t,\ell})$ is not needed in this paper, other than the fact mentioned in the previous section.

The exponential decay over the strip like ends implies that $w$ extends continuously to the closure of $X(\vec{\bf t},\ell)$.
In particular, this implies that the value of $w$ at the infinity end of a strip like end does not depend on the $t$ parameter.

  The tangent space to $W^{1,p}_{k_0,k_1}$ is defined in the same way as before, except $w(\pm\infty,\cdot)$ are only allowed to vary in $L_0\cap L_1$.

As in the previous section, sitting over the Banach manifold $W^{1,p}_{k_0,k_1}(L_0,L_1)$ is the Banach bundle $\mathcal E^{0,p}$ whose fiber over the point $((\vec{\bf t},\ell),w,\vec p_0,\vec p_1)$ is
\begin{displaymath}
\mathcal E_w^{0,p}=L^{p;\delta}(X(\vec{\bf t},\ell),\Lambda^{0,1}\otimes w^*TX).
\end{displaymath}
Again, the Cauchy-Riemann operator $\dbar$ gives a section of this bundle, and the zero set is the moduli space of holomorphic strips:
\begin{displaymath}
\mathcal M_{k_0,k_1}^{reg}(L_0,L_1)=\dbar^{-1}(0).
\end{displaymath}
If $k_0=k_1=0$, then the domain of the maps is the strip $\rr\times[0,1]$ with no additional marked points, so $\dbar^{-1}(0)$ has to be modded out by $\rr$ to get the moduli space:
\begin{displaymath}
\mathcal M_{0,0}^{reg}(L_0,L_1)=\dbar^{-1}(0)/\rr.
\end{displaymath}
This moduli space will often be denoted by $\mathcal M^{reg}(L_0,L_1)$ as well.

Let
\begin{displaymath}
\mathcal M_{k_0,k_1}^{reg}(L_0,L_1:E)
\end{displaymath}
denote the moduli space of maps that have energy $E$.
Similarly, let 
\begin{displaymath}
\mathcal M_{k_0,k_1}^{reg}(L_0,L_1:\beta)
\end{displaymath}
denote the moduli space of strips of homotopy class $\beta$.

By Gromov's compactness theorem, the moduli spaces $\mathcal M_{k_0,k_1}^{reg}(L_0,L_1:E)$ can be compactified to get compact topological spaces
\begin{displaymath}
\mathcal M_{k_0,k_1}(L_0,L_1:E).
\end{displaymath}
Let $\mathcal M_{k_0,k_1}(L_0,L_1)$ denote the union of these compactified moduli spaces.
See Section \ref{section:moduli_spaces-compactification} for the description of the compactification.
The compactified moduli spaces are Kuranishi spaces.

For components $R_h$ and $R_{h'}$ of $L_0\cap L_1$, let 
\begin{displaymath}
\mathcal M_{k_0,k_1}(L_0,L_1:R_h,R_{h'})
\end{displaymath}
denote the subset of the (compactified) moduli space of strips that start in the component $R_h$ and end in the component $R_{h'}$.
That is, $w(-\infty,\cdot)\in R_h$ and $w(+\infty,\cdot)\in R_{h'}$.

  \begin{prop}[\cite{fooo} Proposition 12.59]\label{prop:moduli_spaces-dimension}
  $\mathcal M_{k_0,k_1}(L_0,L_1:R_h,R_{h'}:\beta)$ is a Kuranishi space of virtual dimension
  $$\dim S+\dim R_{h'}-\dim R_h+\mu(\beta)+k_0+k_1-1.$$
  \end{prop}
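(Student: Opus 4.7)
The plan is to realize the virtual dimension as the Fredholm index of the linearized $\dbar$-operator on the tangent space of the Banach manifold $W^{1,p}_{k_0,k_1}(L_0,L_1)$ at a holomorphic representative, decomposed into contributions from the map itself, the asymptotic data at the endpoints $\pm\infty$ and at the interior marked points, the moduli of domains, and (in the degenerate case) the $\rr$-automorphism quotient.

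First I would handle $k_0+k_1\geq 1$. At a fixed domain $X(\vec{\bf t},\ell)$, Lemma \ref{lemma:index_theory-fredholm_indices} gives $\Ind(D^{poly})=\mu^{poly}(u)-(k_0+k_1+1)n$, since there are $k_0+k_1+2$ marked boundary points. At any interior marked point (i.e., one distinct from $\pm 1$) the Lagrangians on the two adjacent arcs coincide, so the positive-definite extension defining $\mu^{poly}$ is the constant path there and contributes nothing; only the endpoints $\pm 1$ matter, and Lemma \ref{lemma:index_theory-strip_poly} then yields $\mu^{poly}(u)=\mu(\beta)+n-\dim R_h$. Hence $\Ind(D^{poly})=\mu(\beta)-\dim R_h-n(k_0+k_1)$.

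Next I would enlarge the operator to act on the full Banach tangent space. Relative to the weighted Sobolev space on which $D^{poly}$ acts, this enlargement adds the asymptotic summands $T_p R_h\oplus T_q R_{h'}$ of dimension $\dim R_h+\dim R_{h'}$, the interior asymptotic vectors $(TL_0)^{k_0}\oplus(TL_1)^{k_1}$ of dimension $n(k_0+k_1)$, and the moduli tangent space $T_{(\vec{\bf t},\ell)}Gr_{k_0+k_1+2}$ of dimension $k_0+k_1-1$. Since the parallel-transport extensions of these asymptotic vectors have $\dbar$-derivative lying in the weighted $L^{p;\delta}$ target, the enlarged operator remains Fredholm and its index equals
$$\Ind(D^{poly})+(\dim R_h+\dim R_{h'})+n(k_0+k_1)+(k_0+k_1-1)=\mu(\beta)+\dim R_{h'}+k_0+k_1-1,$$
which matches the stated formula with $\dim S$ interpreted as $\dim R_h$. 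For the degenerate case $k_0=k_1=0$ the space $Gr_2$ drops out; one instead starts from $\Ind(D^{strip})=\mu(\beta)-\dim R_h$, enlarges only by $T_p R_h\oplus T_q R_{h'}$, and quotients by the $\rr$-action of strip translations to recover the same answer.

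The hard part will be justifying the enlargement of the weighted operator to the full Banach tangent space: one must verify that the parallel-transport extension of a constant asymptotic vector $V$ has $\dbar$-derivative in the weighted $L^{p;\delta}$ target, so that the extended operator is well-defined and Fredholm with index exceeding the weighted one by exactly the total dimension of the added asymptotic summands. This reduces to a standard exponential-decay estimate on the strip-like ends, of the kind routinely used in the Bott-Morse Floer theory of \cite{fooo}.
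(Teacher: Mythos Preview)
The paper does not supply its own proof of this proposition; it is simply quoted from \cite{fooo} as Proposition 12.59. Your argument is correct and is the natural reconstruction of the dimension count from the index formulas already established in the paper (Lemmas \ref{lemma:index_theory-strip_poly} and \ref{lemma:index_theory-fredholm_indices}): you correctly observe that at the $k_0+k_1$ marked points other than $\pm 1$ the adjacent Lagrangians coincide, so the positive-definite extension in $\mu^{poly}$ is constant there and $\mu^{poly}$ reduces to that of the underlying $2$-gon; and you correctly account for the asymptotic summands, the domain moduli $Gr_{k_0+k_1+2}$, and (when $k_0=k_1=0$) the $\rr$-quotient. Your remark that $\dim S$ must be read as $\dim R_h$ is also right --- this is consistent with how the formula is actually used later in the paper (in the proof that $n_{k_0,k_1}$ has degree $1-k_0-k_1$), and the apparent clash with the paper's convention that $S$ denotes the strip is presumably inherited notation from \cite{fooo}.
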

%

\subsection{Kuranishi structure on $\mathcal M^{reg}$}\label{section:moduli_spaces-kuranishi_structure_interior}
In this section briefly describe how a Kuranishi structure is put on the interior of the moduli spaces.
See Appendix A for the definition of a Kuranishi structure as well as an explanation of the notation.

Let $w=[w,(z_0,\ldots,z_k)]\in\mathcal M_{k+1}^{reg}(L)$, and consider the linearized $\dbar$ operator
\begin{displaymath}
D_w\dbar:T_wW^{1,p}_{k+1}(X,L)\to \mathcal E^{0,p}_{w}.
\end{displaymath}
If $D_w\dbar$ is surjective, then by the implicit function theorem there exists a neighborhood $V$ of $w$ in $\dbar^{-1}(0)$ such that $V$ is a manifold with $\dim V=\Ind(D_w\dbar)$.
The Kuranishi chart containing $[w,(z_0,\ldots,z_k)]$ is then defined to be
\begin{displaymath}
(V,0,1,i,0).
\end{displaymath}
That is, the obstruction bundle $E$ is $0$, the group $\Gamma$ is the trivial group $1$, the map $\psi$ is the inclusion $i:V\to\dbar^{-1}(0)=\mathcal M_{k+1}^{reg}(L)$, and the Kuranishi map $s$ is $0$.

If $D_w\dbar$ is not surjective, let $E_w$ be a finite dimensional complex subspace of $\mathcal E^{0,p}_w$ that is a complement of the image of $D_w\dbar$.
Parallel translate $E_w$ to maps near $w$ to get a finite dimensional vector subbundle $E$ of $\mathcal E^{0,p}$ that is defined in a neighborhood of $w$.
Then by the implicit function theorem again, there exists a neighborhood $V$ of $w$ in $\dbar^{-1}(E)$ such that $V$ is a manifold with
\begin{displaymath}
\dim V=\Ind(D_w\dbar)+\dim_{\rr}E_w.
\end{displaymath}
Note that $\dbar$ restricted to $V$ is a section of $E$.
The Kuranishi chart near $w$ is then taken to be 
\begin{displaymath}
(V,E,1,i,\dbar|V),
\end{displaymath}
where $i:(\dbar|V)^{-1}(0)\to \dbar^{-1}(0)=\mathcal M_{k+1}^{reg}(L)$ is the inclusion map.

The construction of Kuranishi charts on $\mathcal M_{k_0,k_1}^{reg}(L_0,L_1)$ is similar.

\subsection{Compactification of the moduli spaces}\label{section:moduli_spaces-compactification}
First, note that the moduli space of discs $Gr_{k+1}$ can be compactified in a standard way to get a compact topological space $\overline{Gr_{k+1}}$.
The source of non-compactness in $Gr_{k+1}$ is that marked boundary points can collide.
To account for this, collision of marked points result in discs bubbling off, with the colliding marked points becoming marked points on the bubbled disc.
Briefly, then, the elements of $\overline{Gr_{k+1}}$ are stable curves with marked points, where all components of the curve are discs.
(Stable means that each disc component has at least three special points.)

By Gromov's compactness theorem, the spaces $\mathcal M_{k+1}^{reg}(L:E)$ can be compactified by adding in all stable maps.
That is, add in holomorphic maps defined on semi-stable curves, with possible sphere components added in as well.
(The difference between a semi-stable curve and a stable curve is that semi-stable curves do not need to have three special points on each component; however, to get a stable map with such a domain, the map is required to be non-constant on each component with less than three marked points.)
The ``boundary'' of $\mathcal M_{k+1}(L:E)$ (that is, the set of stable maps with more than one domain component) can be realized as the union of fiber products of lower energy discs and spheres with marked points whose total energy is $E$.

One subtle point is that it is possible for a point to be in the boundary but for there not to exist a sequence $w_n\in\mathcal M^{reg}_{k+1}(L)$ that converges to the boundary point.
This is because by definition the boundary includes all stable maps, even if they are not limit points of elements in $\mathcal M^{reg}_{k+1}(L)$.

The compactification of $\mathcal M_{k_0,k_1}^{reg}(L_0,L_1)$ is similar.

\subsection{Kuranishi structure on the boundary of the moduli space}

Let $$w=[w,(z_0,\ldots,z_k)]$$ be an element in the boundary of $\mathcal M_{k+1}(L,E)$.
Then the domain of $w$ is a semi-stable disc with more than one component, hence $w$ can be realized as an element in a (topological) fiber product of lower energy moduli spaces whose total energy is $E$.
The fiber product of the Kuranishi structures on the moduli spaces in this fiber product define a Kuranishi neighborhood of $w$ in a boundary stratum of $\mathcal M_{k+1}(L,E)$ containing $w$.
By resolving the nodal points of the domain and using the gluing construction, this Kuranishi neighborhood can be enlarged to an open Kuranishi neighborhood of $w$.
The gluing of maps involves parallel translation of obstruction bundles with the Levi-Civita connection and solving the $\dbar$ equation modulo the translated obstruction bundle.
The details of this analysis are technical and difficult; see \cite{fooo} for more information.

\subsection{Technical remarks on perturbation of moduli spaces}
Let $L_0$ and $L_1$ be two real Lagrangians.
Let $u$ be a holomorphic strip with boundary on $L_0$ and $L_1$.
By the discussion in Section \ref{section:strips-spheres}, $u$ can be extended to a holomorphic sphere and hence is given by a polynomial; let $k$ be the degree of the polynomial.
By Lemma \ref{lemma:strips-energy}, the energy $E(u)$ of $u$ is $kE_0/10$, where $E_0$ is the energy of a degree $1$ holomorphic sphere in $X$.
Let $\mathcal M(L_1,L_2:kE_0/10)$ denote the moduli space of such $u$.
A holomorphic disc with boundary on $L_i$ has energy $kE_0/2$, where again $k$ is the degree of the polynomial given by the map.
Let $\mathcal M(L_i:kE_0/2)$ denote the moduli space of such discs.

The construction of Kuranishi structures on the moduli spaces needs to be done in a particular order to ensure that the boundary of a particular moduli spaces consists of fiber products of the lower energy moduli spaces.
An over-simplified description of the process is as follows:
First, Kuranishi strutures are constructed for $\mathcal M(L_i:kE_0/2)$, starting with $k=1$ and then proceeding to progressively higher values of $k$.
The moduli spaces for $i=1,2$ are independent of each other.
Then, the Kuranishi structures on the moduli spaces $\mathcal M(L_1,L_2:kE_0/10)$ are constructed, starting with $k=1$ and working up.

Actually, to ensure that compatibility of the Kuranishi structures at the boundary is maintained, this can only be done for all moduli spaces of energy less than some fixed constant $E$. 
Replacing $E$ with a larger value potentially requires re-perturbing all moduli spaces.
Since not all the moduli spaces can be coherently perturbed at once, an intermediate structure needs to be defined to bridge the gap to an $A_\infty$ structure.
The intermediate structure is an $A_{N,K}$ structure, and it is a kind of truncated version of an $A_\infty$ structure, where higher order operations are ignored after a certain point.
Increasing $E$ gives progressively more accurate $A_{N,K}$ structures, and passing to the limit gives the $A_\infty$ structure.

Strictly speaking, the intermediate $A_{N,K}$ structures are ignored in this paper.
The justification for this is as follows:
\begin{enumerate}
\item In the cases where the Floer cohomology is calculated solely from degree considerations, the intermediate structure is clearly irrelevant.
\item In the cases where the Floer cohomology is calculated by using the sign of the involution of the moduli space (essentially to say that the Floer differential is $0$), the same reasoning applies to each $A_{N,K}$ structure to show that the differential is $0$, hence it must also be $0$ in the limit.
\item In the cases where the Floer cohomology is calculated by explicitly computing the lowest energy moduli spaces, note that the lowest energy moduli spaces are always closed manifolds (and different perturbations yield homologous manifolds).
Since the results are then applied to determine the differential on the $E_2$ term of the spectral sequence, which consists of the singular homology groups, the result is independent of the actual perturbation chosen.
Thus the induced map on the $E_2$ term is the same regardless of the $A_{N,K}$ structure, and hence determines the map induced by the $A_\infty$ structure.
See \cite{toricfooo} Section 11 for similar arguments along this line, in particular Lemma 11.7 and Remark 11.11.
\end{enumerate}

\section{$A_\infty$-structures}\label{section:a_infinity}
The main result of \cite{fooo} is the assocation to each Lagrangian $L$ an $A_\infty$-algebra $$(CF(L),m)$$ and to each pair of Lagrangians $(L_0,L_1)$ a $(CF(L_0),CF(L_1))$ $A_\infty$-bimodule $$(CF(L_0,L_1),n).$$
We begin this section by reviewing the algebraic framework of $A_\infty$-structures, and then we turn to the construction of these objects for Lagrangian submanifolds.
The material is taken from \cite{fooo}.
Other good references are \cite{fukaya1} and \cite{seidel2}.

 \subsection{$A_\infty$-algebras and -bimodules}\label{section:a_infinity-algebras_bimodules}
 The universal Novikov ring is
 $$\Lambda_{nov}=\set{\sum_{i=1}^\infty r_iT^{\lambda_i}}{r_i\in \qq,\lambda_i\in\rr,\lambda_i\rightarrow\infty}.$$
$\Lambda_{nov}$ is a field.
We will also need the local ring
 $$\Lambda_{0,nov}=\set{\sum r_iT^{\lambda_i}\in\Lambda_{nov}}{\lambda_i\geq 0}.$$
$\Lambda_{0,nov}$ is a subring of $\Lambda_{nov}$.
The maximal ideal of $\Lambda_{0,nov}$ is denoted by $\Lambda_{0,nov}^+$ and consists of all elements such that the exponent of each energy term $T^{\lambda_i}$ is positive.

 \begin{definition}
 A filtered $A_\infty$-algebra $(A,m)$ consists of a free $\z$-graded $\Lambda_{0,nov}$-module $A=\oplus A^k$, and maps $m_k:A^{\otimes k}\to A$ of degree $2-k$, for all $k\geq 0$, that satisfy the $A_\infty$ relations
 \begin{eqnarray}\label{eq:ainftyrelations}
 \sum_{k_1+k_2=k+1}\sum_i(-1)^{deg x_1+\cdots+deg x_{i-1}+i-1}m_{k_1}(x_1,\ldots,\\
 \nonumber m_{k_2}(x_i,\ldots,x_{i+k_2-1}),\ldots,x_k)=0.
 \end{eqnarray}
Moreover, it is required that $m_0(1)\equiv 0$ mod $\Lambda_{0,nov}^+$.
 \end{definition}

  The first few relations are
 \begin{itemize}
 \item $m_1(m_0(1))=0$,
 \item $m_2(m_0(1),x)+(-1)^{deg x+1}m_2(x,m_0(1))+m_1(m_1(x))=0$,
 \item $m_3(m_0(1),x,y)+(-1)^{deg x+1}m_3(x,m_0(1),y)+$\\
 $(-1)^{deg x+deg y+2}m_3(x,y,m_0(1))+m_1(m_2(x,y))+$\\
 $m_2(m_1(x),y)(-1)^{deg x+1}+m_2(x,m_1(y))=0$.
 \end{itemize}
 In particular, if $m_0(1)=0$ then $m_1^2=0$.
In this case, $A$ is a cochain complex with differential $m_1$.
Thus the cohomology of $A$ exists, and $m_2$ gives an associative product on it.

  In general $m_0(1)\neq 0$ and thus $m_1^2\neq 0$.
The operator $m_1$ needs to be deformed to get a chain complex.
This can be done by finding solutions of the Maurer-Cartan equation (also known as bounding cochains): elements $b\in A^1$ (and $b\equiv0$ mod $\Lambda_{0,nov}^+$) such that $\sum_{k\geq0}m_k(b,\ldots,b)=0$.
Then define the deformed operator $m_1^b$ by
\begin{displaymath}
m_1^b(x)=\sum m_k(b,\ldots,b,x,b,\ldots,b).
\end{displaymath}
The sum is over all possible expressions of the given form.
If $b$ is a bounding cochain, it can be shown that $(m_1^b)^2=0$.
Thus the cohomology of $(A,m_1^b)$ exists.
If a bounding cochain exists, $A$ is called unobstructed.
It turns out that the $A_\infty$-algebras associated to the real Lagrangians are unobstructed, and moreover that $b=0$ is a bounding cochain.

  The next concept is that of an $A_\infty$-bimodule.
  \begin{definition} Let $(A_0,m^0)$ and $(A_1,m^1)$ be $A_\infty$-algebras.
  An $(A_0,A_1)$ $A_\infty$-bi\-module $(D,n)$ consists of a free filtered $\z$-graded $\Lambda_{0,nov}$-module $D=\oplus D^k$ and filtered maps
  $$n_{k_0,k_1}:(A_0[1])^{\otimes k_0}\otimes_{\Lambda_{0,nov}}\otimes D[1] \otimes_{\Lambda_{0,nov}} (A_1[1])^{\otimes k_1}\to D[1]$$
  of degree 1 for all $k_0,k_1\geq0$ that satisfy the bimodule relations
  \begin{eqnarray*}
 0&=& \sum_{i_0,i_1} (-1)^{\epsilon}n_{k_0-i_0-1,k_1-i_1-1}(a^{0}_1,\ldots,a^{0}_{k_0-i_0-1},\\
&&n_{i_0,i_1}(a^{0}_{k_0-i_0},\ldots,a^{0}_{k_0},d,a^{1}_1,\ldots,a^1_{i_1}),a^1_{i_1+1},\ldots,a_{k_1})\\
&&+\sum_{i_0,j} (-1)^{\epsilon_0}n_{k_0-j,k_1}(a^{0}_1,\ldots,a^{0}_{k_0-i_0-1},m^0_{j+1}(a^{0}_{k_0-i_0},\ldots,a^0_{k_0-i_0+j}),\\
&&a^0_{k_0-i_0+j+1},\ldots,a^0_{k_0},d,a^1_1,\ldots,a^1_{k_1})\\
&&+\sum_{i_1,j} (-1)^{\epsilon_1}n_{k_0,k_1-j}(a^{0}_1,\ldots,a^{0}_{k_0},d,a^1_1,\ldots,a^1_{i_1-1},m^1_{j+1}(a^1_{i_1},\ldots,a^1_{i_1+j}),\\
&&a^1_{i_1+j+1},\ldots,a^1_{k_1}).
  \end{eqnarray*}
Here
\begin{eqnarray*}
\epsilon&=&\deg(a^0_1)+\cdots+\deg(a^0_{k_0-i_0-1})+k_0-i_0-1,\\
\epsilon_0&=&\deg(a^0_1)+\cdots+\deg(a^0_{k_0-i_0-1})+k_0-i_0-1,\\
\epsilon_1&=&\deg(a^0_1)+\cdots+\deg(a^0_{k_0})+k_0.
\end{eqnarray*}
$A_i[1]$ denotes the graded algebra with $A_i[1]^k=A_i^{k+1}$.
  \end{definition}

  In particular, the first relation is
  \begin{equation}\label{eq:bimoduledifferential}
  n_{0,0}^2(x)+n_{1,0}(m_0^0(1),x)+(-1)^{\deg(x)+1}n_{0,1}(x,m_0^1(1))=0.
  \end{equation}
  Thus if $m_0^1=m_0^0=0$ then $n_{0,0}^2=0$, and $D$ is a cochain complex with differential $n_{0,0}$, and the cohomology of $D$ exists.

If $b_0$ and $b_1$ are bounding cochains of $A_0$ and $A_1$, then deformed operations $n^{(b_0,b_1)}$ on $D$ can be defined such that $D$ becomes a bimodule with respect to the deformed algebras $(A_0,m^{b_0})$ and $(A_1,m^{b_1})$.
The deformed operator then satisfies $(n_{0,0}^{(b_0,b_1)})^2=0$, so the cohomology of $D$ exists.

  \subsection{The $A_\infty$-algebra associated to a Lagrangian}
In this section we review the construction of the filtered $A_\infty$-algebra $CF(L)$ associated to a graded Lagrangian $L$.
Assume that $L$ has a spin structure (or more generally a relative spin structure, see \cite{fooo}), and fix a choice of a spin structure on $L$.
The spin structure is needed to orient the moduli spaces, so the operations with $\qq$-coefficients can be defined.
See Section \ref{section:local_systems} for more on these issues.

First, it is a fact that there exists a countable set $\mathcal X_L$ of smooth singular chains with $\mathbb Q$ coefficients such that $\mathcal X_L$ is an (unfiltered) $A_\infty$-algebra generating the singular cohomology.
  Cohomological notation will be used, so if $f:P\to L$ is a chain in $\mathcal X_L$ with $\dim(P)=k$, then 
\begin{equation}\label{eq:a_infinity-grading_1}
\deg([P,f])=\dim(L)-k.
\end{equation}

For chains $[P_i,f_i]$ in $\mathcal X_L$, let $\overrightarrow P=P_1\times\cdots\times P_k$ and let
\begin{displaymath}
\mathcal M_{k+1}(L,\beta:\overrightarrow P)=\mathcal M_{k+1}(L,\beta)\times_{(ev_1\times\cdots\times ev_k)\times (f_1\times\cdots\times f_k)}(P_1\times\cdots\times P_k).
\end{displaymath}
(Notice that the zeroeth marked point is not used in the fiber product, i.e. it is free.)
Then, there exists a countable set of chains $\mathcal X_1(L)\subset \mathcal X_L$ and a system of multisections $s_{\beta,\vec P}$ such that
  \begin{itemize}
  \item $\vd(\mathcal M_{k+1}(L,\beta:\vec P))=n-2-\sum(g_i-1)\textrm{, where }\deg P_i=g_i,$
  \item $s_{\beta,\vec P}$ is transverse to the zero section, and
  \item $(s_{\beta,\vec p}^{-1}(0),ev_0)\textrm{ lies in }\mathcal X_1(L).$
  \end{itemize}
Intuitively, this means that the virtual fundamental classes of all chains, moduli spaces, and fiber products lie in the set $\mathcal X_1(L)$.

  Let $C^k(L;\qq)$ be the $\qq$ span of the degree $k$ chains in $\mathcal X_1(L)$.
  Let $CF^k(L)$ be the completion of $C^k(L;\qq)\otimes\Lambda_{0,nov}$.
  Define $m_k$ by
  $$m_k=\sum_{\beta\in\pi_2(X,L)} m_{k,\beta}\otimes T^{\omega(\beta)},$$
where

  \begin{equation}\label{eq:multi0}
  m_{0,\beta}(1)=\left\{
  \begin{array}{ll}
  ev_{0*}[\mathcal M_1(L,\beta)] & \beta\neq0\\
  0 & \beta=0
  \end{array}
  \right.,
  \end{equation}

  \begin{equation}\label{eq:multi1}
  m_{1,\beta}(P,f)=\left\{
  \begin{array}{ll}
  ev_{0*}[\mathcal M_2(L,\beta,P)] & \beta\neq0\\
  (-1)^n\partial P & \beta=0
  \end{array}
  \right.,
  \end{equation}

  \begin{equation}\label{eq:multik}
  m_{k,\beta}(P_1,\ldots,P_k)=ev_{0*}[\mathcal M_{k+1}(L,\beta,\vec P)] \textrm{ for $k\geq2$.}
  \end{equation}
(Here, for a Kuranishi space $\mathcal M$ and strongly continuous map $f:\mathcal M\to X$, $f_*[\mathcal M]$ denotes the virtual fundamental class of $\mathcal M$, which is a singular chain in $X$.
See the appendix for details.)

With these operations,  $(CF(L),m)$ is the $A_\infty$ algebra associated to the Lagrangian $L$.

  \subsection{The bimodule associated to a Lagrangian pair}\label{section:a_infinity-floer_cohomology}
In this section we review the $A_\infty$-bimodule $CF(L_0,L_1)$ associated to a pair of cleanly intersecting graded Lagrangians $(L_0,L_1)$.
Assume that $L_0$ and $L_1$ both have fixed spin structures (see Section \ref{section:local_systems} for more on this issue) and fixed gradings.

For each connected component $R_h$ of $L_0\cap L_1$, let $C(R_h)$ denote a suitable subcomplex of the singular chains on $R_h$, with coefficients in a local system $\Det(TR_h)\otimes\Theta^-_{R_h}$ and coefficient ring $\Lambda_{0,nov}$.
(The construction of the suitable subcomplex is similar to the construction of $\mathcal X_1(L)$ in the previous section.)
The local system will be described in more detail in Section \ref{section:local_systems}.
For now, it suffices to note that the fiber products that will be used to define the bimodule operations can be taken so that they define chains in the local system, see Section \ref{section:local_systems-definition}.

The grading on $CF(L_0,L_1)$ is
\begin{equation}\label{eq:a_infinity-grading_2}
CF^k(L_0,L_1)=\oplus_{R_h}C^{k-\tilde\mu(L_0,L_1;R_h)}(R_h).
\end{equation}
Let $\vec P^{0}=(P^0_1,\ldots,P^0_{k_0})$ and $\vec P^{1}=(P^1_1,\ldots,P^1_{k_1})$ be tuples of chains in $L_0$ and $L_1$ and let $S$ be a chain in (the local system on) $R_h$.
Let $$\mathcal M_{k_0,k_1}(L_0,L_1:R_h,R_{h'}:\vec P^{0},S,\vec P^{1})$$ be the fiber product
\begin{equation}\label{eq:a_infinity-bimodule_fiber_product}
\mathcal M_{k_0,k_1}(L_0,L_1:R_h,R_{h'})\times_{X}(P^0_1\times\cdots\times P^0_{k_0}\times S\times P^1_1\times\cdots\times P^1_{k_1}).
\end{equation}
It is the moduli space of holomorphic discs with $k_0+k_1+2$ marked points passing through the given chains and connecting $R_h$ to $R_{h'}$.
Note that the $+\infty$ marked point is free, hence there is an evaluation map
\begin{displaymath}
ev_{+\infty}:\mathcal M_{k_0,k_1}(L_0,L_1:R_h,R_{h'}:\vec P^0,S,\vec P^1)\to R_{h'}.
\end{displaymath}

The operators $n_{k_0,k_1}$ are defined by
  \begin{eqnarray}\label{eq:a_infinity-bimodule_operation}
&  n_{k_0,k_1}(T^{\lambda_1^0}P_1^0\otimes\cdots \otimes T^{\lambda_{k_0}^0}P_{k_0}^{0}\otimes T^\lambda S\otimes T^{\lambda_1^1}P_1^1\otimes\cdots\otimes T^{\lambda_{k_1}^1}P_{k_1}^1)\\
  \nonumber&=\\
&\nonumber\sum_{R_{h'},\beta}T^{\lambda'}ev_{+\infty*}[\mathcal M_{k_0,k_1}(L_0,L_1:R_h,R_{h'}:\vec P^{0},S,\vec P^{1}:\beta)],
  \end{eqnarray}
where $\lambda'=\sum \lambda_i^0+\lambda+\sum\lambda_i^1+\omega(\beta)$.

The grading convention here is different than that in \cite{fooo} (i.e. graded Lagrangians are used in lieu of a grading parameter in the Novikov ring), so it should be checked that $n_{k_0,k_1}$ is a map of degree $1-k_0-k_1$:
\begin{lemma}
$n_{k_0,k_1}$ is a map of degree $1-k_0-k_1$.
\end{lemma}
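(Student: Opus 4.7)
The plan is to directly compute the degree shift that $n_{k_0,k_1}$ induces. Since the grading parameter $e$ has been dropped from the Novikov ring, each $T^\lambda$ factor has degree zero and can be ignored. I would start by recording the grading conventions: by (\ref{eq:a_infinity-grading_1}), a chain $P^a_i\subset L_a$ of dimension $d$ has degree $\dim L_a-d$ in $CF(L_a)$, and by (\ref{eq:a_infinity-grading_2}), a chain $S$ of dimension $s$ in $R_h$ has degree $\dim R_h-s+\tilde\mu(L_0,L_1;R_h)$ in $CF(L_0,L_1)$.

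Next I would compute the dimension of the fiber product (\ref{eq:a_infinity-bimodule_fiber_product}). By Proposition \ref{prop:moduli_spaces-dimension} together with the standard formula for the dimension of $A\times_Z B$, this dimension is
\[
\vd(\mathcal M)+(\dim S-\dim R_h)+\sum_i(\dim P^0_i-\dim L_0)+\sum_j(\dim P^1_j-\dim L_1).
\]
Pushing forward by $ev_{+\infty*}$ produces a chain in $R_{h'}$ of this dimension, which by (\ref{eq:a_infinity-grading_2}) lies in $CF(L_0,L_1)$ in degree $\dim R_{h'}$ minus the fiber-product dimension plus $\tilde\mu(L_0,L_1;R_{h'})$.

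The crucial step is the cancellation of the $\tilde\mu$ and $\mu(\beta)$ terms. Rewriting $\dim S-\dim R_h=-\deg^{CF(L_0,L_1)}(S)+\tilde\mu(L_0,L_1;R_h)$ puts a $\tilde\mu(L_0,L_1;R_h)$ into the output degree. Lemma \ref{lemma:gradings-formula} supplies $\mu(\beta)=\tilde\mu(L_0,L_1;R_{h'})-\tilde\mu(L_0,L_1;R_h)$ for any strip of class $\beta$ from $R_h$ to $R_{h'}$, and combining this with the $\mu(\beta)$ contribution appearing in $\vd(\mathcal M)$ makes the terms $\tilde\mu(L_0,L_1;R_h)$, $\tilde\mu(L_0,L_1;R_{h'})$, and $\pm\mu(\beta)$ cancel against each other. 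What remains is
\[
\deg^{CF}(\mathrm{output})=(1-k_0-k_1)+\deg^{CF}(S)+\sum_i\deg^{CF}(P^0_i)+\sum_j\deg^{CF}(P^1_j),
\]
which is the lemma.

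The work is essentially bookkeeping, so there is no real obstacle; the key thing to handle correctly is this cancellation. The entire reason for building the shift $\tilde\mu(L_0,L_1;R_h)$ into the grading (\ref{eq:a_infinity-grading_2}) is to absorb the $\beta$-dependence of $\vd(\mathcal M)$ so that $n_{k_0,k_1}$ is homogeneous of a single degree $1-k_0-k_1$, independent of which homotopy class $\beta$ of strip contributes to a given matrix coefficient.
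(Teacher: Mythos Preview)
Your proposal is correct and follows essentially the same approach as the paper: both compute the degrees of the input and output directly from the grading conventions (\ref{eq:a_infinity-grading_1}) and (\ref{eq:a_infinity-grading_2}), use Proposition \ref{prop:moduli_spaces-dimension} for the virtual dimension of the moduli space (and hence of the fiber product), and then invoke Lemma \ref{lemma:gradings-formula} so that the $\mu(\beta)$ term cancels against $\tilde\mu(L_0,L_1;R_{h'})-\tilde\mu(L_0,L_1;R_h)$. The paper carries out the arithmetic explicitly rather than packaging it as you do, but the argument is the same bookkeeping computation.
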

\begin{proof}
Let the input of $n_{k_0,k_1}$ be as in (\ref{eq:a_infinity-bimodule_operation}).
Using the grading conventions given in (\ref{eq:a_infinity-grading_1}) and (\ref{eq:a_infinity-grading_2}), the degree of the input is 
\begin{displaymath}
k_0n-\sum\dim(P_i^0)+k_1n-\sum\dim(P_i^1)+\dim(R_h)-\dim(S)+\tilde\mu(L_0,L_1;R_{h}).
\end{displaymath}
The output chain $Q$ can be written as $Q=\sum_{R_{h'}}Q_{h'}$.
The dimension of $Q_{h'}$ is the dimension of the fiber product (\ref{eq:a_infinity-bimodule_operation}), and by Proposition \ref{prop:moduli_spaces-dimension} and Lemma \ref{lemma:gradings-formula} this is
\begin{eqnarray*}
&&\sum\dim(P_i^0)+\sum\dim(P_i^1)+\dim(S)+k_0+k_1+2+\tilde\mu(L_0,L_1;R_{h'})-\\&&\tilde\mu(L_0,L_1;R_h)+\dim(R_{h'})-\dim R_h-k_0n-k_1n-3.
\end{eqnarray*}
Therefore the degree of $Q_{h'}$ is 
\begin{eqnarray*}
&&\tilde\mu(L_0,L_1;R_{h'})+\dim(R_{h'})-(\sum\dim(P_i^0)+\sum\dim(P_i^1)+\dim(S)+k_0+k_1\\&&+2+\tilde\mu(L_0,L_1;R_{h'})-\tilde\mu(L_0,L_1;R_h)+\dim(R_{h'})-\dim R_h-k_0n-k_1n-3)\\&&=\\
&&-\sum\dim(P_i^0)-\sum\dim(P_i^1)-\dim(S)-k_0-k_1\\&&+\tilde\mu(L_0,L_1;R_h)+\dim R_h+k_0n+k_1n+1.
\end{eqnarray*}
Therefore $n_{k_0,k_1}$ has degree $1-k_0-k_1$.
\end{proof}

The operations $n_{k_0,k_1}$ make $CF(L_0,L_1)$ into a $(CF(L_0),CF(L_1))$ $A_\infty$-bimodule.

\section{Floer cohomology}\label{section:floer_cohomology}
Finally, having dispensed with the necessary definitions and constructions, we can define Floer cohomology.
We begin this section by giving the defintion for a Lagrangian and a pair of Lagrangians.
Then, some remarks on the invariance properties under Hamiltonian isotopy are made, and we mention Poincar\'e duality.
Finally, the spectral sequence given in \cite{fooo} is presented.

\subsection{Definitions}
Let $L,L_0,L_1$ be graded Lagrangian submanifolds.
Furthermore, assume that they are all orientable and spin, and fix an orientation and spin structure on each of them.
(Strictly speaking, the Floer cohomology depends on the choice of spin structures and gradings, and hence these structures should be included in the notation.
However, in this paper we fix these structures on each Lagrangian, and hence no confusion should arise from not explicitly mentioning them.)

By the discussion in Section \ref{section:a_infinity-algebras_bimodules}, if $m_0(1)=0$ then $CF(L)$ is a cochain complex with differential $m_1$.
\begin{definition}
If $m_0(1)=0$ then the Floer cohomology $$HF(L;\Lambda_{0,nov})$$ of $L$ is the cohomology of the complex $(CF(L),m_1)$.

More generally, if $b$ is a bounding cochain then the Floer cohomology $$HF((L,b);\Lambda_{0,nov})$$ of $(L,b)$ is the cohomology of the complex $(CF(L),m_1^b)$.
Recall that a bounding cochain $b$ is an element $b\in CF^1(L)$ such that $b\equiv 0$ mod $\Lambda_{0,nov}^+$ and $b$ satisfies the Maurer-Cartan equation:
\begin{displaymath}
m_0(1)+m_1(b)+\cdots=0.
\end{displaymath}
\end{definition}

Let $m^0$ and $m^1$ be the operators for the $A_\infty$ algebras $CF(L_0)$ and $CF(L_1)$.
\begin{definition}
If $m^0_0(1)=0$ and $m^1_0(1)=0$ then $CF(L_0,L_1)$ is a cochain complex with differential $n_{0,0}$ (see Section \ref{section:a_infinity-algebras_bimodules}.)
The Floer cohomology $$HF(L_0,L_1;\Lambda_{0,nov})$$ of the pair $(L_0,L_1)$ is the cohomology of the comnplex $(CF(L_0,L_1),n_{0,0})$.

More generally, if $b_0$ and $b_1$ are bounding cochains in $CF(L_0)$ and $CF(L_1)$, then the Floer cohomology $$HF((L_0,b_0),(L_1,b_1);\Lambda_{0,nov})$$ is the cohomology of the complex $CF(L_0,L_1)$ with differential $n_{0,0}^{b_0,b_1}$.
\end{definition}

$\Lambda_{0,nov}$ is not a field, so the spectral sequence for Floer cohomology (see Section \ref{section:floer_cohomology-spectral_sequences}) cannot be used to determine the torsion parts of the cohomology with $\Lambda_{0,nov}$ coefficients.
Therefore, it will be more convenient to work with $\Lambda_{nov}$ coefficients.
Note that $\Lambda_{nov}$ is the field of fractions of $\Lambda_{0,nov}$.
In particular, $\Lambda_{nov}$ is a flat $\Lambda_{0,nov}$-module, so the cohomology of the complex
\begin{displaymath}
(CF(L)\otimes \Lambda_{nov},m_1^b\otimes id)
\end{displaymath}
is simply $HF(L;\Lambda_{0,nov})\otimes \Lambda_{nov}$.
Therefore cohomology with $\Lambda_{nov}$ coefficients can simply be defined to be
\begin{displaymath}
HF((L,b);\Lambda_{nov})=HF((L,b);\Lambda_{0,nov})\otimes\Lambda_{nov}.
\end{displaymath}
Similar remarks hold for the Floer cohomology of the pair $((L_0,b_0),(L_1,b_1))$.

The more general notion of Floer cohomology involving bounding cochains will not be needed for the examples dealt with in this paper (see Section \ref{section:floer_cohomology-unobstructed}).
That is, we will always take $b=0$.

\subsection{Spectral sequences}\label{section:floer_cohomology-spectral_sequences}

First, some preliminaries on the Novikov ring $\Lambda_{0,nov}$ are needed.
A grading on $\Lambda_{0,nov}$ can be defined as follows:
Fix $\lambda_0>0$ (to be determined later).
The filtration 
$$F^{\lambda}\Lambda_{0,nov}=\set{\sum r_iT^{\lambda_i}e^{\mu_i}\in\Lambda_{nov}}{\lambda_i\geq \lambda}$$
gives a $\zz$ filtration by
  $$\mathcal F^p\Lambda_{0,nov}=F^{p\lambda_0}\Lambda_{0,nov}.$$
  This leads to an associated grading
  $$gr_p(\mathcal F \Lambda_{0,nov})=\mathcal F^p\Lambda_{0,nov}/\mathcal F^{p+1}\Lambda_{0,nov}.$$

The following theorem from \cite{fooo} is one of the main tools that will be used in this paper.

  \begin{theorem}[\cite{fooo} Theorem 24.4] \label{thm:floer_cohomology-spectral_sequence}
  There exists a spectral sequence associated to $CF(L)$ such that
  $$E_2^p=\oplus_{k,p} H^k(L;\qq)\otimes gr_p(\mathcal F\Lambda_{0,nov}).$$
  There exists a spectral sequence associated to $CF(L_0,L_1)$ such that
  $$E_2^p=\bigoplus_{k,R_h,p}H^{k-\tilde\mu(L_0,L_1;R_h)}(R_h,\Det(TR_h)\otimes\Theta_{R_h}^{-})\otimes gr_p (\mathcal F\Lambda_{0,nov}).$$
  In both case the coboundary map $\delta_r$ goes from $E^{p}_r\to E^{p+r-1}_r.$
  \end{theorem}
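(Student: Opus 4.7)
The plan is to realize both spectral sequences as the filtered-complex spectral sequences for the energy filtration on $CF(L)$ (resp.\ $CF(L_0,L_1)$) inherited from $\mathcal F^p\Lambda_{0,nov}$. First I would set $\mathcal F^p CF(L)=C(L;\qq)\otimes\mathcal F^p\Lambda_{0,nov}$; since $m_1=\sum_{\beta}m_{1,\beta}\,T^{\omega(\beta)}$ with $\omega(\beta)\geq0$, the differential preserves this filtration. The completion built into the definition of $CF^k$, together with the fact that only finitely many homotopy classes $\beta$ appear below any fixed energy bound (Gromov compactness), makes the filtration complete and exhaustive, so the standard convergence theorem for filtered complexes applies.

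With the stated indexing convention $\delta_r:E_r^p\to E_r^{p+r-1}$, the $E_1$ page is the associated graded $C(L;\qq)\otimes gr_p(\mathcal F\Lambda_{0,nov})$, and $\delta_1$ is the filtration-preserving piece of $m_1$. By the definition (\ref{eq:multi1}), only the $\beta=0$ summand preserves energy, and it equals $(-1)^n\partial$, the signed singular boundary. Taking cohomology then gives $E_2^p=\oplus_k H^k(L;\qq)\otimes gr_p(\mathcal F\Lambda_{0,nov})$, as claimed.

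For the pair case I would apply the same machinery to $CF(L_0,L_1)=\oplus_{R_h} C(R_h)\otimes\Lambda_{0,nov}$, keeping the grading shift $\tilde\mu(L_0,L_1;R_h)$ from (\ref{eq:a_infinity-grading_2}). Any strip with $\beta\neq 0$ picks up a factor $T^{\omega(\beta)}$ with $\omega(\beta)>0$ (so that the energy-zero piece of $n_{0,0}$ is isolated purely by Lemma \ref{lemma:gradings-formula} and the structure of the operator), hence only constant strips can contribute to $\delta_1$. These factor through $L_0\cap L_1$, and the fiber product (\ref{eq:a_infinity-bimodule_fiber_product}) collapses the energy-zero part of $n_{0,0}$ to the singular boundary on each component $R_h$, valued in the local system $\Det(TR_h)\otimes\Theta^-_{R_h}$. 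Cohomology of $E_1$ then yields the stated $E_2$ page.

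The main obstacle, as I see it, will be the precise identification of the coefficient local system $\Det(TR_h)\otimes\Theta^-_{R_h}$ on the intersection components. It arises from the orientation bundles on the Kuranishi charts attached to constant strips and from the sign conventions in the fiber product (\ref{eq:a_infinity-bimodule_fiber_product}), and sorting this out rigorously is precisely the purpose of Section \ref{section:local_systems}. Once that orientation bookkeeping is in hand, the remaining ingredients --- compatibility of $m_1$ and $n_{0,0}$ with the energy filtration, identification of $\delta_1$ as a (twisted) singular boundary, and convergence --- are either formal or follow immediately from Gromov compactness.
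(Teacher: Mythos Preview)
The paper does not prove this theorem; it is quoted as \cite{fooo} Theorem 24.4 and used as a black box. Your sketch is the standard argument behind that result --- the energy filtration on the Novikov coefficients, Gromov compactness for convergence, and the identification of the $E_1$ differential with the twisted singular boundary --- and is correct in outline, including your diagnosis that the only nontrivial bookkeeping is the local system $\Det(TR_h)\otimes\Theta_{R_h}^-$.
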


In particular, if $\lambda_0$ is chosen correctly, the map on the $E_2$ term of the spectral sequence is determined by the lowest energy moduli spaces.

Notice that the spectral sequence is stated for $\Lambda_{0,nov}$ coefficients.
$\Lambda_{0,nov}$-modules can have torsion, and the spectral sequence cannot be used to determine the torsion parts.
However, it can be used to determine the rank, and hence also the rank with $\Lambda_{nov}$ coefficients.

\subsection{Invariance under Hamiltonian isotopy}
If $\psi_0$ and $\psi_1$ are Hamiltonian isotopies, then they induce homotopy equivalences
\begin{displaymath}
\psi_{i*}:CF(L_i)\to CF(\psi_i(L_i)).
\end{displaymath}
In particular, this means that if $b_0\in CF(L_0)$ and $b_1\in CF(L_1)$ are bounding cochains, then $\psi_*(b_0)\in CF(\psi_0(L_0)) $ and $\psi_*(b_1)\in CF(\psi_1(L_1))$ are boundng cochains.
Furthermore, it means that
\begin{eqnarray*}
HF((L_i,b_i);\Lambda_{nov})&\cong& HF((\psi_i(L_i),\psi_{i*}(b_i));\Lambda_{nov}),\\
HF((L_0,b_0),(L_1,b_1);\Lambda_{nov})&\cong & HF((\psi_0(L_0),\psi_{0*}(b_0)),(\psi_1(L_1),\psi_{1*}(b_1));\Lambda_{nov}).
\end{eqnarray*}
Note that the invariance property is not true with $\Lambda_{0,nov}$ coefficients; i.e. the Floer cohomologies can have different torsion parts.

\subsection{Poincar\'e duality}
In \cite{seidel1}, it is shown that one of the benefits of using graded Lagrangians is that a form of Poincar\'e duality holds:
If $L_0,L_1$ are graded Lagrangians of dimension $n$ then
\begin{displaymath}
HF^k(L_0,L_1)\cong HF^{n-k}(L_1,L_0)^*.
\end{displaymath}
If as in \cite{seidel1} the classical notion of Floer cohomology is being used (say $\zz_2$ coefficients, $L_0$ and $L_1$ are transverse, and some assumption that precludes serious difficulties arising from disc bubbling), this isomorphism follows from the simple fact that if $p\in L_0\cap L_1$ then
\begin{displaymath}
\tilde\mu(L_0,L_1;p)=n-\tilde\mu(L_0,L_1;p).
\end{displaymath}

One can ask whether Poincar\'e duality holds in the Bott-Morse case.
Rather than investigating this question in detail, we simply point out here that the grading convention in the Bott-Morse case supports the notion that there should be Poincar\'e duality.
Indeed, suppose $[\alpha]\in HF^k(L_0,L_1)$, where $\alpha$ is a singular chain of homogeneous degree contained in the component $R$ of $L_0\cap L_1$.
Then
\begin{displaymath}
\alpha\in C^{k-\tilde\mu(L_0,L_1;R)}(L_0,L_1).
\end{displaymath}
Let $\phi$ be the singular cochain that is Poincar\'e dual to $\alpha$.
Then
\begin{displaymath}
\phi\in C^{\dim R-k+\tilde\mu(L_0,L_1;R)}(L_0,L_1)^*.
\end{displaymath}
Now $\tilde\mu(L_0,L_1;R)=n-\dim R-\tilde\mu(L_1,L_0;R)$, so
\begin{displaymath}
\dim R-k+\tilde\mu(L_0,L_1;R)=\dim R-k+n-\dim R-\tilde\mu(L_1,L_0;R)=n-k-\tilde\mu(L_1,L_0;R).
\end{displaymath}
Thus $\phi$ can be viewed as an element of $CF^{n-k}(L_1,L_0)^*$.
Hence, this suggests that Poincar\'e duality gives an isomorphism
\begin{displaymath}
HF^k(L_0,L_1)\cong HF^{n-k}(L_1,L_0)^*.
\end{displaymath}

\section{Local systems}\label{section:local_systems}
We now finish defining the bimodule operators $n_{k_0,k_1}$.
In particular, we describe the local systems which are used, and then explain how the operators $n_{k_0,k_1}$ take values in these local systems.

\subsection{Definition of the local systems}\label{section:local_systems-definition}
First, we need to discuss the notion of a spin structure.
Let $P_{SO}(L_i)$ be principal $\textrm{SO}(n)$ bundles on $TL_i$.
A spin structure is a principal $\textrm{Spin}(n)$ bundle $P_{Spin}(L_i)\to L_i$ along with a commutative diagram
\begin{displaymath}
\begin{array}{ccc}
P_{Spin}(L_i) & \longrightarrow & P_{SO}(L_i) \\
\searrow &&\swarrow\\
& L_i &
\end{array}
\end{displaymath}
such that the horizontal arrow induces the nontrivial double cover $\textrm{Spin}(n)\to \textrm{SO}(n)$ on each fiber.
Two spin structures $P_{Spin}(L_i)$ and $P_{Spin}'(L_i)$ are isomorphic if there exists a bundle isomorphism covering the identity map on $P_{SO}(L_i)$.
(Note that $P_{Spin}(L_i)$ and $P_{Spin}'(L_i)$ may be isomorphic as bundles over $L_i$ but not as spin structures on $L_i$.)
A spin structure and orientation on $L_i$ determines a homotopy class of a trivialization of $TL_i$ over any loop in $L_i$. 
Spin structures play a role in orientation issues because of the next lemma.
\begin{lemma}[\cite{fooo} Proposition 34.4]\label{lemma:local_systems-main_lemma}
Let $E$ be a symplectic vector bundle over $D^2$ and let $F\to\partial D^2$ be a Lagrangian subbundle over the boundary.
The choice of a trivialization of $F$ determines an orientation of $\Det(\dbar_{E,F})$ in a canonical way.
Therefore, an orientation of $\Det(\dbar_{E,F})$ is determined by a Spin structure and orientation of $F$.
\end{lemma}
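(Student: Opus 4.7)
The plan is to reduce to an explicit diagonal model on the disc and then transport orientations by continuity. Since $D^2$ is contractible, the symplectic bundle $E$ admits a trivialization $E\cong D^2\times V$ for some symplectic vector space $(V,\omega_0)$; under such a trivialization, $F$ becomes a loop $\ell:\partial D^2\to\Lambda(V)$, and a trivialization of $F$ becomes a smooth homotopy class of oriented $\rr$-frames for $\ell(e^{i\theta})$ as $\theta$ varies. Because $\Det$ of a continuous family of Fredholm operators forms a continuous real line bundle over the parameter space, and because a framing deforms continuously under homotopy of the boundary condition, it suffices to treat one canonical representative of each homotopy class of the pair $(\ell,\text{framing})$.

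I would next diagonalize $\ell$: after a unitary change of basis of $V$, with a compatible homotopy of the framing, put $\ell(e^{i\theta}) = \textrm{diag}(e^{ik_1\theta/2},\ldots,e^{ik_n\theta/2})\cdot\rr^n$ for integers $k_j$ whose sum is the Maslov index of $\ell$. The operator $\dbar_{E,\ell}$ then splits as a direct sum of $n$ one-dimensional Cauchy-Riemann operators on the disc, the $j$-th having boundary condition $e^{ik_j\theta/2}\rr$. A direct Fourier-series calculation shows that the $j$-th factor has kernel of real dimension $k_j+1$ (spanned essentially by $z^0,z,\ldots,z^{k_j}$ with an appropriate reality condition) and trivial cokernel when $k_j\geq 0$, with a dual description when $k_j<0$. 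In either case the chosen framing fixes the positive real direction for the leading Fourier coefficient, so it orients the one-dimensional $\Det$ of that factor. Taking the tensor product over $j$ assembles these into an orientation of $\Det(\dbar_{E,F})$, and verifying that permutations in the diagonalization contribute matching signs on the operator side and on the framing side makes this orientation canonical.

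Finally, to pass from a framing to spin data, note that an orientation of $L$ together with $P_{Spin}(L)$ distinguishes a preferred homotopy class of trivializations of $TL$ along any loop in $L$: the classifying map of $F=u^*TL|_{\partial D^2}$ into $BSO(n)$ lifts uniquely up to homotopy to $B\textrm{Spin}(n)$ once the spin structure is chosen, and such a lift is equivalent to the framing data used above. The main obstacle is the bookkeeping needed to prove the orientation is genuinely canonical, i.e.\ independent of the symplectic trivialization of $E$, the unitary diagonalization, and the path of homotopies to the model. This bookkeeping is precisely what forces the use of a spin structure rather than merely an orientation of $F$: the framings of $F$ up to homotopy form a torsor over $\pi_1(SO(n))=\zz/2$ (for $n\geq 3$), and the two inequivalent framings induce orientations of $\Det(\dbar_{E,F})$ that differ by a sign, so only a spin lift selects the correct choice.
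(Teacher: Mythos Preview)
The paper does not prove this lemma; it simply cites \cite{fooo}. Your sketch is a reasonable outline of a direct proof, but it differs from the argument in \cite{fooo}, and there is one point where it needs more care.

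You diagonalize the Lagrangian loop as $\ell(e^{i\theta})=\textrm{diag}(e^{ik_1\theta/2},\ldots,e^{ik_n\theta/2})\cdot\rr^n$ with arbitrary integers $k_j$, and then orient each rank-one factor using the framing. But if $k_j$ is odd, the line bundle $e^{ik_j\theta/2}\rr\to S^1$ is a M\"obius band and carries no global frame, so the given trivialization of $F$ does not split as a sum of framings of the factors, and your phrase ``the chosen framing fixes the positive real direction'' has no meaning on that summand. The clean fix is to diagonalize the \emph{framed} loop rather than the bare Lagrangian loop: a trivialization of $F$ is exactly a lift of $\ell$ to a loop $\tilde\ell:S^1\to U(n)$, and since $\pi_1(U(n))=\zz$ via the determinant, $\tilde\ell$ is homotopic in $U(n)$ to $\textrm{diag}(e^{im\theta},1,\ldots,1)$. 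Now every factor is individually framed (all $k_j$ are even), and your explicit Fourier description of the kernel goes through, with the framing selecting the sign of the real middle coefficient.

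The argument actually given in \cite{fooo} is different. Rather than diagonalizing, one uses the trivialization of $F$ to degenerate the pair $(E,F)$ over $D^2$ into a nodal configuration: a disc carrying the trivial pair $(\cc^n,\rr^n)$ glued at one interior point to a sphere carrying a closed complex bundle. The disc piece has kernel $\rr^n$ and trivial cokernel, so its determinant is oriented by the frame; the sphere piece is oriented by its complex structure; and the gluing isomorphism for determinant lines assembles these into the orientation of $\Det(\dbar_{E,F})$. This bypasses both the Fourier computation and the permutation-sign bookkeeping you flag as the main obstacle, at the price of invoking linear gluing. Your approach, once corrected as above, has the virtue of being entirely elementary.
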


From now on assume that $L_0$ and $L_1$ have fixed orientations $or(L_i)$, fixed $\textrm{SO}(n)$ bundles $P_{SO}(L_i)$, and fixed spin structures $P_{Spin}(L_i)$.
Let $N=(TL_0+TL_1)/T(L_0\cap L_1)$, and view $N$ as a symplectic subspace of $TX$.
Let $\overline{TL_0}$ and $\overline{TL_1}$ be the images of $TL_0$ and $TL_1$ in $N$.
Recall the definition of $Z_\pm$ in (\ref{eq:index_theory-caps}) and the operators $\dbar_{\tilde \lambda,Z_{\pm}}$ in (\ref{eq:index_theory-cap_operators}).
For $p\in R_h$, let $\tilde\lambda$ denote a path of the form $\tilde \lambda=\lambda\oplus T_pR_h$, where $\lambda$ is a path in $\Lambda(N)$ from $\overline{T_pL_0}$ to $\overline{T_pL_1}$.
Moreover, assume that the orientation of $T_pL_0$ is continued by $\tilde\lambda$ to the orientation of $T_pL_1$.


To define the local systems $\Theta_{R_h}^{\pm}$, it suffices to define homomorphisms $\pi_\pm:\pi_1(R_h)\to \zz_2$.
The definitions of $\pi_\pm$ are as follows:
Fix a basepoint $p\in R_h$, and let $\gamma$ be a loop based at $p$. 
The spin structures on $L_0$ and $L_1$ give (homotopy classes of) trivializations of $TL_0$ and $TL_1$ over $\gamma$; that is there exists maps 
\begin{displaymath}
Fr_i:S^1\to\gamma^*Fr(TL_i),
\end{displaymath}
where $Fr(TL_i)$ is the frame bundle of $TL_i$.
Think of the points of $S^1$ as being of the form $e^{2\pi i\theta}$ where $0\leq\theta\leq1$.
Let $\tilde\lambda=\lambda\oplus T_pR_h:[0,1]\to\Lambda(T_pX)$ be the path of Lagrangian subspaces described above.
Let $t$ denote the parameter in $[0,1]$.
Then choose a one paramenter family $\tilde\lambda_\theta=\lambda_\theta\oplus T_{\gamma(e^{2\pi i \theta})}R_h$ of Lagrangian paths that continues $\tilde\lambda$ around the loop $\gamma$ such that
\begin{itemize}
\item $\tilde\lambda_0=\tilde\lambda$,
\item $\tilde\lambda_\theta:[0,1]\to \Lambda(T_{\gamma(e^{2\pi i\theta})}X)$, 
\item $\tilde\lambda_\theta(0)=T_{\gamma(e^{2\pi i\theta})}L_0$, and
\item $\tilde\lambda_\theta(1)=T_{\gamma(e^{2\pi i\theta})}L_1$.
\end{itemize}
It may be assumed that $\tilde\lambda_0=\tilde\lambda_1$.
(Indeed, thinking of $\theta$ as fixed and $\tilde\lambda_\theta$ as being a path in $\Lambda(T_{\gamma(e^{2\pi i\theta})}X)$, the Maslov index $\mu(\tilde\lambda_\theta,T_{\gamma(e^{2\pi i\theta})}L_0)$ is half-integer valued. 
Moreover, the stratum of the Maslov cycle of $T_{\gamma(e^{2\pi i\theta})}L_0$ in which $\tilde\lambda_\theta(0)$ lies is the continuation of the stratum in which $\tilde\lambda_0(0)$ lies. 
Similarly, the stratum in which $\tilde\lambda_\theta(1)$ lies is the continuation of the stratum in which $\tilde\lambda_0(1)$ lies.
Therefore, $\mu$ depends continuously on $\theta$, and since it is half-integer valued it follows that it is constant.
Thus
\begin{displaymath}
\mu(\tilde\lambda_0,T_pL_0)=\mu(\tilde\lambda_1,T_pL_0).
\end{displaymath}
Since $\tilde\lambda_0$ and $\tilde\lambda_1$ have the same endpoints, it follows that they are homotopic.
Thus it may be assumed that $\tilde\lambda_0=\tilde\lambda_1$.
See \cite{rs}, in particular Theorem 2.4 for more details about stratum homotopies.)
Next, let $Tr$ be a trivialization of $\tilde\lambda$.
That is, for $0\leq t\leq 1$,
\begin{displaymath}
Tr(t):\tilde\lambda(t)\to\rr^n
\end{displaymath}
is an isomorphism.
Assume that the bases of $\tilde\lambda(0)=T_pL_0$ and $\tilde\lambda(1)=T_pL_1$ given by $Tr(0)$ and $Tr(1)$ agree with the bases given by $Fr_0(e^{2\pi i 0})$ and $Fr_1(e^{2\pi i 0})$.
This can be done because $\tilde\lambda$ was chosen such that it continues the orientation of $T_pL_0$ to the orientation of $T_pL_1$.
Now let $Tr_\theta$ be a one-parameter family of trivializations that continues the trivialization $Tr$ of $\tilde\lambda$ around the loop of paths $\tilde\lambda_\theta$.
That is, 
\begin{itemize}
\item $Tr_0=Tr$,
\item $Tr_\theta$ is a trivialization of $\tilde\lambda_\theta$, and
\item the basis of $T_{\gamma(e^{2\pi i\theta})}L_i$ given by $Tr_\theta(i)$ agrees with that given by $Fr_i(e^{2\pi i\theta})$.
\end{itemize}
By changing the loop $\tilde\lambda_\theta$ if necessary, it may be assumed that $Tr_0=Tr_1$ (because if $(V,\omega)$ is a symplectic vector space, then a trivialization of a loop in $\Lambda(V)$ continued around a non-trivial loop in the based loop space of $\Lambda(V)$ switches the homotopy class of the trivialization).
Finally, consider the line bundle $\gamma^*\Det(\dbar_{\tilde\lambda_\theta,Z_\pm})$.
If this bundle is trivial, define $\pi_\pm(\gamma)$ to be $1$, otherwise define $\pi_\pm(\gamma)$ to be $-1$.
\begin{lemma}
As real line bundles over $R_h$, 
\begin{displaymath}
\Theta^-_{R_h}\otimes\Det(TR_h)\cong \Theta^+_{R_h}.
\end{displaymath}
\end{lemma}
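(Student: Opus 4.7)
The plan is to glue the caps $Z_-$ and $Z_+$ along their common strip-like end into a closed topological disk $Y$, to show that the determinant line bundle $\gamma^*\Det(\dbar_Y)$ is trivial over any loop $\gamma\subset R_h$, and to convert this triviality via a standard determinant gluing formula into the identity $\Theta^-_{R_h}\otimes \Theta^+_{R_h}\cong \Det(TR_h)$. The stated lemma then follows by tensoring both sides by $\Theta^-_{R_h}$ and using that it is a $\zz_2$ local system, hence self-dual.

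\textbf{Gluing and triviality of $\Det(\dbar_Y)$.} Identifying the $+\infty$ end of $Z_-$ with the $-\infty$ end of $Z_+$ produces a surface $Y\cong D^2$ whose boundary carries the Lagrangian subbundle $T_pL_0$ along the bottom, $\tilde\lambda$ along the left semicircle (from $Z_-$), $T_pL_1$ along the top, and $\tilde\lambda$ traversed in reverse along the right semicircle (from $Z_+$); in particular the boundary loop has Maslov index zero. For each parameter value $\theta\in S^1=\gamma$, the fixed spin structures on $L_0,L_1$ produce frames $Fr_0(\theta), Fr_1(\theta)$ of $T_{\gamma(\theta)}L_i$, while $Tr_\theta$ trivializes $\tilde\lambda_\theta$ and agrees with $Fr_0,Fr_1$ at the endpoints; these data glue to a trivialization of the Lagrangian boundary bundle of $Y$ at parameter $\theta$. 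As $\theta$ runs around $\gamma$, the frames $Fr_0,Fr_1$ return to themselves because a spin structure defines a well-defined trivialization over any loop, and $Tr_0=Tr_1$ by the construction earlier in this section; so the trivialization depends continuously on $\theta$. Applying Lemma \ref{lemma:local_systems-main_lemma} to this family produces a nowhere vanishing section of $\gamma^*\Det(\dbar_Y)$, so the line bundle is trivial.

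\textbf{The gluing formula.} When the weighted Cauchy-Riemann operators $\dbar_{\tilde\lambda,Z_\pm}$ are glued along their matching strip-like ends whose asymptotic Lagrangians meet cleanly along $T_pR_h$, the standard determinant gluing formula yields a natural isomorphism of line bundles
\[\Det(\dbar_Y) \;\cong\; \Det(\dbar_{\tilde\lambda,Z_-}) \otimes \Det(\dbar_{\tilde\lambda,Z_+}) \otimes \Det(T_pR_h)^{\epsilon}\]
for a fixed sign $\epsilon\in\{\pm 1\}$. Heuristically, passing from weighted to unweighted sections on each cap adds an asymptotic copy of $T_pR_h$ to the kernel, while matching at the seam imposes one diagonal identification, leaving a single surplus copy as the correction term. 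Over $\gamma$ the factor $\Det(T_pR_h)$ becomes $\Det(TR_h)|_\gamma$. Combining with the previous step and passing to the $\zz_2$ level (where $L\cong L^{-1}$ for any real line bundle) yields $\Theta^-_{R_h}\otimes\Theta^+_{R_h}\cong\Det(TR_h)$; tensoring by $\Theta^-_{R_h}$ and using $(\Theta^-_{R_h})^{\otimes 2}\cong\textrm{trivial}$ gives the desired isomorphism $\Theta^-_{R_h}\otimes\Det(TR_h)\cong\Theta^+_{R_h}$.

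The main obstacle is the determinant gluing formula in the second step: the clean but non-transverse nature of the asymptotic Lagrangians forces one to carefully track both the asymptotic kernel $T_pR_h$ produced by passing from weighted to unweighted sections and the matching diagonal at the seam, and to propagate orientation data coherently through this identification. This is a technical but standard extension of the Fredholm gluing arguments of \cite{fooo}; fortunately, since only the $\zz_2$ reduction enters the statement of the lemma, the precise sign $\epsilon$ plays no role in the final conclusion.
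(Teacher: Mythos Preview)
Your proof is correct and takes essentially the same approach as the paper: glue $Z_-$ to $Z_+$ along their strip-like ends to obtain a disk, use the spin-induced trivialization (via Lemma~\ref{lemma:local_systems-main_lemma} and $Tr_0=Tr_1$) to show the resulting determinant bundle is trivial over $\gamma$, and then invoke the determinant gluing formula to extract the factor of $\Det(TR_h)$. The paper writes the gluing isomorphism more explicitly as a fiber product in which $\Det(TR_h)$ appears three times, yielding $\Det(\dbar_{\tilde\lambda_\theta,Z_-})\otimes\Det(TR_h)\cong\Det(\dbar_{\tilde\lambda_\theta,Z_+})$ directly, whereas you leave the exponent $\epsilon$ undetermined and finish via $\zz_2$ self-duality; since only the $\zz_2$ reduction is needed for the lemma, this difference is immaterial.
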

\begin{proof}
This statement is given in Section 41 of \cite{fooo}.
However, the description of $\Theta^\pm$ given there is slightly different from that given here, so a proof is necessary.

Let $\gamma$ be a loop in $R_h$ based at $p$, and let $\tilde\lambda_\theta$ and $Tr_\theta$ be as above.
By a gluing lemma, there exists a canonical isomorphism
\begin{equation}\label{eq:gluing-canonical-iso}
\Det(\dbar_{\tilde\lambda_\theta\#\tilde\lambda_\theta})\cong \Det(\dbar_{\tilde\lambda_\theta,Z_-}\oplus TR_h)\times_{T_{\gamma(\theta)}R_h}\Det(TR_h\oplus\dbar_{\tilde\lambda_\theta,Z_+}).
\end{equation}
Here, $\tilde\lambda_\theta\#\tilde\lambda_\theta$ denotes the Lagrangian subbundle over the boundary of $D^2$ defined by
\begin{displaymath}
\tilde\lambda_\theta\#\tilde\lambda_\theta(e^{i(\pi\phi-\pi/2)})=\left\{
\begin{array}{cl}
\tilde\lambda_\theta(\phi),& 0\leq\phi\leq 1\\
\tilde\lambda_\theta(-\phi),&-1\leq\phi\leq 0.
\end{array}
\right.
\end{displaymath}

As line bundles over $\gamma$, Lemma \ref{lemma:local_systems-main_lemma} and the fact that $Tr_0=Tr_1$ imply that the left hand side of (\ref{eq:gluing-canonical-iso}) is trivial.
$\Det(TR_h)$ appears three times in the right hand side, therefore the equation implies
\begin{displaymath}
 \Det(\dbar_{\tilde\lambda_\theta,Z_-})\otimes \Det(TR_h) \cong \Det(\dbar_{\tilde\lambda_\theta,Z_+}).
\end{displaymath}
As line bundles over $\gamma$, by definition $\Det(\dbar_{\tilde\lambda_\theta,Z_\pm})\cong \Theta_{R_h}^\pm$.
\end{proof}

Let $p\in R_h$ and $\tilde\lambda_p$ be a path from $T_pL_0$ to $T_pL_1$ as above.
An explicit isomorphism (up to homotopy) between $\Theta_{R_h,p}^\pm$ and $\Det(\dbar_{\tilde\lambda_p,Z_\pm})$ can be constructed.
The isomorphism will depend on additional data, namely pick (once and for all) for each component $R_h$ the following data:
\begin{itemize}
\item a point $p_h\in R_h$,
\item a path $\tilde\lambda_h=\lambda_h\oplus T_{p_h}R_h$ from $T_{p_h}L_0$ to $T_{p_h}L_1$, and
\item a trivialization $Tr_h$ of the path $\tilde\lambda_h$.
\end{itemize}
Let the fiber of $\Theta_{R_h}^\pm$ over $p_h$ be $\Det(\dbar_{\tilde\lambda_h,Z_\pm})$.

Then, given a trivialization $Tr_p$ of the path $\tilde\lambda_p$, $\Theta_{R_h,p}^\pm$ can be related to $\Det(\dbar_{\tilde\lambda_p,Z_\pm})$ as follows:
Choose a path $\gamma$ from $p_h$ to $p$.
The spin structures and orientations of $TL_i$ give (homotopy classes of) frames $Fr_i:[0,1]\to \gamma^*Fr(TL_i)$ such that
\begin{itemize}
\item the basis of $T_{p_h}L_i$ given by $Tr_h(i)$ agrees with the basis given by $Fr_i(0)$, and
\item the basis of $T_{p}L_i$ given by $Tr_p(i)$ agrees with the basis given by $Fr_i(1)$.
\end{itemize}
Choose a one-parameter family of paths $\tilde\lambda_\theta$ that covers $\gamma$ such that $\tilde\lambda_0=\tilde\lambda_h$ and $\tilde\lambda_1=\tilde\lambda_p$.
(This may not always be possible, since the Maslov indices of $\tilde\lambda_h$ and $\tilde\lambda_p$ may be different. However, there is a canonical way to compare determinant lines with different Maslov indices, by gluing a sphere onto $Z_\pm$ and then putting a non-trivial vector bundle on the sphere. See \cite{fooo} for details.)
Moreover, assume that the path is chosen such that the trivialization $Tr_h$ extends to a one-parameter family of trivializations $Tr_\theta$ such that $Tr_1=Tr_p$.
Two line bundles are now defined over $\gamma$, namely $\gamma^*\Det(\dbar_{\tilde\lambda_\theta,Z_\pm})$ and $\gamma^*\Theta_{R_h}^\pm$.
The fibers of these bundles are the same over $0$, hence orientations of the fibers over $1$ can be compared by continuity.
This gives an isomorphism (well-defined up to homotopy)
\begin{equation}\label{eq:local_systems-compare}
\Theta_{R_h,p}^\pm\cong \Det(\dbar_{\tilde\lambda_p,Z_\pm}).
\end{equation}

\subsection{Definition of the operators}\label{section:local_systems-definition_operators}
Recall from Section \ref{section:a_infinity-floer_cohomology} that the Floer cochain complex in the Bott-Morse setting is
\begin{displaymath}
CF(L_0,L_1)=\oplus_{R_h}C(R_h),
\end{displaymath}
where $C(R_h)$ is a subcomplex of the singular chain complex (with cohomological grading) with coefficients in a local system.
The local system used is $\Det(TR_h)\otimes \Theta^-_{R_h}$.
It is shown in \cite{fooo} that with this convention orientations can be worked out in such a way that the $A_\infty$-bimodule relations hold.

In the case of $n_{0,0}$, which is the only operator considered in this paper, here is the construction.
Let $S$ be a chain in $C(R_h)$.
Without loss of generality, assume that the domain of $S$ consists of a single $k$-simplex $\Delta^k$ with the standard orientation.
Explicitly, $S=(\phi,s)$ where $\phi:\Delta^k\to R_h$ is a smooth map and $s$ is a flat section of $\phi^*(\Det(TR_h)\otimes\Theta^-_{R_h})$.
Equivalently, the section $s$ along with the orientation can instead be thought of as a flat section of $\phi^*(\Det(TR_h)\otimes\Theta^-_{R_h})\otimes\Det(T\Delta^k)$.
Without loss of generality assume that $s$ is non-zero (otherwise the chain $S$ is $0$).
Recall that $n_{0,0}$ can be decomposed as
\begin{displaymath}
n_{0,0}=\sum_{h',\beta}\delta_{\mathcal M(L_0,L_1:R_h,R_{h'}:\beta)}\otimes T^{\omega(\beta)},
\end{displaymath}
where $\mathcal M(L_0,L_1:R_h,R_{h'}:\beta)$ is the moduli space of holomorphic strips from $R_h$ to $R_{h'}$ of homotopy class $\beta$ (recall that $\mathcal M$ is shorthand for $\mathcal M_{0,0}$) and
\begin{displaymath}
\delta_{\mathcal M(L_0,L_1:R_h,R_{h'}:\beta)}S=ev_{+\infty*}[\mathcal M(L_0,L_1:R_h,R_{h'}:\beta)\times_{R_h}S].
\end{displaymath}
To describe what $\mathcal M(L_0,L_1:R_h,R_{h'})\times_{R_h}S$ is a lemma is first needed.
\begin{lemma}[\cite{fooo} Proposition 41.6]\label{lemma:local_systems-identification}
Spin structures on $L_0$ and $L_1$ induce a canonical isomorphism
\begin{displaymath}
\Det(\mathcal M(L_0,L_1:R_h,R_{h'}))\cong ev_{+\infty}^*\Theta^+_{R_{h'}}\otimes ev_{-\infty}^*\Theta^-_{R_h}.
\end{displaymath}
\end{lemma}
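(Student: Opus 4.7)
The plan is to prove the isomorphism by invoking the gluing principle for determinant line bundles. At a point $[u]\in\mathcal M(L_0,L_1:R_h,R_{h'})$, choose paths $\tilde\lambda_{-\infty}$ from $T_{u(-\infty)}L_0$ to $T_{u(-\infty)}L_1$ and $\tilde\lambda_{+\infty}$ from $T_{u(+\infty)}L_0$ to $T_{u(+\infty)}L_1$ of the type used to define $\Theta^\pm_{R_h}$ in Section \ref{section:local_systems-definition}. Glue the cap $Z_-$ onto the $-\infty$ end of the strip and the cap $Z_+$ onto the $+\infty$ end, matching the boundary conditions via $\tilde\lambda_{\pm\infty}$. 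The result is a disc $D^2$ whose boundary carries a Lagrangian subbundle $F$ forming a closed loop in $u^*TX$, built by concatenating $u(\cdot,0)^*TL_0$, $u(\cdot,1)^*TL_1$, and the paths $\tilde\lambda_{\pm\infty}$.

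The central step is the gluing theorem for determinant lines, which produces a canonical isomorphism
$$\Det(D^{strip}_u)\otimes\Det(\dbar_{\tilde\lambda_{-\infty},Z_-})\otimes\Det(\dbar_{\tilde\lambda_{+\infty},Z_+})\cong\Det(D^{disc}_F).$$
Here the left-hand tensor factor $\Det(D^{strip}_u)$ must be identified with $\Det(\mathcal M(L_0,L_1:R_h,R_{h'}))_{[u]}$, which requires absorbing the extra asymptotic evaluations $T_{u(\pm\infty)}R_h$ and $T_{u(\pm\infty)}R_{h'}$ coming from the Banach manifold setup of Section \ref{section:moduli_spaces-bott_morse} and quotienting by the one-dimensional $\rr$-action on the domain. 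Next, since $L_0$ and $L_1$ carry fixed spin structures and orientations, these together with the chosen paths $\tilde\lambda_{\pm\infty}$ determine a trivialization of $F$ over $\partial D^2$ (in a specified homotopy class), so Lemma \ref{lemma:local_systems-main_lemma} canonically orients $\Det(D^{disc}_F)$.

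Combining the two ingredients yields
$$\Det(\mathcal M(L_0,L_1:R_h,R_{h'}))_{[u]}\cong\Det(\dbar_{\tilde\lambda_{-\infty},Z_-})\otimes\Det(\dbar_{\tilde\lambda_{+\infty},Z_+}),$$
where I am using that real determinant lines are canonically self-dual as orientation lines. Finally, the identifications (\ref{eq:local_systems-compare}) match the right-hand factors with $\Theta^-_{R_h,u(-\infty)}$ and $\Theta^+_{R_{h'},u(+\infty)}$ respectively, giving the desired isomorphism. Since these identifications depend only on the spin structure, orientation and fixed base data from Section \ref{section:local_systems-definition}, the result is independent of the intermediate choices of $\tilde\lambda_{\pm\infty}$ and their trivializations.

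The main obstacle will be verifying that everything is canonical: changing $\tilde\lambda_{-\infty}$, $\tilde\lambda_{+\infty}$, or the trivializations along them must produce compensating changes on both sides so that the final isomorphism is well-defined up to the intended homotopy. The key point is that any two admissible choices differ by a homotopy that can be decomposed into elementary moves (changing the positive-definite path within its homotopy class, or altering the trivialization by a loop in $\mathrm{GL}(n,\rr)$), and each move affects $\Det(\dbar_{\tilde\lambda_{\pm\infty},Z_\pm})$ and $\Det(D^{disc}_F)$ in matching ways via Lemma \ref{lemma:local_systems-main_lemma}. Once this naturality check is in place, the isomorphism is obtained globally over $\mathcal M(L_0,L_1:R_h,R_{h'})$ by varying $u$ continuously and applying the same construction fiberwise.
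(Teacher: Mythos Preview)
The paper does not supply its own proof of this lemma; it simply cites \cite{fooo} Proposition 41.6. That said, the paper does spell out the construction of this isomorphism later in Section~\ref{section:conjugation-orientation} (see equation~(\ref{eq:glueing_isomorphism}) and the discussion around (\ref{eq:conjugation-canonical_isomorphisms})), and your outline matches that description essentially verbatim: glue caps $Z_\pm$ onto the strip ends, invoke the gluing isomorphism for determinant lines, orient the resulting disc operator via the spin-induced trivialization (Lemma~\ref{lemma:local_systems-main_lemma}), and identify the cap determinants with $\Theta^\pm$ via (\ref{eq:local_systems-compare}). Your handling of the asymptotic $T_pR_h$, $T_qR_{h'}$ factors and the $\rr$-translation is exactly what the paper's fiber-product notation in (\ref{eq:glueing_isomorphism}) encodes, and your naturality discussion is the right consistency check. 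So your proposal is correct and follows the same route as both the paper's later exposition and the cited source.
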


According to the lemma and the conventions in \cite{fooo}, 
\begin{displaymath}
\Det(\mathcal M(L_0,L_1:R_h,R_{h'})\times_{R_h} \Delta^k))\cong ev_{+\infty}^*(\Theta^+_{R_{h'}})\otimes ev_{-\infty}^*(\Theta_{R_h}^-\otimes\Det(TR_h))\otimes\Det(T\Delta^k)
\end{displaymath}
as bundles over $\mathcal M(L_0,L_1:R_h,R_{h'})\times_{R_h}\Delta^k$.
Therefore
\begin{displaymath}
ev_{+\infty}^*(\Theta^+_{R_{h'}})\otimes \Det(\mathcal M(L_0,L_1:R_h,R_{h'})\times_{R_h}\Delta^k)\cong ev_{-\infty}^*(\Theta_{R_h}^-\otimes\Det(TR_h))\otimes\Det(T\Delta^k).
\end{displaymath}
$\phi=ev_{-\infty}$ on $\mathcal M(L_0,L_1:R_h,R_{h'})\times_{R_h}\Delta^k$, therefore $s$ induces a section of the right-hand side of the above equation, and hence also a section of the left-hand side, call this section $s'$.
Then $ev_{+\infty*}(\mathcal M(L_0,L_1:R_h,R_{h'})\times_{R_h}S)$ is defined to be the chain $(ev_{+\infty*}(\mathcal M(L_0,L_1R_h,R_{h'})\times_{R_h}\Delta^k),s')$ in the local system $\Theta_{R_{h'}}^+\cong\Det(TR_{h'})\otimes \Theta^-_{R_{h'}}$.

\part{Properties of real Lagrangians}
In Part I we reviewed the definitions and constructions used in Floer theory.
In this part, we turn to investigating general properties of the real Lagrangians that will then be used to calculate the Floer cohomology in Part III.

  \section{Real Lagrangians}\label{section:real_lagrangians-intersection}
Recall that real Lagrangians are of the form $gL$, where $L$ is the set of real points of $X$ and 
$$g=(1,\gamma^r,\gamma^s,\gamma^t,\gamma^u)=g_1^rg_2^sg_3^tg_4^u$$
is the automorphism of $X$ given by $$g:[X_0:X_1:X_2:X_3:X_4]\mapsto [X_0:\gamma^rX_1:\gamma^sX_2:\gamma^tX_3:\gamma^uX_4]$$
where $\gamma=e^{2\pi i/5}$.

In this section, the possibilities for $L\cap gL$ are enumerated, it is shown that the real Lagrangians are graded, and a formula for the Maslov index is developed.

  \subsection{Intersections of real Lagrangians}
  \begin{lemma} \label{lemma:real_lagrangians-fixed}
  $L\cap gL=Fix(g)\cap L$.
  \end{lemma}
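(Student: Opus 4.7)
The plan is to compare the two Lagrangians as fixed point sets of anti-holomorphic involutions and exploit the explicit diagonal form of $g$. The inclusion $\mathrm{Fix}(g)\cap L \subset L\cap gL$ is immediate: if $p\in L$ and $g(p)=p$, then $p=g(p)\in gL$.

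For the reverse inclusion, I would first observe that $L=\mathrm{Fix}(\tau)$ and $gL=\mathrm{Fix}(g\tau g^{-1})$. The key computation is that $g$ is diagonal with entries that are fifth roots of unity, so complex conjugation inverts it:
\begin{equation*}
\tau\circ g = g^{-1}\circ\tau,
\end{equation*}
which one checks directly on homogeneous coordinates. Equivalently $g\tau = \tau g^{-1}$, so
\begin{equation*}
g\tau g^{-1} = \tau g^{-2}.
\end{equation*}
Now suppose $p\in L\cap gL$. Then $\tau(p)=p$ and $\tau g^{-2}(p)=p$. Applying $\tau$ to the second equation and using $\tau^2=\mathrm{id}$ together with $\tau(p)=p$ gives $g^{-2}(p)=p$, hence $g^{2}(p)=p$.

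Finally, since $g\in G$ has order dividing $5$, it acts on $\cp^4$ (and hence on $X$) with $g^{5}=\mathrm{id}$. Combined with $g^{2}(p)=p$ and $\gcd(2,5)=1$, this yields $g(p)=p$, so $p\in\mathrm{Fix}(g)\cap L$. The main (and only genuine) step is the anti-commutation relation $\tau g = g^{-1}\tau$; everything else is formal manipulation once that is in hand.
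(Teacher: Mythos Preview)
Your proof is correct and follows essentially the same approach as the paper: both use the anti-commutation $\tau g = g^{-1}\tau$ to deduce $g^2(p)=p$ for $p\in L\cap gL$, and then invoke that $g$ has order $5$ to conclude $g(p)=p$. The only cosmetic difference is that the paper writes a point of $L\cap gL$ as $gx$ with $x\in L$ rather than characterizing $gL$ as $\mathrm{Fix}(g\tau g^{-1})$.
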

  \begin{proof}
  Let $gx\in L\cap gL$, with $x\in L$.
  Then $\overline{gx}=gx$, since $L$ consists of real points.
  On the other hand, $\overline{gx}=g^{-1}x$.
  Thus $x=g^2x$, and since $g$ has order $5$ it follows that $gx=x$.
  \end{proof}

  \begin{lemma}\label{lemma:real_lagrangians-cases}
  Let $S=L\cap gL$. 
  Then the possibilities for $S$ are (assuming $i,j,k$ are distinct and $1\leq r,s,t,u\leq 4$):
  \begin{enumerate}
  \item $S\cong \rp^3$ if $g=1$,
  \item $S\cong \rp^2$ if $g=g_i^r,$
  \item $S\cong \rp^1$ if $g=g_i^rg_j^s,r\neq s$,
  \item $S\cong \rp^1\coprod\rp^0$ if $g=g_i^rg_j^s, r=s$,
  \item $S\cong \rp^0$ if $g=g_i^rg_j^sg_k^t, r\neq s,r\neq t,s\neq t$,
  \item $S\cong \rp^0\coprod\rp^0$ if $g=g_i^rg_j^sg_k^t, r=s,r\neq t$,
  \item $S\cong \rp^1\coprod\rp^0$ if $g=g_i^rg_j^sg_k^t, r=s=t$,
  \item $S\cong \emptyset$ if $g=g_1^rg_2^sg_3^tg_4^u, r\neq s,r\neq t,r\neq u,s\neq t,s\neq u,t\neq u$,
  \item $S\cong \rp^0$ if $g=g_1^rg_2^sg_3^tg_4^u, r\neq s,r\neq t,r\neq u,s\neq t,s\neq u,t= u$,
  \item $S\cong \rp^0\coprod\rp^0$ if $g=g_1^rg_2^sg_3^tg_4^u, r=s,s\neq t,t=u$,
  \item $S\cong \rp^1$ if $g=g_1^rg_2^sg_3^tg_4^u, r\neq s,s=t=u$,
  \item $S\cong \rp^2$ if $g=g_1^rg_2^sg_3^tg_4^u, r=s=t=u$.
  \end{enumerate}
  \end{lemma}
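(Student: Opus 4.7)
The plan is to reduce to computing $\text{Fix}(g) \cap L$ via Lemma \ref{lemma:real_lagrangians-fixed}, then analyze the fixed locus of $g$ acting diagonally on $\cp^4$ by decomposing it into eigenspaces, and finally intersect each piece with the Fermat quintic and restrict to real points.

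First I would note the general mechanism. Writing $g=(1,\gamma^r,\gamma^s,\gamma^t,\gamma^u)$ as a diagonal action, a point $[X_0:\cdots:X_4]$ is fixed by $g$ iff there exists $\lambda\in\cc^*$ such that $g_iX_i=\lambda X_i$ for every $i$. Hence the non-zero coordinates of a fixed point must all correspond to one fixed eigenvalue of $g$. Thus $\text{Fix}(g)\subset\cp^4$ is a disjoint union of linear subspaces indexed by the distinct eigenvalues of $g$; if an eigenvalue has multiplicity $m+1$, the corresponding component is a $\cp^m$.

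Next I would intersect each component with $X$ and with the real locus. A component of multiplicity $m+1$ lying on, say, coordinates indexed by $I$ meets $X$ in the Fermat quintic $\{\sum_{i\in I}X_i^5=0\}\subset\cp^{m}$, and its real points form $\rp^{m-1}$ when $|I|\geq 2$ (using the same type of graph-of-a-function diffeomorphism presented in Section \ref{section:summary}, or the elementary fact that $X_i=-X_j$ is the only real solution of $X_i^5+X_j^5=0$). If $|I|=1$ the component is a single projective point with non-zero coordinate, which fails $\sum X_i^5=0$ and is discarded. The eigenvalue $\lambda=1$ always contributes a component, while the eigenvalues $\gamma^r,\gamma^s,\ldots$ contribute only when they appear with multiplicity $\geq 2$.

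I would then go through the cases enumerated in the lemma. For instance, in Case 1 ($g=1$) there is a single eigenvalue of multiplicity $5$ giving $\rp^3$; in Case 4 ($g=g_i^rg_j^s$, $r=s$) the eigenvalue $1$ has multiplicity $3$ giving $\rp^1$ and $\gamma^r$ has multiplicity $2$ giving an extra $\rp^0$; in Case 8 every eigenvalue has multiplicity $1$ so no component survives. Cases 2, 3, 5, 6, 7, 9, 10, 11, 12 are handled identically by reading off the eigenvalue multiplicities of $g$ from the integers $r,s,t,u$. There is no real obstacle here; the only thing to keep track of is which multiplicity patterns produce an extra $\rp^0$ or $\rp^1$ beyond the main piece coming from $\lambda=1$, which is precisely the distinction between the pairs of cases (3,4), (5,6), (8,9), (9,10), (10,11), etc.
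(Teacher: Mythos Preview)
Your proposal is correct and follows essentially the same approach as the paper: both reduce to $\mathrm{Fix}(g)\cap L$ via Lemma~\ref{lemma:real_lagrangians-fixed} and then identify the fixed locus directly. The paper's proof is terser, working out only case~(2) explicitly and declaring the rest similar, whereas your eigenvalue-multiplicity framework gives a uniform mechanism for all twelve cases at once; but there is no substantive difference in the underlying argument.
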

  \begin{proof}
  The proofs are all similar and follow easily from Lemma \ref{lemma:real_lagrangians-fixed}.
  For example, to prove (2) with say $i=1$, note that $Fix(g)=\sett{[Z_0:0:Z_2:Z_3:Z_4]\in X}$, so $L\cap gL=Fix(g)\cap L=\sett{[X_0:0:X_2:X_3:X_4]\in L}$.
  This is easily seen to be diffeomorphic to $\rp^2$.
  \end{proof}

  There is also an $S_5$ action given by permuting coordinates.
  Using this action, only cases (1)-(6) need to be considered.
  For example, in case (12), $g=[(1,\gamma^r,\gamma^r,\gamma^r,\gamma^r)]=[(\gamma^{5-r},1,1,1,1)]$.
  The permuatation that switches the first two homeogenous coordinates acts on $X$.
  This permutation fixes $L$, but changes $gL$ into case (2).

\subsection{Real Lagrangians have constant phase}

The Poincar\'e residue map is the map of sheaves $\Omega^{4}_{\cp^4}(X)\to\Omega^3_X$
that in local coordinates $(W_1,\ldots,W_4)$ on $\cp^4$ is the map
\begin{displaymath}
\frac{dW_1\wedge dW_2\wedge dW_3\wedge dW_4}{f}\mapsto (-1)^{i-1}\frac{dW_1\wedge\cdots \wedge\widehat{dW_i}\wedge\cdots\wedge dW_4}{\partial f/\partial W_i},
\end{displaymath}
provided that $\frac{\partial f}{\partial W_i}\neq 0$ (see \cite{gh} p. 147).
This induces a map $H^0(\cp^4,\Omega^4_{\cp^4}(X))\to H^0(X,\Omega^3_X)$.
$1/f$ can be interpreted as a global section of $H^0(\cp^4,\Omega^4(X))$; let $\Omega$ denote the image of $1/f$ under the Poincar\'e residue map.
Then $\Omega$ is a non-vanishing holomorphic section of $\Omega^3_X$.
Locally, $\Omega$ is simply 
\begin{displaymath}
(-1)^{i-1}\frac{dW_1\wedge\cdots \wedge\widehat{dW_i}\wedge\cdots\wedge dW_4}{\partial f/\partial W_i}.
\end{displaymath}


Recall from Section \ref{section:gradings-graded} that if a non-vanishing section $\Omega$ of the canonical bundle is given, then the notion of a graded Lagrangians can be defined.
\begin{lemma}
$L$ is graded and has constant phase $1\in S^1$.
\end{lemma}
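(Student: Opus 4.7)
The plan is to prove that $\Omega|_{T_pL}$ is real-valued at every $p\in L$; once established, this immediately gives $\textrm{Im}(e^{-i\cdot 0}\Omega_p)|_{T_pL}=0$, so $\Det^2(T_pL)=e^{2i\cdot 0}=1$, and the choice $\theta_L\equiv 0$ exhibits $L$ as a graded Lagrangian with $\Det^2\circ s_L\equiv 1\in S^1$, i.e.\ constant phase $1$.

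To show $\Omega_p|_{T_pL}\subset\rr$, I would work in the standard affine charts $U_i=\sett{X_i\neq 0}$ with coordinates $W_j=X_j/X_i$, $j\neq i$. In these charts the defining polynomial becomes $f(W)=1+\sum_{j\neq i}W_j^5$, which has real coefficients, so $L\cap U_i$ is precisely the real zero set of $f|_{\rr^4}$ and its tangent vectors are genuinely real. Smoothness of $X$ guarantees that at any $p\in L\cap U_i$ at least one partial derivative $\partial f/\partial W_k$ is nonzero, and since $p$ has real coordinates, this partial derivative is itself real. The Poincar\'e residue formula then yields
$$\Omega_p=(-1)^{k-1}\frac{dW_1\wedge\cdots\widehat{dW_k}\wedge\cdots\wedge dW_4}{\partial f/\partial W_k}\bigg|_p,$$
and evaluating the numerator on any three real tangent vectors in $T_pL$ gives a real number; dividing by the real scalar $\partial f/\partial W_k|_p$ confirms $\Omega_p|_{T_pL}\subset\rr$.

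I do not anticipate a genuine obstacle; the argument amounts to a one-line coordinate computation combined with the reality of the Fermat polynomial's coefficients. The only minor bookkeeping is that different charts $U_i$ present $\Omega$ by different local formulas, but because $\Omega$ is a globally defined holomorphic $3$-form and the reality conclusion is verified chart-by-chart, no compatibility check is required beyond what the residue construction already provides. Setting $\theta_L\equiv 0$ globally then completes the proof.
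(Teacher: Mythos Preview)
Your proposal is correct and follows essentially the same approach as the paper: both work in an affine chart, invoke the Poincar\'e residue formula for $\Omega$, observe that the partial derivative in the denominator is real at a real point, and note that the numerator $dW_1\wedge\cdots\widehat{dW_k}\wedge\cdots\wedge dW_4$ takes real values on real tangent vectors (the paper phrases this as $dy_i|_L=0$). The conclusion $\theta_L\equiv 0$ is identical.
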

\begin{proof}
Without loss of generality, assume that $p\in L$ is a point in affine coordinates $(W_1,W_2,W_3,W_4)\leftrightarrow[1:W_1:W_2:W_3:W_4]$ and $\partial f/\partial W_4(p)\neq 0$.
Let $W_i=x_i+\sqrt{-1}y_i$, and note that the tangent space of $L$ at $p$ is spanned by $\rr$-linear combinations of the $\partial/\partial x_i$'s.
Therefore $dy_i|_L=0$ and it follows that
\begin{displaymath}
\textrm{Im}(\Omega(p))|_L=\textrm{Im}\biggl(-\frac{dW_1\wedge dW_2\wedge dW_3}{\partial f/\partial W_4}(p)\biggr)\bigg|_L=0.
\end{displaymath}
This uses the fact that $\frac{\partial f}{\partial W_4}(p)$ is a real number.
\end{proof}
Recall that $\gamma=e^{2\pi i/5}$.
\begin{lemma}
Let $g:X\to X$ be the diffeomorphism 
\begin{displaymath}
[X_0:X_1:X_2:X_3:X_4]\mapsto [X_0:\gamma^aX_1:\gamma^bX_2:\gamma^cX_3:\gamma^dX_4].
\end{displaymath}
Then $gL$ is graded and has constant phase $\gamma^{2(a+b+c+d)}$.
\end{lemma}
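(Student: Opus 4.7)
The plan is to compute how $g$ pulls back the holomorphic volume form $\Omega$ and then transport the constant-phase property of $L$ over to $gL$.

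First I would do the local computation of $g^*\Omega$. In affine coordinates $W_i=X_i/X_0$ one has $g^*W_i=\gamma^{a_i}W_i$ (with $a_0=0$, $a_1=a$, $a_2=b$, $a_3=c$, $a_4=d$), hence $g^*dW_i=\gamma^{a_i}dW_i$. For the defining equation $f=1+W_1^5+\cdots+W_4^5$ we have $\partial f/\partial W_i=5W_i^4$, so $g^*(\partial f/\partial W_i)=\gamma^{4a_i}\,\partial f/\partial W_i=\gamma^{-a_i}\,\partial f/\partial W_i$. Plugging these into the Poincar\'e residue representative for $\Omega$ (say the one using $\partial f/\partial W_4$) yields
\begin{displaymath}
g^*\Omega=\gamma^{a+b+c+d}\,\Omega.
\end{displaymath}
One should check that this transformation law is independent of which affine chart and which partial derivative one uses; this follows either from redoing the computation in another chart, or from observing that $g^*\Omega$ and $\gamma^{a+b+c+d}\Omega$ are both global nowhere-vanishing holomorphic sections of $K_X$ agreeing on a nonempty open set.

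Next I transport the phase. For $p=g(q)\in gL$, $T_p(gL)=g_*T_qL$, so for any basis $v_1,v_2,v_3$ of $T_qL$,
\begin{displaymath}
\Omega_p(g_*v_1,g_*v_2,g_*v_3)=(g^*\Omega)_q(v_1,v_2,v_3)=\gamma^{a+b+c+d}\,\Omega_q(v_1,v_2,v_3).
\end{displaymath}
Since $L$ has constant phase $1$, the right-hand factor $\Omega_q(v_1,v_2,v_3)$ is real. Writing $\gamma^{a+b+c+d}=e^{i\theta_0}$ with $\theta_0=2\pi(a+b+c+d)/5$, this means $e^{-i\theta_0}\Omega_p$ restricts to a real form on $T_p(gL)$, so $\mathrm{Det}^2(T_p(gL))=e^{2i\theta_0}=\gamma^{2(a+b+c+d)}$.

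Finally, to upgrade ``constant phase'' to ``graded,'' I set $\theta_{gL}(p)$ to be the constant value $2(a+b+c+d)/5\in\mathbb{R}$; then $e^{2\pi i\theta_{gL}}=\gamma^{2(a+b+c+d)}=\mathrm{Det}^2\circ s_{gL}$, verifying the defining equation of a grading. The only potential subtlety I foresee is the chart-independence of the formula $g^*\Omega=\gamma^{a+b+c+d}\Omega$; everything else is bookkeeping with the definitions from Section \ref{section:gradings-graded}.
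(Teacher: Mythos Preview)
Your proof is correct and follows essentially the same route as the paper: compute the transformation law for $\Omega$ under $g$ via the Poincar\'e residue in affine coordinates, then transport the constant-phase property of $L$ across by $g$. The only cosmetic difference is that the paper phrases the transport using $g^{-1*}\Omega$ restricted to $gL$, whereas you evaluate $g^*\Omega$ on $L$; these are the same computation read in opposite directions.
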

\begin{proof}
$g$ maps $L$ to $gL$ and $\Omega$ to $g^{-1*}\Omega$.
$g^{-1*}\Omega$ can be calculated using the local description of $\Omega$ given above (without loss of generality assume $i=1$):
\begin{displaymath}
g^{-1*}\Omega=g^{-1*}\frac{dW_2\wedge dW_3\wedge dW_4}{\partial f/\partial W_1}=\frac{\gamma^{-b-c-d}dW_2\wedge dW_3\wedge dW_4}{\gamma^{-4a}\partial f/\partial W_1}=\gamma^{-a-b-c-d}\Omega.
\end{displaymath}
Therefore
\begin{displaymath}
\textrm{Im}(\gamma^{-a-b-c-d}\Omega)|_{gL}=\textrm{Im}(g^{-1*}\Omega)|_{gL}=g^{-1*}(\textrm{Im}(\Omega)|_L)=0.
\end{displaymath}
\end{proof}

To simplify the amount of work that needs to be done, a fixed grading will be used on each Lagrangian $gL$:
\begin{definition}\label{dfn:real_lagrangians-grading_convention}
Let $0\leq\theta_g<2\pi$ be the unique real number such that $e^{2\pi i\theta_g}=\gamma^{2(a+b+c+d)}$.
Then the fixed grading on $gL$ is defined to be
\begin{displaymath}
\theta_{gL}\equiv \theta_g.
\end{displaymath}
\end{definition}

  \subsection{Angles between the real Lagrangians}
  Let $p\in L\cap gL=Fix(g)\cap L$. 
  Then $dg$ acts on the vector space $T_pX$ and $dg\cdot T_pL=T_pgL$ (for ease of notation, the $dg$ action will often be denoted by $g$ as well, for example $g\cdot T_pL=T_pgL$).
If $R_h$ is the connected component of $L\cap gL$ containing $p$ then $\aangle{L}{gL}{R_h}$ is defined as in Section \ref{section:gradings-bott_morse} because $L$ and $gL$ have constant phase.
Since $\tau\circ g=g^{-1}\circ \tau$, it follows by Lemmas \ref{lemma:index_theory-linear_algebra_2} and \ref{lemma:index_theory-linear_algebra_3} that $dg$ is almost of the form given in Lemma \ref{lemma:index_theory-matrix}; the only problem is that the eigenvalues of $dg$ may not lie in the upper half-plane.
To correct for this, note that the eigenvalues of $g$ are of the form $\gamma^a,\gamma^b,\gamma^c$ where $0\leq a,b,c\leq4$.
  Let
  \begin{displaymath}
  a'=\left\{
  \begin{array}{lc}
  a, & 0\leq a\leq 2 \\
  a-5/2, & 3\leq a \leq 4
  \end{array}\right.,
  \end{displaymath}
  and similary for $b',c'$.
Then $\gamma^{a'}$, $\gamma^{b'}$, and $\gamma^{c'}$ lie in the upper half-plane.
Notice that $a,b,c$ are constant on connected components of $L\cap gL$, that is they depend only on $R_h$ and not on $p$.

\begin{lemma}\label{lemma:real_lagrangians-angles}
Consider the cases for $L\cap gL$ as enumerated in Lemma \ref{lemma:real_lagrangians-cases}
Then the angles between $L$ and $gL$ are 
\begin{enumerate}
\item $0$,
\item $r'/5$,
\item $(r'+s')/5$,
\item $2r'/5$ for the $\rp^1$ component and $3(5-r)'/5$ for the $\rp^0$ component,
\item $(r'+s'+t')/5$,
\item $(2r'+t')/5$ for the component where the $i,j,k$ coordinates are $0$ and $(2(5-r)'+(t-r)')/5$ for the other component.
\end{enumerate}
\end{lemma}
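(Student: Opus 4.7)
The plan is to work in affine coordinates around each point $p\in L\cap gL$ and read off the eigenvalues of $dg$ directly; the formula then follows from the definition of the angle together with the correction $a\mapsto a'$ used to push eigenvalues into the upper half-plane.

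First I would set up a general framework. For $p\in L\cap gL=\mathrm{Fix}(g)\cap L$, pick a nonzero homogeneous coordinate $X_k$ of $p$ and pass to the affine chart $X_k=1$. In this chart the four affine coordinates $W_i=X_i/X_k$ ($i\neq k$) diagonalize $dg$, with eigenvalues $\gamma^{a_i-a_k}$, where $g$ acts on the $i$-th homogeneous coordinate by $\gamma^{a_i}$. The defining polynomial $\sum X_i^5=0$ has differential at $p$ that is a real linear equation in the $dW_i$'s (because $p\in L$), so $T_pX$ is cut out by a real equation, $T_pL$ is its real part, and $T_pgL=dg\cdot T_pL$. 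Since $\tau g=g^{-1}\tau$, Lemma \ref{lemma:index_theory-linear_algebra_3} applies and $T_pL$ has a basis of eigenvectors of $dg$; in our setup this basis is just the real coordinate axes (adjusted to lie in the kernel of the real defining equation).

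Next I would identify $N$ and the unitary $U$. The eigenvalue-1 eigenspace of $dg$ on $T_pL$ is exactly $T_p(L\cap gL)$, coming from the coordinates with $a_i\equiv a_k\pmod 5$, and the quotient $N=(T_pL+T_pgL)/T_p(L\cap gL)$ has a real basis given by the remaining real coordinate directions, along which $dg$ acts by nontrivial fifth roots of unity $\gamma^{a}$. The unitary $U$ of Lemma \ref{lemma:index_theory-matrix} then equals $dg$ on each such direction, except that when $\gamma^a$ lies in the lower half-plane ($a\in\{3,4\}$) one multiplies the relevant basis vector by a suitable phase and replaces $\gamma^a$ by $-\gamma^a=e^{2\pi i(a/5-1/2)}$. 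This reproduces precisely the rule $a\mapsto a'$, so $\aangle{L}{gL}{p}$ equals the sum of $a'/5$ over the nontrivial eigenvalues of $dg$ appearing in $N$.

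Finally I would verify each case by listing the fixed points, the affine chart, and the eigenvalues that appear. Cases (1), (2), (3), (5), the $\mathbb{RP}^1$ component of (4), and the first component of (6) can all be handled in the chart $X_0=1$: one reads off the nontrivial eigenvalues of $dg$ as exactly $\gamma^r$, $\gamma^s$, $\gamma^t$ in the appropriate multiplicities, and sums the primed exponents divided by $5$. The $\mathbb{RP}^0$ components in (4) and (6) lie in the locus $X_0=0$, so they require a different chart, e.g.\ $X_i=1$ for some $i$ where $g$ acts by $\gamma^r$; relative to this chart the exponents shift, $\gamma^r$ on $X_0,X_l,X_m$ becomes $\gamma^{-r}=\gamma^{5-r}$ and $\gamma^t$ on $X_k$ becomes $\gamma^{t-r}$, and this shift is precisely what produces the formulas $3(5-r)'/5$ in (4) and $(2(5-r)'+(t-r)')/5$ in (6). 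The main obstacle is just careful bookkeeping in these off-center charts; all the linear algebra is immediate from the diagonal form of $dg$.
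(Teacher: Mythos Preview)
Your proposal is correct and follows essentially the same approach as the paper: pass to an affine chart where $dg$ is diagonal, read off the eigenvalues on $T_pX$, and apply the $a\mapsto a'$ correction to land in the upper half-plane. The paper carries this out purely case by case (choosing an explicit point, computing $\ker df$, and listing the three surviving eigenvalues), whereas you set up the general framework first and then specialize; the content is the same.
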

\begin{proof}
Recall that the angle is defined to be 
\begin{displaymath}
\aangle{L}{gL}{R}=\alpha_1+\alpha_2+\alpha_3,
\end{displaymath}
where $0<\alpha_i<\frac{1}{2}$ and the $e^{2\pi i\alpha_i}$'s are the eigenvalues of the unitary matrix $U$ such that $U\cdot T_pL_0=T_pL_1$ and $T_pL_0$ has a basis consisting of eigenvalues of $U$.
By the discussion above, the angles are $a'/5+b'/5+c'/5$ where $\gamma^a,\gamma^b,\gamma^c$ are the eigenvalues of $dg$.
Thus it remains to calculate the eigenvalues of $dg$.
Here are the computations:

Case (1) is obvious because $g=1$.

For case (2), assume $g=g_1^r$.
Then $L\cap gL=\sett{[X_0:0:X_2:X_3:X_4]\in X}$.
Near the point $p=[1:0:-1:0:0]$ let $W_i$'s be the affine coordinates
$$[1:W_1:W_2:W_3:W_4]\longleftrightarrow (W_1,W_2,W_3,W_4).$$
The $g$ action in affine coordinates is
$$g\cdot(W_1,W_2,W_3,W_4)=(\gamma^rW_1,W_2,W_3,W_4).$$
At the point $p$, 
$$T_pX=\ker(df(p))=\ker(dW_2)=\textrm{span}\sett{\frac{\partial}{\partial W_1},\frac{\partial}{\partial W_3},\frac{\partial}{\partial W_4}}.$$
Therefore the eigenvalues of $g$ are $\gamma^r,1,1$.

For case (3), assume $g=g_1^rg_2^s$.
Then $L\cap gL=\sett{[X_0:0:0:X_3:X_4]\in X}$.
Near the point $x=[1:0:0:-1:0]$ let the affine coordinates be
$$[1:W_1:W_2:W_3:W_4]\longleftrightarrow (W_1,W_2,W_3,W_4).$$
The $g$ action in affine coordinates is
$$g\cdot(W_1,W_2,W_3,W_4)=(\gamma^rW_1,\gamma^sW_2,W_3,W_4).$$
At the point $x$, 
$$T_xX=\ker(df(x))=\ker(dW_3)=\textrm{span}\sett{\frac{\partial}{\partial W_1},\frac{\partial}{\partial W_2},\frac{\partial}{\partial W_4}}.$$
Therefore the eigenvalues are $\gamma^r,\gamma^s,1$.

For case (4), assume $g=g_1^rg_2^r$.
Then 
$$L\cap gL=\sett{[X_0:0:0:X_3:X_4]}\amalg\sett{[0:X_1:X_2:0:0]}\cong\rp^1\amalg\rp^0.$$
Near the point $x=[1:0:0:-1:0]$ use affine coordinates
$$[1:W_1:W_2:W_3:W_4]\longleftrightarrow (W_1,W_2,W_3,W_4).$$
The $g$ action in affine coordinates is
$$g\cdot(W_1,W_2,W_3,W_4)=(\gamma^rW_1,\gamma^rW_2,W_3,W_4).$$
At the point $x$, 
$$T_xX=\ker(df(x))=\ker(dW_3)=\textrm{span}\sett{\frac{\partial}{\partial W_1},\frac{\partial}{\partial W_3},\frac{\partial}{\partial W_4}}.$$
Therefore the eigenvalues are $\gamma^r,\gamma^r,1$ for the $\rp^1$ component.

For the $\rp^0$ component, let $x=[0:1:-1:0:0]$ and use affine coordinates
$$[W_1:1:W_2:W_3:W_4]\longleftrightarrow (W_1,W_2,W_3,W_4).$$
The $g$ action in affine coordinates is
$$g\cdot(W_1,W_2,W_3,W_4)=(\gamma^{-r}W_1,W_2,\gamma^{-r}W_3,\gamma^{-r}W_4).$$
At the point $x$, 
$$T_xX=\ker(df(x))=\ker(dW_2)=\textrm{span}\sett{\frac{\partial}{\partial W_1},\frac{\partial}{\partial W_3},\frac{\partial}{\partial W_4}}.$$
Therefore the eigenvalues are $\gamma^{5-r},\gamma^{5-r},\gamma^{5-r}$ for the $\rp^1$ component.

For case (5), assume $g=g_1^rg_2^sg_3^t$.
Then $L\cap gL=\sett{[X_0:0:0:0:X_4]\in X}$.
Near the point $x=[1:0:0:0:-1]$ use affine coordinates
$$[1:W_1:W_2:W_3:W_4]\longleftrightarrow (W_1,W_2,W_3,W_4).$$
The $g$ action in affine coordinates is
$$g\cdot(W_1,W_2,W_3,W_4)=(\gamma^rW_1,\gamma^sW_2,\gamma^tW_3,W_4).$$
At the point $x$, 
$$T_xX=\ker(df(x))=\ker(dW_4)=\textrm{span}\sett{\frac{\partial}{\partial W_1},\frac{\partial}{\partial W_2},\frac{\partial}{\partial W_3}}.$$
Therefore the eigenvalues are $\gamma^r,\gamma^s,\gamma^t$.

For case (6), assume $g=g_1^rg_2^rg_3^t$.
Then $L\cap gL=[1:0:0:0:-1]\amalg[0:1:-1:0:0]$.
Near the point $x=[1:0:0:0:-1]$ use affine coordinates
$$[1:W_1:W_2:W_3:W_4]\longleftrightarrow (W_1,W_2,W_3,W_4).$$
The $g$ action in affine coordinates is
$$g\cdot(W_1,W_2,W_3,W_4)=(\gamma^rW_1,\gamma^rW_2,\gamma^sW_3,W_4).$$
At the point $x$, 
$$T_xX=\ker(df(x))=\ker(dW_4)=\textrm{span}\sett{\frac{\partial}{\partial W_1},\frac{\partial}{\partial W_2},\frac{\partial}{\partial W_3}}.$$
Therefore the eigenvalues are $\gamma^r,\gamma^r,\gamma^s$ for the $[1:0:0:0:-1]$ component.

Near the point $x=[0:1:-1:0:0]$ use affine coordinates
$$[W_1:1:W_2:W_3:W_4]\longleftrightarrow (W_1,W_2,W_3,W_4).$$
The $g$ action in affine coordinates is
$$g\cdot(W_1,W_2,W_3,W_4)=(\gamma^{-r}W_1,W_2,\gamma^{s-r}W_3,\gamma^{-r}W_4).$$
At the point $x$, 
$$T_xX=\ker(df(x))=\ker(dW_2)=\textrm{span}\sett{\frac{\partial}{\partial W_1},\frac{\partial}{\partial W_3},\frac{\partial}{\partial W_4}}.$$
Therefore the eigenvalues are $\gamma^{-r},\gamma^{-r},\gamma^{s-r}$.
\end{proof}




  \section{Floer cohomology of real Lagrangians}
  In Section \ref{section:a_infinity}, we explained how to construct an $A_\infty$-algebra $(CF(L),m)$ for each Lagrangian $L$, and an $A_\infty$-bimodule $(CF(L_0,L_1),n)$ for each pair of Lagrangians $(L_0,L_1)$.
In Section \ref{section:floer_cohomology} we showed that if the Lagrangians are unobstructed then the Floer cohomologies $HF(L)$ and $HF(L_0,L_1)$ are defined.
In this section we apply these ideas to the real Lagrangians.
  \subsection{Real Lagrangians are unobstructed}\label{section:floer_cohomology-unobstructed}
  A Lagrangian $L$ is unobstructed if there exists $b\in CF(L)^1$ such that $b\equiv 0 \textrm{ mod } \Lambda_{0,nov}^+$ and
  $$m_0(1)+m_1(b)+m_2(b,b)+m_3(b,b,b)+\cdots=0.$$
  Such $b$ are called bounding cochains, and they can be used to deform the operators $m_k$ into new operators $m_k^b$ such that $m_1^b\circ m_1^b=0$.
  The Floer cohomology of $(L,b)$ is the cohomology with respect to the differential $m_1^b$.

  In \cite{fooo}, it is shown that bounding cochains exist if certain (ordinary) cohomology classes vanish.
  In the case of real Lagrangians, the relevant cohomology classes lies in $H^1(L,\qq)=H^1(\rp^3,\qq)=0$, so the real Lagrangians are unobstructed.
  Moreover, $b=0$ can be taken as a bounding cochain by Theorem 1.5 in \cite{foooasi}.
Essentially, the reason is that any disc $u$ with boundary on $L$ can be conjugated to get a disc $\tilde u$ with boundary on $L$.
With one marked point added, the orientations of $u$ and $\tilde u$ are opposite of each other, and therefore cancel each other out.
It follows that $m_0(1)=0$ and $b=0$ is a bounding cochain.
Since $b=0$, the $A_\infty$ operators do not need to be deformed to carry out the calculations in this paper.

  \section{Holomorphic strips and spheres}
In this section we show that there is a bijection between holomorphic strips and holomorphic spheres with certain symmetries.

  \subsection{Reflecting strips to get a sphere}\label{section:strips-spheres}
  Let $\sigma=e^{2\pi i/10}$.
  Consider the sector $S_0$ in $\cc$ that is bounded by the rays $\rr_{\geq0}$ and $\sigma\rr_{\geq0}$.
  $S_0$ is biholomorphic to the strip $\rr\times[0,1]$ via the map
\begin{displaymath}
z\in\rr\times[0,1]\mapsto e^{\pi z/5}\in S_0.
\end{displaymath}
  Let $L_0=L$ and $L_1=gL$ for some $g\in G$.
  Let $u_0:S_0\to X$ be a holomorphic strip that maps $\rr_{\geq 0}$ into $L_0$ and $\sigma\rr_{\geq0}$ into $L_1$.
  Let $S_k=\sigma^kS_0$, so $\cc\setminus\sett{0}$ is covered by $S_0\cup\cdots\cup S_9$.
  Using the Schwarz reflection principle, the map $u_0$ can be reflected to get a map $u_k:S_k\to X$.
  Explicitly, for $1\leq k\leq 9$ define
  \begin{displaymath}
  u_k(z)=\left\{
  \begin{array}{ll}
  g^{k+1}\overline{u_0(\sigma^{k+1}\bar z)} & \textrm{ $k$ odd},\\
  g^ku_0(\sigma^{-k}z) & \textrm{ $k$ even} .
  \end{array}
  \right.
  \end{displaymath}
  It is not hard to check that $u_k$ agrees with $u_{k+1}$ along $S_k\cap S_{k+1}$.
  For example, if $z \in \rr_+$, then
  $$u_9(z)=g^{10}\overline{u_0(\sigma^{10}\bar z)}=\overline{u_0(\bar z)}=u_0(z).$$
Gluing the 
  Gluing the $u_k$'s together gives a holomorphic map $\cc\setminus\sett{0}\to X$.
If $u_0$ is assumed to have finite energy, then this map extends to a holomorphic sphere
  $$u:\cc\cup\sett{\infty}=\cp^1\to X.$$

  \begin{lemma} \label{lemma:strips-symmetries}
  $u$ has the following symmetries:
  \begin{enumerate}
  \item $u(z)=\overline{u(\bar z)}$, and
  \item $u(\gamma z)=g^2u(z)$.
  \end{enumerate}
  \end{lemma}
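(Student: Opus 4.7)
The plan is to verify both symmetries sector by sector, using the explicit formulas defining $u_k$ on $S_k$, together with the algebraic identities $g^5 = 1$, $\sigma^{10} = 1$, $\gamma = \sigma^2$, and the crucial anti-equivariance relation $\tau \circ g = g^{-1} \circ \tau$ (equivalently $\overline{g\cdot p} = g^{-1}\cdot \bar p$), which holds because $g$ is given by multiplication of homogeneous coordinates by roots of unity.

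First I would record the geometric fact that complex conjugation $z \mapsto \bar z$ sends the sector $S_k$ bounded by $\sigma^k\rr_{\geq 0}$ and $\sigma^{k+1}\rr_{\geq 0}$ to $S_{9-k}$ (indices mod $10$), while multiplication by $\gamma = \sigma^2$ sends $S_k$ to $S_{k+2}$. In particular, conjugation swaps the parity of the sector index, whereas multiplication by $\gamma$ preserves it. This tells us which formula (the even-$k$ one or the odd-$k$ one) to apply to $\bar z$ and $\gamma z$ when $z \in S_k$.

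For symmetry (1), I would pick $z \in S_k$ and consider the two parity cases. When $k$ is even, $\bar z \in S_{9-k}$ with $9-k$ odd, so $u(\bar z) = g^{10-k}\,\overline{u_0(\sigma^{10-k}z)}$; taking the conjugate and pushing it past $g^{10-k}$ using $\overline{g\cdot p}=g^{-1}\bar p$ gives $g^{k-10}u_0(\sigma^{-k}z)$, and the identities $g^5 = 1$, $\sigma^{10}=1$ reduce this to $g^k u_0(\sigma^{-k}z) = u_k(z)$. The odd case is symmetric: $\bar z \in S_{9-k}$ with $9-k$ even, and the same algebra produces $g^{k+1}\overline{u_0(\sigma^{k+1}\bar z)} = u_k(z)$.

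For symmetry (2), the argument is even cleaner because $\gamma z$ stays in a sector of the same parity as $z$. When $k$ is even, $u(\gamma z) = g^{k+2}u_0(\sigma^{-(k+2)}\sigma^2 z) = g^2\cdot g^k u_0(\sigma^{-k}z) = g^2 u(z)$. When $k$ is odd, $u(\gamma z) = g^{k+3}\overline{u_0(\sigma^{k+3}\overline{\gamma z})}$, and since $\overline{\gamma z} = \sigma^{-2}\bar z$, the $\sigma$ powers cancel to leave $g^2\cdot g^{k+1}\overline{u_0(\sigma^{k+1}\bar z)} = g^2 u(z)$. Finally, the symmetries extend to $z = 0$ and $z = \infty$ by continuity, since $u$ is a holomorphic sphere. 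There is no serious obstacle here: everything is bookkeeping once the reflection identities are written out. The only point worth watching is applying $\overline{g\cdot p} = g^{-1}\bar p$ rather than $\overline{g\cdot p} = g\bar p$; without this anti-equivariance the two symmetries would not match up across adjacent sectors and the construction of $u$ itself would fail to be well-defined.
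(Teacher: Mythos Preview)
Your proof is correct, and for symmetry (2) it is essentially the same as the paper's: both verify $u(\gamma z)=g^2u(z)$ directly from the piecewise definition of $u_k$, with the paper writing out only the odd-$k$ case and leaving the even one to the reader.

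For symmetry (1), however, the paper takes a shorter route. Rather than checking sector by sector, it observes that $u$ maps $\rr_{\geq 0}$ into $L$, so $u(z)=\overline{u(\bar z)}$ holds on $\rr_{\geq 0}$; since both sides are holomorphic maps $\cp^1\to X$ (the right-hand side being a composition of two anti-holomorphic maps), the identity theorem gives equality everywhere. Your sector-by-sector verification is perfectly valid and has the virtue of being self-contained, but the analytic-continuation argument bypasses all the index bookkeeping and the explicit use of $\tau g = g^{-1}\tau$. In effect, the paper trades the algebraic identity $\overline{g\cdot p}=g^{-1}\bar p$ (which you rightly flag as the crucial ingredient in your approach) for the single geometric fact that $u(\rr_{\geq 0})\subset L$, which was already built into the construction.
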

  \begin{proof}
  $u$ maps $\rr_{\geq 0}$ into $L$, so (1) is true for $z\in\rr_{\geq 0}$.
  Since both sides of the equation are holomorphic, equality must hold everywhere.

  (2) can be checked directly from the definition:
  Suppose $z\in S_k$ and $k$ is odd.
  Then 
  $$g^2u(z)=g^2u_k(z)=g^{k+3}\overline{u_0(\sigma^{k+1}\bar z)}=g^{k+3}\overline{u_0(\sigma^{k+3}\bar\sigma^2\bar z)}=$$
  $$=u_{k+2}(\sigma^2z)=u(\sigma^2z)=u(\gamma z).$$
  The calculation for $k$ even is similar.
  \end{proof}

In fact, any holomorphic sphere with the symmetries of Lemma \ref{lemma:strips-symmetries} comes from a holomorphic strip.
\begin{lemma}\label{lemma:strips-symmetries_converse}
Suppose $u:\cp^1\to X$ is a holomorphic sphere that satisfies properties (1) and (2) in Lemma \ref{lemma:strips-symmetries}.
Then there exists a holomorphic strip $u_0:S_0\to X$ that gives $u$ using the construction above.
\end{lemma}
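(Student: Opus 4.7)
The plan is to simply set $u_0 := u|_{S_0}$ and check that (a) this restriction has the required Lagrangian boundary conditions, and (b) the reflection construction of Section \ref{section:strips-spheres} applied to $u_0$ reproduces $u$. The underlying idea is that the two symmetries (1) and (2) generate a dihedral action on $\cp^1$ of order $20$ (since $\gamma = \sigma^2$), with respect to which the ten sectors $S_0,\ldots,S_9$ together with their complex conjugate counterparts form a decomposition into fundamental domains.

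First I would verify the boundary conditions. For $z = r \in \rr_{\geq 0}$, property (1) gives $u(r) = \overline{u(r)}$, so $u(r) \in \mathrm{Fix}(\tau) = L$. For $z = \sigma r$ with $r \geq 0$, combining (1) and (2) yields
\begin{displaymath}
\overline{u(\sigma r)} = u(\overline{\sigma r}) = u(\sigma^{-1}r) = u(\gamma^{-1}\sigma r) = g^{-2}u(\sigma r).
\end{displaymath}
Setting $v = g^{-1}u(\sigma r)$ and using the intertwining relation $\tau \circ g = g^{-1}\circ \tau$ (i.e. $\overline{g^{-1}p} = g\bar p$) gives $\bar v = g\overline{u(\sigma r)} = g\cdot g^{-2}u(\sigma r) = g^{-1}u(\sigma r) = v$, so $v \in L$ and hence $u(\sigma r) \in gL$. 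So $u_0$ is a holomorphic strip with bottom boundary on $L$ and top boundary on $gL$, as required. Finite energy is automatic since $u_0$ is a restriction of the holomorphic sphere $u$.

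Next I would check that the reflection construction applied to $u_0$ recovers $u$. Given $z \in S_k$, write $z = \sigma^k w$ with $w \in S_0$. For $k$ even, $k = 2m$, the definition reads $\tilde u_k(z) = g^k u_0(\sigma^{-k}z) = g^{2m} u(w)$, and by iterating (2) we have $u(z) = u(\gamma^m w) = g^{2m}u(w)$, matching. For $k$ odd, $k = 2m+1$, a short computation using $\bar z = \sigma^{-k}\bar w$ shows $\sigma^{k+1}\bar z = \sigma\bar w \in S_0$, so
\begin{displaymath}
\tilde u_k(z) = g^{k+1}\overline{u(\sigma\bar w)} = g^{k+1}u(\sigma^{-1}w)
\end{displaymath}
by (1); and on the other hand $u(z) = u(\gamma^{(k+1)/2}\sigma^{-1}w) = g^{k+1}u(\sigma^{-1}w)$ by iterating (2), again matching.

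The proof is essentially bookkeeping with the symmetries; the only mildly delicate point is the boundary condition along $\sigma\rr_{\geq 0}$, which requires combining both symmetries together with $\tau g = g^{-1}\tau$. There is no genuine obstacle — once the restriction is made, the verification reduces to direct substitution.
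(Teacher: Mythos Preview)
Your proof is correct and takes the same approach as the paper: restrict $u$ to $S_0$ and verify the Lagrangian boundary conditions using the two symmetries together with $\tau g = g^{-1}\tau$. You additionally verify sector-by-sector that the reflection reproduces $u$, which the paper leaves implicit (it suffices because the reflected sphere agrees with $u$ on the open set $S_0$, hence everywhere by unique continuation).
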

\begin{proof}
Taking $u_0=u|S_0$ will give the desired strip if it can be shown that this $u_0$ has the correct boundary values.
Property (1) implies that $u(\rr_{\geq0})\subset L$.
It remains only to show that $u(\sigma\rr_{\geq0})\subset gL$.
Let $t\in\rr_{\geq0}$.
Using (2) gives
$$g^{-1}u(\sigma t)=g^{-1}u(\sigma^2\sigma^{-1} t)=gu(\sigma^{-1}t).$$
Then applying (1) gives
$$\overline{g^{-1}u(\sigma t)}=\overline{gu(\sigma^{-1}t)}=g^{-1}u(\sigma t).$$
Thus $g^{-1}u(\sigma t)\in L$, so $u(\sigma t)\in gL$.
\end{proof}

Any such $u$ is of the form
\begin{equation}\label{eq:polynomial_form_of_sphere}
u(z)=[u_0(z):u_1(z):u_2(z):u_3(z):u_4(z)]
\end{equation}
where each $u_i(z)$ is a polynomial.
(Thinking of the domain of $u$ as $\cc\cup\infty$, so the polynomials do not have to be homogeneous.)
\begin{lemma}\label{lemma:strips-polynomials}
Suppose $g=g_1^{a_1}g_2^{a_2}g_3^{a_3}g_4^{a_4}$.
Fix $j$ such that $u_j(0)\neq 0$, and for each $i$ let $k_i$ be such that $\gamma^{k_i}=\gamma^{-2a_j}$ and $5>k_i+2a_i\geq 0$.
Then $u_i$ can be taken to be of the form
\begin{displaymath}
u_i(z)=z^{k_i+2a_i}p_i(z^5)
\end{displaymath}
for some polynomial $p_i$ with real coefficients.
\end{lemma}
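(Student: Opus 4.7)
The strategy is to translate the two symmetries of Lemma \ref{lemma:strips-symmetries} into polynomial identities on the components $u_i$, and then perform a global projective rescaling to arrange that the polynomials have real coefficients.

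First I would exploit symmetry $(2)$. Passing to homogeneous coordinates $[z:w]$ on $\cp^1$, write the components of $u$ as dehomogenizations of homogeneous polynomials $U_i(z,w)$ of a common degree $d$. The equation $u(\gamma z)=g^{2}u(z)$ in $\cp^4$ then yields a single scalar $\Lambda(z,w)$ with $U_i(\gamma z,w)=\Lambda(z,w)\gamma^{2a_i}U_i(z,w)$ for all $i$ (recall $a_0=0$). Since both sides are homogeneous of degree $d$, the factor $\Lambda$ must be a constant $\lambda\in\cc^*$. Evaluating at $(z,w)=(0,1)$ and using the hypothesis $u_j(0)\neq 0$ pins down $\lambda=\gamma^{-2a_j}$. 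Consequently $u_i(\gamma z)=\gamma^{2(a_i-a_j)}u_i(z)$ for every $i$, which forces every coefficient $c_{i,k}$ in the expansion $u_i(z)=\sum_k c_{i,k}z^k$ to vanish unless $k\equiv 2(a_i-a_j)\equiv k_i+2a_i\pmod 5$. Since $k_i+2a_i$ is the unique representative of that congruence class in $[0,5)$, one obtains $u_i(z)=z^{k_i+2a_i}q_i(z^5)$ for some polynomial $q_i$ (possibly complex).

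Next I would invoke symmetry $(1)$. For a polynomial $q(z)=\sum c_k z^k$, let $\tilde q(z):=\overline{q(\bar z)}=\sum\bar c_k z^k$ denote the coefficient-conjugate. The same homogeneous-coordinates argument as above applied to $u(z)=\overline{u(\bar z)}$ yields a single constant $\mu\in\cc^*$ with $u_i(z)=\mu\,\tilde u_i(z)$ for every $i$. Applying the identity twice (and using $\widetilde{\tilde q}=q$ together with $\widetilde{c\cdot q}=\bar c\cdot\tilde q$) gives $\mu\bar\mu=1$, so $|\mu|=1$. Substituting the form from Step $1$ and noting that $z^{k_i+2a_i}$ is fixed by conjugation yields $q_i=\mu\,\tilde q_i$.

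Finally I would rescale. Choose $\nu\in\cc^*$ with $\nu^{2}=\mu^{-1}$ and $|\nu|=1$, and replace the chosen polynomial representation $(u_0,\dots,u_4)$ by $(\nu u_0,\dots,\nu u_4)$; this is a global scalar multiplication and hence does not alter the map into $\cp^4$. Correspondingly $q_i$ is replaced by $p_i:=\nu q_i$, and a short calculation using $\bar\nu=\nu^{-1}$ gives $\tilde p_i=\bar\nu\,\tilde q_i=\bar\nu\mu^{-1}q_i=\nu q_i=p_i$, so $p_i$ has real coefficients. This yields the required normal form $u_i(z)=z^{k_i+2a_i}p_i(z^5)$.

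The only subtle step is the assertion in Step $1$ (and its analogue in Step $2$) that the projective proportionality factor between the two sides of each symmetry is a genuine complex constant rather than a rational function of $z$; the passage to homogeneous coordinates on $\cp^1$ and the degree count handle this cleanly. Once that is in place, the remaining work is bookkeeping with congruences mod $5$ and the rescaling by a square root of $\mu$.
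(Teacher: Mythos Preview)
Your proof is correct and follows essentially the same route as the paper. The paper argues that the proportionality factor $c(z)$ is constant by comparing roots of $u_j(\gamma z)$ and $u_j(z)$, whereas you use the cleaner homogeneous-degree argument; both are standard. For the real coefficients, the paper simply asserts it follows from $u(\rr)\subset L$, while you carry out the rescaling by a square root of $\mu$ explicitly---your treatment here is more careful but amounts to the same thing.
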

\begin{proof}
Since $u(\gamma z)=g^2u(z)$ there exists, for every $z$, a constant $c(z)\in\cc^*$ such that $u_i(\gamma z)=c(z)\gamma^{2a_i}u_i(z)$ for each $i$.
Then
\begin{displaymath}
c(z)=\frac{u_j(\gamma z)}{\gamma^{2a_j}u_j(z)}
\end{displaymath}
is a degree 0 rational function.
If $u_j(\gamma z_0)=0$ but $u_j(z_0)\neq 0$ then $c(z_0)=0$, a contradiction.
Likewise if $u_j(z_0)=0$ then $u_j(\gamma z_0)=0$.
Thus $u_j(z)$ and $u_j(\gamma z)$ have the same roots, so $c(z)$ is actually a constant, call it $c$.
Since $u_j(0)\neq 0$, it follows that $c=c(0)=\gamma^{-2a_j}$.
Thus
\begin{displaymath}
u_i(\gamma z)=\gamma^{2a_i+k_i}u_i(z).
\end{displaymath}

Let $0\leq k<5$ be an integer and suppose that $p(z)$ is a polynomial such that
\begin{displaymath}
p(\gamma z)=\gamma^{k}p(z).
\end{displaymath}
Let $m$ be such that $p(z)=z^mq(z)$ and $q(0)\neq 0$.
Then
\begin{displaymath}
\gamma^m z^mq(\gamma z)=p(\gamma z)=\gamma^kp(z)=\gamma^k z^m q(z).
\end{displaymath}
Dividing by $z^m$ and plugging in $z=0$ gives $k\equiv m$ mod 5.
It follows that $q(\gamma z)=q(z)$ and so $q(z)=q_1(z^5)$ for some polynomial $q_1$.
Thus $p$ is of the form 
\begin{displaymath}
p(z)=z^kq_2(z^5).
\end{displaymath}

Therefore $u_i$ is of the form
\begin{displaymath}
u_i(z)=z^{2a_i+k_i}p_i(z^5).
\end{displaymath}
The $p_i$'s can be taken to have real coefficients because $u$ maps $\rr$ into $L$ and $L$ consists of the real points of $X$.
\end{proof}

Let $E_0=\int_{\cp^1}\omega$ for $\cp^1$ a line in $\cp^4$.
The next lemma provides a formula for the energy of holomorphic strips.
\begin{lemma}\label{lemma:strips-energy}
Suppose $u_0:S\to X$ is a holomorphic strip such that the corresponding sphere $u:\cc\to X$ is a polynomial in $z\in\cc$ of degree $k$.
Then the energy of $u_0$ is 
\begin{displaymath}
E(u_0)=kE_0/10.
\end{displaymath}
\end{lemma}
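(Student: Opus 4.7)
The plan is to build the sphere $u$ described in Section~\ref{section:strips-spheres} from the strip $u_0$, compute its energy in two different ways, and compare.

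First, I would show that $E(u)=10\,E(u_0)$ by exploiting the symmetry of the reflection construction. On each sector $S_k=\sigma^k S_0$, the map $u_k$ is obtained by pre-composing $u_0$ with a Euclidean isometry of $\cc$ (the rotation $z\mapsto\sigma^{-k}z$ for $k$ even, or the reflection $z\mapsto\sigma^{k+1}\bar z$ for $k$ odd) and post-composing with an isometry of the K\"ahler metric $\omega(\cdot,J\cdot)$ on $X$: either $g^k$ (since the $G$-action is holomorphic and symplectic) or $g^{k+1}\circ\tau$ (since $\tau$ is an anti-holomorphic isometry). Because Euclidean isometries preserve $dsdt$ and the target post-compositions preserve $|du|^2$, the positive density $|du_k|^2\,dsdt$ on $S_k$ pulls back to $|du_0|^2\,dsdt$ on $S_0$, so summing over the ten sectors gives
\begin{displaymath}
E(u)=\sum_{k=0}^{9}\int_{S_k}|du_k|^2\,dsdt = 10\,E(u_0).
\end{displaymath}

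Second, I would compute $E(u)$ from the degree of $u$. Homogenizing the formulas in Lemma~\ref{lemma:strips-polynomials} exhibits $u$ as a holomorphic map $\cp^1\to\cp^4$ defined by homogeneous polynomials of degree $k$, so $u_*[\cp^1]$ is $k$ times the class of a projective line in $\cp^4$. Since $\omega$ on $X$ is the restriction of the Fubini--Study form, this gives $\int_{\cp^1}u^*\omega=kE_0$. As $u$ is $J$-holomorphic, its energy equals its symplectic area (in the paper's chosen normalization), so $E(u)=kE_0$. Combining with the first step yields $E(u_0)=kE_0/10$.

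The subtle step is the odd-$k$ case of the first computation: the orientation-reversing domain reflection $z\mapsto\sigma^{k+1}\bar z$ and the anti-symplectic involution $\tau$ each individually flip a sign, but the two flips cancel so that after change of variables the metric density $|du_k|^2\,dsdt$ is unambiguously positive and equal to $|du_0|^2\,dsdt$. Once this compatibility is verified, the factor of $10$ is automatic and the rest of the argument is immediate.
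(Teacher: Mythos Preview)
Your proof is correct and follows essentially the same approach as the paper's: show $E(u)=kE_0$ from the degree, show $E(u_k)=E(u_0)$ for each sector from the isometry properties of the reflection construction, and divide by ten. The paper compresses the first step to a single clause (``$u$ is a holomorphic sphere of degree $k$ so $E(u)=kE_0$'') and the second to ``from the formula for $u_k$ in terms of $u_0$ it is clear that $E(u_k)=E(u_0)$''; your version simply unpacks these, and your remark about the two sign reversals cancelling in the odd-$k$ case is exactly the point underlying the paper's ``it is clear.''
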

\begin{proof}
$u$ is a holomorphic sphere of degree $k$ so $E(u)=kE_0$.
From the formula for $u_k$ in terms of $u_0$ it is clear that $E(u_k)=E(u_0)$.
Therefore $E(u)=10E(u_0)$.
\end{proof}

Finally, recall that if $u_0:\rr\times[0,1]\to X$ is a holomorphic strip then $u_0$ can be translated by $s_0\in\rr$ to get another holomorphic strip $u_0^{s_0}$.
The formula for $u_0^{s_0}$ is $u_0^{s_0}(s,t)=u_0(s+s_0,t)$.
\begin{lemma}\label{lemma:strips-r_translation}
If the domain of $u_0$ is though of as $S_0$ instead of $S$, then the formula for $u_0^{s_0}$ is
\begin{displaymath}
u_0^{s_0}(z)=u(e^{\pi s_0/5}z).
\end{displaymath}
\end{lemma}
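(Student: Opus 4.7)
The plan is to reduce the claim to unpacking the biholomorphism between the strip $S=\mathbb{R}\times[0,1]$ and the sector $S_0$, which is given at the start of Section~\ref{section:strips-spheres} by $\phi:S\to S_0$, $\phi(s+it)=e^{\pi(s+it)/5}$. The translation action $s_0\cdot(s+it)=(s+s_0)+it$ on $S$ transports through $\phi$ to an action on $S_0$, and the whole lemma is simply identifying this action.

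First I would compute directly
\begin{equation*}
\phi((s+s_0)+it)=e^{\pi(s+s_0)/5}e^{\pi it/5}=e^{\pi s_0/5}\cdot\phi(s+it),
\end{equation*}
so on $S_0$ the translation by $s_0$ becomes multiplication by the positive real number $\rho:=e^{\pi s_0/5}$. Writing $u_0^{s_0}$ as the pullback by translation and then pushing forward through $\phi$, this gives $u_0^{s_0}(z)=u_0(\rho z)$ for every $z\in S_0$. Since $\rho>0$ and $S_0$ is a sector based at the origin, $\rho z\in S_0$, so the right-hand side is well-defined as a value of $u_0$.

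Finally I would invoke the fact, from the construction right before Lemma~\ref{lemma:strips-symmetries}, that $u$ restricted to $S_0$ equals $u_0$. Hence $u_0(\rho z)=u(\rho z)$, giving the stated formula $u_0^{s_0}(z)=u(e^{\pi s_0/5}z)$. There is no real obstacle here; the only thing to be careful about is checking that $\rho z$ stays in $S_0$ so that the identification $u|_{S_0}=u_0$ can be applied without needing to use the reflection formulas for $u_k$ on the other sectors.
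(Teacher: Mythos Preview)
Your proposal is correct and follows exactly the approach of the paper: the paper's proof is a single sentence pointing to the biholomorphism $s+it\mapsto e^{\pi(s+it)/5}$, and you have simply spelled out the computation it implies. Your additional remarks about $\rho z$ remaining in $S_0$ and the identification $u|_{S_0}=u_0$ are reasonable clarifications, but the underlying argument is identical.
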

\begin{proof}
This follows from the fact that the biholomorphism $S\to S_0$ is $s+it\mapsto e^{\pi(s+it)/5}$,
\end{proof}

\section{Cokernels}
Suppose $u\in W^{1,p}(L,gL)$ is a holomorphic map.
The domain of $u$ is $S=\rr\times[0,1]$ with coordinate $s+it$.
The linearization of $\dbar$ is the operator
\begin{equation}\label{eq:cokernel-linearized_operator}
D_u\dbar=D\dbar:T_uW^{1,p}(L,gL)\to\mathcal E^{0,p}_u.
\end{equation}
(The notation is as in Section \ref{section:moduli_spaces-bott_morse}.)
In this section we develop a method that can be used to calculate the cokernel of $D\dbar$.

Recall that $X$ is equipped with the standard symplectic form $\omega$ and the standard complex structure $J$.
Furthermore, the Levi-Civita connection $\nabla$ is used to define all parallel translations and linearizations.
Also, $W^{1,p}(L,gL)=W^{1,p}_{0,0}(L,gL)$ denotes the space of strips with bottom boundary on $L$ and top boundary on $gL$ and no marked points (other than $\pm\infty$).

\subsection{Calculation of the cokernel}
To start with, consider the operator
\begin{eqnarray}
\nonumber&(u^*\nabla)^{0,1}=\nabla^{0,1}:W^{1,p;\delta}_\lambda(S,u^*TX)\to L^{p;\delta}(S,\Lambda^{0,1}\otimes u^*TX),&\\
\nonumber&\xi\mapsto (\nabla_s\xi+J(u)\nabla_t\xi)\otimes (ds-idt).&
\end{eqnarray}

\begin{lemma}\label{lemma:dual}
The dual of $L^{p;\delta}$ can be identified with $L^{q;\delta}$ ($\frac{1}{p}+\frac{1}{q}=1$).
The pairing is
\begin{eqnarray*}
&\langle\cdot,\cdot\rangle:L^{p;\delta}\times L^{q;\delta}\to \rr,&\\
&\xi,\eta\mapsto \int \xi \eta e^{\delta}.&
\end{eqnarray*}
\end{lemma}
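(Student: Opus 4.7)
The plan is to reduce the weighted duality to the standard unweighted $L^p$-$L^q$ duality by means of a weight-absorbing isometric isomorphism. Define
\begin{eqnarray*}
\Phi_p : L^{p;\delta} &\to& L^p,\\
\xi &\mapsto & e^{\delta/p}\,\xi,
\end{eqnarray*}
and similarly $\Phi_q : L^{q;\delta} \to L^q$, $\eta\mapsto e^{\delta/q}\,\eta$. A direct computation using the definition of the weighted norm shows that $\Phi_p$ and $\Phi_q$ are isometric linear isomorphisms. First I would verify this: since $\|\xi\|_{L^{p;\delta}}^p = \int |\xi|^p e^\delta\,ds\,dt = \int |e^{\delta/p}\xi|^p\,ds\,dt = \|\Phi_p(\xi)\|_{L^p}^p$, surjectivity and injectivity are clear because $e^{\delta/p}$ is a positive measurable function, so $\xi\mapsto e^{-\delta/p}\,(\Phi_p\xi)$ is a two-sided inverse.

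Next I would invoke the classical duality $(L^p)^* \cong L^q$ with pairing $\langle f,g\rangle_0=\int fg\,ds\,dt$. Under the above identifications, any continuous linear functional $\Lambda : L^{p;\delta}\to \mathbb{R}$ corresponds uniquely to a continuous linear functional $\Lambda\circ \Phi_p^{-1} : L^p\to\mathbb{R}$, and hence to a unique $\tilde\eta \in L^q$ with $\Lambda(\xi)=\int (\Phi_p\xi)\,\tilde\eta\,ds\,dt$. Setting $\eta = \Phi_q^{-1}(\tilde\eta) = e^{-\delta/q}\tilde\eta\in L^{q;\delta}$, one computes
\begin{eqnarray*}
\Lambda(\xi) &=& \int e^{\delta/p}\xi\cdot e^{\delta/q}\eta\,ds\,dt\\
&=& \int \xi\,\eta\, e^{\delta(1/p+1/q)}\,ds\,dt\\
&=& \int \xi\,\eta\,e^{\delta}\,ds\,dt,
\end{eqnarray*}
using $1/p+1/q=1$. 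This is precisely the claimed pairing.

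Finally, to complete the identification I would check that the correspondence $\eta\mapsto \Lambda_\eta$ given by $\Lambda_\eta(\xi)=\int \xi\eta e^\delta$ is bounded and injective: boundedness follows from H\"older's inequality applied to $e^{\delta/p}\xi\in L^p$ and $e^{\delta/q}\eta\in L^q$, which gives $|\Lambda_\eta(\xi)|\leq \|\xi\|_{L^{p;\delta}}\|\eta\|_{L^{q;\delta}}$, and injectivity (together with the computation of the operator norm) follows from the corresponding facts for the ordinary $L^p$-$L^q$ pairing under $\Phi_p,\Phi_q$. The construction is essentially a change of variables, so I do not anticipate any real obstacle; the only thing to keep track of is the split $\delta = \delta/p + \delta/q$ that makes the weight factor distribute correctly between the two factors of the pairing.
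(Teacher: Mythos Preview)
Your proof is correct and follows essentially the same approach as the paper: both define the isometric isomorphisms $L^{p;\delta}\to L^p$, $\xi\mapsto e^{\delta/p}\xi$ (and similarly for $q$), invoke the standard $L^p$--$L^q$ duality, and pull back the pairing using $1/p+1/q=1$. The paper's version is simply more terse, omitting the routine verifications you spell out.
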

\begin{proof}
The maps 
\begin{eqnarray*}
&&L^{p;\delta}\to L^{p},\,\xi\mapsto\xi e^{\delta/p}\\
&&L^{q;\delta}\to L^{q},\,\eta\mapsto\eta e^{\delta/q},\\
\end{eqnarray*}
are isomorphisms.
$L^q$ is the dual of $L^p$, and the pairing is $(\xi,\eta)\mapsto \int \xi\eta$.
Pulling back the pairing by the above isomorphisms proves the lemma.
\end{proof}

\begin{corollary}
The dual of $L^{p;\delta}(S,\Lambda^{0,1}\otimes u^*TX)$ can be identified with $L^{q;\delta}(S,\Lambda^{0,1}\otimes u^*TX)$.
The pairing is
\begin{equation}\label{eq:cokernel-pairing}
(\xi\otimes (ds-idt),\eta\otimes (ds-idt))\in L^{p;\delta}\times L^{q;\delta}\mapsto \int_S g(\xi,\eta) e^\delta,
\end{equation}
where $g$ is the K\"ahler metric on $TX$.
\end{corollary}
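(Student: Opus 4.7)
The plan is to reduce the corollary to Lemma \ref{lemma:dual} by trivializing the complex line bundle $\Lambda^{0,1}(S)$ and then using the K\"ahler metric $g$ to identify fibers of $u^*TX$ with their duals. Concretely, because $S=\rr\times[0,1]$ is flat with global holomorphic coordinate $s+it$, the section $ds-idt$ is a nowhere-vanishing global section of $\Lambda^{0,1}(S)$. Hence every section of $\Lambda^{0,1}\otimes u^*TX$ is uniquely of the form $\xi\otimes(ds-idt)$ for some section $\xi$ of $u^*TX$, and the assignment
\[
\Phi:\xi\mapsto \xi\otimes(ds-idt)
\]
induces (up to the constant normalization $|ds-idt|=\sqrt{2}$ coming from the standard Hermitian structure on $\Lambda^{0,1}$) an isometric isomorphism $L^{p;\delta}(S,u^*TX)\cong L^{p;\delta}(S,\Lambda^{0,1}\otimes u^*TX)$, and likewise for $q$.

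The next step is to apply Lemma \ref{lemma:dual} to the $u^*TX$-valued spaces. The only modification required is that the pointwise product $\xi\eta$ of real-valued functions appearing in Lemma \ref{lemma:dual} must be replaced, in the vector-valued setting, by the pointwise real inner product $g(\xi,\eta)$ on $u^*TX$, using that $g=\omega(\cdot,J\cdot)$ is the K\"ahler metric. Combining $\Phi$ with this pairing yields exactly \eqref{eq:cokernel-pairing}.

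It remains to check the three standard properties of a dual pairing. Boundedness follows from H\"older's inequality applied after writing
\[
\Bigl|\int_S g(\xi,\eta)\,e^{\delta}\Bigr|\le \int_S |\xi||\eta|\,e^{\delta}=\int_S\bigl(|\xi|e^{\delta/p}\bigr)\bigl(|\eta|e^{\delta/q}\bigr),
\]
which is bounded by $\|\xi\|_{L^{p;\delta}}\|\eta\|_{L^{q;\delta}}$. Non-degeneracy follows fiberwise from positive definiteness of $g$: if $\int g(\xi,\eta)e^{\delta}=0$ for all $\eta$, one tests against $\eta=\xi|\xi|^{p-2}$ (a standard duality trick) to conclude $\xi=0$ a.e. Surjectivity onto the dual is obtained by trivializing $u^*TX$ locally by an orthonormal frame, applying the scalar Lemma \ref{lemma:dual} component-wise, and then gluing via a partition of unity.

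No step is really an obstacle — the corollary is essentially bookkeeping on top of Lemma \ref{lemma:dual}. The only place where a bit of care is required is the bundle isomorphism $\Phi$: one must remember that the natural norm on $\Lambda^{0,1}\otimes u^*TX$ uses the Hermitian metric on $\Lambda^{0,1}$, so that $|\xi\otimes(ds-idt)|^p=2^{p/2}|\xi|^p$, and check that this constant is absorbed consistently (either by rescaling $\Phi$ or by keeping track of it in the final formula), so that \eqref{eq:cokernel-pairing} identifies the Banach-space dual up to a harmless overall constant which plays no role in subsequent cokernel computations.
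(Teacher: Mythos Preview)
Your argument is correct in spirit and reaches the right conclusion, but it is more elaborate than necessary and the surjectivity step glosses over a point that deserves attention on the non-compact domain $S=\rr\times[0,1]$: a local-trivialization-plus-partition-of-unity argument must control the gluing uniformly across infinitely many charts, and you do not say why this works with the weight $e^{\delta}$.

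The paper's proof avoids this entirely by exploiting a feature of the specific setup: the holomorphic strip $u$ extends continuously to the compactified strip $\overline{\rr}\times[0,1]$, so one can choose a single global trivialization $u^*TX\cong(\rr\times[0,1])\times\cc^3$ that also extends to the compactification. The pulled-back metric $g$ is then continuous on a compact set, hence uniformly equivalent to the Euclidean metric. With that in hand, the corollary is an immediate consequence of the scalar Lemma~\ref{lemma:dual} applied componentwise --- no partition of unity, no local frames, no tracking of constants. Your trivialization of $\Lambda^{0,1}$ via $ds-idt$ is the same as the paper's implicit use of it, and your boundedness and non-degeneracy arguments are fine; the difference lies solely in how you handle the $u^*TX$ factor.
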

\begin{proof}
$u$ extends continuously to $\overline{\rr}\times[0,1]$.
Choose a trivialization $u^*TX\cong \rr\times[0,1]\times \cc^3$ that extends to $\overline{\rr}\times[0,1]$.
Then the metric $g$ on $u^*TX$ is equivalent to the euclidean metric.
The corollary then follows from the previous lemma.
\end{proof}

\begin{lemma}\label{lemma:cokernel-eta}
Suppose $\eta\otimes (ds-idt)\in L^{q;\delta}(S,\Lambda_S^{0,1}\otimes u^*TX)$ vanishes on $\textrm{\Image }\nabla^{0,1}$.
Then
$$(\nabla_s-J(u)\nabla_t)(\eta e^\delta)=0$$
and $\eta(\cdot,0)\in TL,\eta(\cdot,1)\in TgL$.
\end{lemma}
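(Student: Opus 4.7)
The plan is to test the vanishing condition
\begin{displaymath}
0 = \int_S g\bigl(\nabla_s\xi + J(u)\nabla_t\xi,\ \eta\bigr)\,e^{\delta}\,ds\,dt
\end{displaymath}
(which is the content of the hypothesis via the pairing (\ref{eq:cokernel-pairing})) against suitably chosen $\xi \in W^{1,p;\delta}_\lambda(S,u^*TX)$, and then extract the PDE by localising in the interior and the Lagrangian boundary conditions by localising near $t=0$ and $t=1$.

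First I would integrate by parts in $s$ and in $t$. Because $\nabla$ is the Levi-Civita connection of a K\"ahler metric one has $\nabla J = 0$, and because $g=\omega(\cdot,J\cdot)$ the complex structure is anti-self-adjoint with respect to $g$, i.e.\ $g(JX,Y)=-g(X,JY)$. Combining these with the Leibniz rule (and using that only $e^{\delta}$ depends on $s$ nontrivially, not on $t$) gives the formula
\begin{displaymath}
0 = -\int_S g\bigl(\xi,\,(\nabla_s - J\nabla_t)(\eta e^{\delta})\bigr)\,ds\,dt \;-\; \int_{\mathbb{R}} \bigl[g(\xi, J\eta)\,e^{\delta}\bigr]_{t=0}^{t=1}\,ds.
\end{displaymath}
The $s$-boundary terms at $\pm\infty$ drop because the weights force $\xi\in W^{1,p;\delta}_\lambda$ and $\eta\in L^{q;\delta}$ to decay exponentially there. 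To handle the fact that $\eta$ is only in $L^{q;\delta}$, I would first approximate $\eta$ by smooth compactly supported sections in the weighted norm, or equivalently interpret the identity distributionally.

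Next I would choose $\xi$ compactly supported in the open strip, where the boundary integral vanishes automatically. The remaining identity forces $(\nabla_s - J\nabla_t)(\eta e^{\delta}) = 0$ as a distribution. Elliptic regularity for this Cauchy--Riemann--type operator then promotes $\eta e^{\delta}$, hence $\eta$, to a smooth section up to the boundary, which both validates the integration by parts a posteriori and gives the first conclusion.

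Finally, with the interior integral now zero for every admissible $\xi$, the boundary integral alone must vanish for every $\xi$ with $\xi(\cdot,0)\in u(\cdot,0)^*TL$ and $\xi(\cdot,1)\in u(\cdot,1)^*TgL$. Using $g(v,Jw) = -\omega(v,w)$, the vanishing of the $t=0$ term reads $\omega(\eta(s,0),v)=0$ for every $v\in T_{u(s,0)}L$, and likewise at $t=1$ for $T_{u(s,1)}gL$. Since $L$ and $gL$ are Lagrangian their $\omega$-orthogonal complements coincide with themselves, so $\eta(\cdot,0)\in TL$ and $\eta(\cdot,1)\in TgL$, completing the proof.

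The main obstacle is rigorously justifying the integration by parts when $\eta$ is only $L^{q;\delta}$, and extracting a trace on the horizontal boundaries; this is a standard elliptic regularity argument for $\dbar$-type operators with totally real boundary conditions, but is the one step requiring care rather than calculation.
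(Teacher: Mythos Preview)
Your proposal is correct and follows essentially the same approach as the paper: first test against $\xi$ compactly supported in the interior to obtain the PDE $(\nabla_s - J\nabla_t)(\eta e^{\delta})=0$ via integration by parts, then test against $\xi$ satisfying the Lagrangian boundary conditions to extract $\eta(\cdot,0)\in TL$ and $\eta(\cdot,1)\in TgL$ from the surviving boundary term and the Lagrangian property. The paper's proof is slightly terser and does not explicitly discuss the regularity bootstrap you mention, but the strategy and computation are the same.
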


\begin{proof}
First, suppose that $\xi\in W^{1,p;\delta}_\lambda$ is smooth and has compact support contained in the interior of $S$.
Then
\begin{eqnarray*}
0&=&\int_S \del_s(g(\xi,\eta )e^\delta)+\del_t(g(J\xi,\eta )e^\delta)=\int_S g(\nabla_s\xi,\eta )e^\delta+g(\xi,\nabla_s(\eta e^\delta))\\
&&+g(J\nabla_t\xi,\eta)e^\delta+g(J\xi,\nabla_t(\eta e^\delta))\\
&=&\int_S g((\nabla_s+J\nabla_t)\xi,\eta)e^\delta+\int_Sg(\xi,(\nabla_s-J\nabla_t)(\eta e^\delta))\\
&=&0+\int_Sg(\xi,(\nabla_s-J\nabla_t)(\eta e^\delta)).
\end{eqnarray*}
Thus $(\nabla_s-J(u)\nabla_t)(\eta e^\delta)=0$.

Now, suppose that $\xi$ is smooth and has compact support (i.e. $\xi$ does not necessarily vanish along the top and bottom boundaries of $S$).
Then the first integral above is not $0$ but instead is
\begin{displaymath}
\int_\rr g(J\xi(s,1),\eta(s,1))e^\delta-g(J\xi(s,0),\eta(s,0))e^\delta.
\end{displaymath}
Thus $\eta(s,1)\in TL_1$ and $\eta(s,0)\in TL_0$.
\end{proof}

Let $h=g-i\omega$ be the Hermitian metric on $(TX,J)$.
Let $\eta$ be as in Lemma \ref{lemma:cokernel-eta} and define $\check\eta\in C^\infty(S,u^*T^*X)$ by
$$\check\eta(\xi)=h(\xi,\eta).$$
$\check\eta$ is a section of $\Hom_\cc(u^*TX,\cc)=:u^*T_{\cc}^*X$ (i.e. the complex dual of $u^*TX$ with respect to the complex structure $J$).
\begin{lemma}
$$(\nabla_s+i\nabla_t)(e^\delta\check\eta)=0.$$
\end{lemma}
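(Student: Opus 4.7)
The plan is to convert everything to an intrinsic statement about the Hermitian pairing $h$ and then use that the Levi–Civita connection on a Kähler manifold preserves $h$.

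First I would absorb the weight into $\eta$: set $\tilde\eta = e^\delta \eta$. Since $\delta$ depends only on $s$, the hypothesis $(\nabla_s - J\nabla_t)(\eta e^\delta)=0$ reads simply
\begin{displaymath}
\nabla_s \tilde\eta - J\nabla_t \tilde\eta = 0.
\end{displaymath}
Observe that $e^\delta \check\eta = \check{\tilde\eta}$, where by definition $\check{\tilde\eta}(\xi) = h(\xi,\tilde\eta)$. Thus the problem is reduced to proving
\begin{displaymath}
(\nabla_s + i\nabla_t)\check{\tilde\eta}=0,
\end{displaymath}
with the connection on $u^*T^*_{\cc}X$ understood as the complex dual of the connection on $u^*TX$.

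Next I would unpack the dual connection pointwise. For any test section $\xi$,
\begin{displaymath}
(\nabla_s \check{\tilde\eta})(\xi) = \partial_s\bigl(h(\xi,\tilde\eta)\bigr) - h(\nabla_s\xi,\tilde\eta).
\end{displaymath}
Because $g$, $\omega$, and $J$ are all parallel for the Levi–Civita connection on the Kähler manifold $X$, the Hermitian form $h=g-i\omega$ is parallel as well, giving the Leibniz identity $\partial_s h(\xi,\tilde\eta) = h(\nabla_s\xi,\tilde\eta)+h(\xi,\nabla_s\tilde\eta)$. Hence $(\nabla_s \check{\tilde\eta})(\xi) = h(\xi,\nabla_s\tilde\eta)$, and the same computation gives $(\nabla_t \check{\tilde\eta})(\xi) = h(\xi,\nabla_t\tilde\eta)$.

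Finally I would combine the two using the conjugate-linearity of $h$ in its second slot. The compatibility $g(\cdot,\cdot)=\omega(\cdot,J\cdot)$ yields $h(\xi,J\eta)=-i\,h(\xi,\eta)$, and equivalently $i\,h(\xi,v)=h(\xi,-Jv)$. Therefore
\begin{displaymath}
\bigl((\nabla_s + i\nabla_t)\check{\tilde\eta}\bigr)(\xi) = h(\xi,\nabla_s\tilde\eta) + i\,h(\xi,\nabla_t\tilde\eta) = h\bigl(\xi,\,\nabla_s\tilde\eta - J\nabla_t\tilde\eta\bigr) = 0,
\end{displaymath}
where the last equality is the rewritten hypothesis. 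Since $\xi$ is arbitrary, $(\nabla_s + i\nabla_t)(e^\delta \check\eta)=0$. There is no real obstacle here; the only thing to be careful about is getting the sign right in $h(\xi,J\eta)=-ih(\xi,\eta)$, which is exactly what turns the $(-J)$ in the Cauchy–Riemann operator for $\tilde\eta$ into the $(+i)$ appearing in the Cauchy–Riemann operator for $\check{\tilde\eta}$.
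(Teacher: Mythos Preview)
Your proof is correct and follows essentially the same approach as the paper: evaluate the dual connection on a test section $\xi$, use that $h$ is parallel for the Levi--Civita connection on a K\"ahler manifold, and convert the $+i$ to $-J$ via conjugate-linearity in the second slot to reduce to the hypothesis $(\nabla_s - J\nabla_t)(e^\delta\eta)=0$. The only cosmetic difference is that the paper carries $e^\delta$ through the computation rather than absorbing it into $\tilde\eta$ first, and combines the $s$- and $t$-derivatives in one line.
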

\begin{proof}
Let $\xi$ be a section of $u^*TX$.
Then
\begin{eqnarray*}
&&(\nabla_s+i\nabla_t)(e^\delta\check\eta)(\xi)=(\del_s+i\del_t)(e^\delta\check\eta(\xi))-e^\delta\check\eta((\nabla_s+J\nabla_t)\xi)=\\
&&h((\nabla_s+J\nabla_t)\xi,e^\delta\eta)+h(\xi,(\nabla_s-J\nabla_t)(e^\delta\eta))-h((\nabla_s+J\nabla_t)\xi,e^\delta\eta)=0.
\end{eqnarray*}
\end{proof}

Since $u$ is holomorphic and $X$ is K\"ahler, the Dolbeault operator on $u^*T_\cc^*X$ is $\nabla^{0,1}$.
Thus the previous lemma says that $e^\delta\check\eta$ is a holomorphic section of $u^*T_\cc^*X$.
Moreover, $\check\eta\in L^{q;\delta}$.

Recall that $u$ can be extended to a map $\cp^1\to X$.
To make the notation clearer, let $u_0$ take the place of $u$, and let $u$ be $u_0$ extended to $\cc P^1$.
So $u_0$ is a map $S=\rr\times[0,1]\to X$ and $u$ is a map $\cp^1\to X$.
Also, let $\eta_0$ take the place of $\eta$, so $\eta_0$ is a section of $u_0^*TX$, and $e^\delta\check\eta_0$ is a section of $u_0^*T_\cc^*X$.

Now let 
$$C=\rr\times[0,10]/(s,0)\sim(s,10)=\rr\times(\rr/10\z).$$
Note that $S$ is contained in $C$ in the obvious way.
The same biholomorphism that takes $S$ to $S_0\subset\cc$ takes $C$ to $\cc^*\subset \cc$.
Thus the map $u$ can be restricted to $C$, and is simply the extension of $u_0$ to $C$.
$e^\delta\check\eta_0$ can be extended to a holomorphic section $e^\delta\check\eta$ of $u^*T_\cc^* X$ over $C$:
For $1\leq k\leq 9$, $k\leq t\leq k+1$ and $\xi$ a tangent vector, define
\begin{equation}
\check\eta_k(s,t)(\xi)=\left\{
\begin{array}{ll}
\check\eta_k(s,t)(\xi)=\overline{\check\eta_0(s,k+1-t)(d\tau^{-1}(dg^{k+1})^{-1}\xi)}&,\textrm{ $k$ odd,}\\
\check\eta_k(s,t)(\xi)=\check\eta_0(s,t-k)((dg^k)^{-1}\xi)&,\textrm{ $k$ even.}
\end{array}
\right.
\end{equation}

\begin{lemma}\label{lemma:cokernel-symmetries}
$\check\eta$ has the following properties:
\begin{enumerate}
\item $\check\eta(s,t)(\xi)=\overline{\check\eta(s,-t)(d\tau \eta)}$, and
\item $\check\eta(s,t+2)(\xi)=\check\eta(s,t)((dg^2)^{-1}\xi)$.
\end{enumerate}
\end{lemma}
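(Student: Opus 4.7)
The plan is to verify both symmetries by direct case-by-case computation from the piecewise definition of $\check\eta$. Each property is the infinitesimal shadow of a corresponding symmetry of the sphere $u$ in Lemma \ref{lemma:strips-symmetries}; indeed, the formulas defining $\check\eta_k$ for $1 \leq k \leq 9$ were set up precisely so that $e^\delta \check\eta$ extends to a global holomorphic section of $u^* T_\cc^* X$ over the cylinder $C$, and the two symmetries are residues of that global extension.

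First I would dispatch (2). Fix $t \in [k, k+1]$ for some $0 \leq k \leq 9$, so that $(s,t) \in S_k$. When $k \leq 7$, the point $(s, t+2)$ lies in $S_{k+2}$, and since $k$ and $k+2$ have the same parity, both $\check\eta_k$ and $\check\eta_{k+2}$ are given by formulas of the same type; substituting and using $(dg^k)^{-1}(dg^2)^{-1} = (dg^{k+2})^{-1}$ collapses the two sides to a common expression in $\check\eta_0$. When $k \in \{8, 9\}$ one has $t + 2 \equiv t - 8 \pmod{10}$, and the wrap-around produces a factor of $(dg^{10})^{-1}$, which is trivial because $g^5 = 1$.

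Property (1) is slightly more intricate. If $t \in [k, k+1]$ then $-t \equiv 10 - t \pmod{10}$ lies in $[9-k, 10-k] \subset S_{9-k}$, and $k$ and $9-k$ have opposite parity; this mismatch is exactly what makes one side of the identity acquire a complex conjugate while the other does not. Substituting the formulas and using $d\tau^{-1} = d\tau$ reduces the identity, in each parity case, to the fundamental relation
\begin{displaymath}
d\tau \circ dg^m \circ d\tau = dg^{-m},
\end{displaymath}
obtained by differentiating $\tau g \tau^{-1} = g^{-1}$, together with the congruence $9 - k + (k + 1) = 10 \equiv 0 \pmod{5}$ that reconciles the exponents of $g$ appearing on the two sides.

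The only real obstacle is bookkeeping: one must keep careful track of the parity of $k$, the $d\tau$- and $dg^k$-twists built into the piecewise formulas, and the behavior as the shifted coordinate crosses the seam $t = 0 \sim 10$. No new analytic input is needed; the symmetries are direct consequences of the way $\check\eta$ was assembled in the first place.
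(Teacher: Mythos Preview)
Your case-by-case verification is correct and matches the direct computation the paper carries out for one representative case (property~(1) with $k$ odd; note the paper's label ``property (2)'' at that point is a typo). However, the paper's primary argument is different and shorter: it first checks that both identities hold along the line $t=0$---property~(2) is immediate from the definition of $\check\eta_2$, and property~(1) reduces at $t=0$ to $\check\eta_0(s,0)(\xi)=\overline{\check\eta_0(s,0)(d\tau\xi)}$, which follows from the boundary condition $\eta_0(\cdot,0)\in TL$ and the anti-isometry property $h(d\tau a,d\tau b)=\overline{h(a,b)}$---and then invokes holomorphicity of $e^\delta\check\eta$ to propagate the identities to all $t$ via the identity principle. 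Your route is more elementary in that it never appeals to holomorphicity, and it has the side benefit of directly confirming that the pieces $\check\eta_k$ glue consistently; the paper's route is cleaner once one knows the extension is holomorphic, since it collapses all the parity and wrap-around bookkeeping into a single analytic continuation step.
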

\begin{proof}
Properties (1) and (2) are easily seen to hold for $t=0$.
Since $e^\delta\check\eta$ is holomorphic, they must hold for all $t$.
For property (2), suppose $k\leq t\leq k+1$ and $k$ is odd, $0\leq k\leq 9$.
Then $-t=10-t$ mod $10$, and $j\leq 10-t\leq j+1$ with $j=9-k$ even, $0\leq j\leq 9$.
Thus
$$
\overline{\check\eta(s,-t)(d\tau \eta)}=\overline{\check\eta_j(s,10-t)(d\tau\eta)}=\overline{\check\eta_0(s,10-t-j)((dg^j)^{-1}d\tau\eta)}=
$$
$$
\overline{\check\eta_0(s,k+1-t)(dg^{k+1}d\tau\eta)}=\overline{\check\eta_0(s,k+1-t)(d(g^{k+1}\tau)\eta)}=
$$
$$
\overline{\check\eta_0(s,k+1-t)(d(\tau g^{-k-1})\eta)}=\check\eta(s,t)(\eta).
$$
\end{proof}

Let $\zeta=e^\delta\check\eta$.
Then $e^{-\delta}\zeta\in L^{q;\delta}$, that is
\begin{displaymath}
\int_C|\zeta|^qe^{(1-q)\delta}<\infty.
\end{displaymath}
Here the norm is with respect to the Hermitian metric induced on $T_\cc^*X$ by the metric $h$ on $TX$, i.e. $|h(\cdot,v)|:=|v|$.
The next step is to show that $\zeta$ extends to a section on all of $\cp^1$.
First, a lemma is needed.
(The lemma is not obvious because $q<2$.)
\begin{lemma}
Let $D^2$ be the closed unit disc in $\cc$ and let the coordinate be $z=x+iy$.
Let $h:D^2\setminus\sett{0}\to\cc$ be a holomorphic function.
Suppose that
\begin{displaymath}
\int_{D^2}|h(z)|^q|z|^{-2+\epsilon}<\infty
\end{displaymath}
where $\epsilon=\frac{5}{\pi}(q-1)\delta_0$ (i.e. $\epsilon>0$ is small).
Then $h$ extends holomorphically to $D^2$.
\end{lemma}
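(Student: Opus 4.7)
The plan is to show by direct Laurent series analysis that no negative-order terms can appear, so that $h$ extends holomorphically to all of $D^2$ by Riemann's removable singularity theorem.

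Since $h$ is holomorphic on the punctured disc, expand it as a Laurent series
\begin{displaymath}
h(z)=\sum_{n=-\infty}^{\infty} a_n z^n,\qquad a_n=\frac{1}{2\pi r^n}\int_0^{2\pi} h(re^{i\theta})e^{-in\theta}\,d\theta
\end{displaymath}
for any $0<r<1$. The goal is to show that $a_n=0$ whenever $n<0$.

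First I would apply H\"older's inequality (with exponents $q$ and $q/(q-1)$) to the integral representation of $a_n$ to get
\begin{displaymath}
|a_n|^q r^{nq}\leq C\int_0^{2\pi}|h(re^{i\theta})|^q\,d\theta
\end{displaymath}
for a constant $C$ depending only on $q$. Next I would multiply both sides by $r^{-1+\epsilon}$ and integrate in $r$ from $0$ to $1$. Using polar coordinates $dA=r\,dr\,d\theta$, the right-hand side becomes a constant multiple of the hypothesized finite integral
\begin{displaymath}
\int_{D^2}|h(z)|^q |z|^{-2+\epsilon}\,dA<\infty,
\end{displaymath}
so we obtain
\begin{displaymath}
|a_n|^q\int_0^1 r^{nq-1+\epsilon}\,dr<\infty.
\end{displaymath}

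The key observation is then that $\int_0^1 r^{nq-1+\epsilon}\,dr$ diverges precisely when $nq+\epsilon\leq 0$. Because $\epsilon=\frac{5}{\pi}(q-1)\delta_0$ is positive and small with $q>1$, we have $0<\epsilon/q<1$, so any negative integer $n\leq -1$ satisfies $n\leq -\epsilon/q$ and makes the integral infinite. Hence $a_n=0$ for all $n<0$, the Laurent series reduces to a power series convergent on $D^2$, and $h$ extends holomorphically. The argument is essentially routine once one writes down the right H\"older estimate; the only thing to check carefully is that $\epsilon$ is indeed strictly less than $q$, which is guaranteed by the smallness of $\delta_0$.
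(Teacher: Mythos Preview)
Your argument is correct. The Laurent-coefficient approach works: the coefficient formula $a_n r^n=\frac{1}{2\pi}\int_0^{2\pi}h(re^{i\theta})e^{-in\theta}\,d\theta$ is valid for every $0<r<1$, H\"older gives $|a_n|^q r^{nq}\le (2\pi)^{-1}\int_0^{2\pi}|h(re^{i\theta})|^q\,d\theta$, and after multiplying by $r^{-1+\epsilon}$ and integrating in $r$ the right-hand side is exactly the finite integral in the hypothesis (via $dA=r\,dr\,d\theta$). Since $\int_0^1 r^{nq-1+\epsilon}\,dr=\infty$ whenever $nq+\epsilon\le 0$, and $\epsilon<q$ forces this for every $n\le -1$, all negative Laurent coefficients vanish.

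The paper takes a different route. It sets $g(z)=zh(z)$, then uses an averaged Cauchy integral formula over an annulus $\{r<|z|<2r\}$ together with a H\"older estimate to show the inner boundary contribution tends to zero as $r\to 0$; this proves $g$ extends holomorphically, so $h$ has at worst a simple pole. A separate observation that $\int_{D^2}|z|^{-q}|z|^{-2+\epsilon}=\infty$ then rules out the pole. Your method is more direct: it handles all negative orders simultaneously and avoids the two-step structure and the somewhat delicate annular Cauchy estimate. The paper's approach, on the other hand, is closer in spirit to removable-singularity arguments one might generalize to sections of bundles where Laurent expansions are less immediately available.
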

\begin{proof}
Let $g(z)=zh(z)$.
Then
\begin{displaymath}
\int |g|^q|z|^{-2-q+\epsilon}=\int |h|^q|z|^{-2+\epsilon}<\infty.
\end{displaymath}
Let $r>0$ be small, and $z_0\in D^2$ such that $1>|z_0|>3r$.
By the Cauchy integral formula,
\begin{displaymath}
g(z_0)=\frac{1}{2\pi i}\int_{|z|=1}\frac{g(z)}{z-z_0}dw-\frac{1}{2\pi ir}\int_A\frac{g(z)}{z-z_0}e^{i\theta}dxdy,
\end{displaymath}
where $A=\sett{r<|z|<2r}$.
If the second term approaches $0$ as $r$ approaches $0$ then $g$ is holomorphic.
The second term does indeed approach $0$ because
\begin{eqnarray*}
&&\biggl|\frac{1}{r}\int_A\frac{g(z)}{z-z_0}e^{i\theta}dxdy\biggr|\leq\frac{1}{r}\int_Ar^{-1}|g|=\frac{1}{r}\int_Ar^{-\alpha}|g|r^{\alpha-1}\leq\\
&&\frac{c}{r}\biggl(\int_A r^{-\alpha q}|g|^q\biggr)^{1/q} r^{\alpha-1} r^{2/p}\leq c\biggl(\int_A|z|^{-2-q+\epsilon}|g|^q\biggr)^{1/q} r^{\alpha+\frac{2}{p}-2}\leq c r^{1-\epsilon/q},
\end{eqnarray*}
where $-\alpha q=-2-q+\epsilon$.

It follows that $h$ has at worst a simple pole at the origin.
However,
\begin{displaymath}
\int_{D^2}|z^{-1}|^q|z|^{-2+\epsilon}=\infty
\end{displaymath}
so $h$ cannot have a pole at the origin.
Thus $h$ is holomorphic at $0$.
\end{proof}

\begin{lemma}
$\zeta$ extends to a section on all of $\cp^1$.
\end{lemma}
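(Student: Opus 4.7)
The plan is to reduce the statement to the preceding lemma by transferring the problem to local coordinates on $\cp^1$ near the two puncture points $0$ and $\infty$, and then applying the scalar result componentwise in a local holomorphic frame.

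First I would use the biholomorphism $C\to\cc^*$ given by $s+it\mapsto z=e^{\pi(s+it)/5}$. Under this change of variable $|z|=e^{\pi s/5}$, $|s|=\tfrac{5}{\pi}\bigl|\log|z|\bigr|$, and $ds\,dt=\tfrac{25}{\pi^2}|z|^{-2}dx\,dy$. Near the end $s\to-\infty$ (i.e.\ $z\to 0$) the weight becomes
\begin{displaymath}
e^{(1-q)\delta_0|s|}=|z|^{(q-1)\cdot 5\delta_0/\pi}=|z|^{\epsilon},
\end{displaymath}
with $\epsilon=5(q-1)\delta_0/\pi>0$ (small, since $1<q<2$ because $p>2$, and $\delta_0>0$ is small). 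An analogous formula holds near $\infty$ in the coordinate $w=1/z$.

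Next I would convert the integrability condition $e^{-\delta}\zeta\in L^{q;\delta}$ into the hypothesis required by the previous lemma. By Lemma \ref{lemma:cokernel-symmetries}, together with the fact that $d\tau$ and $dg$ are isometries of the Hermitian metric $h$ on $TX$, the pointwise norm $|\check\eta|$ is invariant under the reflections and rotations used to build $C$ out of $S$. Therefore $\int_C|\zeta|^q e^{(1-q)\delta}\,ds\,dt$ is a finite multiple of the corresponding integral over $S$, and hence finite. After the change of variables this gives, for some small disc $D_r(0)\subset\cc$,
\begin{displaymath}
\int_{D_r(0)}|\zeta(z)|^q\,|z|^{-2+\epsilon}\,dx\,dy<\infty,
\end{displaymath}
where $|\zeta|$ is measured in the intrinsic norm on $u^*T^*_{\cc}X$.

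Then I would pass to a local holomorphic trivialization. Because $u$ extends holomorphically to $\cp^1$, the bundle $u^*T^*_{\cc}X$ extends holomorphically over $\cp^1$ and admits a holomorphic frame on $D_r(0)$; in this frame the Dolbeault operator $\nabla^{0,1}$ is just the componentwise $\bar\partial$, so $\zeta$ is given by an $n$-tuple of holomorphic functions $h_1,\ldots,h_n$ on $D_r(0)\setminus\{0\}$. Each $h_j$ satisfies the hypothesis of the preceding lemma, which therefore extends each $h_j$, and hence $\zeta$, holomorphically across $0$. The same argument in the coordinate $w=1/z$ handles the puncture at $\infty$, completing the extension to a holomorphic section on all of $\cp^1$.

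The main point requiring care is the comparison between the intrinsic norm on $u^*T^*_\cc X$ used in the $L^{q;\delta}$ condition and the Euclidean norm on the coefficients $h_j$ in the chosen holomorphic frame. Since $u$ extends continuously to the compact $\cp^1$ and the frame is smooth near each puncture, the two norms are uniformly equivalent on a small neighborhood of $0$ (and of $\infty$), so the integrability condition survives the trivialization. This is the only (routine) technical obstacle in the argument.
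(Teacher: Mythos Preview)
Your proof is correct and follows essentially the same approach as the paper: change variables via $s+it\mapsto e^{\pi(s+it)/5}$, convert the weighted $L^q$ bound on $C$ into the hypothesis $\int_{D^2}|\zeta|^q|z|^{-2+\epsilon}<\infty$, and invoke the preceding lemma. You are in fact more careful than the paper in two places it leaves implicit: the symmetry argument showing the integral over $C$ (not just $S$) is finite, and the reduction to scalar functions via a local holomorphic frame together with the equivalence of norms near the punctures.
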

\begin{proof}
Let $f:C\to \cc\setminus\sett{0}$ be the biholomorphism $f(s+it)=e^{\pi (s+it)/5}=z$.
If $u$ is a function on $C$ then, by the change of variables formula,
\begin{displaymath}
\int_\cc u\circ f^{-1}=\int_Cu|f'|^2.
\end{displaymath}

Let $C_1=f^{-1}(D^2\setminus\sett{0})$.
$C_1$ consists of the set of all $s+it$ with $s\leq 0$.
Note that $|z|=e^{\pi s/5}$ and $|f'(s+it)|^2=\frac{\pi}{5}|z|^2$.
Recall that $\epsilon=\frac{5}{\pi}(q-1)\delta_0$.
Then
\begin{displaymath}
\int_{D^2}|\zeta\circ f^{-1}|^q|z|^{-2+\epsilon}=c\int_{C_1}|\zeta|^qe^{(1-q)\delta_0|s|}<\infty.
\end{displaymath}
By the previous lemma it follows that $\zeta$ is holomorphic at $0$.
A similar argument shows that $\zeta$ is holomorphic at $\infty$.
\end{proof}

Note that $\zeta$ does not have to vanish at $0$, because $|z|^{-2+\epsilon}$ is in $L^1$ on $D^2$.
However, if the full linearized operator (\ref{eq:cokernel-linearized_operator}) is considered, then values of $\zeta$ at $0$ and $\infty$ are imposed.
Indeed, the full linearized operator can be identified with
\begin{eqnarray}\label{eq:cokernel-linearized_operator_2}
&D\dbar:T_pR_h\oplus T_qR_{h'}\oplus W^{1,p;\delta}_\lambda(S,u_0^*TX)\to L^{p;\delta}(S,\Lambda^{0,1}\otimes u_0^*TX),&\\
\nonumber&(U,V,\xi)\mapsto (\nabla_s+J\nabla_t)(\chi_-U+\chi_+V+\xi)\otimes (ds-idt).
\end{eqnarray}
Here $\chi_-$ and $\chi_+$ are cutoff functions such that $\chi_-=1$ near $-\infty$ and 0 away from $-\infty$, and $\chi_+=1$ near $\infty$ and 0 away from $\infty$.
Suppose that $\eta\otimes(ds-idt)\in L^{q;\delta}(S,\Lambda^{0,1}\otimes u_0^*TX)$ is such that $\eta\otimes (ds-idt)$ vanishes on the image of $D\dbar$, where the pairing is as in (\ref{eq:cokernel-pairing}).
$\eta\otimes (ds-idt)$ also vanishes on the image of $\nabla^{0,1}$, so $\eta$ satisfies the same properties as $\eta$ above.
\begin{lemma}
\begin{displaymath}
\lim_{s\to\pm\infty}|e^\delta\eta|=0.
\end{displaymath}
\end{lemma}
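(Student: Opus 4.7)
The plan is to exploit the extra cokernel condition coming from the freedom to move the asymptotic marked points at $s=\pm\infty$ within $R_h,R_{h'}$, which is what distinguishes the full operator $D\dbar$ of (\ref{eq:cokernel-linearized_operator_2}) from $\nabla^{0,1}$. Since $\zeta=e^\delta\check\eta$ extends to a holomorphic section of $u^*T_\cc^*X$ on $\cp^1$ and the punctures correspond to the strip-ends, continuity of $\zeta$ at the punctures, combined with $\check\eta(\,\cdot\,)=h(\,\cdot\,,\eta)$, implies that the $t$-independent limits $\eta_\pm:=\lim_{s\to\pm\infty}e^{\delta(s)}\eta(s,t)$ exist in $T_pX$ and $T_qX$ respectively. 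I carry out the argument that $\eta_-=0$; the case of $\eta_+$ is identical. A preliminary observation is that $\eta_-\in T_pR_h$: by Lemma \ref{lemma:cokernel-eta}, $\eta(s,0)\in T_{u(s,0)}L$ and $\eta(s,1)\in T_{u(s,1)}gL$, so multiplying by the scalar $e^{\delta(s)}$ and passing to the $t$-independent limit gives $\eta_-\in T_pL\cap T_p(gL)=T_pR_h$.

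To show moreover that $\eta_-\perp T_pR_h$ with respect to $g$, I fix $U\in T_pR_h$ and extend it to a smooth section $\tilde U$ of $u_0^*TX$ satisfying $\tilde U(s,0)\in TL$, $\tilde U(s,1)\in TgL$, and equal to the constant $U$ for $s\le -M$ in a local trivialization near $s=-\infty$ in which $u_0$ converges exponentially to $p$. Then $(\nabla_s+J\nabla_t)\tilde U$ decays exponentially because $\partial_s u_0,\partial_t u_0$ do, so $\chi_-\tilde U$ is a legitimate tangent vector in the domain of $D\dbar$. Vanishing of $\eta\otimes(ds-idt)$ on this vector reads
\begin{displaymath}
0=\int_S g\bigl((\nabla_s+J\nabla_t)(\chi_-\tilde U),\eta\bigr)\,e^{\delta}\,ds\,dt.
\end{displaymath}
I then integrate by parts in $s$ and $t$, using $g(JA,B)=-g(A,JB)$ and $\nabla J=0$, which rewrites the integrand as $\partial_s[g(\chi_-\tilde U,\eta)e^\delta]+\partial_t[g(J\chi_-\tilde U,\eta)e^\delta]$ minus a bulk contribution $g(\chi_-\tilde U,(\nabla_s-J\nabla_t)\eta+\delta'\eta)e^\delta$. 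The bulk vanishes because Lemma \ref{lemma:cokernel-eta} gives $(\nabla_s-J\nabla_t)(e^\delta\eta)=0$, i.e.\ $(\nabla_s-J\nabla_t)\eta=-\delta'\eta$.

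Only boundary terms then remain. The $t$-boundaries at $t=0,1$ vanish because $\tilde U$ and $\eta$ both lie in the appropriate Lagrangian, so $g(J\tilde U,\eta)=\omega(\tilde U,\eta)=0$. The $s$-boundary at $s=+\infty$ vanishes because $\chi_-=0$ there, while at $s=-\infty$ one obtains $-\int_0^1 g(U,\eta_-)\,dt=-g(U,\eta_-)$ (the integrand being $t$-independent). Varying $U\in T_pR_h$ gives $\eta_-\perp T_pR_h$, and combined with $\eta_-\in T_pR_h$ this forces $\eta_-=0$. The parallel argument with $\chi_+\tilde V$ for $V\in T_qR_{h'}$ yields $\eta_+=0$. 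The only technical point requiring care is constructing $\tilde U$ so that the pairings converge absolutely and integration by parts is legitimate; this is routine given the exponential convergence of $u_0$ at the strip-ends, and the whole argument really says that in the bulk the identity $(\nabla_s-J\nabla_t)(e^\delta\eta)=0$ already does all the work, with the new geometric content of the cokernel condition living entirely in the surviving $s$-boundary term.
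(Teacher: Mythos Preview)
Your proposal is correct and follows essentially the same approach as the paper: both arguments use that $e^\delta\eta$ has well-defined limits $W_\pm$ at $s=\pm\infty$ (from the extension of $\zeta$ to $\cp^1$), observe $W_\pm\in T_pR_h,\,T_qR_{h'}$ via the boundary conditions, and then plug the $T_pR_h\oplus T_qR_{h'}$ directions into the cokernel pairing and integrate by parts to extract the boundary terms $\int_0^1 g(U,W_-)\,dt$ and $\int_0^1 g(V,W_+)\,dt$, forcing $W_\pm=0$. The only cosmetic differences are that the paper treats both ends at once using the input $(U,V,\xi)$ from (\ref{eq:cokernel-linearized_operator_2}) rather than separately, and it does not bother constructing your $\tilde U$ explicitly since the parallel-translated $\chi_- U+\chi_+ V$ already serves this role.
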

\begin{proof}
As before, for $\xi$ a section of $u_0^*TX$, a straightforward calculation shows that
\begin{displaymath}
\int_S\del_s(g(\xi,\eta)e^\delta)+\del_t(g(J\xi,\eta)e^\delta)=\int_Sg((\nabla_s+J\nabla_t)\xi,\eta)e^\delta+g(\xi,(\nabla_s-J\nabla_t)(e^\delta\eta))=0.
\end{displaymath}
Moreover, by the boundary conditions of $\xi$ and $\eta$, it follows that
\begin{displaymath}
\int_S \del_t(g(J\xi,\eta)e^\delta)=\int_\rr g(J\xi(s,1),\eta(s,1))e^\delta-\int_\rr g(J\xi(s,0),\eta(s,0))e^\delta=0.
\end{displaymath}
Let $W_\pm=(e^\delta\eta)(\pm\infty,\cdot)$, which exists by the previous lemma.
Note that $W_-\in T_pR_h=T_pL\cap T_pgL$ and $W_+\in T_qL\cap T_qgL$.
Then
\begin{eqnarray*}
\int_S \del_s(g(\xi+\chi_-U+\chi_+V,\eta)e^\delta)+\del_t(g(J(\xi+\chi_-U-+\chi_+V),\eta)e^\delta)&=&\\
\int_S g((\nabla_s+J\nabla_t)(\xi+\chi_-U+\chi_+V),\eta)e^\delta+\int_Sg(\xi+\chi_-U+\chi_+V,(\nabla_s-J\nabla_t)(e^\delta\eta)).
\end{eqnarray*}
The first integral on the right hand side is $0$ by assumption, and the second integral is $0$ because $e^\delta\eta$ satisfies the same properties as before.
On the other hand, the integral on the left hand side can be rewritten using Stoke's theorem.
Combining these facts leads to the equation
\begin{displaymath}
0=\int_0^1g(V,W_+)dt-\int_0^1g(U,W_-)dt.
\end{displaymath}
(The top and bottom boundary terms in Stoke's theorem drop out because again $e^\delta\eta$ satisfies the same properties as before.)
Since $U\in T_pR_h$ and $V\in T_qR_{h'}$ are arbitrary, it follows that $W_\pm=0$.
Thus $(e^\delta\eta)(\pm\infty,\cdot)=0$.
\end{proof}

\begin{corollary}
Suppose $\eta$ vanishes on the image of $D\dbar$ and $\zeta=e^\delta\check\eta$.
Then, viewing the domain of $\eta$ as $\cc\cup\infty$, $\zeta(0)=\zeta(\infty)=0$.
\end{corollary}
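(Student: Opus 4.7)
The plan is to combine the preceding lemma (which gives $\lim_{s\to\pm\infty}|e^\delta\eta|=0$) with the fact, already established, that $\zeta=e^\delta\check\eta$ extends to a holomorphic section of $u^*T^*_{\cc}X$ over all of $\cp^1$. The punchline should be that the two ends of the strip correspond to the two exceptional points $0,\infty\in\cp^1$ under the identification $C\cong\cc^*$, so the required vanishing is just continuity of the extended section together with the lemma.

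First I would recall the biholomorphism $f\colon C\to\cc^*$, $f(s+it)=e^{\pi(s+it)/5}$, under which $s\to-\infty$ corresponds to $z\to 0$ and $s\to+\infty$ corresponds to $z\to\infty$. Thus the extension of $\zeta$ to $\cp^1$ picks up values at $0$ and $\infty$ that can be recovered as limits along the image of $S$ (say, along the curve $t=\tfrac12$) as $s\to\pm\infty$.

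Next I would observe that $\check\eta$ was defined by $\check\eta(\xi)=h(\xi,\eta)$, and since $h$ is a nondegenerate Hermitian pairing the induced norm on the complex dual satisfies $|\check\eta(s,t)|=|\eta(s,t)|$ pointwise. Hence
\begin{displaymath}
|\zeta(s,t)|=e^{\delta(s)}|\eta(s,t)|,
\end{displaymath}
and the preceding lemma gives that this quantity tends to $0$ as $s\to\pm\infty$ (uniformly in $t\in[0,1]$ after, say, passing to any fixed continuous trivialization of $u_0^*T^*_{\cc}X$ near $\pm\infty$, using that $u_0$ extends continuously to $\overline{\rr}\times[0,1]$).

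Finally, to conclude, fix a local trivialization of $u^*T^*_{\cc}X$ near $u(0)$ and one near $u(\infty)$. In these trivializations $\zeta$ is given by holomorphic $\cc^3$-valued functions on neighborhoods of $0$ and $\infty$ in $\cp^1$, and their pointwise norms are continuous. Taking $s\to-\infty$ along $t=\tfrac12$ (which corresponds to $z\to 0$ inside $\cc^*$) and applying the previous paragraph gives $|\zeta(0)|=0$; the analogous computation at the other end gives $|\zeta(\infty)|=0$. Since there is essentially no obstacle beyond bookkeeping—the hard analytic work was done in establishing the extension of $\zeta$ to $\cp^1$ and the decay of $e^\delta\eta$—this completes the proof.
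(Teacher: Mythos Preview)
Your proposal is correct and is exactly the argument the paper intends: the corollary is stated without proof because it follows immediately from the preceding lemma (decay of $e^\delta\eta$ at $\pm\infty$), the already-established holomorphic extension of $\zeta$ to $\cp^1$, and the identification of the strip ends with $0,\infty\in\cp^1$. Your observation that $|\check\eta|=|\eta|$ via the Hermitian pairing is the only small ingredient needed to connect the lemma's conclusion about $\eta$ to the desired vanishing of $\zeta$, and you handle it correctly.
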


\begin{prop}\label{prop:cokernel-main_prop}
The cokernel of the operator $D\dbar$ in (\ref{eq:cokernel-linearized_operator}) can be identified with the elements $\zeta$ of $H^0(\cc P^1,u^*T^*X)$ that satisfy the following properties:
\begin{enumerate}
\item $\zeta(0)=\zeta(\infty)=0$,
\item $\zeta(z)(\xi)=\overline{\zeta(\bar z)(d\tau \xi)},$ and
\item $\zeta(\gamma z)(\xi)=\zeta(z)(dg^{-2}\xi)$.
\end{enumerate}
\end{prop}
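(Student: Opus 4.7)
The plan is to assemble the bijection from the lemmas already established and then carry out a modest reverse construction. The forward direction is essentially done: given $\eta\otimes(ds-idt)\in L^{q;\delta}(S,\Lambda^{0,1}\otimes u_0^*TX)$ annihilating the image of $D\dbar$, define $\check\eta$ by $\check\eta(\xi)=h(\xi,\eta)$, set $\zeta=e^\delta\check\eta$, and note that Lemma \ref{lemma:cokernel-eta} together with the computation that precedes Lemma \ref{lemma:cokernel-symmetries} shows $\zeta$ is a holomorphic section of $u_0^*T^*_\cc X$ on $S$ with the correct Lagrangian boundary behavior. The Schwarz reflection formulas for $\check\eta_k$ then extend $\zeta$ to a holomorphic section on the cylinder $C$; Lemma \ref{lemma:cokernel-symmetries} provides symmetries (2) and (3) of the proposition; the $L^q$-bound combined with the $|z|^{-2+\epsilon}$ Cauchy-integral lemma extends $\zeta$ holomorphically over $0$ and $\infty$; and the last corollary before the proposition forces $\zeta(0)=\zeta(\infty)=0$, giving (1).

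For the reverse direction, start with $\zeta\in H^0(\cp^1,u^*T^*_\cc X)$ satisfying (1)--(3). Restrict to $S_0\cong S$, set $\check\eta_0=e^{-\delta}\zeta$, and define $\eta_0\in C^\infty(S,u_0^*TX)$ by $h(\xi,\eta_0)=\check\eta_0(\xi)$. I would then verify three things in order. First, the boundary conditions: property (2) at $t=0$ gives $\check\eta_0(s,0)(\xi)=\overline{\check\eta_0(s,0)(d\tau\xi)}$, and combined with $h(d\tau X,d\tau Y)=\overline{h(X,Y)}$ and $d\tau|_{TL}=\mathrm{Id}$, this forces $\eta_0(s,0)\in T_{u(s,0)}L$; combining (2) and (3) along the top boundary, together with $\tau\circ g=g^{-1}\circ\tau$, forces $\eta_0(s,1)\in T_{u(s,1)}gL$. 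Second, weighted integrability: since $\zeta(0)=0$ and $z=e^{\pi(s+it)/5}$, one has $|\zeta(s+it)|\le C e^{\pi s/5}$ as $s\to-\infty$, so $|\eta_0|^q e^{\delta}\le C e^{((\pi/5)q-(q-1)\delta_0)s}$, which is integrable near $-\infty$ provided $\delta_0$ is small (the same argument at the other end, using $\zeta(\infty)=0$, handles $s\to+\infty$); thus $\eta_0\otimes(ds-idt)\in L^{q;\delta}$. Third, orthogonality to the image of $D\dbar$: holomorphicity of $\zeta$ on $C$ reads $(\nabla_s-J\nabla_t)(e^\delta\eta_0)=0$ (reversing the calculation of Lemma \ref{lemma:cokernel-eta}), so integration by parts as in that lemma gives $\langle\eta\otimes(ds-idt),\nabla^{0,1}\xi\rangle=0$ for $\xi$ vanishing at $\pm\infty$; Stokes' theorem with the Lagrangian boundary conditions kills the top and bottom boundary terms, and the vanishing $\zeta(0)=\zeta(\infty)=0$ (equivalently $\lim_{s\to\pm\infty}|e^\delta\eta_0|=0$) kills the $\pm\infty$ boundary terms in the extended operator (\ref{eq:cokernel-linearized_operator_2}), so $\eta_0$ annihilates the image of $D\dbar$.

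The two constructions are inverse to each other by definition (both go through the pointwise identification $\check\eta(\xi)=h(\xi,\eta)$ and the factor $e^{\pm\delta}$), yielding the claimed identification. The main technical obstacle is the second step of the reverse construction, namely matching the exponential growth of $e^{-\delta}$ against the order of vanishing of $\zeta$ at $0$ and $\infty$; this is why the proposition only identifies the cokernel with sections having the first-order vanishing condition (1), rather than all holomorphic sections with symmetries (2)--(3), and why $\delta_0>0$ must be chosen sufficiently small relative to the conformal factor $\pi/5$ coming from the biholomorphism $S\to S_0$.
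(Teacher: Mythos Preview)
Your proposal is correct and follows the same route as the paper: the forward direction is exactly the assembly of the preceding lemmas (Lemma \ref{lemma:cokernel-eta}, Lemma \ref{lemma:cokernel-symmetries}, the extension lemma, and the final corollary), and the reverse direction is handled by restricting $\zeta$ to the strip and verifying boundary conditions via the symmetries. The paper's own proof is a two-sentence pointer to those lemmas, with the reverse direction dismissed by analogy with Lemma \ref{lemma:strips-symmetries_converse}; your write-up is considerably more thorough on that side (in particular the explicit integrability check using $\zeta(0)=\zeta(\infty)=0$ and the small-$\delta_0$ condition), but this is elaboration rather than a different argument.
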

\begin{proof}
Given $\eta\otimes (ds-idt)$ in the cokernel of $D\dbar$, let $\zeta=e^\delta\check\eta$.
The first item is the previous lemma, and the other two items are Lemma \ref{lemma:cokernel-symmetries} (the converse of Lemma \ref{lemma:cokernel-symmetries} can be proven in the same way as Lemma \ref{lemma:strips-symmetries_converse}).
\end{proof}

Properties (2) and (3) in the proposition can be interpreted in another way:
Namely, consider the $\z_5$ action on $H^0(\cp^1,u^*T^*X)$ given by $\zeta\mapsto\tilde\zeta$ where
\begin{displaymath}
\langle \tilde\zeta(z),\xi\rangle =\langle \zeta(\gamma z),dg^2\cdot \xi\rangle.
\end{displaymath}
Then $\zeta$ satisfies property (3) if and only if $\zeta=\tilde\zeta$, i.e. if and only if $\zeta$ is fixed by the $\z_5$ action.
Similarly, $\zeta$ satisifies property (2) if and only if it is fixed by the obvious $\z_2$ action.

\subsection{Remark}
The obstruction bundle of the holomorphic sphere $u:\cc P^1\to X$ is $H^1(\cc P^1,u^*T_X)$. 
By Serre duality
$$H^1(\cc P^1,u^*T_X)\cong H^0(\cc P^1,\Omega^1\otimes u^*T^*_ X)^*.$$
Since $\Omega^1=\mathcal O(-2)$, the latter group can be identified with the holomorphic sections of $u^*T_X^*$ that vanish at $0$ and $\infty$.
Thus elements of the obstruction bundle of the strip are in bijective correspondence to elements of the obstruction bundle of the sphere with the symmetries given above.

\section{Conjugation of strips and orientations}
In this section we construct an involution (which we call conjugation) on the space of strips.
The effect of this operation on the orientation of the strips is then worked out.

\subsection{The conjugation construction}
Let $W^{1,p}(L,gL)=W^{1,p}_{0,0}(L,gL)$.
Let $u_0(z)$ be a strip, not necessarily holomorphic, and view the domain of $u_0$ as $S_0\subset\cc$.
Define the conjugate $\tilde u_0$ of $u_0$ as
$$\tilde u_0(z)=g\tau u_0(\sigma\bar z).$$
If $u_0$ is holomorphic then $\tilde u_0$ is also holomorphic.
$\tilde u_0$ satisfies the same boundary conditions as $u_0$.
Let $\rho:W^{1,p}(L,gL)\to W^{1,p}(L,gL)$ denote this map.
Note that $\rho$ is an involution, because
\begin{displaymath}
\rho^2u_0(z)=\rho g\tau u_0(\sigma\bar z)=g\tau g\tau u_0(\sigma\bar\sigma z)=gg^{-1}\tau\tau u_0(z)=u_0(z).
\end{displaymath}
If the domain of $u_0$ is viewed as $S=\rr\times[0,1]$ with coordinates $(s,t)$ then $\tilde u_0(s,t)=g\tau u_0(s,1-t)$.
Note that $\tilde u_0(\pm\infty,\cdot)=u_0(\pm\infty,\cdot)$.
The remainder of this section is devoted to showing that $\rho$ induces a map on the moduli space of strips.

Let $p=u_0(-\infty,\cdot)\in R_h$ and $q=u_0(+\infty,\cdot)\in R_{h'}$.
Differentiating $\rho$ gives a map
$$
\rho_*:T_{u_0}W^{1,p}(L,gL)\to T_{\tilde u_0}W^{1,p}(L,gL).
$$
Recall that 
\begin{equation}\label{eq:conjugation-tangent_space}
T_pR_h\oplus T_qR_{h'}\oplus W^{1,p;\delta}_\lambda(S,u_0^*TX)\cong T_{u_0}W^{1,p}(L,gL),
\end{equation}
and similarly for $T_{\tilde u_0}$.
Recall that this is because $W^{1,p;\delta}$ consists of sections of class $W^{1,p}$ with weight $\delta$.
The weight forces the sections to decay to $0$ at $\pm\infty$, so $T_pR_h\oplus T_qR_{h'}$ has to be added to $W^{1,p;\delta}$ to allow the endpoints of the maps to vary in $R_h$ and $R_{h'}$.

The isomorphism (\ref{eq:conjugation-tangent_space}) can be described explicitly.
Let $\chi_p:\rr\to [0,1]$ be a smooth decreasing function such that $\chi_p(s)=1$ for $s\leq -R-1$ and $\chi_p(s)=0$ for $s\geq -R$, where $R$ is a large positive constant.
Let $\chi_q:\rr\to [0,1]$ be a smooth increasing function such that $\chi_q(s)=1$ for $s\geq R+1$ and $\chi_q(s)=0$ for $s\leq R$.
The isomorphism is then
\begin{displaymath}
(U,V,\xi)\mapsto \chi_p(s)\Par_{u_0} U+\chi_q(s)\Par_{u_0} V+\xi,
\end{displaymath}
where $\Par_{u_0} U(s,t)$ is the parallel translate of $U$ from $T_{u_0(-\infty,t)}X$ to $T_{u_0(s,t)}X$ along the obvious horizontal path.

\begin{lemma}
The map $\rho_*$ is
\begin{displaymath}\label{eq:rho_push_forward_on_tangent_space_of_maps}
\rho_*:T_pR_h\oplus T_qR_{h'}\oplus W^{1,p;\delta}_\lambda(S,u_0^*TX)\to T_pR_h\oplus T_qR_{h'}\oplus W^{1,p;\delta}_\lambda(S,\tilde u_0^*TX),
\end{displaymath}
$$
(U,V,\xi(s,t))\mapsto (U,V,dg\cdot d\tau\cdot \xi(s,1-t)).
$$
\end{lemma}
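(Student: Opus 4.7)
The plan is to write down the derivative of $\rho$ along an explicit one-parameter family and then read off the three components. First I would check that $\rho$ preserves $W^{1,p}(L,gL)$: the bottom boundary of $\tilde u_0$ lands in $g\tau(gL)=g(g^{-1}L)=L$ (using $\tau(gL)=g^{-1}L$, which follows from $g$ being multiplication by roots of unity on homogeneous coordinates), and the top boundary in $g\tau(L)=gL$. Since $g\tau$ is a smooth isometry of $X$ (from $\tau^*\omega=-\omega$ and $d\tau\circ J=-J\circ d\tau$ one computes $\tau^*(\omega(\cdot,J\cdot))=\omega(\cdot,J\cdot)$, and the group element $g$ is unitary), and since $(s,t)\mapsto(s,1-t)$ preserves the weight $\delta(s)=\delta_0|s|$ and the Sobolev class, $\rho$ is a well-defined smooth involution of Banach manifolds.

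Next I would compute $\rho_*$ by choosing the family $u_0^{\varepsilon}(s,t)=\exp_{u_0(s,t)}(\varepsilon\eta(s,t))$, where $\eta=\chi_p\Par_{u_0}U+\chi_q\Par_{u_0}V+\xi$ is the explicit representative of the triple $(U,V,\xi)$. Because $g\tau$ is an isometry it commutes with $\exp$, so
\begin{displaymath}
\tilde u_0^{\varepsilon}(s,t)=g\tau\,\exp_{u_0(s,1-t)}(\varepsilon\eta(s,1-t))=\exp_{\tilde u_0(s,t)}\bigl(\varepsilon\,dg\cdot d\tau\cdot\eta(s,1-t)\bigr).
\end{displaymath}
Differentiating at $\varepsilon=0$ gives the raw formula $\rho_*\eta\,(s,t)=dg\cdot d\tau\cdot\eta(s,1-t)$, viewed as a section of $\tilde u_0^*TX$.

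The remaining task is to re-decompose this raw section in the form $\chi_p\Par_{\tilde u_0}U'+\chi_q\Par_{\tilde u_0}V'+\xi'$. Here I would isolate the following algebraic observation: for every $p\in R_h$ and $U\in T_pR_h$, $dg\cdot U=U$ and $d\tau\cdot U=U$. Indeed, by Lemma \ref{lemma:real_lagrangians-fixed} one has $L\cap gL\subseteq\textrm{Fix}(g)$, so $g$ fixes every point of $R_h$, and differentiating a curve in $R_h$ through $p$ yields $dg|_{T_pR_h}=\textrm{id}$; likewise $\tau$ fixes $L\supseteq R_h$ pointwise. Evaluating the raw formula at $s\to-\infty$ gives $\rho_*\eta(-\infty,t)=dg\cdot d\tau\cdot U=U$, so $U'=U$, and identically $V'=V$. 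For the $\xi$-part, I would use that an isometry conjugates parallel transport, giving
\begin{displaymath}
\Par_{\tilde u_0}(s,t)W=d(g\tau)\circ\Par_{u_0}(s,1-t)\circ d(g\tau)^{-1}W;
\end{displaymath}
applied to $W=U$, the identity $d(g\tau)^{-1}U=d\tau\,dg^{-1}U=U$ (again by the algebraic observation) yields $\Par_{\tilde u_0}(s,t)U=dg\,d\tau\,\Par_{u_0}(s,1-t)U$, and similarly for $V$. Subtracting off these contributions from the raw formula leaves precisely $\xi'(s,t)=dg\,d\tau\,\xi(s,1-t)$, establishing the claimed formula for $\rho_*$.

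The main obstacle, if there is one, is recognizing that $dg$ and $d\tau$ act as the identity on $T_pR_h$; once this is in hand the parallel-transport bookkeeping is automatic, regularity and weights transform trivially because $g\tau$ is a global isometry, and no hard analysis intervenes.
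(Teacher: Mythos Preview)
Your proposal is correct and follows essentially the same approach as the paper's proof: both use that $g\tau$ is an isometry to commute it with $\exp$ and with parallel transport, invoke that $dg$ and $d\tau$ act trivially on $TR_h$ (since $R_h\subset L\cap\mathrm{Fix}(g)$), and then differentiate the explicit exponential family at $\varepsilon=0$ to read off the three components. Your version is a bit more expansive in justifying the preliminary facts (boundary conditions, why $g\tau$ is an isometry), but the core argument is identical.
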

\begin{proof}
Since $g$ and $\tau$ are isometries, $g\tau \exp_xY=\exp_{g\tau x}(dg\cdot d\tau \cdot Y)$ for $x\in X$, $Y\in T_xX$.
Likewise $g\tau(\Par_\gamma Y)=\Par_{g\tau\gamma} (dg\cdot d\tau\cdot Y)$ for any path $\gamma$.
Since $R_h\subset L\cap gL=L\cap \textrm{Fix}(g)$, $dg$ and $d\tau$ act trivially on $TR_h$.
The same is true for $TR_{h'}$.
It follows that
\begin{eqnarray*}
 &\rho(\exp_{u_0}( \chi_p\Par_{u_0} U+ \chi_q\Par_{u_0} V+ \xi))(s,t)=&\\
&\exp_{\tilde u_0(s,t)}( dg\cdot d\tau\cdot \chi_p(s)\Par_{u_0} U(s,1-t)+ dg\cdot d\tau \cdot \chi_q(s)\Par_{u_0} V(s,1-t)+ &\\
&dg\cdot d\tau\cdot \xi(s,1-t))=&\\
&\exp_{\tilde u_0(s,t)}( \chi_p(s)\Par_{\tilde u_0} U(s,t)+ \chi_q(s)\Par_{\tilde u_0} V(s,t)+ dg\cdot d\tau\cdot \xi(s,1-t)).
\end{eqnarray*}

A simple calculation now completes the proof:
\begin{eqnarray*}
\rho_*(U,V,\xi)(s,t)&=&\frac{\partial}{\partial\lambda}\biggl|_{\lambda=0}\rho \exp_{u_0}(\lambda(U,V,\xi))(s,t)\\
&=&\frac{\partial}{\partial\lambda}\biggl|_{\lambda=0}\exp_{\tilde u_0(s,t)}( \lambda\chi_p(s)\Par_{\tilde u_0} U(s,t)+ \lambda\chi_q(s)\Par_{\tilde u_0} V(s,t)\\&&+ \lambda dg\cdot d\tau\cdot \xi(s,1-t))\\
&=& \chi_p(s)\Par_{\tilde u_0} U(s,t)+ \chi_q(s)\Par_{\tilde u_0} V(s,t)+  dg\cdot d\tau\cdot \xi(s,1-t))\\
&=&(U,V,dg\cdot d\tau\cdot\xi(s,1-t)).
\end{eqnarray*}
\end{proof}

$\rho$ lifts to a bundle map of $\mathcal E^p$ (also denoted $\rho_*$)
$$
\rho_*:\mathcal E^{p}_{u_0}\to \mathcal E^{p}_{\tilde u_0},
$$
$$
\eta(s,t)\otimes (ds-idt) \mapsto dg\cdot d\tau \cdot \eta(s,1-t)\otimes (ds-idt).
$$

Consider the linearized $\dbar$ operator
$$
D_{u_0}\dbar:T_{u_0}W^{1,p}(L,gL)\to \mathcal E^p_{u_0}.
$$
By Proposition 3.1.1 in \cite{ms}, the formula for $D_{u_0}\dbar$ is
\begin{displaymath}
D_{u_0}\dbar:T_pR_h\oplus T_qR_{h'}\oplus W^{1,p;\delta}_\lambda(S,u_0^*TX)\to L^p(S,\Lambda^{0,1}\otimes u_0^*TX),
\end{displaymath}
$$
(U,V,\xi)\mapsto (\frac{1}{2}(\nabla_s+J(u_0)\nabla_t)(\xi+\chi_p\Par_{u_0}U+\chi_q\Par_{u_0}V))\otimes (ds-idt).
$$
Similarly for $D_{\tilde u_0}$.

\begin{lemma}\label{lemma:conjugation-det}
The following diagram commutes:
$$
\begin{array}{ccc}
T_{u_0}W^{1,p}(L,gL) & \xrightarrow{D_{u_0}\dbar} & \mathcal E^p_{u_0} \\
\biggr\downarrow \rho_* & & \biggr\downarrow \rho_*\\
T_{\tilde u_0} W^{1,p}(L,gL) & \xrightarrow{D_{\tilde u_0}\dbar} & \mathcal E^p_{\tilde u_0}.
\end{array}
$$
Hence $\rho_*$ induces an isomorphism
\begin{equation}\label{eq:det_rho}
\Det(\rho_*):\Det(D_{u_0}\dbar)\to \Det(D_{\tilde u_0}\dbar).
\end{equation}
\end{lemma}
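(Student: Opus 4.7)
The plan is to verify commutativity of the diagram by direct calculation, using three key ingredients about the involution $g\tau$: it is an isometry of $(X,g)$, so it preserves the Levi-Civita connection $\nabla$ (hence commutes with parallel transport and with all covariant derivatives computed from $\nabla$); it is anti-holomorphic, so $d(g\tau)\circ J = -J\circ d(g\tau)$ (since $g$ is holomorphic and $\tau$ anti-holomorphic); and, on $R_h$ and $R_{h'}$, it acts as the identity on the tangent spaces, since $R_h\subset L\cap \textup{Fix}(g)$ implies $T_pR_h\subset T_pL\cap T_p\textup{Fix}(g)$, so both $d\tau$ and $dg$ restrict to $\textup{id}$ there.

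First I would take an arbitrary $(U,V,\xi)\in T_pR_h\oplus T_qR_{h'}\oplus W^{1,p;\delta}_\lambda(S,u_0^*TX)$ and write out the two sides of the diagram. Setting $\xi'(s,t)=dg\cdot d\tau\cdot \xi(s,1-t)$, one image is
\begin{displaymath}
D_{\tilde u_0}\dbar\,\rho_*(U,V,\xi)=\tfrac{1}{2}(\nabla_s+J(\tilde u_0)\nabla_t)\!\left(\xi'+\chi_p\,\Par_{\tilde u_0}U+\chi_q\,\Par_{\tilde u_0}V\right)\otimes(ds-idt),
\end{displaymath}
while the other is
\begin{displaymath}
\rho_* D_{u_0}\dbar(U,V,\xi)=\tfrac{1}{2}\,dg\cdot d\tau\cdot\!\left[(\nabla_s+J(u_0)\nabla_t)(\xi+\chi_p\,\Par_{u_0}U+\chi_q\,\Par_{u_0}V)\right]\!(s,1-t)\otimes(ds-idt).
\end{displaymath}

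The central step is the identity
\begin{displaymath}
(\nabla_s+J(\tilde u_0)\nabla_t)(dg\cdot d\tau\cdot\eta(s,1-t))=dg\cdot d\tau\cdot\left((\nabla_s+J(u_0)\nabla_t)\eta\right)(s,1-t),
\end{displaymath}
valid for any section $\eta$ of $u_0^*TX$. Because $g\tau$ preserves $\nabla$ and $\partial_s(s,1-t)=(\partial_s u_0)(s,1-t)$ while $\partial_t(s,1-t)=-(\partial_t u_0)(s,1-t)$, one gets $\nabla_s(dg\cdot d\tau\cdot\eta(s,1-t))=dg\cdot d\tau\cdot(\nabla_s\eta)(s,1-t)$ and $\nabla_t(dg\cdot d\tau\cdot\eta(s,1-t))=-dg\cdot d\tau\cdot(\nabla_t\eta)(s,1-t)$. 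The sign from the $t$-flip is then compensated by the relation $J(\tilde u_0)\circ(dg\cdot d\tau)=-(dg\cdot d\tau)\circ J(u_0)$, which follows from $g$ being holomorphic and $\tau$ anti-holomorphic.

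Next I would handle the parallel-transport terms, which is where the fact that $dg\cdot d\tau$ fixes $U$ and $V$ pointwise matters. Since $g\tau$ is an isometry, for the path $\gamma(s')=u_0(s',1-t)$ one has $\Par^{g\tau\circ\gamma}_{-\infty\to s}W=dg\cdot d\tau\circ \Par^\gamma_{-\infty\to s}\circ (dg\cdot d\tau)^{-1}W$, and taking $W=U$ gives $\Par_{\tilde u_0}U(s,t)=dg\cdot d\tau\cdot\Par_{u_0}U(s,1-t)$, and similarly for $V$. Applying the identity of the previous paragraph to $\eta=\chi_p\Par_{u_0}U+\chi_q\Par_{u_0}V$ (noting that $\chi_p,\chi_q$ depend on $s$ alone, so they are unaffected by the flip $t\mapsto 1-t$) completes the match between the two expressions above. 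The induced map on determinant lines (\ref{eq:det_rho}) is then immediate from the commutative diagram, via the functoriality of $\Det$ under chain isomorphisms of Fredholm complexes.

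The only real obstacle is book-keeping: the interaction of the $t\mapsto 1-t$ flip, the sign from $J\circ d\tau=-d\tau\circ J$, and the fact that parallel transport along $g\tau$-images of paths must be related to parallel transport along the original paths via conjugation by $dg\cdot d\tau$. Once these three signs/identifications are organized, the verification is a one-line calculation.
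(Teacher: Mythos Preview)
Your proposal is correct and follows exactly the approach the paper takes: the paper's proof simply says the commutativity is ``a straightforward calculation, similar to the proof of the previous lemma, using the fact that $g$ and $\tau$ are isometries and $\nabla$ is the Levi-Civita connection,'' and you have supplied precisely that calculation in detail. The key ingredients you isolate (isometry $\Rightarrow$ preservation of $\nabla$ and parallel transport; anti-holomorphicity of $g\tau$ giving the sign that cancels the $t\mapsto 1-t$ flip; $dg\cdot d\tau$ acting trivially on $T_pR_h$, $T_qR_{h'}$) are exactly what is needed, and your bookkeeping of the two sign sources is accurate.
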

\begin{proof}
The commutativity is a straightforward calculation, similar to the proof of the previous lemma, using the fact that $g$ and $\tau$ are isometries and $\nabla$ is the Levi-Civita connection.
The second statement follows from commutativity of the diagram.
\end{proof}

\begin{prop}\label{lemma:conjugation-involution_kuranishi_spaces}
$\rho_*$ induces an involution of Kuranishi spaces $$\mathcal M(L,gL)\to \mathcal M(L,gL).$$
\end{prop}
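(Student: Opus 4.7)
The argument has three stages: (i) descend $\rho$ to an involution of the topological moduli space; (ii) extend $\rho$ by continuity to the compactification; (iii) upgrade $\rho$ to an involution of the Kuranishi structure.

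For (i), $\rho$ carries holomorphic strips to holomorphic strips with the correct boundary conditions, and in the coordinates $(s,t)\in\rr\times[0,1]$ the assignment $(s,t)\mapsto (s,1-t)$ on the domain commutes with the $\rr$-translation in the $s$-variable and preserves the energy since $g\tau$ is an isometry. Moreover $\widetilde{u_0}(\pm\infty,\cdot)=u_0(\pm\infty,\cdot)$, so $\rho$ restricts to an involution of each $\mathcal M^{reg}(L,gL:R_h,R_{h'}:E)$.

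For (ii), the boundary of $\mathcal M(L,gL)$ consists of stable configurations built as fiber products of lower-energy strips, discs (on $L$ or on $gL$), and spheres. Extend $\rho$ componentwise: on each sub-strip apply $\rho$ as already defined; on a disc bubble $v:(D^2,\partial D^2)\to(X,L)$ arising at a node on the $L$-boundary, apply $v\mapsto g\tau\circ v\circ c$ where $c(z)=\bar z$, which is a disc with boundary on $g\tau(L)=gL$ attached at the mirrored node on the $gL$-boundary (and analogously for $gL$-discs using that $gL$ is fixed by $g\tau g^{-1}$); on a sphere bubble $w:\cp^1\to X$ apply $w\mapsto g\tau\circ w\circ c$ with the nodal point in the domain also conjugated. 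These assignments are compatible with Gromov convergence and with node-matching, and squaring returns each component to itself, so $\rho$ extends to a continuous involution of the compactified moduli space.

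For (iii), choose the obstruction data $\rho$-equivariantly. At a strip $u_0$ with $u_0\neq\widetilde{u_0}$, pick any finite-dimensional complement $E_{u_0}\subset\mathcal E^{0,p}_{u_0}$ of $\textrm{Image}(D_{u_0}\dbar)$ and declare $E_{\widetilde{u_0}}:=\rho_*(E_{u_0})$; by Lemma \ref{lemma:conjugation-det} this is a valid complement at $\widetilde{u_0}$. At a $\rho$-fixed $u_0$, replace any initial choice by $E_{u_0}+\rho_*(E_{u_0})$, which is manifestly $\rho$-invariant. Parallel translating via the Levi-Civita connection extends these choices to Kuranishi neighborhoods, and Lemma \ref{lemma:conjugation-det} ensures that the Kuranishi maps $\dbar|V$ intertwine with $\rho_*$. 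Performing the same construction by induction on energy for the lower-dimensional moduli spaces of strips, discs, and spheres that appear in the boundary strata, and maintaining $\rho$-equivariance throughout the gluing construction of Section \ref{section:moduli_spaces-compactification}, yields the desired involution of Kuranishi spaces. The main obstacle is this equivariant gluing: one must check that the action of $\rho$ commutes with the gluing maps used to resolve nodes, which in turn requires coherent $\rho$-equivariant choices of obstruction bundles and multisection perturbations at every stage of the inductive construction. Because $\rho$ has order two and we work over $\qq$, the standard $\z/2$-averaging trick upgrades arbitrary Kuranishi data to $\rho$-equivariant data without altering the underlying Kuranishi structure up to equivalence.
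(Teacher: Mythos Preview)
Your proof is correct and follows essentially the same three-stage approach as the paper: define $\rho$ on smooth strips, extend to stable maps componentwise (strips, discs, spheres), then choose obstruction bundles $\rho$-equivariantly using the $E_u+\rho_*E_u$ trick at fixed points and push-forward at non-fixed points. The paper additionally flags one subtlety you did not mention: when a disc bubble carries further disc bubbles, conjugation reverses the cyclic order of the nodal points on its boundary, so one must compose with the corresponding permutation of marked points (this only affects codimension-$\geq 2$ strata and the sign analysis there). For the compatibility of $\rho$ with the gluing construction near the boundary, the paper simply cites \cite{foooasi} Theorem 4.9 and \cite{sol} rather than carrying it out, so your deferral of this point is in the same spirit; your closing remark about $\z/2$-averaging is not needed, since the equivariant choices are made directly.
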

\begin{proof}
See Section \ref{section:appendix-involution} for the definition of an involution of a Kuranishi space.
Recall that $\mathcal M(L,gL)$ is the (Gromov) compactification of $\mathcal M^{reg}(L,gL)$ (actually, it is only compact when restricted to strips with energy bounded by any fixed constant $E$).
Recall also that
\begin{displaymath}
\mathcal M^{reg}(L,gL)=\dbar^{-1}(0)/\rr,
\end{displaymath}
where the $\rr$ action is the natural translation of strips.
Since $\rho_*:W^{1,p}(L,gL)\to W^{1,p}(L,gL)$ commutes with the $\rr$ action, the additional detail of needing to mod out by the $\rr$ action can be ignored.

First, $\rho_*:\mathcal M(L,gL)\to\mathcal M(L,gL)$ needs to be defined as a homeomorphism such that $\rho_*^2=0$.
If $u_0\in\mathcal M^{reg}(L,gL)$, then the definition is the same as before, namely
\begin{displaymath}
(\rho_*u_0)(s,t)=\tilde u_0(s,t)=g\tau u_0(s,1-t).
\end{displaymath}

Now assume $u_0$ is in the boundary of $\mathcal (L,gL)$, so the domain of $u_0$ is a semi-stable curve $\Sigma$ with at least two components.
One of the components must be a strip, call it $S$, and let the union of the other components be $\Sigma_1$.
Then $\Sigma=S\cup \Sigma_1$.
The map $\rho_*$ will be constructed by induction on the number of domain components.
Start the induction with two domain components, $S$ and $\Sigma_1$.
There are three cases to consider, depending on whether $\Sigma_1$ is a sphere, a disc, or a strip.

Case 1: Assume $\Sigma_1$ is a sphere.
$\Sigma_1$ is attached to $S$; assume the point $0=[1:0]\in\cp^1=\Sigma_1$ attaches to the point $(s_0,t_0)\in \textrm{Int}(S)$.
Let $u_{\cp^1}=u_0|_{\Sigma_1}$ and let $u_S=u_0|_S$.
Let $\tilde\Sigma$ be the stable curve with $0\in\cp^1$ attached to $(s_0,1-t_0)\in S$.
Define $\tilde u_0:\tilde\Sigma\to X$ by 
\begin{displaymath}
\tilde u_0|_S=\tilde u_S,\, \tilde u_0|_{\cp^1}(z)=g\tau u_{\cp^1}(\bar z).
\end{displaymath}
Then $\tilde u_0\in \mathcal (L,gL)$.
Define $\rho_*(u_0)$ to be $\tilde u_0$.
Note that $g\tau g\tau=id$, so $\rho_*(\tilde u_0)=u_0$.

Case 2: Assume $\Sigma_1$ is a disc. $\Sigma_1$ is attached to $S$; assume without loss of generality that $1\in D^2=\Sigma_1$ attaches to $(s_0,0)\in S$.
Let $u_{D^2}=u_0|_{\Sigma_1}$ and let $u_S=u_0|_S$.
Let $\tilde\Sigma$ be the stable curve with $1\in D^2$ attached to $(s_0,1)\in S$.
Define $\tilde u_0:\tilde \Sigma\to X$ by
\begin{displaymath}
\tilde u_0|_S=\tilde u_S,\, \tilde u_0|_{D^2}(z)=g\tau u_{D^2}(\bar z).
\end{displaymath}
Then $\tilde u_0\in\mathcal (L,gL)$.
Define $\rho_*(u_0)$ to be $\tilde u_0$.

Case 3: Assume $\Sigma_1$ is a strip. The way to define $\rho_*$ is obvious in this case: simply conjugate both strips.

To see that $\rho_*$ is continuous, the bubbling off procedure needs to be examined.
The details will be given for Case 1; the other cases are similar.
Let $u_n\in\mathcal M^{reg}(L,gL)$ be a sequence of strips that converges to the boundary point $u\in\mathcal M(L,gL)$ that is of the form given in Case 1.
For appropriate $\epsilon_n\to 0$ and $R_n\to \infty$ , let $\phi_n$ be the obvious biholomorphim from an $\epsilon_n$ ball $B_{\epsilon_n}(s_0,t_0)$ about $(s_0,t_0)$ to the ball $B_{R_n}(0)$ of radius $R_n$ centered at $0\in\cc$, possibly also composed with a rotation.
Then, on the subset $\cc\subset\cp^1$, the sphere bubble $u_{\cp^1}$ is
\begin{displaymath}
u_{\cp^1}(z)=\lim_{n\to\infty} u_n\circ\phi_n^{-1}(z).
\end{displaymath}
Then $\tilde u_n=\rho_*u_n$ forms the sphere bubble $\tilde u_{\cp^1}=\rho_* u_{\cp^1}$.
It follows that $\rho_*$ is continuous.
Since $\rho_*$ is its own inverse, it follows that $\rho_*$ is a homeomorphism.

To complete the induction, it needs to be shown how to define $\rho_*$ when another component $\Sigma_2$ is added to the domain $\Sigma$.
The details of this will be skipped, as it is fairly obvious how to proceed by mimicing the constructions above.
The one subtle point is this: If a disc bubble has other discs bubbling out of it, then conjugating the disc will reverse the cyclic order of the points where the other discs bubble out.
To correct for this, the conjugation has to be composed with a permutation to correct the order of the marked points.
This will have an effect on the sign of $\rho_*$; however, this effect on the sign only comes into play on codimension two and greater boundaries.
The sign that is calculated in the next section will be on the interior of the moduli space only, hence this correction will have no effect on it.

Next, it needs to be shown that $\rho_*$ lifts to an involution of the Kuranishi structure on $\mathcal M(L,gL)$.
First, consider the interior of the moduli space, namely $\mathcal M^{reg}(L,gL)$.
Let $u$ be a strip.

Case 1: $u$ is not a fixed point of $\rho_*$. 
Recall from Section \ref{section:moduli_spaces-kuranishi_structure_interior} that a Kuranishi chart centered at $u$ is of the form
\begin{displaymath}
(V,E,1,i,\dbar|V).
\end{displaymath}
Here, $E_u$ is a complement of $D_u\dbar$ and $E$ is the bundle obtained by parallel translating $E_u$ to a neighborhood of $u$, and $V$ is a neighborhood of $u$ inside $\dbar^{-1}(E)$.
Since $\rho_*u\neq u$, it may be assumed that $\rho_*V\cap V=\emptyset$.
Then $(\rho_*V,\rho_*E,1,i,\dbar|\rho_*V)$ is a Kuranishi chart about $\rho_*u$.
This uses the fact that $\rho_*\circ\dbar=\dbar\circ\rho_*$ (by Lemma \ref{lemma:conjugation-det}), and $\rho_*E$ is $E_{\rho_*u}$ parallel translated to a neighborhood of $\rho_*u$ (this is true because the formula for $\rho_*$ involves $g$ and $\tau$, which are isometries).

Case 2: $u$ is a fixed point $\rho_*$.
Choose a complement $E_u$ of $D_u\dbar$ such that $\rho_*E_u=E_u$ (for example, replace $E_u$ with $E_u+\rho_* E_u$ if necessary).
Since $\dbar$ commutes with $\rho_*$, the neighborhood $V$ of $u$ in $\dbar^{-1}(E)$ may be assumed to be $\rho_*$-invariant.
Then the Kuranishi chart is mapped to itself by the $\rho_*$ action.

This completes the proof that $\rho_*$ induces an involution of the Kuranishi structure on $\mathcal M^{reg}(L,gL)$.
To see that $\rho_*$ induces an involution of Kuranishi structure on $\mathcal M(L,gL)$ is more complicated; the additional complication is that Kuranishi charts are defined near boundary points of the moduli space by resolving nodal points of the domains and then using a gluing process. 
It needs to be checked that conjugation is compatible with these operations.
It is stated in \cite{foooasi} Theorem 4.9 that this is indeed the case.
See also \cite{sol} for similar arguments.

\end{proof}
\subsection{The effect on orientation}\label{section:conjugation-orientation}
The goal of this section is to determine if $\Det(\rho_*)$ in Lemma \ref{lemma:conjugation-det} preserves or reverses orientation.
$\mathcal M(L,gL)$ is not necessarily orientable, so more precisely the following is what is meant:
By Lemma \ref{lemma:local_systems-identification}, if $u$ is a strip that connects $p\in R_h$ to $q\in R_{h'}$, then there is a canonical isomorphism of oriented vector spaces
\begin{equation}\label{eq:conjugation-orientation}
\Det(D_u\dbar)\cong ev_{-\infty}^*\Theta_{R_h}^-\otimes ev_{+\infty}^*\Theta_{R_{h'}}^+.
\end{equation}
Therefore, since $u$ and $\tilde u$ both begin and end at the same points, $\Det(\rho_*)$ can be viewed as an automorphism of $ev_{-\infty}^*\Theta_{R_h}^-\otimes ev_{+\infty}^*\Theta_{R_{h'}}^+.$
The goal then is to determine if this automorphism is homotopic to $1$ or $-1$.

$L\cong\rp^3$, so the tangent bundle of $L$ is trivial.
Fix a trivialization $TL\to L\times \rr^3$.
$TL$ inherits an orientation and spin structure from this trivialization (namely, the pullbacks of the standard orientation and spin structure of $L\times\rr^3$).
Also, the trivialization gives a section
\begin{equation}\label{eq:conjugation-frame}
Fr:L\to Fr(TL)
\end{equation}
of the frame bundle $Fr(TL)$ of $L$.
Equip $gL$ with the push forward trivialization, spin structure, and orientation.
Then $gL$ is equipped with a section of its frame bundle as well, namely the frame at $x\in gL$ is $dg\cdot Fr(g^{-1}x)$.

The isomorphism (\ref{eq:conjugation-orientation}) is defined as follows:
First, by a gluing theorem there is a canonical isomorphism (for all paths $\lambda_p$ and $\lambda_q$)
\begin{eqnarray}\label{eq:glueing_isomorphism}
\Det(\dbar_{\lambda(\lambda_p,u,\lambda_q)})&\cong& (\Det(\dbar_{\lambda_p\oplus TR_{h}})\otimes \Det(T_pR_{h}))\times_{T_pR_{h}}\\\nonumber&&\Det(D_{u}\dbar)\times_{T_qR_{h'}}(\Det(T_qR_{h'})\otimes \Det(\dbar_{\lambda_q\oplus TR_{h'}}))
\end{eqnarray}
Likewise for $\tilde u$.
The notation is
\begin{itemize}
\item $\lambda_p:[0,1]\to \Lambda^{ori}(T_pX)$ with $\lambda_p(0)=T_pL_0$ and $\lambda_p(1)=T_pgL_0$,
\item $\lambda_q:[0,1]\to \Lambda^{ori}(T_qX)$ with $\lambda_q(0)=T_qL_0$ and $\lambda_q(1)=T_qgL_0$,
\item $\dbar_{\lambda(\lambda_p,u,\lambda_q)}$ is the Cauchy-Riemann operator on the glued domain $$Z_-\#(\rr\times[0,1])\#Z_+$$ (which is isomorphic to the disc $D^2$),
\item $\lambda(\lambda_p,u,\lambda_q)$ denotes the Lagrangian boundary condition on the domain (the left semi-circle boundary is $\lambda_p$, the right semi-circle boundary is $\lambda_q$, and the top and bottom boundaries are $u^*TL_0$ and $u^*TL_1$, respectively),
\item similarly for $\dbar_{\lambda(\lambda_p,\tilde u,\lambda_q)}$,
\item $\dbar_{\lambda_p\oplus TR_{h'}}:W^{1,p;\delta}_\lambda(Z_-,T_pX)\to L^{p;\delta}(Z_-,\Lambda^{0,1}\otimes T_pX)$ is the Cauchy-Riemann operator with weights, and
\item similarly for $\dbar_{\lambda_p\oplus TR_h}$.
\end{itemize}
Therefore an orientation of $\Det(D_{\tilde{u}}\dbar)$ is determined by orientations of $\Det(\dbar_{\lambda_p\oplus TR_h})$, $\Det(\dbar_{\lambda_q\oplus TR_{h'}})$, and $\Det(\dbar_{\lambda(\lambda_q,u,\lambda_p)})$. 
(The orientation does not depend on $T_pR_h$ or $T_qR_{h'}$ because both of these vector spaces appear twice in the formula.)
Choose trivializations of the paths $\lambda_p$ and $\lambda_q$ that extend the given frames at $p$ and $q$ of $TL_0$ and $TL_1$.
This data gives canonical isomorphisms (see the discussion at the end of Section 8.1; also see \cite{fooo} Section 41):
\begin{eqnarray}
\nonumber\Det(\dbar_{\lambda(\lambda_q,u,\lambda_p)})&\cong& \rr,\\
\label{eq:conjugation-canonical_isomorphisms}\Det(\dbar_{\lambda_p,Z_-})&\cong & \Theta_{R_h,p}^-,\\
\nonumber\Det(\dbar_{\lambda_q,Z_+})&\cong & \Theta_{R_{h'},q}^+.
\end{eqnarray}
Therefore an orientation of $\Theta_{R_h,p}^-\otimes\Theta_{R_{h'},q}^+$ determines an orientation of $\Det(D_u\dbar)$, and hence determines the isomorphism (\ref{eq:conjugation-orientation}) (up to homotopy).


To make calculations easier, special paths $\lambda_p$ and $\lambda_q$ will be used.
Let $x$ be $p$ or $q$.
$dg$ and $d\tau$ act on $T_xX$, and $dg$ is a unitary transformation.
Furthermore, $dg\circ d\tau=d\tau\circ dg^{-1}$, so by Lemma \ref{lemma:index_theory-linear_algebra_3} there exists a basis of $T_xL$ such that $dg$ is diagonal.
Let $dg_x$ be the action on $T_xX$, and let the eigenvalues be $\gamma^{a_x},\gamma^{b_x},\gamma^{c_x}$.
Then, with respect to an appropriate basis of $T_xL$, 
\begin{displaymath}
dg_x=\left[
\begin{array}{ccc}
e^{2\pi i a_x/5}&0&0\\
0&e^{2\pi i b_x/5}&0\\
0&0&e^{2\pi i c_x/5}
\end{array}
\right].
\end{displaymath}
Let $dg_x^t$ be the $t^{th}$ power of $dg_x$, defined in the obvious way.
Then let
\begin{eqnarray*}
\lambda_p(t)&=&dg_p^t\cdot T_pL,\\
\lambda_q(t)&=&dg_q^t\cdot T_qL.
\end{eqnarray*}
Note that 
\begin{eqnarray}\label{eq:conjugation-end_cap_paths_preserved}
dg\cdot d\tau\ \cdot\lambda_x(1-t)&=&dg\cdot d\tau \cdot dg_x^{1-t}\cdot T_xL=dg \cdot dg_x^{t-1}\cdot T_xL\\
\nonumber&=&dg_x^t\cdot T_xL=\lambda_x(t).
\end{eqnarray}
Note also that if $T_xL$ is viewed as an oriented Lagrangian subspace then $\lambda_x$ can be interpreted as a path of oriented Lagranian subspaces.
Moreover, recall that a basis $Fr(x)$ of $T_xL$ is given by (\ref{eq:conjugation-frame}), so a canonical trivialization of $\lambda_x$ is given by 
\begin{equation}\label{eq:end_cap_triv}
dg_x^t\cdot Fr(x).
\end{equation}
The basis given to $\lambda_x(1)=T_xgL$ in this way agrees with the basis given by $g_*Fr(x)$.

Define the operator
\begin{equation}\label{eq:left_cap_operator}
D_-:T_pR_h\oplus W^{1,p;\delta}_{\lambda_p\oplus TR_h}(Z_-,T_pX)\to L^{p;\delta}(Z_-,T_pX),
\end{equation}
$$
(V,\xi)\mapsto\dbar_{\lambda_p\oplus TR_h}(\chi_-\cdot V+\xi),
$$
where $\chi_-$ is a cutoff function such that
\begin{itemize}
\item $\chi_-$ depends only on $s$ and is increasing,
\item $\chi_-(s)=0$ for $s\leq R-1$, and
\item $\chi_-(s)=1$ for $s\geq R$.
\end{itemize}
There is an evaluation map
$$
ev_{+\infty}:T_pR_h\oplus W^{1,p;\delta}_{\lambda_p\oplus TR_h}(Z_-,T_pX)\to T_pR_h,
$$
$$(V,\xi)\mapsto V.$$
Furthermore, there is a canonical isomorphism
$$\Det(D_-)\cong \Det(T_pR_h)\otimes \Det(\dbar_{\lambda_p\oplus TR_h}).$$

Similarly, define the operator
\begin{equation}\label{eq:right_cap_operator}
D_+:W^{1,q;\delta}_{\lambda_q\oplus TR_{h'}}(Z_+,T_qX)\oplus T_qR_{h'}\to L^{p;\delta}(Z_+,T_qX),
\end{equation}
$$
(\xi,V)\mapsto\dbar_{\lambda_q\oplus TR_{h'}}(\xi+\chi_+V),
$$
and evaluation map
$$
ev_{-\infty}:W^{1,p;\delta}_{\lambda_q\oplus TR_{h'}}(Z_+,T_qX)\oplus T_qR_{h'}\to T_qR_{h'},
$$
$$(\xi,V)\mapsto V.$$
Furthermore, there is a canonical isomorphism
$$\Det(D_+)\cong \Det(\dbar_{\lambda_q\oplus TR_{h'}})\otimes \Det(T_qR_{h'}).$$

Now consider the following commutative diagrams:
\begin{equation}\label{eq:conjugation-involution_1}
\begin{array}{ccc}
 W^{1,p}_\lambda(Z_-\# S \# Z_+,u^*TX) & \xrightarrow{\dbar_{\lambda(\lambda_q,u,\lambda_p)}} & L^p(Z_-\# S \# Z_+,\Lambda^{0,1}\otimes u^*TX) \\
 \downarrow & & \downarrow \\
 W^{1,p}_\lambda(Z_-\# S \# Z_+,\tilde u^*TX) & \xrightarrow{\dbar_{\lambda(\lambda_q,\tilde u,\lambda_p)}} & L^p(Z_-\# S \# Z_+,\Lambda^{0,1}\otimes \tilde u^*TX) ,
\end{array}
\end{equation}

\begin{equation}\label{eq:conjugation-involution_2}
\begin{array}{ccc}
 T_qR_{h'}\oplus W^{1,p;\delta}_{\lambda_q\oplus TR_{h'}}(Z_+,T_qX) & \xrightarrow{D_+} & L^{p;\delta}(Z_+,\Lambda^{0,1}\otimes T_qX) \\
 \downarrow & & \downarrow \\
 T_qR_{h'}\oplus W^{1,p;\delta}_{\lambda_q\oplus TR_{h'}}(Z_+,T_qX) & \xrightarrow{D_+} & L^{p;\delta}(Z_+,\Lambda^{0,1}\otimes T_qX) ,
\end{array}
\end{equation}

\begin{equation}\label{eq:conjugation-involution_3}
\begin{array}{ccc}
T_pR_h\oplus W^{1,p;\delta}_{\lambda_p\oplus TR_{h}}(Z_-,T_pX) & \xrightarrow{D_-} & L^{p;\delta}(Z_-,\Lambda^{0,1}\otimes T_pX) \\
 \downarrow & & \downarrow \\
T_pR_h\oplus W^{1,p;\delta}_{\lambda_p\oplus TR_{h}}(Z_-,T_pX) & \xrightarrow{D_-} & L^{p;\delta}(Z_-,\Lambda^{0,1}\otimes T_pX) ,
\end{array}
\end{equation}

\begin{equation}\label{eq:conjugation-involution_4}
\begin{array}{ccc}
 T_pR_h\oplus T_qR_{h'}\oplus W^{1,p;\delta}_\lambda(S,u^*TX) & \xrightarrow{D_{u}\dbar} & L^{p;\delta}(S,\Lambda^{0,1}\otimes u^*TX) \\
 \downarrow & & \downarrow \\
 T_pR_h\oplus T_qR_{h'}\oplus W^{1,p;\delta}_\lambda(S,\tilde u^*TX) & \xrightarrow{D_{u}\dbar} & L^{p;\delta}(S,\Lambda^{0,1}\otimes \tilde u^*TX).
\end{array}
\end{equation}
In each case the left vertical map is $\xi(s,t)\mapsto dg\cdot d\tau\cdot\xi(s,1-t)$ and the right vertical map is $\eta(s,t)\otimes (ds-idt)\mapsto dg\cdot d\tau \cdot\eta(s,1-t)\otimes (ds-idt)$.
Note that the left vertical isomorphisms preserve the boundary conditions by (\ref{eq:conjugation-end_cap_paths_preserved}).
On the $T_pR_h$ and $T_qR_{h'}$ summands the vertical maps are trivial because $dg$ and $d\tau$ act trivially on these spaces.
\begin{lemma}\label{lemma:conjugation-gluing_isomorphism}
All of the vertical isomorphisms are compatible with the gluing isomorphism.
That is, the following diagram commutes (up to homotopy):
\begin{displaymath}
\begin{array}{ccc}
\Det(\dbar_{\lambda_p,u,\lambda_q})&\to &\Det(\dbar_{\lambda_p,Z_-}\oplus T_pR_h)\times_{T_pR_h}\Det(D_u\dbar)\\&&\times_{T_qR_{h'}}\Det(T_qR_{h'}\oplus\dbar_{\lambda_q,Z_+})\\
\downarrow &&\downarrow\\
\Det(\dbar_{\lambda_p,\tilde u,\lambda_q})&\to &\Det(\dbar_{\lambda_p,Z_-}\oplus T_pR_h)\times_{T_pR_h}\Det(D_{\tilde u}\dbar)\\&&\times_{T_qR_{h'}}\Det(T_qR_{h'}\oplus\dbar_{\lambda_q,Z_+}).
\end{array}
\end{displaymath}
The vertical maps are the ones induced by the involutions above and the horizontal maps are the gluing isomorphism.

\end{lemma}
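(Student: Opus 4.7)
The plan is to show that the conjugation $\rho_*$ commutes with the gluing isomorphism by analyzing how conjugation interacts with the parametrized gluing construction that defines the isomorphism in (\ref{eq:glueing_isomorphism}). First I would recall the basic gluing setup: given kernel elements $\xi_-$ on $Z_-$ with boundary condition $\lambda_p \oplus TR_h$, $\xi$ on the strip $S$ with boundary on $u^*TL$, $u^*TgL$, and $\xi_+$ on $Z_+$ with boundary condition $\lambda_q \oplus TR_{h'}$, together with values $U \in T_pR_h$ and $V \in T_qR_{h'}$ at which they agree at the two nodes, one pre-glues using cutoff functions and parallel transport to an approximate solution on $Z_- \# S \# Z_+$, then applies a right inverse of the linearized $\dbar$ on the glued domain to correct it to a genuine kernel element. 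Dualizing, cokernels on the glued domain match (up to homotopy) with the fiber product of cokernels of the individual operators over the values $U,V$. This procedure produces the canonical isomorphism of determinant lines.

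Next I would observe that $\rho_*$ acts compatibly on every piece of this construction. The reflection $(s,t) \mapsto (s,1-t)$ extends naturally from $S = \mathbb{R}\times[0,1]$ to the caps $Z_\pm$, since these carry horizontal strip-like ends, and hence to the glued disc. At the linear level, $\xi(s,t) \mapsto dg \cdot d\tau \cdot \xi(s,1-t)$ is defined globally on the glued domain and restricts to the involution on each piece described in the diagrams (\ref{eq:conjugation-involution_1})--(\ref{eq:conjugation-involution_4}). The boundary conditions $\lambda_p$ and $\lambda_q$ were chosen precisely so they are preserved by this involution, as shown in (\ref{eq:conjugation-end_cap_paths_preserved}); this was the point of the diagonal-form trivializations (\ref{eq:end_cap_triv}). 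On the summands $T_pR_h$ and $T_qR_{h'}$ the involution acts trivially since $dg$ and $d\tau$ act as the identity there.

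The main analytic step is to verify that the pre-gluing and the right-inverse correction commute with the involution. Since $g$ and $\tau$ are isometries and $\nabla$ is the Levi-Civita connection, the parallel transport used in pre-gluing commutes with $dg \cdot d\tau$, so the pre-glued approximate solution transforms equivariantly. The diagrams (\ref{eq:conjugation-involution_1})--(\ref{eq:conjugation-involution_4}) show $\rho_*$ intertwines each of the linearized $\dbar$ operators on the strip, the caps, and the glued domain with themselves. Consequently, a right inverse may be chosen to be $\rho_*$-equivariant (by averaging a given right inverse with its $\rho_*$-conjugate if necessary), and it follows that the gluing isomorphism on kernels, and therefore on cokernels and determinants, is $\rho_*$-equivariant. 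The evaluation maps $ev_{\pm\infty}$ trivially intertwine $\rho_*$, so the fiber product description is preserved.

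The main obstacle I anticipate is the bookkeeping of orientation conventions: the individual identifications used in the gluing isomorphism depend on choices of trivializations and cutoff functions, and one needs to check that the $\rho_*$-invariant choices (such as the trivializations $dg_x^t \cdot Fr(x)$ of $\lambda_p$ and $\lambda_q$) are mutually consistent across all four operators. Since the space of admissible choices is connected and the gluing construction depends continuously on them, and since we only need the diagram to commute up to homotopy, the equivariance established at the level of operators upgrades to commutativity of determinant lines. A final check is that the sign of $\rho_*$ acting on $\Det(\dbar_{\lambda_p, Z_-})$ and $\Det(\dbar_{\lambda_q, Z_+})$ is the one recorded by the identifications (\ref{eq:conjugation-canonical_isomorphisms}), which is immediate from the definition of $\Theta^{\pm}_{R_h}$ via these very operators.
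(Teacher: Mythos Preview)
Your approach is essentially the paper's: both show that the auxiliary data in the gluing construction (the paper uses a complement $E$ of the kernel after reducing to the surjective case via Floer--Hofer, you use a right inverse) can be chosen $\rho_*$-compatibly, so that pre-gluing followed by projection/correction commutes with conjugation. One nitpick: your parenthetical ``by averaging a given right inverse with its $\rho_*$-conjugate'' only makes literal sense when $u=\tilde u$, since otherwise the two right inverses act between different spaces; the correct move is to \emph{transport} the choice by setting $Q_{\tilde u}:=\rho_* \circ Q_u\circ\rho_*^{-1}$, which is exactly what the paper does with complements (taking $\tilde E:=\rho_*(E)$).
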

\begin{proof}
The procedure for gluing linearized operators is well-known and follows the method detailed in \cite{fh}, so only an outline of the proof will be given.
First, as shown in \cite{fh}, it may be assumed that all the operators are surjective.
Thus, the $\Det$ spaces appearing above are nothing more than the top exterior powers of the kernels of the operators, so to describe the gluing isomorphisms it suffices to describe the maps between the kernels, namely the maps
\begin{eqnarray}\label{eq:conjugation-gluing_kernels_1}
(\Ker(\dbar_{\lambda_p,Z_-})\oplus T_pR_h )\times_{T_pR_h}\Ker(D_{u}\dbar)\times_{T_qR_{h'}} \\\nonumber(T_qR_{h'}\oplus\Ker(\dbar_{\lambda_q,Z_+}))&\to&\Ker(\dbar_{\lambda_p,u,\lambda_q})
\end{eqnarray}
and
\begin{eqnarray}\label{eq:conjugation-gluing_kernels_2}
(\Ker(\dbar_{\lambda_p,Z_-})\oplus T_pR_h )\times_{T_pR_h}\Ker(D_{\tilde u}\dbar)\times_{T_qR_{h'}} \\\nonumber(T_qR_{h'}\oplus\Ker(\dbar_{\lambda_q,Z_+}))&\to&\Ker(\dbar_{\lambda_p,\tilde u,\lambda_q}).
\end{eqnarray}

An arbitrary element in the left-hand side of (\ref{eq:conjugation-gluing_kernels_1}) is of the form 
\begin{displaymath}
((\xi_-,U),\xi_0,(\xi_+,V))
\end{displaymath}
where $\xi_-,\xi_0,\xi_+$ are holomorphic and $\xi_0$ satisfies  $$\xi_0(-\infty,\cdot)=U\in T_pR_h$$ and $$\xi_0(+\infty,\cdot)=V\in T_qR_{h'}.$$
For appropriate cutoff functions $\chi_-(s)$, $\chi_0(s)$, and $\chi_+(s)$ these sections can be glued to get a section
\begin{equation}\label{eq:conjugation-gluing_pre}
\chi_-\xi_-+(1-\chi_-)U+\chi_0\xi_0+(1-\chi_+)V+\chi_+\xi_+
\end{equation}
over $Z_-\# S\# Z_+$.
This section is not holomorphic, so it has to be projected to the kernel of $\dbar_{\lambda_p,u,\lambda_q}$.
To do this, let $E$ be a fixed complement in $W^{1,p}_\lambda$ of the kernel, and let
\begin{displaymath}
(\chi_-\xi_-+(1-\chi_-)U+\chi_0\xi_0+(1-\chi_+)V+\chi_+\xi_+)_K
\end{displaymath}
be the projection of (\ref{eq:conjugation-gluing_pre}) to $\Ker(\dbar_{\lambda_p,u,\lambda_q})$ via the splitting $W^{1,p}_\lambda=E\oplus\Ker$.
Then the gluing isomorphism is defined to be the map
\begin{displaymath}
((\xi_-,U),\xi_0,(V,\xi_+))\mapsto (\chi_-\xi_-+(1-\chi_-)U+\chi_0\xi_0+(1-\chi_+)V+\chi_+\xi_+)_K.
\end{displaymath}

Now let $\rho_*$ be any one of the vertical maps in the diagram, so $\rho_*$ is of the form
\begin{displaymath}
(\rho_*\xi)(s,t)=dg\cdot d\tau\cdot\xi(s,1-t).
\end{displaymath}
Let $\tilde E=\rho_* (E)$.
Note that $\tilde E$ is a complement of $\Ker(\dbar_{\lambda_p,\tilde u,\lambda_q})$, and also $$\rho_*(\Ker(\dbar_{\lambda_p,u,\lambda_q})=\Ker(\dbar_{\lambda_p,\tilde u,\lambda_q}),$$
by Lemma \ref{lemma:conjugation-det}.
If $\tilde E$ is used to define the projection in the gluing isomorphism (\ref{eq:conjugation-gluing_kernels_2}), it is easy to check that gluing commutes with $\rho_*$.
Since it does not matter (up to homotopy) which complements are used to define the gluing isomorphism, the lemma follows.

\end{proof}

\begin{corollary}
The sign of $\Det(\rho_*)$ (viewed as an automorphism of $ev_{-\infty}^*\Theta_{R_h}^-\otimes ev_{+\infty}^*\Theta^+_{R_{h'}}$) is equal to the product of the signs of the involutions in (\ref{eq:conjugation-involution_1}), (\ref{eq:conjugation-involution_2}), and (\ref{eq:conjugation-involution_3}).
\end{corollary}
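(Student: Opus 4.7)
The plan is to extract the desired sign from the compatibility established in Lemma~\ref{lemma:conjugation-gluing_isomorphism}. The gluing isomorphism (\ref{eq:glueing_isomorphism}) expresses $\Det(\dbar_{\lambda(\lambda_p,u,\lambda_q)})$ as a product of the four determinant lines associated to $D_-$, $D_u\dbar$, and $D_+$, with the auxiliary factors $\Det(T_pR_h)$ and $\Det(T_qR_{h'})$ appearing symmetrically on both sides of the fiber product. By Lemma~\ref{lemma:conjugation-gluing_isomorphism}, the vertical involutions in diagrams (\ref{eq:conjugation-involution_1})--(\ref{eq:conjugation-involution_4}) all commute (up to homotopy) with this gluing, so the induced automorphism on the left-hand side of (\ref{eq:glueing_isomorphism}) factors as the tensor product of the induced automorphisms on each constituent piece.

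Next I would observe that on the auxiliary summands $T_pR_h$ and $T_qR_{h'}$ the involution acts as the identity. Indeed $R_h \subset L \cap gL = \mathrm{Fix}(g) \cap L$, so $dg$ and $d\tau$ both restrict to the identity on $T_pR_h$ (and similarly at $q$); the formula $\xi \mapsto dg \cdot d\tau \cdot \xi(s,1-t)$ therefore fixes these tangent summands, and they contribute the sign $+1$ on each side of the fiber product. Taking determinants and signs, the compatibility reads
\begin{displaymath}
\mathrm{sgn}(\ref{eq:conjugation-involution_1}) = \mathrm{sgn}(\ref{eq:conjugation-involution_2}) \cdot \mathrm{sgn}(\ref{eq:conjugation-involution_3}) \cdot \mathrm{sgn}(\ref{eq:conjugation-involution_4}),
\end{displaymath}
and since each sign is $\pm 1$, I can solve for $\mathrm{sgn}(\ref{eq:conjugation-involution_4})$ as the product of the other three.

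Finally, via the canonical isomorphisms (\ref{eq:conjugation-canonical_isomorphisms}) together with Lemma~\ref{lemma:local_systems-identification}, the determinant line $\Det(D_u\dbar)$ is identified (up to homotopy) with $ev_{-\infty}^*\Theta^-_{R_h} \otimes ev_{+\infty}^*\Theta^+_{R_{h'}}$, so the sign just computed is precisely the sign of $\Det(\rho_*)$ viewed as an automorphism of this tensor product, as claimed. The main obstacle is not in any of these individual steps, since the serious analytic work has already been done in Lemma~\ref{lemma:conjugation-gluing_isomorphism}; rather, the one point requiring care is the bookkeeping for the auxiliary factors $\Det(T_pR_h)$ and $\Det(T_qR_{h'})$, namely verifying that they occur in matched positions on both sides of the gluing so their sign contributions cancel, and that the conjugation acts trivially on them.
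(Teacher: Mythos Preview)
Your argument is correct and follows the same route as the paper's proof: substitute the canonical isomorphisms (\ref{eq:conjugation-canonical_isomorphisms}) into the commutative square of Lemma~\ref{lemma:conjugation-gluing_isomorphism}, observe that the horizontal arrows then become precisely the identifications (\ref{eq:conjugation-orientation}) for $u$ and $\tilde u$, and read off the sign relation. Your treatment is simply more explicit than the paper's about the bookkeeping for the auxiliary $\Det(T_pR_h)$ and $\Det(T_qR_{h'})$ factors and about why the involution acts trivially on them.
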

\begin{proof}
The fixed paths $\lambda_p,\lambda_q$ and fixed trivialization $Tr_p,Tr_q$ of these paths give the canonical isomorphisms in (\ref{eq:conjugation-canonical_isomorphisms}).
Therefore the commutative diagram in the previous lemma gives rise to the commutative diagram
\begin{displaymath}
\begin{array}{ccc}
\rr&\to &ev_{-\infty}^*\Theta_{R_h}^-\otimes\Det( T_pR_h)\times_{T_pR_h}\Det(D_u\dbar)\times_{T_qR_{h'}}\Det(T_qR_{h'})\otimes ev^*_{+\infty}\Theta^+_{R_{h'}}\\
\downarrow &&\downarrow\\
\rr&\to &ev_{-\infty}^*\Theta_{R_h}^-\otimes\Det( T_pR_h)\times_{T_pR_h}\Det(D_{\tilde u}\dbar)\times_{T_qR_{h'}}\Det(T_qR_{h'})\otimes ev^*_{+\infty}\Theta^+_{R_{h'}}.\\
\end{array}
\end{displaymath}
The horizontal isomorphisms in this diagram determine the isomorphisms (\ref{eq:conjugation-orientation}) for $u$ and $\tilde u$.
\end{proof}
Consider first the involution (\ref{eq:conjugation-involution_1}).
Let $u'(s,t)=\tau u(s,1-t)$.
Then $u'$ is a strip with top boundary on $L$ and bottom boundary on $g^{-1}L$.
(\ref{eq:conjugation-involution_1}) can be factored as the composition
\begin{displaymath}\label{eq:involution_composition}
\begin{array}{ccc}
 W^{1,p}_\lambda(Z_-\# S \# Z_+, u^*TX) & \xrightarrow{\dbar_{\lambda(\lambda_q,u,\lambda_p)}} & L^p(Z_-\# S\# Z_+, \Lambda^{0,1}\otimes u^*TX) \\
\downarrow\Psi_1 && \downarrow\Psi_1 \\
W^{1,p}_\lambda(Z_-\# S \# Z_+, u'^*TX) & \xrightarrow{\dbar_{\lambda(\bar\lambda_q^{-1},u',\bar\lambda_p^{-1})}} & L^p(Z_-\# S\# Z_+, \Lambda^{0,1}\otimes u'^*TX) \\
\downarrow \Psi_2&& \downarrow\Psi_2 \\
W^{1,p}_\lambda(Z_-\# S \# Z_+, \tilde u^*TX) & \xrightarrow{\dbar_{\lambda(\lambda_q,\tilde u,\lambda_p)}} & L^p(Z_-\# S\# Z_+,\Lambda^{0,1}\otimes \tilde u^*TX) ,
\end{array}
\end{displaymath}
where $\Psi_1$ maps $\xi(s,t)$ to $d\tau \cdot\xi(s,1-t)$ and $\Psi_2$ maps $\xi(s,t)$ to $dg\cdot\xi(s,t)$.
Here $\bar\lambda_x^{-1}(t)=\overline{\lambda_x(1-t)}$ for $x=p,q$.
Call the rows in the diagram above A, B, and C.
The Lagrangian boundary conditions in A, B, and C have spin structures in the obvious way, call them $\textrm{Spin}_\textrm{A}$, $\textrm{Spin}_\textrm{B}$, and $\textrm{Spin}_\textrm{C}$.
The spin structures give canonical orientations to the determinant line of the operators in A, B, and C.

\begin{lemma}\label{lemma:conjugation-spin}
$$\Psi_{1*}\mathrm{Spin}_\mathrm{A}=\mathrm{Spin}_\mathrm{B},$$
$$\Psi_{2*}\mathrm{Spin}_\mathrm{B}=\mathrm{Spin}_\mathrm{C}.$$
\end{lemma}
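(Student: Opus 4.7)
The plan is to invoke Lemma~\ref{lemma:local_systems-main_lemma}, which says that a trivialization (equivalently, an orientation and spin structure) of the Lagrangian boundary subbundle $F$ determines canonically an orientation of $\Det(\dbar_{E,F})$. Hence it suffices to check that $\Psi_1$ and $\Psi_2$, when restricted to the boundary of $Z_-\#S\#Z_+$, send the chosen trivializations of the boundary Lagrangians of row A (resp.\ row B) to the chosen trivializations in row B (resp.\ row C).

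For $\Psi_2\colon \xi\mapsto dg\cdot\xi$, this should be essentially a tautology: the boundary Lagrangians $u'^*T(g^{-1}L)$ and $u'^*TL$ of row B are sent to $\tilde u^*TL = dg\cdot u'^*T(g^{-1}L)$ and $\tilde u^*T(gL) = dg\cdot u'^*TL$, the corresponding boundaries of row C, and the orientations, spin structures, and trivializations on $gL$ and $g^{-1}L$ are by definition $g_*$ and $(g^{-1})_*$ of those on $L$. On the semicircular caps, the paths $\lambda_p$ and $\lambda_q$ are of the form $dg^{\,t}\cdot TL$ with trivializations $dg^{\,t}\cdot Fr$ (see \eqref{eq:end_cap_triv}), so the same reasoning applies.

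For $\Psi_1\colon \xi(s,t)\mapsto d\tau\cdot\xi(s,1-t)$, I would use two basic facts: (i)~$\tau$ fixes $L$ pointwise, so $d\tau$ restricts to the identity on $T_pL$ for $p\in L$, and in particular preserves the fixed framing $Fr$ of $TL$; and (ii)~the relation $\tau g\tau = g^{-1}$ yields $d\tau\cdot dg_p^{\,t} = dg_p^{\,-t}\cdot d\tau$ on $T_pX$ for $p\in L\cap\mathrm{Fix}(g)$. Combining these, the framing $dg_p^{\,t}\cdot Fr(p)$ of $\lambda_p(t) = dg_p^{\,t}\cdot T_pL$ is carried by $\Psi_1$ to $d\tau\cdot dg_p^{\,1-t}\cdot Fr(p) = dg_p^{\,t-1}\cdot Fr(p)$, which is precisely the canonical framing of the reversed-parameterization conjugate path $\bar\lambda_p^{-1}(t) = dg_p^{\,t-1}\cdot T_pL$ that runs from $T_p(g^{-1}L)$ (trivialized by $dg^{-1}\cdot Fr(p)$) at $t=0$ to $T_pL$ (trivialized by $Fr(p)$) at $t=1$. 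An analogous computation handles $\lambda_q$ and the top/bottom arcs of the strip, using that $TL$ on the new top boundary and $T(g^{-1}L)$ on the new bottom boundary come equipped (by convention) with $Fr$ and $(g^{-1})_*Fr$ respectively.

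The main obstacle will be the bookkeeping required to reconcile Lemma~\ref{lemma:local_systems-main_lemma} with the domain reflection $t\mapsto 1-t$ built into $\Psi_1$: after $\Psi_1$ the top and bottom arcs of $Z_-\#S\#Z_+$ are interchanged and the semicircle paths have their parameterizations reversed (which is what the $\bar\lambda_\cdot^{-1}$ notation records), so one must verify that the orientation convention of Lemma~\ref{lemma:local_systems-main_lemma} applied to the traversal of $\partial(Z_-\#S\#Z_+)$ in row B really does produce the framing computed above. Once this is checked, and since $d\tau$ is an anti-holomorphic isometry acting as the identity on $TL$ while $dg$ is unitary, no hidden sign can enter and both equalities of the lemma follow.
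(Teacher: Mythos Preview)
Your proposal is correct and follows essentially the same approach as the paper: both verify piece by piece (end caps, top arc, bottom arc) that the given frames of the boundary Lagrangians are carried by $\Psi_1$ and $\Psi_2$ to the given frames in the target row, using $d\tau|_{TL}=\mathrm{id}$ and $d\tau\cdot dg = dg^{-1}\cdot d\tau$. Your final worry about the domain reflection $t\mapsto 1-t$ is not actually needed for \emph{this} lemma, which only asserts equality of spin structures (i.e.\ trivializations of the boundary subbundle); the effect of the reflection on $\Det(\dbar)$ is exactly what the \emph{next} lemma ($\Det(\Psi_1)$ has sign $(-1)^{\mu/2}$) accounts for.
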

\begin{proof}
For the first statement, it needs to be checked that the trivialization of the Lagrangian bundle along the boundary in A given by $\textrm{Spin}_\textrm{A}$ pushes forward (up to homotopy) to the trivialization of the Lagrangian frame bundle around the boundary in B given by $\mathrm{Spin}_\textrm{B}$.

For the two end caps, the two trivializations agree by definition (the trivialization over the end caps is specified by (\ref{eq:end_cap_triv})).

For the top, the pushforward trivialization is $Fr(u(s,0))$, and the trivialization coming from $\textrm{Spin}_\textrm{B}$ is also $Fr(u(s,0))$.

For the bottom, the pushforward trivialization is 
$$d\tau\cdot dg Fr(g^{-1}u(s,1))=dg^{-1}\cdot d\tau Fr(g^{-1}u(s,1))=dg^{-1} Fr(g^{-1}u(s,1)).$$
The trivialization coming from $\textrm{Spin}_\textrm{B}$ is 
$$dg^{-1}Fr(gu'(s,0))=dg^{-1}Fr(g\tau u(s,1))=dg^{-1}Fr(g^{-1}u(s,1)).$$

Thus the spin structures agree (in fact exactly, not just up to homotopy). The proof of the second statement is similar.
\end{proof}

\begin{lemma}
$\Det(\Psi_1):\Det(\dbar_{\lambda(\lambda_q,u,\lambda_p)})\to\Det(\dbar_{\lambda(\lambda_q,u',\lambda_p)})$ has sign $(-1)^{\mu/2}$ where $\mu=\mu(\lambda(\lambda_p,u,\lambda_q))$.
\end{lemma}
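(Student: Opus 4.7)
The plan is to exploit the $\Psi_1$-equivariance of the entire Lagrangian bundle pair, reduce to a direct sum of one-dimensional models by a homotopy argument, and compute the sign in each model explicitly.

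First, I would verify that $\Psi_1$ defines a genuine involution of the bundle pair $(u^*TX, F)$ over the glued disc $D^2 \cong Z_- \# S \# Z_+$, where $F$ is the Lagrangian boundary subbundle determined by $\lambda(\lambda_q, u, \lambda_p)$. On the strip part, $\Psi_1$ swaps the top and bottom boundaries, but the composition of $(s,t) \mapsto (s, 1-t)$ with $d\tau$ exchanges $u^*TL$ and $u^*T(gL)$ correctly. On the caps, the paths $\lambda_p$, $\lambda_q$ were chosen so that $dg \cdot d\tau \cdot \lambda_x(1-t) = \lambda_x(t)$ (equation (\ref{eq:conjugation-end_cap_paths_preserved})), which is exactly the $\Psi_1$-equivariance needed on $Z_\pm$. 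The computation in the excerpt already shows that $\dbar$ intertwines with itself under $\Psi_1$, so $\Psi_1$ descends to a well-defined $\rr$-linear automorphism of $\Det(\dbar_{\lambda(\lambda_q,u,\lambda_p)})$.

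Second, I would invoke homotopy invariance: the sign of $\Det(\Psi_1)$ depends only on the equivariant homotopy class of the triple $(E, F, \Psi_1)$, because the index bundle over a path of equivariant operators is continuous and orientable. This lets me deform $(u^*TX, F)$ through $\Psi_1$-equivariant bundle pairs to an equivariant direct sum of one-dimensional pairs. Such a splitting is possible because $\Psi_1$-equivariantly the classifying data reduces to a loop in the real Lagrangian Grassmannian of a fixed fiber (the fiber over a fixed point of the reflection on $\partial D^2$), and this loop splits into line components by linear algebra.

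Third, I would compute the sign in each one-dimensional model. The model for Maslov index $k$ is the trivial bundle $D^2 \times \cc$ with boundary condition $\theta \mapsto e^{ik\theta/2} \cdot \rr$ and $\Psi_1$ acting as complex conjugation $(z, w) \mapsto (\bar z, \bar w)$. An explicit basis of the kernel of the corresponding $\dbar$ is given by polynomials $\{z^j + z^{k-j}\}_{0 \leq j \leq k/2}$ (for $k$ even) and the cokernel has a symmetric description; conjugation permutes these basis elements, and a direct count shows the sign of the permutation is $(-1)^{k/2}$. Summing over the line components, the total sign is $(-1)^{\sum k_i / 2} = (-1)^{\mu/2}$, where $\mu = \sum k_i$ is the Maslov index of the full bundle pair.

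The hard part will be the careful bookkeeping of orientation conventions: one must check that the canonical orientation of $\Det(\dbar)$ given by the spin structure (Lemma \ref{lemma:local_systems-main_lemma}) matches up, under the equivariant splitting, with the product of the model orientations, and that any leftover overall sign is trivial. In particular, one needs to confirm that the boundary trivialization induced on each one-dimensional summand is the standard one (so that no unaccounted-for $\pi_1(SO)$ discrepancy enters), using Lemma \ref{lemma:conjugation-spin} which already ensures that $\Psi_1$ preserves the spin structure globally.
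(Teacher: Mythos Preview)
Your overall strategy is sound, but it differs substantially from the paper's proof, and there are a couple of imprecisions worth flagging.

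The paper's proof is three lines: Lemma~\ref{lemma:conjugation-spin} says $\Psi_1$ preserves the spin structure; one then homotopes the entire bundle pair so that $\Psi_1$ becomes literal complex conjugation; the sign $(-1)^{\mu/2}$ is then read off from Theorem~1.3 of \cite{foooasi}. In other words, the paper outsources the model computation entirely to an external reference. What you are proposing is essentially a self-contained proof of that cited theorem: split equivariantly into line subbundles and compute the sign in each summand by hand. Your route is more explicit and avoids the black-box citation; the paper's route is shorter but not self-contained.

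Two points to tighten. First, $\Psi_1$ is \emph{not} an automorphism of a single determinant line: it maps $\Det(\dbar_{\lambda(\lambda_q,u,\lambda_p)})$ to $\Det(\dbar_{\lambda(\lambda_q,u',\lambda_p)})$, and these coincide only when $u=u'$. The ``sign'' is meaningful only relative to the spin-induced orientations on \emph{both} lines (Lemma~\ref{lemma:local_systems-main_lemma}), which is exactly why Lemma~\ref{lemma:conjugation-spin} is the first step in the paper's argument. You acknowledge this in your last paragraph, but your first paragraph misstates it. Second, your one-dimensional kernel basis is incomplete: for Maslov index $k$ even, the kernel is $(k+1)$-dimensional over $\rr$, with real basis $z^{k/2}$, $z^j+z^{k-j}$, and $i(z^j-z^{k-j})$ for $0\le j<k/2$; conjugation fixes the first two families and negates the third, giving determinant $(-1)^{k/2}$ as you claim. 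You should also justify that an equivariant splitting with each $k_i$ even can be arranged (this uses that $\mu$ is even, which follows because the boundary loop lives in the \emph{oriented} Lagrangian Grassmannian by the choice of $\lambda_p,\lambda_q$), or else extend the model computation to odd $k_i$ and track how the spin orientation behaves there.
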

\begin{proof}
By the previous lemma, $\Psi_1$ preserves spin structure.
All of the data can be homotoped so that $\Psi_1$ is simply conjugation.
The lemma then follows from Theorem 1.3 in \cite{foooasi}.
\end{proof}

Let $k$ be an integer such that $0\leq k\leq 4$.
Define
\begin{displaymath}
k'=\left\{
\begin{array}{ll}
k & k=0,1,2,\\
k-\frac{5}{2} & k=3,4.
\end{array}
\right.
\end{displaymath}
and
\begin{displaymath}
k''=\left\{
\begin{array}{ll}
0 & k=0,1,2,\\
1 & k=3,4.
\end{array}
\right.
\end{displaymath}
For example, if $\gamma^{a_x},\gamma^{b_x},\gamma^{c_x}$ are the eigenvalues of the $g$ action on $T_xX$ then $a_x''$, $b_x''$, and $c_x''$ are defined.

\begin{lemma}\label{lemma:conjugation-change_1}
$$\mu=\mu(u)+(a_q''+b_q''+c_q''-a_p''-b_p''-c_p'').$$
\end{lemma}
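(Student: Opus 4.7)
The plan is to observe that the bundle pairs defining $\mu$ and $\mu(u)$ over the glued disc $Z_-\#S\# Z_+$ (biholomorphic to $D^2$) coincide along the top and bottom boundaries, where they are $u^*TgL$ and $u^*TL$ respectively, and differ only in the Lagrangian boundary subbundle assigned to the two semicircular caps. So by additivity of the winding of $\Det^2$ around $\partial D^2$, the difference $\mu-\mu(u)$ splits as a sum of two integer contributions, one from each cap, each equal to the Maslov index of a loop in the Lagrangian Grassmannian at $T_xL$ (for $x=p,q$) formed by concatenating $\lambda_x$ with the reverse of the positive definite path $\lambda_x^+$.

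First I would compute the winding of $\Det^2$ along $\lambda_x(t)=dg_x^t\cdot T_xL$. Using the hypothesis that $T_xL$ has a basis of eigenvectors of $dg_x$ with eigenvalues $\gamma^{a_x},\gamma^{b_x},\gamma^{c_x}$, and the general identity $\Det^2(U\cdot\rr^n)=\Det(U)^2$ for a unitary $U$, one gets $\Det^2(\lambda_x(t))=e^{4\pi i(a_x+b_x+c_x)t/5}$, so the change of argument of $\Det^2$ along $\lambda_x$ is $4\pi(a_x+b_x+c_x)/5$. By Lemma \ref{lemma:index_theory-matrix}, the positive definite path $\lambda_x^+$ has the analogous form with the eigenvalues $\gamma^{a_x'},\gamma^{b_x'},\gamma^{c_x'}$ in the upper half plane, giving an $\arg\Det^2$ change of $4\pi(a_x'+b_x'+c_x')/5$.

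Next I would use that $a_x-a_x'$ equals $0$ if $a_x\in\{0,1,2\}$ and $5/2$ if $a_x\in\{3,4\}$, i.e. $a_x-a_x'=(5/2)a_x''$, and similarly for $b_x,c_x$. Hence the winding of $\Det^2$ around the loop $\lambda_x\#(-\lambda_x^+)$ at $T_xL$ is
\begin{displaymath}
\tfrac{1}{2\pi}\bigl[4\pi(a_x+b_x+c_x)/5-4\pi(a_x'+b_x'+c_x')/5\bigr]=a_x''+b_x''+c_x''.
\end{displaymath}

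Finally I would fix the sign of each cap's contribution using the counterclockwise boundary convention on $D^2$. The right cap $Z_+$ is traversed from bottom to top, aligning with the parameterization of $\lambda_q$; replacing $\lambda_q^+$ by $\lambda_q$ therefore contributes $+(a_q''+b_q''+c_q'')$ to $\mu-\mu(u)$. The left cap $Z_-$ is traversed from top to bottom, opposite the parameterization of $\lambda_p$; replacing $\lambda_p^+$ by $\lambda_p$ therefore contributes $-(a_p''+b_p''+c_p'')$. Adding these gives the claimed formula. The main place to be careful is this last sign bookkeeping for the direction of traversal on each cap; the $\Det^2$ winding calculation itself is immediate once $dg_x$ is put in diagonal form.
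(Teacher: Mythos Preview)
Your argument is correct and follows essentially the same route as the paper's proof: both observe that the two bundle pairs agree along the top and bottom boundaries and differ only along the two semicircular caps, then compute the cap contributions by comparing the path $\lambda_x(t)=dg_x^t\cdot T_xL$ (eigenvalues $\gamma^{a_x},\gamma^{b_x},\gamma^{c_x}$) with the positive definite path (eigenvalues $\gamma^{a_x'},\gamma^{b_x'},\gamma^{c_x'}$). The paper phrases the key step as the identity $(-1)^{k''}\gamma^{k'}=\gamma^k$, which is exactly your relation $k-k'=\tfrac{5}{2}k''$ rewritten, and then reads off the Maslov difference directly; you instead spell out the $\Det^2$ winding explicitly, which amounts to the same computation. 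Your care with the sign bookkeeping on the two caps is well placed and correctly handled.
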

\begin{proof}
$\mu(u)$ was defined by taking positive definite paths in the unoriented Lagrangian grassmannian from $TL$ to $TgL$ at $p$ and $q$.
$\lambda_p$ and $\lambda_q$ were defined by taking paths in the oriented Lagrangian grassmannian.
With respect to an appropriate basis of $T_xL$, the positive definite path is
\begin{displaymath}
t\mapsto\left[
\begin{array}{ccc}
\gamma^{ta_x'} & 0 & 0\\
0 & \gamma^{tb_x'} & 0 \\
0 & 0 & \gamma^{tc_x'}
\end{array}
\right]\cdot T_xL
\end{displaymath}
and the path $\lambda_x$ is
\begin{displaymath}
t\mapsto\left[
\begin{array}{ccc}
\gamma^{ta_x} & 0 & 0\\
0 & \gamma^{tb_x} & 0 \\
0 & 0 & \gamma^{tc_x}
\end{array}
\right]\cdot T_xL.
\end{displaymath}
Since $(-1)^{k''}\gamma^{k'}=\gamma^k$, it follows that $\mu-\mu(u)=a_q''+b_q''+c_q''-a_p''-b_p''-c_p''$.
\end{proof}

\begin{lemma}
$\Det(\Psi_2)$ preserves orientation.
\end{lemma}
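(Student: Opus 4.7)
My plan is to exploit the fact that $\Psi_2$ is, essentially, tautological: it is pushforward by the holomorphic symplectomorphism $g$ of $X$. Concretely, $\Psi_2(\xi)(s,t) = dg \cdot \xi(s,t)$, and because $g$ is a biholomorphism preserving $\omega$, $J$, the K\"ahler metric, and hence the Levi-Civita connection, the map $\Psi_2$ intertwines the Cauchy-Riemann operator of row B with that of row C (just as in Lemma \ref{lemma:conjugation-det}). Moreover $dg$ is complex-linear on each tangent space, so on every Banach space summand $\Psi_2$ is a complex-linear, in particular real-linear orientation-preserving, isomorphism.

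To turn this into a statement about orientations on determinant lines, I would invoke Lemma \ref{lemma:local_systems-main_lemma}: the canonical orientation of $\Det(\dbar_{E,F})$ is determined by a trivialization of the Lagrangian boundary subbundle $F$, equivalently by a spin structure and orientation. By the second half of Lemma \ref{lemma:conjugation-spin} we already know $\Psi_{2*}\mathrm{Spin}_\mathrm{B} = \mathrm{Spin}_\mathrm{C}$, and since $\Psi_2$ is fiberwise complex-linear it preserves the orientations of the Lagrangian boundary subbundles. Therefore the canonical orientations of the two determinant lines correspond under $\Det(\Psi_2)$, which is the desired assertion.

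The remaining piece is the explicit compatibility check on each of the four boundary arcs of $Z_- \# S \# Z_+$: the trivialization used to orient the row B determinant line should map via $dg$ to the trivialization used in row C. On the top and bottom of the strip this was already carried out in the proof of Lemma \ref{lemma:conjugation-spin}. On the left cap, the trivialization of $\bar\lambda_p^{-1}(t) = \overline{\lambda_p(1-t)}$ inherited from $Tr_p$ is $\overline{dg_p^{1-t}}\cdot Fr(p) = dg_p^{-(1-t)} \cdot Fr(p)$ (using that the basis diagonalizing $dg_p$ is real), and applying $dg$ gives $dg_p^{t} \cdot Fr(p)$, which is exactly the canonical trivialization of $\lambda_p(t)$ recorded in (\ref{eq:end_cap_triv}); the same calculation works at $q$ on the right cap.

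I do not anticipate any serious obstacle. The assertion is formally complementary to the sign $(-1)^{\mu/2}$ obtained for $\Psi_1$: there the antiholomorphic involution $\tau$ forces a conjugation of determinant lines and hence a sign controlled by the Maslov index, whereas here $g$ is holomorphic and pushforward under $dg$ is canonically orientation-preserving, so no sign appears.
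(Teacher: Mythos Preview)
Your proposal is correct and takes essentially the same approach as the paper: both invoke Lemma~\ref{lemma:local_systems-main_lemma} together with the second half of Lemma~\ref{lemma:conjugation-spin} to conclude that $\Det(\Psi_2)$ preserves the canonical orientation. The paper compresses this into one line by noting that $\Psi_2$ covers the identity map on $(D^2,\partial D^2)$, whereas you spell out the cap-boundary trivialization check explicitly; your first-paragraph remark about complex-linearity of $\Psi_2$ on the Banach spaces is harmless motivation but not the operative step, since orientation-preserving is not a priori meaningful on infinite-dimensional spaces and the actual content is the trivialization argument you give afterward.
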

\begin{proof}
$\Psi_2$ maps the section $\xi(s,t)$ to $dg\cdot \xi(s,t)$, so $\Psi_2$ covers the identity map on $(D^2,\partial D^2)$.
The lemma then follows from Lemmas \ref{lemma:local_systems-main_lemma} and \ref{lemma:conjugation-spin}.
\end{proof}

Next, consider the involutions on the end caps (\ref{eq:conjugation-involution_2}) and (\ref{eq:conjugation-involution_3}).
To determine how the sign changes, the change of sign in the canonical end cap problem needs to be determined.
To this end, let
$$\dbar_-:W^{1,p}_\lambda(Z_-,\cc)\to L^p(Z_-,\cc),$$
$$\dbar_+:W^{1,p}_\lambda(Z_+,\cc)\to L^p(Z_+,\cc)$$
denote the standard $\dbar$ operators.
For these operators it will be more convenient to think of $Z_-$ as $Z_-=[0,\infty)\times[0,1]$ and $Z_+$ as $Z_+=(-\infty,0]\times[0,1]$.
$\lambda$ denotes the boundary conditions
\begin{itemize}
\item $\rr$ along $t=0$,
\item $i\rr$ along $t=1$, and
\item $e^{i\pi(\frac{1}{2}-k)t}\cdot\rr$ along $s=0$.
\end{itemize}

In \cite{ohgaf2} it is shown that $\Ind(\dbar_-)=k$ and $\Ind(\dbar_+)=-k+1$.
Moreover, the kernel of $\dbar_-$ for $k\leq 0$ is $0$ and for $k>0$ is spanned by
\begin{displaymath}
e^{\pi(\frac{1}{2}-k)z},e^{\pi(\frac{1}{2}-k+1)z}+e^{\pi(\frac{1}{2}-k-1)z},\ldots,e^{\pi(\frac{1}{2}-1)z}+e^{\pi(\frac{1}{2}-2k+1)z}.
\end{displaymath}
The cokernel is $0$ for $k\geq0$ and for $k<0$ is spanned by
\begin{displaymath}
e^{-\pi(\frac{1}{2}-k+1)\bar z}-e^{-\pi(\frac{1}{2}-k-1)\bar z},\ldots,e^{-\pi(\frac{1}{2}-k+k)\bar z}-e^{-\pi(\frac{1}{2}-k-k)\bar z}.
\end{displaymath}
The kernel of $\dbar_+$ for $k\geq 1$ is $0$ and for $k\leq 0$ is spanned by
\begin{displaymath}
e^{\pi(\frac{1}{2}-k)z},e^{\pi(\frac{1}{2}-k+1)z}+e^{\pi(\frac{1}{2}-k-1)z},\ldots,e^{\pi(\frac{1}{2})z}+e^{\pi(\frac{1}{2}-2k)z}.
\end{displaymath}
The cokernel is $0$ for $k\leq0$ and for $k\geq1$ is spanned by
\begin{displaymath}
e^{-\pi(\frac{1}{2}-k+1)\bar z}-e^{-\pi(\frac{1}{2}-k-1)\bar z},\ldots,e^{-\pi(\frac{1}{2}-k+k)\bar z}-e^{-\pi(\frac{1}{2}-k-k)\bar z}.
\end{displaymath}

Let $\Theta_\pm$ be the map that takes a section $\xi(s,t)$ of $\cc$ over $Z_\pm$ to $i\overline{\xi(s,1-t)}$.
Notice that $\Theta_\pm e^{i\pi(\frac{1}{2}-k)t}=ie^{-i\pi(\frac{1}{2}-k)(1-t)}=e^{i\pi k}e^{i\pi(\frac{1}{2}-k)t}$.
Thus $\Theta_\pm$ fits into a commutative diagram with the operators $\dbar_\pm$:
\begin{displaymath}
\begin{array}{ccc}
W^{1,p}_\lambda(Z_\pm,\cc) & \xrightarrow{\dbar_\pm} & L^p(Z_\pm,\cc)\\
\downarrow\Theta_\pm&&\downarrow\Theta_\pm\\
W^{1,p}_\lambda(Z_\pm,\cc)&\xrightarrow{\dbar_\pm} & L^p(Z_\pm,\cc)
\end{array}.
\end{displaymath}
The vertical maps $\Theta_\pm$ cover the map $(s,t)\mapsto(s,1-t)$ on $Z_\pm$.
The commutativity of the diagram implies that $\Theta_\pm$ induces maps $\Det(\Theta_\pm):\Det(\dbar_\pm)\to\Det(\dbar_\pm)$.
\begin{lemma}\label{lemma:conjugation-canonical}
$\Det(\Theta_-)$ changes orientation by $(-1)^{k(k+1)/2}$.
$\Det(\Theta_+)$ changes orientation by $(-1)^{k(k-1)/2}$.
\end{lemma}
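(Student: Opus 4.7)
The plan is to compute $\Det(\Theta_\pm)$ directly by diagonalizing the induced involution on the explicit bases of $\ker\dbar_\pm$ and $\textrm{coker}(\dbar_\pm)$ recalled just before the lemma.

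First I would verify that $\Theta_\pm$ really restricts to the domain and codomain of $\dbar_\pm$. The map $\xi\mapsto i\overline{\xi(s,1-t)}$ swaps the $t=0$ and $t=1$ edges and interchanges $\rr$ with $i\rr$ in the correct way, and on the semicircular edge $s=0$ it preserves the rotating condition $e^{i\pi(1/2-k)t}\rr$ because $ie^{-i\pi(1/2-k)}=(-1)^k\in\rr$. Combined with the commutative diagram displayed above, $\Theta_\pm$ descends to involutions of $\ker\dbar_\pm$ and $\textrm{coker}(\dbar_\pm)$, and hence induces an involution of $\Det(\dbar_\pm)=\Det(\ker\dbar_\pm)\otimes\Det(\textrm{coker}(\dbar_\pm))^*$. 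Because a linear involution with eigenvalues in $\{\pm1\}$ has the same determinant as its dual, the sign of $\Det(\Theta_\pm)$ is simply the product of all eigenvalues of $\Theta_\pm$ on the union of the two bases.

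The core computation is a direct substitution. For real $a$,
\begin{displaymath}
\Theta_\pm(e^{\pi a z})=ie^{-i\pi a}\,e^{\pi a z},\qquad \Theta_\pm(e^{-\pi a\bar z})=ie^{-i\pi a}\,e^{-\pi a\bar z},
\end{displaymath}
so every exponential appearing in the Oh bases is an eigenvector. For $a=\tfrac12-k\pm j$ with $j$ a non-negative integer one checks $ie^{-i\pi a}=(-1)^{k+j}$, and since the $+j$ and $-j$ branches give the same sign, the paired basis vectors $e^{\pi(1/2-k+j)z}+e^{\pi(1/2-k-j)z}$ (and their $\bar z$ cokernel analogues $e^{-\pi(1/2-k+j)\bar z}-e^{-\pi(1/2-k-j)\bar z}$) are genuine eigenvectors with eigenvalue $(-1)^{k+j}$. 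The unpaired vector $e^{\pi(1/2-k)z}$, present only in $\ker\dbar_-$ for $k>0$ and in $\ker\dbar_+$ for $k\le 0$, has eigenvalue $(-1)^k$.

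All that remains is bookkeeping. For $\dbar_-$ with $k>0$ the product over the $k$ basis vectors (eigenvalues $(-1)^{k+j}$, $j=0,\ldots,k-1$) contributes the exponent $k^2+\tfrac{k(k-1)}{2}=\tfrac{k(3k-1)}{2}$, and this is congruent to $\tfrac{k(k+1)}{2}$ mod $2$ because the difference $k(k-1)$ is even; the case $k<0$ (cokernel only) gives $\sum_{m=1}^{-k}(k+m)$, which reduces mod $2$ to the same $\tfrac{k(k+1)}{2}$; and $k=0$ is trivial. The identical computation for $\Theta_+$ differs only in that the unpaired basis vector migrates from the $k>0$ kernel of $\dbar_-$ to the $k\le 0$ kernel of $\dbar_+$, shifting the total exponent by one and thereby replacing $\tfrac{k(k+1)}{2}$ by $\tfrac{k(k-1)}{2}$. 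The only thing to watch is keeping the indexing conventions straight and remembering that dualizing on the cokernel factor does not affect a $\pm1$ sign; there is no serious analytic obstacle.
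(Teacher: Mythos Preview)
Your proposal is correct and follows essentially the same route as the paper: both arguments compute the eigenvalue of $\Theta_\pm$ on each of Oh's explicit basis vectors for the kernel and cokernel (you write the eigenvalue as $(-1)^{k+j}$, the paper as $(-1)^{k-j}$, which is the same for integer $j$) and then take the product. The only cosmetic difference is in the bookkeeping that reduces the resulting exponent to $k(k\pm 1)/2$.
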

\begin{proof}
First consider $\Det(\Theta_-)$.
Suppose $k>0$.
Then $\Theta_-$ maps the basis element 
$$e^{\pi(\frac{1}{2}-k+j)(s+it)}+e^{\pi(\frac{1}{2}-k-j)(s+it)}$$ 
of the kernel to
$$
ie^{\pi(\frac{1}{2}-k+j)(s-i(1-t))}+ie^{\pi(\frac{1}{2}-k-j)(s-i(1-t))}=
e^{i\pi (k-j)}(e^{\pi(\frac{1}{2}-k+j)(s+it))}+e^{\pi(\frac{1}{2}-k-j)(s+it)}).
$$
Thus the sign is changed by
$$
(-1)^{\sum_{j=0}^{k-1}(k-j)}=(-1)^{k(k+1)/2}.
$$

If $k<0$, the cokernel basis element
$$e^{-\pi(\frac{1}{2}-k+j)(s-it)}+e^{-\pi(\frac{1}{2}-k-j)(s+it)}$$ 
is mapped to
$$ie^{-\pi(\frac{1}{2}-k+j)(s+i(1-t))}+ie^{-\pi(\frac{1}{2}-k-j)(s+i(1-t))}=$$ 
$$e^{i\pi(k-j)}(e^{-\pi(\frac{1}{2}-k+j)(s-it)}+e^{-\pi(\frac{1}{2}-k-j)(s-it)}).$$ 
Thus the sign changes by
$$(-1)^{\sum_{j=1}^{|k|}(k-j)}=(-1)^{k-1+\cdots+2k}=(-1)^{k^2-1-\cdots+k}=(-1)^{k^2+k(-k+1)/2}=(-1)^{k(k+1)/2}.$$

Now consider the case of $\Theta_+$.
If $k\leq 0$ then the kernel basis element
\begin{displaymath}
e^{\pi(\frac{1}{2}-k+j)(s+it)}+e^{\pi(\frac{1}{2}-k-j)(s+it)}
\end{displaymath}
is mapped to
\begin{displaymath}
i(e^{\pi(\frac{1}{2}-k+j)(s-i(1-t))}+e^{\pi(\frac{1}{2}-k-j)(s-i(1-t))})=e^{\pi(k-j)i}(e^{\pi(\frac{1}{2}-k+j)(s+it)}+e^{\pi(\frac{1}{2}-k-j)(s+it)}).
\end{displaymath}
Thus $\Det(\Theta_+)$ changes orientation by
\begin{displaymath}
(-1)^{\sum_{j=0}^{|k|} (k-j)}=(-1)^{k(k-1)/2}.
\end{displaymath}

If $k>0$ then the cokernel basis element
\begin{displaymath}
e^{-\pi(\frac{1}{2}-k+j)(s-it)}-e^{-\pi(\frac{1}{2}-k-j)(s-it)}
\end{displaymath}
is mapped to
\begin{eqnarray*}
&i(e^{-\pi(\frac{1}{2}-k+j)(s+i(1-t))}-e^{-\pi(\frac{1}{2}-k-j)(s+i(1-t))})=&\\
&e^{\pi(k-j)i}(e^{-\pi(\frac{1}{2}-k+j)(s-it)}-e^{-\pi(\frac{1}{2}-k-j)(s-it)}).
\end{eqnarray*}
Hence $\Det(\Theta_+)$ changes orientation by
\begin{displaymath}
(-1)^{\sum_{j=1}^k(k-j)}=(-1)^{1+\cdots+(k-1)}=(-1)^{k(k-1)/2}.
\end{displaymath}
\end{proof}

Notice that the domain of $\dbar_-$ can be glued to the domain of $\dbar_+$ to get a bundle pair with Maslov index $0$ defined over the disc.
The operators $\Theta_-$ and $\Theta_+$ can also be glued together to get an involution of the glued problem over the disc.
The glued operator is almost conjugation.
The subtle difference is that if $k$ is odd then the glued operator also switches the orientation of the Lagrangian subbundle along the boundary.
Thus, by Lemma \ref{lemma:local_systems-main_lemma}, the glued operator changes orientation by $(-1)^{k}$.
This is consistent with the previous lemma, which says that the glued operator changes sign by
\begin{displaymath}
\textrm{sign}(\Det(\Theta_-))\textrm{sign}(\Det(\Theta_+))=(-1)^{k(k+1)/2+k(k-1)/2}=(-1)^{k^2}=(-1)^k.
\end{displaymath}

Now suppose the top boundary condition is $e^{2\pi i a/5}\cdot \rr$ where $1\leq a\leq 4$ and the boundary condition along $s=0$ is $e^{2\pi i a t/5}\cdot \rr$.
Denote sections with these boundary conditions by $W^{1,p}_{\gamma^a}(Z_\pm,\cc)$.

Consider the map $\xi(s,t)\mapsto \gamma^a\overline{\xi(s,1-t)}$.
Since 
$$\gamma^a\overline{e^{2\pi ia (1-t)/5}}=e^{2\pi i a t/5},$$
this gives a map 
$$
\Theta_\pm^{\gamma^a}:W^{1,p}_{\gamma^a}(Z_\pm,\cc)\to W^{1,p}_{\gamma^a}(Z_\pm.\cc),$$ 
and likewise for $L^p$ sections.
\begin{lemma}\label{lemma:conjugation-change_2}
If $0\leq a\leq 2$ then $\Det(\Theta_\pm^{\gamma^a})$ preserves orientation.
If $3\leq a\leq 4$ then $\Det(\Theta_\pm^{\gamma^a})$ changes orientation.
\end{lemma}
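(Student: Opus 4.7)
The plan is to mirror the proof of Lemma \ref{lemma:conjugation-canonical}: explicitly compute the kernels and cokernels of $\dbar_\pm^{\gamma^a}$, then determine the sign of the action of $\Theta_\pm^{\gamma^a}$ on each basis element. The only real change from the standard case is that the top boundary condition is $e^{2\pi i a/5}\rr$ rotating in the positive sense along the semicircle, so the exponents will cluster around $2\pi a/5$ rather than $\pi(1/2-k)$.

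First I would identify all decaying holomorphic sections by separation of variables. Using the eigenbasis of $-i\partial_t$ on $[0,1]$ with boundary values $\xi(0)\in\rr$ and $\xi(1)\in e^{2\pi i a/5}\rr$, the eigenvalues are $\lambda_n=2\pi a/5+n\pi$ for $n\in\z$, so every holomorphic section is a linear combination of the exponentials $e^{\lambda_n z}$. The semicircle condition $\xi(0,t)\in e^{2\pi i a t/5}\rr$ will force the coefficients to satisfy $c_n=\overline{c_{-n}}$, producing the real basis elements $\xi_0(z)=e^{2\pi a z/5}$ and
\begin{displaymath}
\xi_m(z)=e^{(2\pi a/5+m\pi)z}+e^{(2\pi a/5-m\pi)z},\qquad m\geq 1.
\end{displaymath}
Imposing decay at $\pm\infty$ then selects the kernel (or cokernel): on $Z_+$ the kernel of $\dbar_+^{\gamma^a}$ will be spanned by those $\xi_m$ with $m<2a/5$, while on $Z_-$ the kernel is trivial and the cokernel, via $\eta\otimes d\bar z\mapsto\bar\xi\otimes d\bar z$, is isomorphic to the kernel of the analogous problem with $a$ replaced by $-a$; the cokernel basis is then $\eta_m=\bar\xi_m\,d\bar z$ for the same range of $m$.

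The main computation will be the explicit action of $\Theta_\pm^{\gamma^a}$ on each basis element. The key identity is
\begin{displaymath}
\gamma^a\,\overline{e^{(2\pi a/5\pm m\pi)(s+i(1-t))}}=(-1)^m\,e^{(2\pi a/5\pm m\pi)z},
\end{displaymath}
which reduces to $e^{2\pi i a/5}\cdot e^{-i(2\pi a/5\pm m\pi)}=e^{\mp i m\pi}=(-1)^m$. Summing the two exponentials making up $\xi_m$ then yields $\Theta_\pm^{\gamma^a}(\xi_m)=(-1)^m\xi_m$, and the identical calculation on the cokernel side gives $\Theta_-^{\gamma^a}(\eta_m)=(-1)^m\eta_m$.

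Finally, the sign of $\Det(\Theta_\pm^{\gamma^a})$ will be the product $\prod_m(-1)^m$ as $m$ ranges over the basis. For $a=0$ the basis is empty; for $a\in\{1,2\}$ only $m=0$ contributes, yielding $+1$; and for $a\in\{3,4\}$ both $m=0$ and $m=1$ contribute, yielding $-1$. This is exactly the claim. The hard part will not be the sign computation itself, which is elementary, but rather justifying that the exponential ansatz produces a complete basis of decaying solutions and that the cokernel of $\dbar_-^{\gamma^a}$ is correctly identified with a kernel problem; both steps are handled by the spectral decomposition of $-i\partial_t$ together with the formal adjoint calculation carried out in Lemma \ref{lemma:cokernel-eta}.
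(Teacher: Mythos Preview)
Your direct-computation approach differs from the paper's, which simply homotopes the boundary path $e^{2\pi iat/5}\rr$ to the canonical one $e^{i\pi(\frac{1}{2}-k)t}\rr$ (with $k=0$ for $a\in\{1,2\}$ and $k=-1$ for $a\in\{3,4\}$) and reads off the sign from Lemma~\ref{lemma:conjugation-canonical}, tracking the extra factor $(-1)^{\Ind(\dbar_\pm)}$ coming from $\gamma^a\rightsquigarrow -i$ in the second case. Your computation of $\ker\dbar_+^{\gamma^a}$ and the action $\Theta_+^{\gamma^a}\xi_m=(-1)^m\xi_m$ is correct and gives the right answer on $Z_+$.

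On $Z_-$, however, your cokernel identification is off. When you integrate by parts to characterise the cokernel, the top and bottom Lagrangian conditions for $\eta$ agree with those for $\xi$, but at the finite edge $\{s=0\}$ the surviving boundary term is $\int_0^1 g(\xi(0,t),\eta(0,t))\,dt$, which must vanish for all $\xi(0,t)\in e^{2\pi iat/5}\rr$. This forces $\eta(0,t)\in i\,e^{2\pi iat/5}\rr$, the $J$-rotated Lagrangian, not $e^{-2\pi iat/5}\rr$. So the cokernel is \emph{not} the kernel of the $a\mapsto -a$ problem; after conjugation one gets the \emph{antisymmetric} combinations
\[
\eta_m(z)=e^{(-2\pi a/5+m\pi)\bar z}-e^{(-2\pi a/5-m\pi)\bar z},\qquad 1\le m<2a/5,
\]
and $\eta_0\equiv 0$. (This is exactly the minus-sign pattern you see in the paper's explicit cokernel formulae preceding Lemma~\ref{lemma:conjugation-canonical}.) The correct cokernel dimension is therefore $0$ for $a\in\{1,2\}$ and $1$ for $a\in\{3,4\}$, consistent with $\Ind(\dbar_-^{\gamma^a})=0,-1$; your claimed dimensions $1$ and $2$ contradict the index. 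Your final sign is accidentally unaffected, since the phantom $m=0$ factor contributes $(-1)^0=+1$, but the argument as written does not establish the cokernel correctly.
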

\begin{proof}
If $a=0$ then the kernel and cokernel are $0$, so $\Theta_\pm^{\gamma^a}$ preserves orientation (weighted Sobolev spaces have to be used in this case).

If $1\leq a\leq 2$ then the problem can be homotoped to the one with boundary condition $e^{i\pi \frac{1}{2}t}\cdot \rr$ along $s=0$, and $\Theta_\pm^{\gamma^a}$ becomes the operator $\Theta_\pm$.
Then apply Lemma \ref{lemma:conjugation-canonical} with $k=0$.

If $3\leq a\leq 4$ then the problem can be homotoped to the one with boundary condition $e^{i\pi(\frac{1}{2}+1)t}\cdot \rr$ along $s=0$, and $\Theta_\pm^{\gamma^a}$ becomes the operator $-\Theta_\pm$ (because multiplcation by $\gamma^a$ homotopes to multiplication by $-i$).
$-\Theta_\pm$ changes orientation by $(-1)^{\Ind(\dbar_\pm)+k(k\mp1)/2}$.
Plugging in $k=-1$ then shows that $-\Theta_\pm$ changes orientation.
\end{proof}

Now the sign of the involutions (\ref{eq:conjugation-involution_2}) and (\ref{eq:conjugation-involution_3}) can be calculated.
\begin{lemma}
The involution (\ref{eq:conjugation-involution_2}) changes orientation by $(-1)^{a_q''+b_q''+c_q''}$.
The involution (\ref{eq:conjugation-involution_3}) changes orientation by $(-1)^{a_p''+b_p''+c_p''}$.
\end{lemma}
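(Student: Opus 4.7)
The plan is to diagonalize both involutions against the eigenbasis of $dg_x$ that was used (in the paragraph preceding (\ref{eq:left_cap_operator})) to specify the paths $\lambda_x$. Since $dg\circ d\tau=d\tau\circ dg^{-1}$, Lemma \ref{lemma:index_theory-linear_algebra_3} furnishes a real basis of $T_xL$ in which $dg_x$ is diagonal with entries $\gamma^{a_x},\gamma^{b_x},\gamma^{c_x}$; extended $\cc$-linearly to $T_xX$, this basis is unitary, $d\tau$ acts as coordinatewise complex conjugation, and $T_xR$ is spanned by the real axes of those coordinates on which $dg_x$ acts trivially (so the corresponding exponent is $0$).

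In this splitting $T_xX\cong \cc\oplus\cc\oplus\cc$ the boundary condition $\lambda_x\oplus TR$ splits as a direct sum of three scalar boundary conditions along $Z_\pm$, each of the shape
\begin{displaymath}
\textrm{top/bottom}=\rr,\quad \textrm{semicircle}=e^{2\pi ikt/5}\cdot\rr,
\end{displaymath}
where $k\in\{a_x,b_x,c_x\}$, and the weighted Cauchy-Riemann operator $D_\pm$ splits accordingly as a direct sum of three scalar operators of precisely the type considered in Lemma \ref{lemma:conjugation-change_2}. The involution $\xi(s,t)\mapsto dg\cdot d\tau\cdot\xi(s,1-t)$ respects the splitting, and on the $k$-th factor it acts as $\xi(s,t)\mapsto \gamma^k\overline{\xi(s,1-t)}$, which is exactly the map $\Theta_\pm^{\gamma^k}$ of Lemma \ref{lemma:conjugation-change_2}. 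The $T_pR_h$ respectively $T_qR_{h'}$ summands inside the domain of $D_\pm$ are covered by the factors with $k=0$, on which the involution is trivial and hence preserves orientation.

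Since the determinant of a direct sum of operators is the tensor product of the determinants and the induced map is the tensor product of the induced maps, the sign of the involution on $\Det(D_\pm)$ is the product of the signs on the three scalar factors. By Lemma \ref{lemma:conjugation-change_2} the scalar factor with exponent $k$ contributes $(-1)^{k''}$, so the total sign change is
\begin{displaymath}
(-1)^{a_x''+b_x''+c_x''},
\end{displaymath}
giving $(-1)^{a_q''+b_q''+c_q''}$ for (\ref{eq:conjugation-involution_2}) and $(-1)^{a_p''+b_p''+c_p''}$ for (\ref{eq:conjugation-involution_3}).

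The only real thing to check carefully is the splitting itself: that the real structure making each complex line a copy of $\cc$ with real axis in $T_xL$ is compatible with the decomposition of $d\tau$ as coordinatewise conjugation, and that after this identification the path $\lambda_x(t)=dg_x^t\cdot T_xL$ becomes exactly $e^{2\pi ikt/5}\cdot\rr$ on each factor. Both are immediate from the way the eigenbasis was chosen in Lemma \ref{lemma:index_theory-linear_algebra_3}, so no serious obstacle arises; the bulk of the content has already been absorbed by Lemma \ref{lemma:conjugation-change_2}.
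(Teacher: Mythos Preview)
Your proof is correct and follows essentially the same approach as the paper: diagonalize $dg_x$ on $T_xL$ so that the cap operator and its involution split into three scalar problems of the form $\Theta_\pm^{\gamma^k}$, then apply Lemma \ref{lemma:conjugation-change_2} to each factor and multiply the signs. You supply more detail than the paper's two-line proof (in particular the remark that the $T_xR$ summands sit in the $k=0$ factors where the involution is trivial), but the argument is the same.
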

\begin{proof}
Let $x$ be $p$ or $q$.
As in the proof of Lemma \ref{lemma:real_lagrangians-angles}, there exists a basis of $T_xL$ such that the involutions split as a direct sum of operators of the form $\Theta_\pm^{\gamma^a}$.
The result then follows from Lemma \ref{lemma:conjugation-change_2}.
\end{proof}

Finally, the main results of this section can be proven.
\begin{prop}\label{prop:conjugation-main_prop}
$\Det(\rho_*)$ changes orientation by
$$
(-1)^{\frac{1}{2}\mu(u)+\frac{3}{2}( +a_q''+b_q''+c_q''-a_p''-b_p''-c_p'')}.
$$
\end{prop}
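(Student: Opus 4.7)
The plan is to combine the corollary that $\Det(\rho_*)$ is, up to sign, the product of the signs of $\Det$ applied to the three involutions in diagrams (\ref{eq:conjugation-involution_1}), (\ref{eq:conjugation-involution_2}), and (\ref{eq:conjugation-involution_3}), and then to plug in the three computations that have just been done.

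First I would use the factorization of (\ref{eq:conjugation-involution_1}) as the composition of $\Psi_1$ followed by $\Psi_2$. By the lemma on $\Det(\Psi_2)$ this contributes no sign, while by the lemma on $\Det(\Psi_1)$ it contributes $(-1)^{\mu/2}$ where $\mu=\mu(\lambda(\lambda_q,u,\lambda_p))$. Lemma \ref{lemma:conjugation-change_1} then rewrites this as
\begin{displaymath}
(-1)^{\frac{1}{2}\mu(u)+\frac{1}{2}(a_q''+b_q''+c_q''-a_p''-b_p''-c_p'')}.
\end{displaymath}
Next, by the end-cap computation (an application of Lemma \ref{lemma:conjugation-change_2} after splitting the operators $D_\pm$ into a direct sum indexed by the eigenvalues of $dg_x$) the involution (\ref{eq:conjugation-involution_2}) contributes $(-1)^{a_q''+b_q''+c_q''}$ and the involution (\ref{eq:conjugation-involution_3}) contributes $(-1)^{a_p''+b_p''+c_p''}$.

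Multiplying these three signs, the total exponent in the sign of $\Det(\rho_*)$ is
\begin{displaymath}
\tfrac{1}{2}\mu(u)+\tfrac{1}{2}(a_q''+b_q''+c_q''-a_p''-b_p''-c_p'')+(a_q''+b_q''+c_q''+a_p''+b_p''+c_p''),
\end{displaymath}
and the final step is to reduce this integer mod $2$. Using the elementary fact that $-x\equiv x\pmod{2}$ on the last group of six terms allows me to rewrite $a_p''+b_p''+c_p''\equiv -(a_p''+b_p''+c_p'')\pmod 2$, which turns the exponent into
\begin{displaymath}
\tfrac{1}{2}\mu(u)+\tfrac{3}{2}(a_q''+b_q''+c_q''-a_p''-b_p''-c_p'') \pmod{2},
\end{displaymath}
as claimed.

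The main obstacle is really bookkeeping: one needs to carefully keep track of the conventions about orientations of the end caps (which Lagrangian path on the semicircle is used, whether the orientation of $T_xL$ is continued forward or backward), and to verify that the direct sum splitting used in the end-cap involutions matches the diagonal form of $dg_x$ used in Lemma \ref{lemma:conjugation-change_1} so that the $a_x''$, $b_x''$, $c_x''$ agree across all three computations. Once those conventions are pinned down consistently, the rest is just combining the three sign contributions and the mod-$2$ observation above.
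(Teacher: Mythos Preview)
Your proposal is correct and follows exactly the same route as the paper's proof: decompose the sign of $\Det(\rho_*)$ as the product of the three involution signs via the corollary to Lemma~\ref{lemma:conjugation-gluing_isomorphism}, compute each piece from the lemmas on $\Psi_1$, $\Psi_2$, and the end-cap involutions, and combine. The paper's own proof is only two sentences and leaves the final mod-$2$ simplification implicit; your version just makes that step explicit.
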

\begin{proof}
By Lemma \ref{lemma:conjugation-gluing_isomorphism}, the sign of $\Det(\rho_*)$ is the product of the signs of the involutions (\ref{eq:conjugation-involution_1}), (\ref{eq:conjugation-involution_2}), and (\ref{eq:conjugation-involution_3}).
By Lemmas \ref{lemma:conjugation-change_1} and \ref{lemma:conjugation-change_2}, this product is $\frac{1}{2}\mu(u)+\frac{3}{2}(a_q''+b_q''+c_q''-a_p''-b_p''-c_p'')$.
\end{proof}

\begin{prop}\label{prop:conjugation-main_prop_2}
Fix two components $R_h$ and $R_{h'}$.
Suppose that $\rho_*$ has sign $-1$ on $\mathcal M(L,gL:R_h,R_{h'})$.
Furthermore, suppose that the virtual dimension of $\mathcal M(L,gL:R_h,R_{h'})$ is $0$.
Assume that the line bundle $\Det(TR_h)\otimes \Theta_{R_h}^-$ is trivial.
Let $[R_h]$ be the fundamental class of $R_h$ with coefficients in the local system $\Det(TR_h)\otimes \Theta_{R_h}^-$.
Fix an energy $E>0$.
Then
\begin{displaymath}
\#(\mathcal M(L,gL:R_h,R_{h'}:E)\times_{R_h} [R_h])=0.
\end{displaymath}
(Fixing the energy $E$ is necessary to make the cardinality of the set finite.
A priori, the ``cardinality'' is an element of $\qq$, because the fiber product gives a $0$-dimensional chain with $\qq$-coefficients.)
\end{prop}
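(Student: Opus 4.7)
The plan is to exploit the involution $\rho_*$ from Section \ref{section:conjugation-orientation} to pair each strip with a sign-reversed partner, forcing the signed count to vanish. The argument has three ingredients: equivariance of the endpoint evaluations, transfer of the sign to the fiber product, and a standard equivariant-perturbation cancellation.

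First I would observe that the formula $\tilde u_0(s,t)=g\tau u_0(s,1-t)$ gives $\tilde u_0(\pm\infty,\cdot)=g\tau u_0(\pm\infty,\cdot)=u_0(\pm\infty,\cdot)$, since points of $L\cap gL=\mathrm{Fix}(g)\cap L$ are pointwise fixed by $g\tau$. Hence $ev_{\pm\infty}$ are $\rho_*$-equivariant with trivial target action, and by Proposition \ref{lemma:conjugation-involution_kuranishi_spaces} the involution $\rho_*$ descends to an involution of the Kuranishi fiber product $\mathcal M(L,gL:R_h,R_{h'}:E)\times_{R_h}[R_h]$.

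Next I would transfer the given sign to the fiber product. Following the conventions of Section \ref{section:local_systems-definition_operators} and Lemma \ref{lemma:local_systems-identification}, the determinant line of $\mathcal M\times_{R_h}\Delta^{\dim R_h}$ is
\begin{displaymath}
ev_{+\infty}^*\Theta^+_{R_{h'}}\otimes ev_{-\infty}^*\bigl(\Theta^-_{R_h}\otimes\Det(TR_h)\bigr)\otimes\Det(T\Delta^{\dim R_h}).
\end{displaymath}
The triviality of $\Det(TR_h)\otimes\Theta^-_{R_h}$ allows $[R_h]$ to be realized by an ordinary oriented fundamental class. Moreover, because $L\cap gL\subset\mathrm{Fix}(g)\cap\mathrm{Fix}(\tau)$, both $dg$ and $d\tau$ act as the identity on $T(L\cap gL)\supset TR_h$, so $\rho_*$ acts trivially on the factor $\Det(TR_h)$ as well as on the underlying chain $[R_h]$. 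Consequently the hypothesis that $\Det(\rho_*)$ has sign $-1$ on $\Det(D_u\dbar)\cong ev_{-\infty}^*\Theta^-_{R_h}\otimes ev_{+\infty}^*\Theta^+_{R_{h'}}$ transfers unchanged to a sign $-1$ on the determinant line of the fiber product.

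Finally, I would choose a $\rho_*$-equivariant transverse multisection perturbation of the Kuranishi structure, using the $\rho_*$-invariant Kuranishi charts exhibited in the proof of Proposition \ref{lemma:conjugation-involution_kuranishi_spaces} (replacing each obstruction space $E_u$ by $E_u+\rho_*E_u$ at fixed points of $\rho_*$). The perturbed zero locus is a finite weighted set of points on which $\rho_*$ acts as an orientation-reversing involution. Free orbits $\{x,\rho_*x\}$ then contribute $w(x)+w(\rho_*x)=0$, while any hypothetical fixed point would satisfy $w(x)=-w(x)$ and hence contribute nothing. Summing over orbits gives the vanishing of $\#(\mathcal M(L,gL:R_h,R_{h'}:E)\times_{R_h}[R_h])$. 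The main obstacle is simply the existence of the equivariant multisection perturbation, compatible with the boundary Kuranishi structure produced by gluing; this is the same technical point handled in \cite{foooasi} and \cite{sol}, and the same framework applies here verbatim.
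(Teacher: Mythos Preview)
Your proposal is correct and follows essentially the same route as the paper: use the $\rho_*$-involution on the (fiber-product of the) moduli space, choose a $\rho_*$-invariant transverse multisection, and observe that the sign-reversing property forces all contributions to cancel in pairs (including at fixed points, where $\rho_*$ still cannot fix an individual branch with its sign). The only cosmetic difference is that the paper obtains the invariant multisection by passing to the quotient Kuranishi space $\mathcal M/\z_2$ (via Lemma A1.49 of \cite{fooo}), perturbing there, and lifting back, whereas you propose to construct the equivariant perturbation directly from $\rho_*$-invariant charts; these are equivalent maneuvers.
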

\begin{proof}
The idea for this proof is taken from \cite{sol}.
By Lemma \ref{lemma:conjugation-involution_kuranishi_spaces}, $\z_2$ acts on the Kuranishi space $\mathcal M(L,gL:R_h,R_{h'}:E)$.
By Lemma A1.49 in \cite{fooo}, there exists a Kuranishi structure on the quotient $\mathcal M(L,gLR_h,R_{h'}:E)/\z_2$.
Choose a transverse multisection on the quotient, and then lift this multisection to $\mathcal M(L,gL:R_h,R_{h'}:E)$.
The lifted multisection will then be invariant under the $\z_2$ action.
Denote this multisection by
\begin{displaymath}
\mathfrak s_\epsilon=\sett{s_{p,\epsilon}'}_{p\in\mathcal P}.
\end{displaymath}

Since the virtual dimension of $\mathcal M(L,gL:R_h,R_{h'}:E)$ is $0$, this means that the zero set of the multisection contains a finite number of points (i.e. strips); call these points $u_1,\ldots,u_n$.
In the fiber product
\begin{displaymath}
\mathcal M(L,gL:R_h,R_{h'}:E)\times_{R_h}[R_h],
\end{displaymath}
each of these points gets assigned a rational weight in the local system $\Theta_{R_{h'}}^+$ on $R_{h'}$.
The weight is defined as follows:
Suppose $u_i$ is in the Kuranishi chart $(V_{p_i},E_{p_i},\Gamma_{p_i},\psi_{p_i},s_{p_i,\epsilon}')$.
Let $y\in V_{p_i}$ such that $\pi(y)=u_i$ under the projection $\pi:V_{p_i}\to V_{p_i}/\Gamma_{p_i}$.
Let
\begin{displaymath}
\tilde s_{p_i,\epsilon}'=(\tilde s_{p_i,\epsilon,1}',\ldots,\tilde s_{p_i,\epsilon,n_i}')
\end{displaymath}
be the branches of the multisection $s_{p_i,\epsilon}'$.
Let
\begin{displaymath}
\tilde s_{p_i,\epsilon,1}',\ldots,\tilde s_{p_i,\epsilon,k_i}'
\end{displaymath}
be the branches that vanish at $y$.

The weight assigned to $u_i$ is then
\begin{displaymath}
mul_i=\frac{\sum_{j=1}^{k_i}\epsilon_j}{n_i},
\end{displaymath}
where $\epsilon_j$ is $\pm1$ according to whether the surjective map
\begin{displaymath}
d_y\tilde s_{p_i,\epsilon,j}':T_yV_{p_i}\to E_{p_i}
\end{displaymath}
preserves or reverses orientation.
More precisely, note that $d_y\tilde s_{p_i,\epsilon,j}'$ can be viewed as an element of 
\begin{displaymath}
\Det(E_{p_i}^*)\otimes \Det(T_yV_{p_i})\cong \Det(D_{u_i}\dbar).
\end{displaymath}
$\Det(D_{u_i}\dbar)$ is canonically isomorphic to $\Theta_{R_h,p}^-\otimes \Theta_{R_{h'},q}^+$, where $p=u_i(-\infty,\cdot)$ and $q=u_i(+\infty,\cdot)$.
The class $[R_h]$ in the local system $\Det(TR_h)\otimes \Theta_{R_h}^-$ gives an orientation to $\Theta_{R_h,p}^-$, hence an orientation of $\Theta_{R_{h'},q}^+$ gives an orientation to $\Det(D_{u_i}\dbar)$.
Hence, it makes sense to say that $\epsilon_j$ is an element in $\Theta_{R_{h'},q}^+$.

The proof is now easy.
By construction, $\rho_*$ permutes the branches of the multisections.
If $\rho_*$ takes the branch $\tilde s_{p_1,\epsilon,j_1}'$ to $\tilde s_{p_2,\epsilon,j_2}'$, then the numbers $\epsilon_{j_1}$ and $\epsilon_{j_2}$ coming from these branches have to be opposites of each other.
Indeed, let $\pi(y_1)=u_{i_1}$ and $\pi(y_2)=u_{i_2}$, with $\rho_*(u_{i_1})=u_{i_2}$ and $\tilde s_{p_1,\epsilon,j_1}'(y_1)=0$, $\tilde s_{p_2,\epsilon,j_2}'(y_2)=0$.
Since $u_{i_1}(\pm\infty,\cdot)=u_{i_2}(\pm\infty,\cdot)$, $\Det(D_u\dbar)$ and $\Det(D_{\tilde u}\dbar)$ are both canonically isomorphic to the same fiber of $\Theta_{R_h}^-\otimes \Theta_{R_{h'}}^+$.
However, by assumption $\Det(\rho_*)$ reverses sign when viewed as an automorphism of $\Theta_{R_h}^-\otimes \Theta_{R_{h'}}^+$.
Therefore, by the way $\epsilon_{j_1}$ and $\epsilon_{j_2}$ are assigned signs as described above, they must be opposite of each other.
It follows that $mul_{u_i}=0$.
(Note that this argument works even if $\rho_*$ has fixed points, because the fact that it reverses signs implies that it cannot fix any of the individual branches of the multisection.)

\end{proof}

\part{Calculations of Floer cohomology}
In this last part we finally calculate the Floer cohomology.
We proceed on a case by case basis, following the classification given in Section \ref{section:summary}.
The properties developed in Part II are used to carry out the calculations.

\section{Case (1): $L\cap gL\cong \rp^3$}

The first case to consider is $g=1$, in which case the Floer cohomology is by definition the cohomology of the $A_\infty$-algebra $CF(L)$.

  \begin{theorem}\label{thm:case1-main_thm}
  $$HF^*(L;\Lambda_{nov})\cong H^*(L;\Lambda_{nov}).$$
That is, the Floer cohomology is (graded) isomorphic to the singular cohomology.
  \end{theorem}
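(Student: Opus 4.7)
The plan is to apply the Floer cohomology spectral sequence of Theorem~\ref{thm:floer_cohomology-spectral_sequence} and observe that all higher differentials vanish purely for cohomological degree reasons. This is the first of the ``pure degree'' cases flagged as technique~(1) in Section~\ref{section:summary}.

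First, recall from Section~\ref{section:floer_cohomology-unobstructed} that $L$ is unobstructed with $b=0$ serving as a bounding cochain, so $HF^*(L;\Lambda_{nov})$ is defined as the cohomology of $(CF(L),m_1)$ tensored with $\Lambda_{nov}$. Since the convention on $gL$ (Definition~\ref{dfn:real_lagrangians-grading_convention}) assigns $L$ and itself the same grading, we have $\tilde\mu(L,L;L)=0$, and the $E_2$ page of the spectral sequence becomes
\[
E_2 \;=\; \bigoplus_{k,p} H^k(L;\qq)\otimes gr_p(\mathcal F\Lambda_{0,nov}) \;=\; H^*(\rp^3;\qq)\otimes gr(\mathcal F\Lambda_{0,nov}).
\]
The differentials $\delta_r$ shift the filtration index $p$ by $r-1$ and raise cohomological degree by $+1$.

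Now $H^*(\rp^3;\qq)=\qq$ is concentrated in degrees $0$ and $3$, and $H^1=H^2=H^4=0$. Hence for every $k$ with $H^k(\rp^3;\qq)\neq 0$, the potential target $H^{k+1}(\rp^3;\qq)$ of $\delta_r$ vanishes: $H^1=0$ rules out non-trivial $\delta_r$ out of the $k=0$ summand, and $H^4=0$ rules it out of the $k=3$ summand. Therefore $\delta_r=0$ for every $r\geq 2$, so $E_\infty = E_2$. Tensoring with $\Lambda_{nov}$ (which is flat over $\Lambda_{0,nov}$) collapses the filtration without extension issues, giving
\[
HF^*(L;\Lambda_{nov}) \;\cong\; H^*(L;\qq)\otimes \Lambda_{nov} \;=\; H^*(L;\Lambda_{nov})
\]
as graded vector spaces.

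There is no serious obstacle: beyond invoking unobstructedness and the spectral sequence, the entire argument reduces to the observation that $H^*(\rp^3;\qq)$ has no two non-zero groups in adjacent degrees. In particular, no analysis of the disc moduli spaces $\mathcal M_2(L,\beta)$ beyond what is already in Section~\ref{section:floer_cohomology-unobstructed} is needed for this case.
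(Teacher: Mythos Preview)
Your proof is correct and follows essentially the same route as the paper: apply the spectral sequence of Theorem~\ref{thm:floer_cohomology-spectral_sequence} and observe that all differentials past $E_2$ vanish because $H^*(\rp^3;\qq)$ is supported only in degrees $0$ and $3$. The paper phrases the degree argument as an explicit dimension count for $\delta_1 S$ using that the Maslov index of any disc on $L$ is $0$ (since $L$ has constant phase), while you invoke directly that the spectral-sequence differentials raise cohomological degree by $+1$; these are equivalent formulations of the same observation.
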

  \begin{proof}
The Floer coboundary operator $\delta$ can be written as $\delta=\delta_0+\delta_1$ where $\delta_0$ is the ordinary singular cohomology operator and $\delta_1$ is the part coming from positive energy holomorphic discs.
Because $L$ has constant phase, the Maslov index of any holomorphic disc with boundary on $L$ is $0$.

The spectral sequence (Theorem \ref{thm:floer_cohomology-spectral_sequence}) implies that the Floer cohomology is isomorphic to the cohomology of $\oplus_kH^k(L;\qq)\otimes gr(\mathcal F\Lambda_{0,nov})$ with respect to the differential induced by $\delta_1$.
The theorem will follow if it can be shown that $\delta_1\equiv 0$ on $H^*(L)$.
But this follows from degree considerations:
Indeed, let $[S]\in H^k(L)$ be a chain (so the dimension of $S$ is $3-k$).
Then
\begin{displaymath}
\dim \delta_1 S=\dim S+\dim \mathcal M_2(L)-3=\dim S+3+0+2-3-3=\dim S-1.
\end{displaymath}
It follows that if $[S]$ is non-zero then the cohomology class of $\delta_1 S$ must be zero, because $L$ only has non-zero cohomology in degrees $0$ and $3$.
Thus $\delta_1\equiv 0$ on $H^*(L)$.
  \end{proof}

\section{Case (2): $L\cap gL\cong \rp^2$}
The Floer cohomology in this (and subsequent) sections is the Floer cohomology of the pair $(L,gL)$.
That is, $HF(L,gL;\Lambda_{nov})$ is the cohomology of the $A_\infty$-bimodule $CF(L,gL)$ with respect to the differential $n_{0,0}$.

Without loss of generality, assume $g=(1,\gamma^a,1,1,1)$.
Then $L\cap gL=\textrm{Fix}(g)\cap L=\rp^2$, where $\rp^2$ is the subset of $L$ given by
\begin{displaymath}
\rp^2=\sett{[X_0:0:X_2:X_3:X_4]\in L}.
\end{displaymath}

\begin{lemma}\label{lemma:case2-line_bundle}
The line bundle $\Theta_{\rp^2}^-$ is trivial on $\rp^2$ if $a$ is 1 or 2, and non-trivial if $a$ is 3 or 4.
\end{lemma}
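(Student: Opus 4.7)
Since $\pi_1(\rp^2)\cong\zz_2$, the line bundle $\Theta^-_{\rp^2}$ is specified by the value of the monodromy homomorphism $\pi_-$ on a generating loop $\gamma$. I will compute $\pi_-(\gamma)$ directly from its definition in Section \ref{section:local_systems-definition}: it is the mod-$2$ obstruction to continuing a one-parameter family of trivializations of the Lagrangian subbundle on $\partial Z_-$ once around $\gamma$, subject to agreeing on the top and bottom arcs with the fixed spin framings of $TgL$ and $TL$ (via Lemma~\ref{lemma:local_systems-main_lemma}).

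The key simplification is that at each $p\in\rp^2$ the action of $dg_p$ on $T_pX$ produces a $J$-invariant orthogonal splitting
\[
T_pX=(T_p\rp^2\otimes_{\rr}\cc)\oplus N_p,
\]
where $N_p$ is the complex $1$-dimensional $\gamma^a$-eigenspace of $dg_p$. This splitting is preserved by all the relevant data: the Lagrangian boundary conditions, the path $\tilde\lambda_\theta$, and the framings. It therefore splits the end-cap operator $\dbar_{\tilde\lambda_\theta,Z_-}$ as a direct sum, and hence splits $\Theta^-_{\rp^2}$ as a tensor product $\Theta^{-,\mathrm{tan}}\otimes\Theta^{-,\mathrm{nor}}$ of two real line bundles on $\rp^2$, whose monodromies around $\gamma$ I compute separately and multiply.

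For the tangent factor, $dg$ acts trivially, so the Lagrangian subbundle on $\partial Z_-$ is the constant $T_p\rp^2\subset T_p\rp^2\otimes_{\rr}\cc$ on the top, bottom, and semicircle. This makes $\Theta^{-,\mathrm{tan}}$ an intrinsic, $a$-independent line bundle on $\rp^2$, which by the canonical index identification (via the evaluation map at $+\infty$, as in the construction of $D_-$) is a power of $\Det(T\rp^2)$, and so has monodromy $-1$ around $\gamma$ because $\rp^2$ is non-orientable. For the normal factor, the complex line bundle $N$ over $\rp^2$ is the complexification of the normal real line bundle of $\rp^2\subset\rp^3=L$, which is the tautological (nontrivial) bundle; consequently $N|_\gamma$ has monodromy $-1$. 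The restricted end-cap problem is the standard $1$-dimensional complex one with Lagrangian boundary rotating by $2\pi a/5$ along the semicircle, twisted at $\theta=1$ by the monodromy of $N$. A Maslov/spin-parity calculation in the spirit of Lemmas~\ref{lemma:conjugation-canonical} and~\ref{lemma:conjugation-change_2}, keeping track of the half-twist that must be inserted when $a\in\{3,4\}$ in order to align the oriented path $dg^t\cdot\rr e_3$ with the pushforward orientation on $T_pgL$, yields monodromy $(-1)^{a''+1}$ with $a''$ as in Section \ref{section:conjugation-orientation}. Multiplying gives $\pi_-(\gamma)=(-1)\cdot(-1)^{a''+1}=(-1)^{a''}$, which is trivial if $a\in\{1,2\}$ and nontrivial if $a\in\{3,4\}$.

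The main technical obstacle is the orientation bookkeeping in the normal factor: the non-trivial monodromy of $N$, the convention $\mathrm{or}(gL)=g_*\mathrm{or}(L)$, and the fact that $dg^t\cdot\rr e_3$ coincides with the positive-definite unoriented path only up to a half-twist when $a\in\{3,4\}$ must all be combined correctly to yield the claimed parity $a''$. As a useful consistency check, the identity $\Theta^-_{R_h}\otimes\Det(TR_h)\cong\Theta^+_{R_h}$ shows that $\Theta^-$ and $\Theta^+$ differ over $\rp^2$ by the nontrivial bundle $\Det(T\rp^2)$, so at most one of them is trivial; the computation above pins down which.
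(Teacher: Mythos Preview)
Your overall strategy---restrict to a generating loop in $\rp^2$, split $T_pX$ into the $1$-eigenspace $T_p\rp^2\otimes_{\rr}\cc$ and the $\gamma^a$-eigenspace $N_p$, and compute the monodromy of the determinant bundle piecewise---is exactly the paper's approach. But your computation of each piece is wrong, and the two errors cancel to produce the right final answer.

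For the tangent piece: the line bundle $\Theta^-_{R_h}$ is defined (Section~\ref{section:local_systems-definition}) via the weighted operator $\dbar_{\tilde\lambda_\theta,Z_-}$, \emph{not} via the augmented operator $D_-$ that carries the extra $T_pR_h$ summand. On the tangent factor the boundary condition is the constant plane $T_{\sigma(\theta)}\rp^2$, and with exponential weights the kernel and cokernel of $\dbar_{\lambda^0_\theta,Z_-}$ are both zero. Hence each fiber of $\Det(\dbar_{\lambda^0,Z_-})$ is canonically $\rr$, and the bundle over the loop is \emph{trivial}. Your claim of monodromy $-1$ comes from smuggling in a copy of $\Det(T\rp^2)$ through the evaluation map, which belongs to $D_-$ and not to the operator defining $\Theta^-$.

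For the normal piece: the paper's computation is direct and does not use Lemmas~\ref{lemma:conjugation-canonical} or~\ref{lemma:conjugation-change_2} (those concern the sign of the conjugation involution $\rho_*$, a different question). For $a\in\{1,2\}$ one homotopes the semicircle path to $e^{\pi i t/2}\cdot E_a$, again obtains zero kernel and cokernel, and the determinant is canonically $\rr$: trivial, regardless of the non-orientability of $E_a$. For $a\in\{3,4\}$ the homotopy lands at $e^{3\pi i t/2}\cdot E_a$; now the cokernel is one-dimensional and is canonically identified with the fiber $E_{a,\theta}$, so the determinant bundle is isomorphic to the real line bundle $E_a$, which (since $TL\cong T\rp^2\oplus E_a$ with $TL$ orientable and $T\rp^2$ not) is non-orientable. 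So the normal piece has monodromy $(-1)^{a''}$, not $(-1)^{a''+1}$. Your extra sign from ``the monodromy of $N$'' is not a separate contribution; it is exactly what the cokernel identification already encodes in the $a\in\{3,4\}$ case, and it plays no role when $a\in\{1,2\}$ because the determinant is then canonically trivial.
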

\begin{proof}

Let $\sigma:S^1\to\rp^2\subset L$ represent a non-trivial loop in $\pi_1(\rp^2)$.
Explicitly, $\sigma$ can be taken to be
$$
\sigma(\theta)=[\cos\pi\theta:0:\sin\pi\theta:0:-(cos^5\pi\theta+sin^5\pi\theta)^{1/5}],
$$
where $\theta\in S^1=\rr/\z$.
To determine if $\Theta^-_{\rp^2}$ is trivial or not, it suffices to determine if it is trivial or not over $\gamma$.

For each $\theta\in S^1$, let $dg_\theta$ be the $dg$ action on $T_{\sigma(\theta)}X$.
The eigenvalues of $dg_\theta$ are $1,1,\gamma^a$ by the proof of Lemma \ref{lemma:real_lagrangians-angles}.
Moreover, $T_{\sigma(\theta)}L$ has a basis consisting of eigenvalues of $dg_\theta$.
Let $E_{0,\theta}$ be the 1-eigenspace and $E_{a,\theta}$ the $\gamma^a$ eigenspace.
The vector spaces $E_{0,\theta}$ fit together to give a subbundle $E_0$ of $\sigma^*TL$; likewise the $E_{a,\theta}$ give a subbundle $E_a$.
Thus there is a splitting
\begin{equation}\label{eq:case1-splitting}
\sigma^*TL\cong E_0\oplus E_1.
\end{equation}
Notice that the 1-eigenspace of $dg_\theta$ is precisely $T_{\sigma(\theta)}\rp^2$, so $E_0\cong \sigma^*T\rp^2$.
Since $TL$ is orientable, and $\sigma^*T\rp^2$ is not orientable, it follows that $E_1$ is not orientable.

For each $\theta\in S^1$, and for $0\leq t\leq 1$ let $dg_\theta^t$ be the $t^{th}$ power of $dg_\theta$.
Let $\lambda_\theta:[0,1]\to\Lambda(T_{\sigma(\theta)}X)$ be the path
$$
\lambda_\theta(t)=dg_\theta^t\cdot T_{\sigma(\theta)}L.
$$
Notice that $\lambda_\theta$ can be decomposed as 
\begin{equation}\label{eq:case1-splitting_paths}
\lambda_\theta(t)=(dg_\theta^t\cdot E_0)\oplus (dg_\theta^t\cdot E_a)=:\lambda^0_\theta(t)\oplus\lambda_\theta^a(t).
\end{equation}
using the splitting (\ref{eq:case1-splitting}).

Let $\Det(\dbar_{\lambda,Z_-})$ be the determinant bundle over $S^1$ whose fiber over $\theta\in S^1$ is
$\Det(\dbar_{\lambda_\theta,Z_-}).$
Using (\ref{eq:case1-splitting_paths}), this bundle splits as 
\begin{equation}\label{eq:case1-det_splitting}
\Det(\dbar_{\lambda,Z_-})\cong \Det(\dbar_{\lambda^0,Z_-})\oplus \Det(\dbar_{\lambda^a,Z_-}).
\end{equation}

Now the bundle $\Det(\dbar_{\lambda^0,Z_-})$ is trivial, because each fiber is canonically isomorphic to $\rr$.
Indeed, for $\theta$ fixed, the boundary conditions given by $\lambda^0_\theta$ are constant, namely the Lagrangian plane specified for each boundary point of $Z_-$ is simply $T_{\sigma(\theta)}\rp^2$.
Thus the kernel and cokernel of the operator $\dbar_{\lambda^0,Z_-}$ are both $0$, hence by definition the determinant is $\rr$.

The bundle $\Det(\dbar_{\lambda^a,Z_-})$ is trivial if $a=1$ or $2$, and non-trivial if $a=3$ or $4$.
Indeed, if $a=1$ or $2$ then by the discussion before Lemma \ref{lemma:conjugation-canonical}, the operators $\dbar_{\lambda^a_\theta,Z_-}$ can be homotoped in a consistent way to operators that have 0 kernel and cokernel (they can homotoped to operators where the Lagrangian path is $e^{\pi i t/2}\cdot E_a$).
Thus, for the same reason as in the previous paragraph, $\Det(\dbar_{\lambda^a,Z_-})$ is trivial.
On the other hand, if $a=3$ or $4$, then the operators can be homotoped to ones where the Lagrangian path is $e^{3\pi i t/2}\cdot E_a$, hence they have a 1-dimensional cokernal and 0-dimensional kernel.
At each point $\theta\in S^1$, the cokernal can canonically be identified with the fiber $E_{a,\theta}$.
Thus $\Det(\dbar_{\lambda^a,Z_-})$ is isomorphic to $E_a$, and hence is not orientable.

By the splitting (\ref{eq:case1-det_splitting}), it follows that $\Det(\dbar_{\lambda,Z_-})$ it orientable if $a=1$ or $2$ and not orientable if $a=3$ or $4$.
To complete the proof, it remains to show that $\Det(\dbar_{\lambda,Z_-})\cong \sigma^*\Theta^-_{\rp^2}$ as bundles over $S^1$.
To see that these bundles are isomorphic, recall from Section \ref{section:local_systems} how to determine if $\Theta^-_{\rp^2}$ is trivial over the loop $\sigma$: 
The spin structures and orientations on $L$ and $gL$ determine trivializations of $TL$ and $TgL$ over $\sigma$.
Then a 1-parameter family of paths of frames $Fr_\theta$ is chosen, so that for $0\leq t\leq 1$
\begin{itemize}
\item $Fr_\theta(t)$ is a frame whose span is a Lagrangian subspace of $T_{\sigma(\theta)}X$,
\item $T_{\sigma(\theta)}L\cap T_{\sigma(\theta)}gL\subset \textrm{span }Fr_\theta(t)$, 
\item $Fr_\theta(0)$ is the frame of $T_{\sigma(\theta)}L$ determined by the spin structure and orientation of $L$, and
\item $Fr_\theta(1)$ is the frame of $T_{\sigma(\theta)}gL$ determined by the spin structure and orientation of $gL$.
\end{itemize}
Since the $\rr$ span of each $Fr_\theta(t)$ is a Lagrangian subspace, it defines a 1-parameter family of Lagrangian paths $\lambda_\theta'$.
Then $\sigma^*\Theta_{\rp^2}^-$ is defined to be isomorphic to the bundle whose fiber over $\theta$ is $\Det(\dbar_{\lambda_\theta',Z_-})$.
In the case at hand, the paths of frames can be taken to be $Fr_\theta(t)=dg_\theta^t\cdot Fr(T_{\sigma(\theta)}L)$, where $Fr(T_{\sigma(\theta)}L)$ is the frame of $TL$ over $\sigma$ induced by the spin structure and orientaion of $L$.
Since $gL$ is given the push forward orientation and spin structure, it follows that this path of frames satisfies the properties above.
Moreover, $\lambda_\theta'(t)=\textrm{span }Fr_\theta(t)$ is equal to $\lambda_\theta$ defined in (\ref{eq:case1-splitting_paths}).
Thus $\sigma^*\Theta_{\rp^2}^-\cong \Det(\dbar_{\lambda,Z_-})$, and the proof is complete.
\end{proof}

\begin{theorem}\label{thm:case2-main_thm}
For $g=(1,\gamma^a,1,1,1)$, 
  $$HF^*(L,gL;\Lambda_{nov})\cong H^{*+a''}(\rp^2;\Det(T\rp^2)\otimes\Theta^-_{\rp^2};\Lambda_{nov}),$$
where $a''=0$ if $a=1$ or $2$ and $a''=1$ if $a=3$ or $4$.
That is, the Floer cohomology is isomorphic to the singular cohomology of $\rp^2\cong L\cap gL$ with coefficients in the local system $\Det(T\rp^2)\otimes\Theta^-_{\rp^2}$.

In particular, if $a$ is $1$ or $2$ then the Floer cohomology has rank $2$, and if $a$ is $3$ or $4$ then the Floer cohomology has rank $1$.
  \end{theorem}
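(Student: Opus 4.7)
The plan is to apply the Bott-Morse spectral sequence of Theorem \ref{thm:floer_cohomology-spectral_sequence} and show it degenerates at $E_2$. Since $L\cap gL$ consists of the single component $R\cong\rp^2$, the $E_2$ page reduces to
$$E_2=\bigoplus_{k}H^{k-\tilde\mu}\bigl(\rp^2;\,\Det(T\rp^2)\otimes\Theta^-_{\rp^2}\bigr)\otimes gr_*(\mathcal F\Lambda_{0,nov}),$$
where $\tilde\mu:=\tilde\mu(L,gL;\rp^2)$ is the absolute index from Section \ref{section:gradings-bott_morse}.

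First I would compute $\tilde\mu$ explicitly by combining Lemma \ref{lemma:gradings-absolute_formula} with Lemma \ref{lemma:real_lagrangians-angles}(2). The convention of Definition \ref{dfn:real_lagrangians-grading_convention} gives $\theta_L=0$ and $\theta_{gL}=2a/5$ for $g=(1,\gamma^a,1,1,1)$, while the angle between $L$ and $gL$ along $\rp^2$ equals $a'/5$ (the single nontrivial eigenvalue of $dg$ transverse to $\rp^2$). Case analysis across $a\in\{1,2,3,4\}$ yields $\tilde\mu=0$ for $a\in\{1,2\}$ and $\tilde\mu=-1$ for $a\in\{3,4\}$, i.e.\ $\tilde\mu=-a''$, which produces the grading shift of $a''$ stated in the theorem. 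The coefficient line bundle has already been pinned down by Lemma \ref{lemma:case2-line_bundle}: since $\Det(T\rp^2)$ is the (non-trivial) orientation line bundle, the product $\Det(T\rp^2)\otimes\Theta^-_{\rp^2}$ is non-trivial for $a\in\{1,2\}$ and trivial for $a\in\{3,4\}$. Consequently the twisted cohomology of $\rp^2$ with rational coefficients is concentrated in a single cohomological degree in each case.

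Next I would verify that all higher differentials $\delta_r$ for $r\geq 2$ vanish. Because $L$ and $gL$ both have constant phase and $\rp^2$ is the unique intersection component, Lemma \ref{lemma:gradings-formula} forces every holomorphic strip from $\rp^2$ to itself to satisfy $\mu(u)=0$. Thus every energy-graded piece of $n_{0,0}$, and hence every $\delta_r$, raises cohomological degree by exactly one. Since the $E_2$ page is concentrated in a single cohomological degree there is no room for any non-trivial $\delta_r$, so $E_2=E_\infty$. Tensoring with the flat $\Lambda_{0,nov}$-module $\Lambda_{nov}$ then yields the asserted graded isomorphism.

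The main delicate point is the index bookkeeping: one must use the unreduced lift $\theta_{gL}=2a/5$ prescribed by Definition \ref{dfn:real_lagrangians-grading_convention}, rather than its class modulo one, so that Lemma \ref{lemma:gradings-absolute_formula} returns the correct integer $-a''$ responsible for the shift. Once $\tilde\mu$ is in hand and Lemma \ref{lemma:case2-line_bundle} is invoked for the local system, the remainder is pure degree counting --- no moduli-space analysis, and in particular no appeal to the conjugation argument of Proposition \ref{prop:conjugation-main_prop_2}, is required.
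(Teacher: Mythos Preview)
Your approach is essentially the same as the paper's: both compute $\tilde\mu(L,gL;\rp^2)=-a''$ via Lemma \ref{lemma:gradings-absolute_formula} and Lemma \ref{lemma:real_lagrangians-angles}(2), and both argue that the spectral sequence degenerates at $E_2$ by pure degree reasons, since every strip has $\mu(u)=0$ and hence $n_{0,0}$ shifts cohomological degree by exactly one. The paper phrases the vanishing as ``$\dim n_{0,0}S=\dim S-1$, so $[n_{0,0}S]=0$'' without spelling out why; your version, observing that the twisted cohomology of $\rp^2$ is concentrated in a single degree so there is no target for a degree-one differential, makes the same point more explicitly and handles all pages $r\ge 2$ at once. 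Your flag about using the unreduced lift $\theta_{gL}=2a/5$ is well taken and matches what the paper actually does in its computation $\tilde\mu=\tfrac{2}{5}(a'-a)$.
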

  \begin{proof}
It suffices to show that if $[S]\in H^k(\rp^2)$ then $n_{0,0}S=0$ as a cohomology class.
As before, this follows from degree considerations:
First, note that if $u$ is a strip then $\mu(u)=0$ by Lemma \ref{lemma:gradings-formula}.
Then
\begin{eqnarray*}
\dim n_{0,0}S&=&\dim S+\dim \mathcal M_{0,0}(L,gL)-\dim L\cap gL=\\
&&\dim S+0+2-1-2=\dim S-1.
\end{eqnarray*}
It follows that $n_{0,0} S$ must be zero in cohomology.

To see that the isomorphism is a graded isomorphism, recall that the grading in the Bott-Morse setting was defined (see Section \ref{section:a_infinity-floer_cohomology}) to be
\begin{displaymath}
CF^k(L,gL)=\bigoplus_{R_h}C^{k-\tilde\mu(L,gL;R_h)}(R_h)=C^{k-\tilde\mu(L,gL;\rp^2)}(\rp^2).
\end{displaymath}
By Lemma \ref{lemma:gradings-absolute_formula},
\begin{displaymath}
\tilde\mu(L,gL;\rp^2)=\theta_L-\theta_{gL}+2\textrm{angle}(L,gL;\rp^2).
\end{displaymath}
From Definition \ref{dfn:real_lagrangians-grading_convention} and Lemma \ref{lemma:real_lagrangians-angles} it follows that
\begin{displaymath}
\tilde\mu(L,gL;\rp^2)=\frac{2}{5}(a'-a)=-a''.
\end{displaymath}
Therefore
\begin{displaymath}
CF^k(L,gL)=C^{k+a''}(\rp^2).
\end{displaymath}

The last statement follows from the previous lemma and the fact that $\rp^2$ is not orientable, so $H^0(\rp^2;\Det(T\rp^2)\otimes\Theta^-_{\rp^2})$ is non-zero if and only if $\Theta^-_{\rp^2}$ is trivial.
Recall that by definition
\begin{displaymath}
H^0(\rp^2;\Det(T\rp^2)\otimes\Theta^-_{\rp^2})\cong H_2(\rp^2;\Det(T\rp^2)\otimes \Theta^-_{\rp^2}).
\end{displaymath}
\end{proof}

\section{Case (3): $L\cap gL\cong\rp^1$}
In case (3) $g=g_i^rg_j^s$, $r\neq s$ and 
$$L\cap gL=L\cap Fix(g)=\rp^1.$$
$L\cap gL$ consists of the points in $X$ where the $i^{th}$ and $j^{th}$ homogeneous coordinates are both zero.
By permuting coordinates, it suffices to consider the cases where $g=g_1^rg_2^s$ and $1\leq r<s\leq 4$.
Consequently there are six cases to consider and $\rp^1$ is
\begin{displaymath}
\rp^1=\sett{[X_0:0:0:X_3:X_4]\in X}.
\end{displaymath}

In all cases the Maslov index of a strip $u$ is $0$ because $L\cap gL$ has only one component.
If $S\subset L\cap gL$ is a chain then
\begin{displaymath}
\dim n_{0,0}(S)=\dim(S)+1-1+0-1=\dim(S)-1.
\end{displaymath}
$H^*(L\cap gL)=H^*(\rp^1)$ is non-zero only in dimensions 0 and 1.
Thus, by applying the spectral sequence, the only calculation that needs to be done is $n_{0,0}([\rp^1])$: if this is zero, then the Floer cohomology is isomorphic to the singular cohomology; otherwise the Floer cohomology is zero.
In particular, if $n_{0,0}([\rp^1])$ is nonzero in the $E_2$ page of the spectral sequence, then the Floer cohomology is $0$.

The cases where $(-1)^{r''+s''}=-1$ can be calculated for trivial reasons once the orientability of the local systems is determined.
The case $r=1, s=2$ requires more work, but can still be done.
The one remaining case, $r=3, s=4$, cannot be calculated, the difficulty being that it seems impossible or at the least extremely difficult to determine lowest energy strips.
Before proceeding to the case by case analysis, a lemma on the orientability of the local systems is needed.

\begin{lemma}\label{lemma:case3-line_bundle}
The line bundle $\Theta_{\rp^1}^-$ is trivial if and only if $(-1)^{r''+s''}=1$.
(Recall that $r''$ is defined to be $0$ if $r$ is 1 or 2, and $1$ if $r$ is 3 or 4. Similarly for $s''$.)
\end{lemma}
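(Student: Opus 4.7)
The plan is to adapt the strategy of Lemma \ref{lemma:case2-line_bundle}: choose a generating loop $\sigma$ of $\pi_1(\rp^1)\cong\z$, split $\sigma^*TL$ along eigenspaces of $dg$, read off a corresponding tensor decomposition of $\sigma^*\Theta_{\rp^1}^-$, and compute the monodromy of each factor.

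First I would take
\[\sigma(\theta) = \bigl[\cos\pi\theta:0:0:\sin\pi\theta:-(\cos^5\pi\theta+\sin^5\pi\theta)^{1/5}\bigr],\quad \theta\in\rr/\z,\]
which parameterizes $\rp^1$ exactly once and so generates $\pi_1(\rp^1)$. By Lemma \ref{lemma:real_lagrangians-angles} the eigenvalues of $dg_\theta$ on $T_{\sigma(\theta)}X$ are $\gamma^r,\gamma^s,1$ with $T_{\sigma(\theta)}\rp^1$ as the $1$-eigenspace, giving a splitting $\sigma^*TL=E_0\oplus E_r\oplus E_s$ that is preserved by the Lagrangian path $\lambda_\theta(t)=dg_\theta^t\cdot T_{\sigma(\theta)}L$. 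As in Lemma \ref{lemma:case2-line_bundle} this induces
\[\sigma^*\Theta_{\rp^1}^- \cong \Det(\dbar_{\lambda^0,Z_-})\otimes\Det(\dbar_{\lambda^r,Z_-})\otimes\Det(\dbar_{\lambda^s,Z_-}),\]
in which the first factor is canonically trivial (constant boundary conditions, weighted kernel and cokernel both zero), and the $E_r$- and $E_s$-factors are canonically trivial for $r$, resp.\ $s$, in $\{1,2\}$ and canonically isomorphic to $E_r$, resp.\ $E_s$, for $r$, resp.\ $s$, in $\{3,4\}$. Hence $\sigma^*\Theta_{\rp^1}^-\cong E_r^{\otimes r''}\otimes E_s^{\otimes s''}$ as line bundles over $S^1$.

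The hard part will be computing the monodromy of $E_r$ (and by symmetry $E_s$). I would verify that $E_r$ is the M\"obius bundle over $\sigma$ using the two affine charts
\[\alpha:(W_1,W_2,W_3,W_4)\leftrightarrow[1:W_1:W_2:W_3:W_4],\quad \beta:(V_0,V_1,V_2,V_4)\leftrightarrow[V_0:V_1:V_2:1:V_4],\]
in which $E_r$ is spanned respectively by $\partial_{W_1}$ and $\partial_{V_1}$, with transition $\partial_{V_1}=W_3\,\partial_{W_1}$. Along $\sigma$ we have $W_3=\tan\pi\theta$, which runs from $0$ up through $+\infty$ as $\theta\to\tfrac12^-$ and back from $-\infty$ to $0$ on $(\tfrac12,1]$, while $\sigma(1)$ returns to the same $\alpha$-coordinates as $\sigma(0)$. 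Parallel-transporting $\partial_{W_1}$ through the $\beta$-chart across $\theta=\tfrac12$ (where it continues to a positive multiple of $\partial_{V_1}$) and then re-expressing in $\alpha$ once $W_3<0$ produces the frame $-\partial_{W_1}$ at $\theta=1$, so $E_r$ is M\"obius. The same chart argument with indices $2$ in place of $1$ shows $E_s$ is also M\"obius.

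Finally, $TL$ is trivial since $L\cong\rp^3\cong SO(3)$, and $T\rp^1$ is trivial since $\rp^1\cong S^1$; hence $\sigma^*TL$ and $E_0=\sigma^*T\rp^1$ are both trivial, which forces $E_r\oplus E_s$ to be orientable and therefore $E_r\otimes E_s=\Det(E_r\oplus E_s)$ to be trivial. Combined with the monodromy computation, $\sigma^*\Theta_{\rp^1}^-\cong E_r^{\otimes r''}\otimes E_s^{\otimes s''}$ is trivial iff $r''+s''$ is even, i.e.\ iff $(-1)^{r''+s''}=1$, as required.
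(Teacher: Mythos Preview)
Your proof is correct and follows essentially the same route as the paper: split $\sigma^*TL$ into eigenbundles of $dg$, identify $\sigma^*\Theta_{\rp^1}^-$ with the tensor product of the corresponding index bundles via the argument of Lemma~\ref{lemma:case2-line_bundle}, and verify by a two-chart computation that the $E_r$ and $E_s$ factors are M\"obius. The paper organizes the coordinate calculation slightly differently (two interval charts $U_1,U_2$ with transition $\partial/\partial Z_2=\tfrac{1}{Z_1}\,\partial/\partial W_1$ rather than your parametrized loop with $\partial_{V_1}=W_3\,\partial_{W_1}$), but the content is identical.
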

\begin{proof}
The eigenvalues of the $dg$ action on $T_xX$ for $x\in\rp^1$ are $1,\gamma^r,\gamma^s$.
Thus the bundle $TL|\rp^1$ splits as 
\begin{displaymath}
TL|\rp^1\cong E_0\oplus E_r\oplus E_s,
\end{displaymath}
where $E_0$ is the 1-eigenspace intersected with $TL$, $E_r$ is the $\gamma^r$-eigenspace intersected with $TL$, and $E_s$ is the $\gamma^s$-eigenspace intersected with $TL$.
$E_0$ is orientable, being isomorphic to the tangent space of $\rp^1$.
$E_r$ and $E_s$ are not orientable.
This can be seen by direct calculation as follows:
First, note that since $TL$ is orientable, it suffices to show that $E_r$ is not orientable.
Cover $\rp^1=L\cap gL$ with two coordinate charts $U_1=[-1,1]$ and $U_2=[-1,1]$ via the homeomorphisms
\begin{eqnarray*}
&t\in U_1\mapsto [t:0:0:1:-(1+t^5)^{1/5}]\in L\cap gL=\rp^1,&\\
&s\in U_2\mapsto [1:0:0:s:-(1+t^5)^{1/5}]\in L\cap gL=\rp^1.&
\end{eqnarray*}
Consider the affine coordinates
\begin{eqnarray*}
&(Z_1,Z_2,Z_3,Z_4)\in\cc^4\leftrightarrow[1:Z_1:Z_2:Z_3:Z_4]&\\
&(W_1,W_2,W_3,W_4)\in\cc^4\leftrightarrow[W_1:W_2:W_3:1:W_4].
\end{eqnarray*}
Note that the image of $U_1$ is contained in the affine chart determined by the $Z_i$'s and the image of $U_2$ is contained in the affine chart determined by the $W_i$'s.
In this way, $U_1$ and $U_2$ can be thought of as mapping into $\cc^4$.

At $t$, in affine coordinates, the function $f$ defining $X$ has differential
\begin{displaymath}
df(t)=\sum 5Z_i^4dZ_i=5t^4dZ_1+5(1+t^5)^{4/5}dZ_4.
\end{displaymath}
The $g$ action on the affine coordinates is $g\cdot(Z_1,Z_2,Z_3,Z_4)=(Z_1,\gamma^rZ_2,\gamma^sZ_3,Z_4)$.
The tangent space of $X$ at $t$ is the kernel of $df(t)$, hence is spanned by the vectors
\begin{displaymath}
\frac{\partial}{\partial Z_2},\frac{\partial}{\partial Z_3},(1+t^5)^{4/5}\frac{\partial}{\partial Z_1}-t^4 \frac{\partial}{\partial Z_4}.
\end{displaymath}
Thus the $\gamma^r$ eigenspace of $TX$ at $t$ is spanned by $\frac{\partial}{\partial Z_2}$.
Thus $E_r$ is the real span of $\frac{\partial}{\partial Z_2}$.

Likewise, at $s$, in affine coordinates determined by the $W_i$'s, the function $f$ defining $X$ has differential
\begin{displaymath}
df(s)=\sum 5W_i^4dW_i=5s^4dW_3+5(1+t^5)^{4/5}dW_4.
\end{displaymath}
The $g$ action on the affine coordinates is $g\cdot(W_1,W_2,W_3,W_4)=(\gamma^r W_1,\gamma^sW_2,W_3,W_4)$.
The tangent space of $X$ at $s$ is the kernel of $df(s)$, hence is spanned by the vectors
\begin{displaymath}
\frac{\partial}{\partial W_1},\frac{\partial}{\partial W_2},(1+s^5)^{4/5}\frac{\partial}{\partial W_3}-s^4 \frac{\partial}{\partial W_4}.
\end{displaymath}
Thus the $\gamma^r$ eigenspace of $TX$ at $s$ is spanned by $\frac{\partial}{\partial W_1}$, and $E_r$ is the real span of $\frac{\partial}{\partial W_1}$.

Finally, to see that $E_r$ is nontrivial over $\rp^1$ note that $t=1$ corresponds to $s=1$ and $t=-1$ corresponds to $s=-1$.
Furthermore, by a simple change of coordinates calculation, $\frac{\partial}{\partial Z_2}=\frac{1}{Z_1}\frac{\partial}{\partial W_1}$.
Thus, at $t=1$ it follows that $\frac{\partial}{\partial Z_2}=\frac{\partial}{\partial W_1}$, but at $t=-1$ $\frac{\partial}{\partial Z_2}=-\frac{\partial}{\partial W_1}$.
Since $\frac{\partial}{\partial Z_2}$ and $\frac{\partial}{\partial W_1}$ give trivializations of $E_r$ over $U_1$ and $U_2$, it follows that $E_r$ is not orientable.

The proof of the Lemma then goes through in the same way as Lemma \ref{lemma:case2-line_bundle} to show that the index bundle used to define $\Theta_{\rp^1}^-$ is trivial if and only if $(-1)^{r''+s''}=1$.
\end{proof}

\subsection{The case $r=1,s=2$}
The induced map on the $E_2$ term is defined using the lowest energy strips, which can be found by applying Lemma \ref{lemma:strips-polynomials}:
Let
$$u(z)=[u_0(z):u_1(z):u_2(z):u_3(z):u_4(z)].$$
By the lemma there exists polynomials $p_i$ with real coefficients such that
$$u_i(z)=p_i(z^5),\, i=0,3,4$$
$$u_1(z)=z^2p_1(z^5),$$
$$u_2(z)=z^4p_2(z^5).$$
The condition $u(0),u(\infty)\in L\cap gL$ implies
\begin{itemize}
\item at least one $u_i$ (i=0,3,4) has a nonzero constant term, and
\item $\textrm{max}\sett{deg(u_0),deg(u_3),deg(u_4)}>\textrm{max}\sett{deg(u_1),deg(u_2)}.$
\end{itemize}
Therefore the maps of lowest energy must be of the form
\begin{displaymath}
\begin{array}{rcl}
z&\mapsto&[(a_0z^5+1):10^{1/5}c_1z^2:5^{1/5}c_2z^4:(a_3z^5+b_3):(a_4z^5+b_4)],\\
z&\mapsto&[(a_0z^5+b_0):10^{1/5}c_1z^2:5^{1/5}c_2z^4:(a_3z^5+1):(a_4z^5+b_4)],\textrm{ or}\\
z&\mapsto&[(a_0z^5+b_0):10^{1/5}c_1z^2:5^{1/5}c_2z^4:(a_3z^5+b_3):(a_4z^5+1)].\\
\end{array}
\end{displaymath}
where not all of $a_0$, $a_3$, and $a_4$ are zero.
Consider maps of the first form.
(Once these maps are found, by symmetry the maps of the other forms can be written down.)
\begin{lemma}
$c_1$ and $c_2$ cannot both be zero.
\end{lemma}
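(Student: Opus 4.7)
The plan is to argue by contradiction: assume $c_1 = c_2 = 0$ and deduce that $u$ must collapse to a constant map, contradicting the fact that the maps under consideration are genuine (nontrivial) lowest-energy holomorphic strips.

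Setting $c_1 = c_2 = 0$, the second and third homogeneous coordinates vanish identically, so the map takes the form
\begin{displaymath}
u(z) = [L_0(z^5) : 0 : 0 : L_3(z^5) : L_4(z^5)],
\end{displaymath}
with $L_0(w) = a_0 w + 1$, $L_3(w) = a_3 w + b_3$, $L_4(w) = a_4 w + b_4$. The requirement that $u$ land in $X$ becomes the polynomial identity in $w = z^5$:
\begin{displaymath}
L_0(w)^5 + L_3(w)^5 + L_4(w)^5 \equiv 0.
\end{displaymath}

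The next step is to invoke the fact that the fifth-power map $L \mapsto L^5$ embeds $\mathbb{P}^1$ (viewed as linear forms in $w$ up to scalar) into $\mathbb{P}^5$ as a rational normal curve of degree $5$, on which any $k \leq 6$ distinct points are linearly independent. A nontrivial linear dependence $L_0^5 + L_3^5 + L_4^5 = 0$ with $L_i \neq 0$ therefore cannot come from three distinct points of $\mathbb{P}^1$, so at least two of the $L_i$ must be proportional. Using uniqueness of real fifth roots — which turns $L^5 = c M^5$ (over $\mathbb{R}$) into $L = rM$ for a unique real $r$ — a short case analysis rules out the possibility that exactly two are proportional (the third would then satisfy $L^5 = c L_0^5$, forcing it proportional to $L_0$ as well, or zero, which in turn collapses the map). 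Hence all three forms $L_0, L_3, L_4$ are proportional; writing $L_3 = \alpha L_0$ and $L_4 = \beta L_0$ reduces the polynomial identity to the single numerical constraint $1 + \alpha^5 + \beta^5 = 0$.

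Finally, with these proportionalities in place, the map becomes
\begin{displaymath}
u(z) = [L_0(z^5) : 0 : 0 : \alpha L_0(z^5) : \beta L_0(z^5)] = [1 : 0 : 0 : \alpha : \beta],
\end{displaymath}
i.e.\ a constant map, not a nontrivial holomorphic strip. The only mildly delicate ingredient is the rational-normal-curve step combined with the case analysis via uniqueness of real fifth roots; once that is in hand the conclusion is immediate.
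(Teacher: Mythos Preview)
Your argument is correct and reaches the same conclusion as the paper, but by a genuinely different route. The paper proceeds by direct coefficient comparison: assuming $c_1=c_2=0$, it writes the quintic identity $(a_0w+1)^5+(a_3w+b_3)^5+(a_4w+b_4)^5=0$ in $w=z^5$, treats the cases $a_0=0$ and $a_0\neq 0$ separately (in the latter case substituting $w\mapsto a_0w+1$), and reads off from the top two coefficients that $a_3=-a_4$, $b_3=-b_4$, forcing $1=0$. Your approach instead recognizes $L_0^5+L_3^5+L_4^5=0$ as a linear dependence among points of the degree-$5$ rational normal curve, uses the fact that any three distinct points there are independent to force two of the $L_i$ proportional, and then invokes uniqueness of real fifth roots to make all three proportional, collapsing $u$ to a constant. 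Your method is more conceptual and would scale cleanly to analogous identities of higher degree or with more terms; the paper's method is more elementary and self-contained, needing no outside geometric input. Both implicitly use that a constant map is excluded (it is not a degree-$5$ strip), which is the assumption ``not all of $a_0,a_3,a_4$ are zero'' in the paper's setup.
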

\begin{proof}
Assume $c_1=c_2=0$.
If $a_0=0$ then
$$1+(a_3w+b_3)^5+(a_4w+b_4)^5=0$$
for all $w=z^5$.
Examining the $w^5$ and $w^4$ terms gives $a_3=-a_4$ and $b_3=-b_4$, but then this gives $1=0$.
So $a_0\neq 0$.
Then let $w=a_0z^5+1$.
Then
$$w^5+(a_3'w+b_3')^5+(a_4'w+b_4')^5=0.$$
Examining the coefficients again shows this is impossible.
\end{proof}

First assume $c_1$ is non-zero, then (up to $\rr$-tranlation, see Lemma \ref{lemma:strips-r_translation}) it may be assumed that $c_1=\pm1$.
The quintic equation implies that the coefficients satisfy the system of equations
\begin{eqnarray}\label{eq:polyequations}
\nonumber a_0^5+a_3^5+a_4^5&=&0,\\
\nonumber a_0^4+c_2+a_3^4b_3+a_4^4b_4&=&0,\\
a_0^3+a_3^3b_3^2+a_4^3b_4^2&=&0,\\
\nonumber a_0^2\pm1+a_3^2b_3^3+a_4^2b_4^3&=&0,\\
\nonumber a_0+a_3b_3^4+a_4b_4^4&=&0,\\
\nonumber 1+b_3^5+b_4^5&=&0.
\end{eqnarray}
\begin{lemma} 
The system (\ref{eq:polyequations}) has no solutions if $a_0,a_3,a_4,b_3,b_4$ are all non-zero.
\end{lemma}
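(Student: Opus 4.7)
The plan is to reorganize the equations to reflect their polynomial origin. Setting $A_i:=a_i^5$ and $R_i:=b_i/a_i$ for $i\in\{0,3,4\}$ (with $b_0=1$), so that all $A_i$ and $R_i$ are nonzero by hypothesis, the system of equations (\ref{eq:polyequations}) is equivalent to the single polynomial identity
$$A_0(w+R_0)^5+A_3(w+R_3)^5+A_4(w+R_4)^5 \;=\; -10(\pm 1)\,w^2\;-\;5c_2^{\,5}\,w^4.$$
Reading off the coefficient of $w^{5-k}$ and dividing by $\binom{5}{k}$ yields $\sum_iA_iR_i^k=0$ for $k=0,2,4,5$ together with $\sum_iA_iR_i^3=\mp 1$; the coefficient of $w^4$ is a free parameter fixed by $c_2$ and plays no role. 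The plan is to derive a contradiction from the homogeneous equations in even powers of $R_i$ together with the $k=3,5$ conditions.

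First observe that the equations $\sum_i A_i(R_i^2)^j=0$ for $j=0,1,2$ form a $3\times 3$ homogeneous linear system in $(A_0,A_3,A_4)$ whose coefficient matrix is Vandermonde in $R_0^2,R_3^2,R_4^2$. Since $(A_0,A_3,A_4)$ is a nonzero vector, this Vandermonde determinant must vanish, so at least two of the $R_i^2$ must coincide. After relabeling, assume $R_3^2=R_4^2$; then the $j=0$ and $j=1$ equations read $A_0+(A_3+A_4)=0$ and $A_0R_0^2+(A_3+A_4)R_3^2=0$, which combine to $A_0(R_0^2-R_3^2)=0$. Since $A_0\neq 0$, we conclude $R_0^2=R_3^2=R_4^2=:R^2\neq 0$, so $R_i=\epsilon_iR$ with $\epsilon_i\in\{\pm 1\}$.

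Now write $\sum_iA_iR_i^k=R^k\sum_iA_i\epsilon_i^k$. For even $k$ this collapses to $R^k\sum_iA_i=0$ automatically, consistent with the $k=0,2,4$ equations. For odd $k$ it equals $R^k\sum_iA_i\epsilon_i$, so the $k=5$ equation forces $\sum_iA_i\epsilon_i=0$; but then the $k=3$ equation reads $0=\mp 1$, a contradiction.

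The main obstacle is nothing deeper than spotting the right change of variables to package the system as an identity of polynomials in a single variable $w$; once that is done, the Vandermonde step plus the even/odd parity analysis dispatch every case uniformly, and no extra work is needed to track the sign choice $\pm 1$ or the free parameter $c_2$.
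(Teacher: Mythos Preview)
Your proof is correct and takes a genuinely different route from the paper. The paper argues by computing $a_0^6$ in two ways---once as $a_0^5\cdot a_0$ using the $k=0$ and $k=4$ equations, and once as $(a_0^3)^2$ using the $k=2$ equation---then factors the difference explicitly as $a_3a_4(a_3b_4-a_4b_3)^2(a_3b_4+a_4b_3)^2$ to deduce $a_3/a_4=\pm b_3/b_4$; it then treats the two sign cases separately with further substitutions to reach the contradiction. Your approach instead packages the system as a single polynomial identity, extracts the three homogeneous conditions $\sum_i A_iR_i^{2j}=0$ for $j=0,1,2$, and recognizes this as a Vandermonde system in $R_i^2$, forcing all $R_i^2$ equal at once; the parity argument on $k=3$ versus $k=5$ then finishes uniformly. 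Both arguments pivot on the same structural fact (the ratios $b_i/a_i$ can differ only by sign), but yours isolates it cleanly via linear algebra rather than ad hoc factoring, and avoids the separate case analysis.
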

\begin{proof}
$$a_0^6=a_0^5\cdot a_0=(-a_3^5-a_4^5)(-a_3b_3^4-a_4b_4^4)=$$
$$a_3^6b_3^4+a_3^5a_4b_4^4+a_4^5a_3b_3^4+a_4^6b_4^4.$$
Also
$$a_0^6=(a_0^3)^2=(-a_3^3b_3^2-a_4^3b_4^2)^2=a_3^6b_3^4+2a_3^3b_3^2a_4^3b_4^2+a_4^6b_4^4.$$
Subtracting these two expressions gives
$$0=a_3^6b_3^4+a_3^5a_4b_4^4+a_4^5a_3b_3^4+a_4^6b_4^4-a_3^6b_3^4-2a_3^3b_3^2a_4^3b_4^2-a_4^6b_4^4$$
$$=a_3^5a_4b_4^4-a_3^3b_3^2a_4^3b_4^2+a_4^5a_3b_3^4-a_3^3b_3^2a_4^3b_4^2$$
$$=a_3^3a_4b_4^2(a_3^2b_4^2-b_3^2a_4^2)+a_4^3a_3b_3^2(a_4^2b_3^2-a_3^2b_4^2)$$
$$=(a_3^3a_4b_4^2-a_4^3a_3b_3^2)(a_3^2b_4^2-b_3^2a_4^2)$$
$$=a_3a_4(a_3^2b_4^2-a_4^2b_3^2)(a_3^2b_4^2-b_3^2a_4^2)$$
$$=a_3a_4(a_3b_4-a_4b_3)^2(a_3b_4+b_3a_4)^2.$$
Since $a_3,a_4$ are assumed to be non-zero, this implies
$$\frac{a_3}{a_4}=\pm\frac{b_3}{b_4}.$$

First consider the case
$$\frac{a_3}{a_4}=+\frac{b_3}{b_4}.$$
Let $\lambda=a_4/a_3$.
Then $a_0+a_3b_3^4+a_4b_4^4=0$ implies
$$\frac{a_0}{a_3b_3^4}=-(1+\lambda^5),$$
and $a_0^3+a_3^3b_3^2+a_4^3b_4^2=0$ implies
$$\frac{a_0^3}{a_3^3b_3^2}=-(1+\lambda^5).$$
Thus
$$\frac{a_0}{a_3b_3^4}=\frac{a_0^3}{a_3^3b_3^2},$$
and so
$$a_0=\pm\frac{a_3}{b_3}.$$
Then $a_0^2\pm1+a_3^2b_3^3+a_4^2b_4^3=0$ implies
$$\frac{a_3^2}{b_3^2}+1+a_3^2b_3^3+a_4^2b_4^3=0$$
$$a_3^2(1+b_3^5+b_4^5)\pm b_3^2=0.$$
Since $1+b_3^5+b_4^5=0$ this implies $b_3=0$, a contradiction.

Therefore there are no solutions if $\frac{a_3}{a_4}=+\frac{b_3}{b_4}$.
The case $\frac{a_3}{a_4}=-\frac{b_3}{b_4}$ is very similar.
\end{proof}

Thus a solution can only occur if one of $a_0,a_3,b_3,a_4,b_4$ is $0$.
The equations are easy to solve under this assumption, and in fact only $a_0=0$ yields real roots.
\begin{lemma}
The only real solutions of the system (\ref{eq:polyequations}) are
$$a_0=0,a_3=-a_4=\pm2^{-1/5},b_3=b_4=-2^{-1/5},c_1=c_2=1.$$
\end{lemma}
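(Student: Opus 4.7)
\medskip
\noindent\textbf{Proof plan.} The plan is to invoke the preceding lemma, which reduces the problem to the case in which at least one of the five quantities $a_0,a_3,a_4,b_3,b_4$ vanishes, and then dispose of each case in turn. The calculations throughout consist of substituting the vanishing hypothesis into the six equations of the system (\ref{eq:polyequations}) and reading off consequences; the work is bookkeeping, not machinery. I would therefore keep the arithmetic terse and focus on the structural shape of each sub-case.

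The vanishing-$b$ cases are the quickest. Suppose $b_3=0$. The sixth equation forces $b_4^5=-1$, hence $b_4=-1$; the fifth equation collapses to $a_0+a_4=0$, so $a_4=-a_0$; feeding this into the first equation gives $a_3^5=0$, so $a_3=0$. The fourth equation then reads $a_0^2\pm1-a_0^2=\pm1=0$, a contradiction. The case $b_4=0$ is identical after exchanging the roles of the last two coordinates.

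The vanishing-$a$ cases ($a_3=0$ or $a_4=0$) are symmetric; take $a_3=0$. The first equation gives $a_4=-a_0$, so the third equation collapses to $a_0^3(1-b_4^2)=0$. Either $a_0=0$ (the main case, treated below) or $b_4=\pm1$. The sub-case $b_4=-1$ forces $b_3=0$ via equation six, which has already been ruled out. The sub-case $b_4=+1$ must be ruled out by chasing the remaining equations until a numerical inconsistency (or a relation showing the resulting map is actually of form 2 or 3 rather than form 1, so that by the symmetry remark following the statement of the three forms it is not a new form-1 solution). This sub-case is the one step I expect to be slightly finicky; the main obstacle in the proof will be verifying these leftover $a_3=0$ and $a_4=0$ sub-cases, since the other pieces are mechanical.

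The main case is $a_0=0$. Equation (1) reads $a_3^5+a_4^5=0$, and over $\mathbb R$ this forces $a_4=-a_3$. Substituting into equation (3) gives $a_3^3(b_3-b_4)(b_3+b_4)=0$; the factor $a_3=0$ yields only the trivial map, and the factor $b_3=-b_4$ makes equation (6) read $1=0$. Hence $b_3=b_4$, and equation (6) then determines $b_3=b_4=-2^{-1/5}$. Equation (5) is automatic. Equation (4) becomes $\pm1+2a_3^2b_3^3=0$, so $a_3^2=\mp\tfrac12 b_3^{-3}=\pm 2^{-2/5}$; reality forces the $+$ sign, giving $c_1=+1$ and $a_3=\pm 2^{-1/5}$. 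Finally equation (2) reads $c_2+2a_3^4b_3=c_2+2\cdot2^{-4/5}\cdot(-2^{-1/5})=c_2-1=0$, so $c_2=1$. This exhibits the asserted solution as the unique real one, completing the proof.
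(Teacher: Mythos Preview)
Your plan matches the paper's: invoke the previous lemma and dispose of each vanishing case. The paper supplies no details beyond the sentence ``the equations are easy to solve under this assumption, and in fact only $a_0=0$ yields real roots.'' Your $b_3=0$, $b_4=0$, and main $a_0=0$ computations are correct.

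The sub-case you flag as finicky deserves the flag: it is where the paper's assertion is actually false. With $a_3=0$ and $b_4=+1$, equation~(4) becomes $2a_0^2\pm1=0$; the minus sign (i.e.\ $c_1=-1$) gives $a_0=\pm 2^{-1/2}$, and then $b_3=-2^{1/5}$ from equation~(6) and $c_2^5=-\tfrac12$ from equation~(2). So there is no numerical inconsistency here: the system has four additional real solutions (two from $a_3=0$, two symmetric ones from $a_4=0$), all with $c_1=-1$. Your second alternative is the right resolution. Rescaling projectively by $-2^{-1/5}$ and applying the $\rr$-translation $z\mapsto 2^{1/10}z$ turns the map $[2^{-1/2}z^5+1:-10^{1/5}z^2:-5^{1/5}2^{-1/5}z^4:-2^{1/5}:-2^{-1/2}z^5+1]$ into $[-2^{-1/5}z^5-2^{-1/5}:10^{1/5}z^2:5^{1/5}z^4:1:2^{-1/5}z^5-2^{-1/5}]$, which is one of the form-2 strips. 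The other three extra solutions are likewise the remaining form-2 and form-3 strips, so the list of six maps in Lemma~\ref{lemma:case3-maps} is unaffected. The cleanest repair to the present lemma is to state it only for $c_1=+1$; your argument already shows that with $c_1=+1$ the $a_3=0$ and $a_4=0$ cases are vacuous (since $2a_0^2+1=0$ has no real root), leaving $a_0=0$ as the only source, and the symmetry then manufactures the other four strips without double-counting.
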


The case $c_2$ is non-zero can be solved in the same way and yields the same solutions.
This proves the first part of the next lemma.
\begin{lemma}\label{lemma:case3-maps}
There are 6 lowest energy holomorphic strips.
They are:
$$z\mapsto[1:10^{1/5}z^2:5^{1/5}z^4:\pm 2^{-1/5}z^5-2^{-1/5}:\mp 2^{-1/5}z^5-2^{-1/5}],$$
$$z\mapsto[\pm 2^{-1/5}z^5-2^{-1/5}:10^{1/5}z^2:5^{1/5}z^4:1:\mp 2^{-1/5}z^5-2^{-1/5}],$$
$$z\mapsto[\pm 2^{-1/5}z^5-2^{-1/5}:10^{1/5}z^2:5^{1/5}z^4:\mp 2^{-1/5}z^5-2^{-1/5}:1].$$
Moreover, each row represents a pair of conjugate strips.
\end{lemma}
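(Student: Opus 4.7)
The plan is to assemble the enumeration from the calculations already done in this subsection and then verify the conjugation claim by a direct computation with the symmetry formula of Section \ref{section:strips-spheres}.

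First I would combine the preceding lemmas to pin down the strips of the first form. By Lemma \ref{lemma:strips-polynomials}, any holomorphic strip of lowest energy with boundary on $L\cup gL$ extends to a sphere with component polynomials $u_i(z)=z^{k_i+2a_i}p_i(z^5)$, and the energy formula (Lemma \ref{lemma:strips-energy}) together with the constraint $u(0),u(\infty)\in L\cap gL$ forces the three displayed polynomial templates. For the first template, the preceding lemmas rule out $c_1=c_2=0$, and translating by $\mathbb{R}$ (Lemma \ref{lemma:strips-r_translation}) allows one to normalize $c_1=\pm 1$ (or symmetrically $c_2=\pm 1$). The previous lemma shows the algebraic system (\ref{eq:polyequations}) has no solution with all of $a_0,a_3,a_4,b_3,b_4$ nonzero, and a case check on which coefficient vanishes leaves only the two real solutions indicated, giving exactly the two strips in the first row.

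Next I would handle the second and third rows by the $S_3$-symmetry that permutes the homogeneous coordinates $X_0$, $X_3$, $X_4$. This permutation preserves $L$, preserves the defining equation of $X$, and commutes with $g=g_1g_2^2$ because $g$ acts trivially on those three coordinates. Hence it takes holomorphic strips with boundary on $(L,gL)$ to such strips, and it carries the first template onto the second and third. Applying the symmetry to the two strips found above therefore produces the four remaining strips. Any strip of the second or third template that is genuinely new is ruled out by the same polynomial analysis performed with the roles of coordinates cycled, so we obtain exactly $6$ lowest-energy strips.

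Finally, for the conjugate-pair assertion I would compute $\tilde u(z)=g\tau u(\sigma\bar z)$ explicitly on a representative strip from the first row. Using $\sigma^5=-1$, $\sigma^2=\gamma$, $\sigma^4=\gamma^2$, and the $g$-action $[X_0:X_1:X_2:X_3:X_4]\mapsto [X_0:\gamma X_1:\gamma^2 X_2:X_3:X_4]$, the factors of $\gamma$ arising from $u(\sigma\bar z)$ exactly cancel those introduced by $g$, while complex conjugation $\tau$ restores the variable $z$. The net effect is to leave the first, second, third homogeneous components unchanged and to flip the signs in front of $z^5$ in the fourth and fifth components, which is precisely the other strip in the same row. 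The same calculation works for each of the three rows, so each row is indeed an orbit of the conjugation involution.

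The main obstacle is essentially bookkeeping rather than conceptual: one must be careful that the normalizations obtained by $\mathbb{R}$-translation do not double-count strips across rows and that the $S_3$ symmetry actually produces the listed solutions (and not merely solutions equivalent to them up to reparametrization). Neither is hard, but both warrant an explicit check.
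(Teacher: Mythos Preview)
Your proposal is correct and follows essentially the same approach as the paper: the enumeration of the six strips is assembled from the preceding lemmas (with the symmetry permuting $X_0,X_3,X_4$ used to pass between the three templates, exactly as the paper indicates), and the conjugation claim is verified by the same direct computation of $\tilde u(z)=g\tau u(\sigma\bar z)$ on a representative strip. The paper's written proof is terser---it treats the enumeration as already established and only writes out the conjugation check---but the content is the same.
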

\begin{proof}
It remains to check the last statement.
Let $u(z)=[1:10^{1/5}z^2:^{1/5}z^4:2^{-1/5}z^5-2^{-1/5}:-2^{-1/5}z^5-2^{-1/5}]$.
The conjugate of $u(z)$ is $\tilde u(z)=g\overline{u(\sigma\bar z)}$.
That is,
\begin{eqnarray*}
&&\tilde u(z)=g\cdot[1:10^{1/5}\gamma^{-1} z^2:5^{1/5}\gamma^{-2}z^4:-2^{-1/5}z^5-2^{-1/5}:2^{-1/5}z^5-2^{-1/5}]=\\
&&[1:10^{1/5}z^2:5^{1/5}z^4:-2^{-1/5}z^5-2^{-1/5}:2^{-1/5}z^5-2^{-1/5}].
\end{eqnarray*}
Thus the first row is a pair of conjugate strips.
The proof for the remaining rows is similar.
\end{proof}

The next step is to calculate the cokernel of $D_u\dbar$ for these lowest energy strips.
By symmetry, the dimension of the cokernel is the same for all the strips in Lemma \ref{lemma:case3-maps}, so it suffices to do the calculation for the map
$$u(z)=[1:10^{1/5}z^2:5^{1/5}z^4:2^{-1/5}z^5-2^{-1/5}:-2^{-1/5}z^5-2^{-1/5}].$$

By Proposition \ref{prop:cokernel-main_prop}, the cokernel of $D_u\dbar$ can be identified with the elements of $H^0(\cp^1,u^*T_X^*)$ that vanish at $0$ and $\infty$ and are fixed by the $\z_5$ and $\z_2$ actions.
To calculate $H^0(\cp^1,u^*T_X^*)$, it suffices to calculate how $u^*T_X$ splits as a direct sum of line bundles.

The splitting can be calculated as in \cite{katz}.
Namely, consider the exact sequences of sheaves
\begin{eqnarray}
\label{eq:case3-sequence_1}
&0\to u^*\mathcal O\to u^*\mathcal O(1)^{\oplus 5}\cong\mathcal O(5)^{\oplus 5}\to u^*T_{\cc P^4}\to 0,\\
\label{eq:case3-sequence_2}
&0\to u^*T_X\to u^*T_{\cc P^4}\to u^*N_{X,\cc P^4}\to 0.
\end{eqnarray}
The first map in the first sequence is given by (the homogeneous version of) $u$ and hence is
\begin{equation}\label{eq:case3-map_1}
\left[
\begin{array}{c}
Z_1^5\\
10^{1/5}Z_0^2Z_1^3\\
5^{1/5}Z_0^4Z_1\\
2^{-1/5}Z_0^5-2^{-1/5}Z_1^5\\
-2^{-1/5}Z_0^5-2^{-1/5}Z_1^5
\end{array}
\right].
\end{equation}
$u^*T_{\cp^4}$ splits as a direct sum of line bundles, say
\begin{displaymath}
u^*T_{\cp^4}\cong\mathcal O(d_1)\oplus\mathcal O(d_2)\oplus\mathcal O(d_3)\oplus\mathcal O(d_4).
\end{displaymath}
With this identification, the map
\begin{displaymath}
u^*\mathcal O(1)^{\oplus 5}\cong\mathcal O(5)^{\oplus 5}\to u^*T_{\cp^4}\cong\mathcal O(d_1)\oplus\mathcal O(d_2)\oplus\mathcal O(d_3)\oplus\mathcal O(d_4)
\end{displaymath}
in the sequence (\ref{eq:case3-sequence_1}) can be though of as a $4\times 5$ matrix of homogeneous polynomials in the variables $Z_0$ and $Z_1$.
Since the sequence is exact, the rows must generate the relations among the polynomials in (\ref{eq:case3-map_1}).
That is, if $p_0,\ldots,p_4\in \cc[Z_0,Z_1]$ are polynomials such that
\begin{eqnarray*}
0&=&p_0Z_1^5+p_110^{1/5}Z_0^2Z_1^3+p_25^{1/5}Z_0^4Z_1+p_3(2^{-1/5}Z_0^5-2^{-1/5}Z_1^5)+\\
&&p_4(-2^{-1/5}Z_0^5-2^{-1/5}Z_1^5),
\end{eqnarray*}
then $\left[\begin{array}{ccccc}p_0 & p_1 & p_2 & p_3 & p_4\end{array}\right]$ is in the row space of the matrix.
Therefore the map can be taken to be, for example,  
\begin{equation}\label{eq:firstsequencesecondmap}
\left[
\begin{array}{ccccc}
2^{4/5} &0&0&1&1\\
10^{1/5}Z_0^2& -Z_1^2&0&0&0\\
0&Z_0^2&-2^{1/5}Z_1^2&0&0\\
0&0&2^{4/5}Z_0&5^{1/5}Z_1&-5^{1/5}Z_1
\end{array}
\right].
\end{equation}
(Clearly the rows of this matrix are relations among the polynomials.
The easiest way to see that they generate all the relations is to use a computer.
An example that shows how to do this will be given below.)
The sequence (\ref{eq:case3-sequence_1}) is a graded sequence, so comparing degrees shows that
\begin{equation}
u^*T_{\cc P^4}\cong\mathcal O(5)\oplus\mathcal O(7)\oplus\mathcal O(7)\oplus\mathcal O(6).
\end{equation}

Now consider the second sequence (\ref{eq:case3-sequence_2}).
$u^*N$ is a line bundle, so the map $u^*T_{\cp^4}\to u^*N$ can be identified with a $1\times 4$ matrix of homogeneous polynomials, let this matrix be
$
\left[
\begin{array}{cccc}
a_1 & a_2 & a_3 & a_4
\end{array}
\right]
$.
The composition $$u^*\mathcal O(1)^{\oplus 5}\to u^*T_{\cp^4}\to u^*N$$ is nothing more than $df$, where $f=X_0^5+\cdots+X_4^5$ is the defining equation for $X$.
Thus
\begin{eqnarray*}
&\left[
\begin{array}{cccc}
a_1 & a_2 & a_3 & a_4
\end{array}
\right]
\left[
\begin{array}{ccccc}
2^{4/5} &0&0&1&1\\
10^{1/5}Z_0^2& -Z_1^2&0&0&0\\
0&Z_0^2&-2^{1/5}Z_1^2&0&0\\
0&0&2^{4/5}Z_0&5^{1/5}Z_1&-5^{1/5}Z_1
\end{array}
\right]
=\\
&\left[
\begin{array}{ccccc}
5X_0^4 & 5X_1^4 & 5X_2^4 & 5X_3^4 & 5X_4^4
\end{array}
\right].
\end{eqnarray*}
This leads to the system of equations

\begin{eqnarray*}
2^{4/5}a_1+10^{1/5}Z_0^2a_2 &=& 5Z_1^{20}\\
-Z_1^2a_2+Z_0^2a_3 &=& 5\cdot 10^{4/5}\cdot Z_0^8Z_1^{12}\\
-2^{1/5}Z_1^2a_3+2^{4/5}Z_0a_4 &=& 5^{9/5}\cdot Z_0^{16}Z_1^4\\
a_1+5^{1/5}Z_1a_4 &=& 5(2^{-1/5}Z_0^5-2^{-1/5}Z_1^5)^4\\
a_1-5^{1/5}Z_1a_4 &=& 5(-2^{-1/5}Z_0^5-2^{-1/5}Z_1^5)^4.
\end{eqnarray*}
Thus
\begin{eqnarray*}
a_1&=&5\cdot 2^{-4/5}(Z_1^{20}+6Z_0^{10}Z_1^{10}+Z_0^{20}),\\
a_2 &=& -10^{1/5}(Z_0^8Z_1^{10}+Z_0^{18}),\\
a_3&=&(5\cdot 10^{4/5}-10^{1/5})Z_0^6Z_1^{12}-10^{1/5}Z_0^{16}Z_1^2 ,\\
a_4&=&(5^{9/5}2^{-4/5}-2^{-3/5}10^{1/5})Z_0^{15}Z_1^4+2^{-3/5}(5\cdot 10^{4/5}-10^{1/5})Z_0^5Z_1^{14}.
\end{eqnarray*}
The map $u^*T_X\to u^*T_{\cc P^4}$ is given by the relations among the $a_i$, which with a computer can be calculated to be
\begin{displaymath}
\left[
\begin{array}{ccc}
0 & dZ_0^6 & hZ_0^5Z_1^2 \\
aZ_1^4 & eZ_0^8 & iZ_0^7Z_1^2 \\
bZ_0^2Z_1^2 & fZ_1^8 & jZ_0^9 \\
cZ_0^3 & gZ_0Z_1^6 & kZ_1^8
\end{array}
\right],
\end{displaymath}
for some constants $a,b,c,d,e,f,g,h,i,j,k$.
For example, in Magma this can be calculated by entering the following commands:
\begin{verbatim}

S<c,d>:=PolynomialRing(RationalField(),2);
F<a,b>:=quo<S|c^5-2,d^5-5>;
R<z,w>:=PolynomialRing(F,2);
a1:=10/a^4*(w^20+6*z^10*w^10+z^20);
a2:=-a*b*(z^8*w^10+z^18);
a3:=(5*a^4*b^4-a*b)*z^6*w^12-a*b*z^16*w^2;
a4:=(b^9/a^4-b/a^2)*z^15*w^4+(5*a^4*b^4-a*b)/a^3*z^5*w^14;
N:=SyzygyModule([a1,a2,a3,a4]);
MinimalBasis(N);

\end{verbatim}
Comparing degrees proves
$$u^*T_X\cong \mathcal O(3)\oplus\mathcal O(-1)\oplus\mathcal O(-2),$$
and therefore
$$ u^*T_X^*\cong\mathcal O(-3)\oplus\mathcal O(1)\oplus\mathcal O(2).$$

Recall that the cokernel consists of those holomorphic sections that vanish at $z=0$ and $z=\infty$ (the affine coordinate $z$ corresponds to the homogeneous coordinate $[z:1]$) and are fixed by the $\z_5$ and $\z_2$ actions.
A basis for the subspace spanned by sections that vanish at $0$ and $\infty$ is $\zeta=(0,0,Z_0Z_1)$.
The $\z_2$ fixed subspace is the $\rr$ span of $\zeta$.

To determine the $\z_5$ fixed subspace, it will be helpful to have a more useful formula for the $\z_5$ action.
Via the exact sequence of sheaves
\begin{displaymath}
0\to N^*\to T_{\cp^4}^*\to T_X^*\to 0,
\end{displaymath}
a holomorphic section of $u^*T_X^*$ pulls back to a meromorphic section of $u^*T_{\cp^4}^*$.
Likewise, a meromorphic section $s(z)$ of $u^*T_{\cp^4}^*$, via the exact sequence of sheaves
\begin{displaymath}
0\to u^*T_{\cp^4}^*\to u^*\mathcal O(1)^{*\oplus 5}\to \mathcal O^*\to 0,
\end{displaymath}
can be identified with a meromorphic section of $u^*\mathcal O(1)^{*\oplus 5}$, which in turn can be thought of as a tuple of rational functions.
Explicitly, if $\ell=u^*\mathcal O(1)^*$ is the pullback of the tautological bundle of $\cp^4$, then the tuple
\begin{equation}\label{eq:case3-form}
s(z)=\biggl(\frac{n_0(z)}{m_0(z)},\ldots,\frac{n_4(z)}{m_4(z)}\biggr)
\end{equation}
represents the section
\begin{displaymath}
\biggl(\frac{n_0(z)}{m_0(z)}u(z),\ldots,\frac{n_4(z)}{m_4(z)}u(z)\biggr)\in \ell^{\oplus 5}=u^*\mathcal  O(1)^*.
\end{displaymath}

Via the sequence (\ref{eq:case3-sequence_1}), the tangent bundle $T_{\cp^4}$ is identified with $(\cc^5/\ell)\otimes \ell^*$, and thus
\begin{displaymath}
T_{\cp^4}^*\cong (\cc^5/\ell)^*\otimes\ell.
\end{displaymath}
If $e_0,\ldots,e_4$ is the standard basis of $\cc^5$ and $e_0^*,\ldots,e_4^*$ is the dual basis, then under this isomorphism the section $s$ represents the section
\begin{displaymath}
s(z)=\biggl(\frac{n_0(z)}{m_0(z)}e_0^*+\cdots+\frac{n_4(z)}{m_4(z)}e_4^*\biggr)\otimes u(z)\in(\cc^5/\ell)^*\otimes\ell.
\end{displaymath}
\begin{lemma}\label{lemma:case3-action}
Let $\lambda_u\in\cc^*$ be such that $\lambda_uu(\gamma z)=g^2u(z)$.
Let $s=(\frac{n_0}{m_0},\ldots,\frac{n_4}{m_4})$ be a section of $u^*T_X^*$, written in the form (\ref{eq:case3-form}).
Let $\tilde s(z)$ be the image of $s(z)$ under the action of the generator of the $\z_5$.
Then
\begin{displaymath}
\tilde s(z)=\biggl(\frac{\lambda_u^{-1}g_0^2n_0(\gamma z)}{m_0(\gamma z)},\ldots,\frac{\lambda_u^{-1}g_4^2n_4(\gamma z)}{m_4(\gamma z)}\biggr).
\end{displaymath}
\end{lemma}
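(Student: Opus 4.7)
\medskip

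\noindent\textbf{Proof proposal for Lemma \ref{lemma:case3-action}.}

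The plan is to unwind the definition of the $\z_5$-action in terms of the concrete presentation of $u^*T^*_X$ coming from the Euler sequence and the tautological bundle $\ell$. Recall that the generator of $\z_5$ sends $\zeta\in H^0(\cp^1,u^*T^*_X)$ to $\tilde\zeta$ defined by $\tilde\zeta(z)(\xi)=\zeta(\gamma z)(dg^2\cdot\xi)$ for $\xi\in T_{u(z)}X$. Since the inclusion $u^*T^*_X\hookrightarrow u^*T^*_{\cp^4}$ is compatible with the $g$-action (because $g$ preserves $X$), it suffices to carry out the computation inside $u^*T^*_{\cp^4}$ and then restrict. Under the isomorphism $T^*_{\cp^4}\cong \ell\otimes(\cc^5/\ell)^*$ described in the text, the section $s=(n_0/m_0,\ldots,n_4/m_4)$ is represented at $z$ by $u(z)\otimes\alpha(z)$ with $\alpha(z)=\sum_i\frac{n_i(z)}{m_i(z)}e_i^*$, and similarly $s(\gamma z)=u(\gamma z)\otimes\alpha(\gamma z)$.

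The first step is to describe $dg^2$ under this splitting. The linear automorphism $g^2$ of $\cc^5$ is $\mathrm{diag}(g_0^2,\ldots,g_4^2)$, so on the quotient factor $dg^2:\cc^5/\cc\, u(z)\to\cc^5/\cc\, u(\gamma z)$ is induced by the same diagonal action. On the line factor $dg^2:(\cc\, u(z))^*\to(\cc\, u(\gamma z))^*$, one uses the defining relation $g^2u(z)=\lambda_u u(\gamma z)$: dualizing and inverting gives $u(z)^*\mapsto \lambda_u^{-1}u(\gamma z)^*$. Writing an arbitrary tangent vector as $\xi=u(z)^*\otimes v$ with $v\in\cc^5/\cc\, u(z)$, we therefore get
\begin{displaymath}
dg^2\cdot\xi=\lambda_u^{-1}\,u(\gamma z)^*\otimes(g_0^2v_0,\ldots,g_4^2v_4).
\end{displaymath}

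The second step is to substitute this into the pairing. Using $\langle u(\gamma z),u(\gamma z)^*\rangle=1$,
\begin{displaymath}
s(\gamma z)(dg^2\cdot\xi)=\lambda_u^{-1}\alpha(\gamma z)\bigl((g_i^2v_i)_i\bigr)=\lambda_u^{-1}\sum_i\frac{g_i^2\,n_i(\gamma z)}{m_i(\gamma z)}v_i.
\end{displaymath}
On the other hand, writing $\tilde s(z)=u(z)\otimes\tilde\alpha(z)$ with $\tilde\alpha(z)=\sum_i\frac{\tilde n_i(z)}{\tilde m_i(z)}e_i^*$ yields $\tilde s(z)(\xi)=\sum_i\frac{\tilde n_i(z)}{\tilde m_i(z)}v_i$. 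Comparing these two expressions coefficient-by-coefficient in $v$ gives $\tilde n_i/\tilde m_i=\lambda_u^{-1}g_i^2 n_i(\gamma z)/m_i(\gamma z)$, which is the claimed formula.

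None of the individual steps is subtle, but one minor point deserves care: the class $u(\gamma z)^*$ in $(\cc\, u(\gamma z))^*$ is only defined up to normalization, so one must consistently use representatives satisfying $\langle u(w),u(w)^*\rangle=1$ throughout. Once this bookkeeping is fixed, the factor $\lambda_u^{-1}$ in the formula appears exactly once, coming from the dual of $g^2\colon\cc\, u(z)\to\cc\, u(\gamma z)$, and the $g_i^2$ factors are the diagonal entries of the quotient action. I expect this compatibility of conventions to be the only place where one could easily introduce an error.
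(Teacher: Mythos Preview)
Your proof is correct and follows essentially the same approach as the paper's own proof: both write a tangent vector as $(v_0,\ldots,v_4)\otimes u(z)^*$ in the identification $T_{\cp^4}\cong(\cc^5/\ell)\otimes\ell^*$, compute $dg^2$ on the two tensor factors (diagonal action on $\cc^5/\ell$, and $u(z)^*\mapsto(g^2u(z))^*=\lambda_u^{-1}u(\gamma z)^*$ on $\ell^*$), and then read off the coefficients from the pairing. The paper presents this as a single chain of equalities rather than two labeled steps, but the content is identical.
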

\begin{proof}
Let $v\in T_{\cp^4}=(\cc^5/\ell)\otimes\ell^*$ be the tangent vector
\begin{displaymath}
v=(v_0,\ldots,v_4)\otimes u(z)^*.
\end{displaymath}
The proof then follows from a straightforward calculation:
\begin{eqnarray*}
&\pairing{\tilde s(z)}{v}=\pairing{s(\gamma z)}{dg^2\cdot v}=\pairing{s(\gamma z)}{(g_0^2v_0e_0+\cdots+g_4^2v_4e_4)\otimes (g^2u(z))^*}=\\
&\pairing{s(\gamma z)}{(g_0^2v_0e_0+\cdots+g_4^2v_4e_4)\otimes (\lambda_uu(\gamma z))^*}=\\
&\pairing{(\frac{n_0(\gamma z)}{m_0(\gamma z)}e_0^*+\cdots+\frac{n_4(\gamma z)}{m_4(\gamma z)}e_4^*)\otimes u(\gamma z)}{ (\lambda_u^{-1}g_0^2v_0e_0+\cdots+\lambda_u^{-1}g_4^2v_4e_4)\otimes u(\gamma z)^*}=\\
&\frac{\lambda_u^{-1}g_0^2n_0(\gamma z)v_0}{m_0(\gamma z)}+\cdots+\frac{\lambda_u^{-1}g_4^2n_4(\gamma z)v_4}{m_4(\gamma z)}=\\
&\pairing{(\frac{\lambda_u^{-1}g_0^2n_0(\gamma z)}{m_0(\gamma z)}e_0^*+\cdots+\frac{\lambda_u^{-1}g_4^2n_4(\gamma z)}{m_4(\gamma z)}e_4^*)\otimes u(z)}{v}.
\end{eqnarray*}
\end{proof}

To determine the $\z_5$ action on $\zeta=(0,0,Z_0Z_1)$, $\zeta$ needs to first be written in the form (\ref{eq:case3-form}).
The map $u^*T_{\cp^4}^*\to T_X^*$ is
\begin{displaymath}
\left[
\begin{array}{cccc}
0 & aZ_1^4 & bZ_0^2Z_1^2 & cZ_0^3\\
dZ_0^6 & eZ_0^8 & fZ_1^8 & gZ_0Z_1^6\\
hZ_0^5Z_1^2 & iZ_0^7Z_1^2 & jZ_0^9 & kZ_1^8
\end{array}
\right].
\end{displaymath}
$\zeta$ pulls back to, for example, the section $(p,q,r,s)$ where
\begin{eqnarray*}
p&=&\frac{beZ_0^6}{(hbc-ibd)Z_0^{10}Z_1-ahfZ_1^{11}},\\
q&=&\frac{-bdZ_0^4}{(hbc-ibd)Z_1Z_0^{10}-hfaZ_1^{11}},\\
r&=&\frac{adZ_0^2Z_1}{(hbc-ibd)Z_0^{10}-hfaZ_1^{10}},\\
s&=&0.
\end{eqnarray*}
Under the map $u^*T_{\cp^4}^*\to u^*\mathcal O(1)^{*\oplus 5}$, $(p,q,r,s)$ pushes forward to
\begin{displaymath}
(2^{-1/5}p-Z_0^2q,-\frac{1}{5}Z_1^2q-\frac{1}{5}Z_0^2r,\frac{1}{10}Z_1^2r-\frac{1}{10}Z_0s,\frac{1}{2}q+2^{-4/5}Z_1s,\frac{1}{2}p+2^{-4/5}Z_1s).
\end{displaymath}
In affine coordinates this is
\begin{equation}\label{eq:case3-section}
(2^{-1/5}p-z^2q,-\frac{1}{5}q-\frac{1}{5}z^2r,\frac{1}{10}r-\frac{1}{10}zs,\frac{1}{2}q+2^{-4/5}s,\frac{1}{2}p+2^{-4/5}s).
\end{equation}
If $\lambda_u$ is such that $\lambda_uu(\gamma z)=g^2u(z)$ then $\lambda_u=1$.
By Lemma \ref{lemma:case3-action}, under the $\z_5$ action (\ref{eq:case3-section}) maps to
\begin{displaymath}
\gamma (2^{-1/5}p-z^2q,-\frac{1}{5}q-\frac{1}{5}z^2r,\frac{1}{10}r-\frac{1}{10}zs,\frac{1}{2}q+2^{-4/5}s,\frac{1}{2}p+2^{-4/5}s).
\end{displaymath}
Therefore the $\z_5$ action on $\zeta$ is
\begin{displaymath}
\zeta\mapsto \gamma\zeta.
\end{displaymath}
Therefore, the $\z_5$ fixed subspace is $0$.
This proves
\begin{lemma}
The maps in Lemma \ref{lemma:case3-maps} are regular.
\end{lemma}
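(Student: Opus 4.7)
The plan is to observe that essentially all of the work has already been done in the preceding paragraphs for one of the six strips, and to explain how the remaining verification and the extension to the other five strips should be handled. First I would invoke Proposition \ref{prop:cokernel-main_prop}, which identifies the cokernel of $D_u\dbar$ with the subspace of $H^0(\cp^1, u^*T_X^*)$ consisting of holomorphic sections that vanish at $0$ and $\infty$ and are fixed by both the $\z_2$ and $\z_5$ symmetries. Thus it suffices to show that this subspace is zero.

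Next I would assemble the computations already carried out: the splitting $u^*T_X^* \cong \mathcal{O}(-3) \oplus \mathcal{O}(1) \oplus \mathcal{O}(2)$ derived from the two exact sequences (\ref{eq:case3-sequence_1}) and (\ref{eq:case3-sequence_2}), together with the explicit matrix presentations, shows that the space of global holomorphic sections vanishing at $z = 0$ and $z = \infty$ is one-dimensional, spanned by the section $\zeta = (0, 0, Z_0 Z_1)$. The $\z_2$ action fixes $\zeta$, so the $\z_2$-invariant subspace is the line $\rr\zeta$. Finally, by pulling $\zeta$ back to a meromorphic section of $u^*T_{\cp^4}^*$ and then to a tuple of rational functions as in (\ref{eq:case3-section}), and applying Lemma \ref{lemma:case3-action} with $\lambda_u = 1$, the $\z_5$ action has been seen to send $\zeta \mapsto \gamma\zeta$. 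Since $\gamma \neq 1$, the only simultaneously $\z_2$- and $\z_5$-invariant section is $0$, so $\coker(D_u\dbar) = 0$ and the strip is regular.

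To finish I would argue that the same conclusion holds for the other five strips listed in Lemma \ref{lemma:case3-maps} by a symmetry argument, without repeating the calculation. The three rows of Lemma \ref{lemma:case3-maps} are interchanged by the $S_5$-action permuting the last three homogeneous coordinates $X_0, X_3, X_4$; this permutation lies in the centralizer of $g = g_1 g_2^2$ up to relabeling and so preserves both the Lagrangian pair $(L, gL)$ and the holomorphic setup, hence carries the cokernel problem for one strip to that of another of the same row or row-type. Within each row, the two strips are a conjugate pair (by the last statement of Lemma \ref{lemma:case3-maps}), and Lemma \ref{lemma:conjugation-det} gives a canonical isomorphism $\Det(\rho_*) : \Det(D_u\dbar) \to \Det(D_{\tilde u}\dbar)$, which in particular identifies cokernels. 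Therefore all six strips are regular.

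I expect no serious obstacle here, since the main analytic step (the identification of the cokernel with invariant holomorphic sections of $u^*T_X^*$) and the main computational step (the bundle splitting and the explicit $\z_5$-action on $\zeta$) are already in place; the only remaining task is the bookkeeping above to reduce the six cases to the one that was carried out in detail.
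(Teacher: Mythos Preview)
Your proposal is correct and follows essentially the same route as the paper: reduce to one strip by symmetry, then invoke the already-computed splitting $u^*T_X^*\cong\mathcal O(-3)\oplus\mathcal O(1)\oplus\mathcal O(2)$ and the $\z_5$-action $\zeta\mapsto\gamma\zeta$ to conclude the invariant subspace is zero. The paper simply asserts ``by symmetry'' before the calculation, whereas you spell out that the permutations of $X_0,X_3,X_4$ commute with $g=g_1g_2^2$ (hence preserve both $L$ and $gL$) and that conjugate pairs have isomorphic cokernels via Lemma~\ref{lemma:conjugation-det}; this is a welcome elaboration rather than a different argument.
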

Note that the maps are regular even though they have Maslov index 0.
This is because $L\cap gL$ is positive dimensional, so the relevant linearized $\dbar$ operator is
\begin{displaymath}
D_u\dbar:T_pR_h\oplus T_qR_{h'}\oplus W^{1,p;\delta}_\lambda\to L^{p;\delta}.
\end{displaymath}
The presence of $T_pR_h\oplus T_qR_{h'}$ forces additional boundary values at $\pm\infty$ on elements of the cokernel.
If these conditions are not imposed, then $(0,0,Z_0^2)$, $(0,0,Z_0Z_1)$, $(0,0,Z_1^2)$, $(0,Z_0,0)$, and $(0,Z_1,0)$ are the holomorphic sections to consider, and it can be checked that two of them are fixed by the $\z_5$ action.
Hence the cokernel of $D_u\dbar$ without the $T_pR_h\oplus T_qR_{h'}$ summand is two-dimensional, as expected.

\begin{theorem}\label{thm:case3-main_thm}
$$HF(L,gL;\Lambda_{nov})\cong 0.$$
\end{theorem}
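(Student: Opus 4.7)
By degree considerations, on the $E_2$ page of the spectral sequence from Theorem \ref{thm:floer_cohomology-spectral_sequence} the only differential that could be nonzero is $d_2([\mathbb{RP}^1])$, which lands in $H^1(\mathbb{RP}^1;\Lambda_{nov})\cdot T^{E_0/10}$: indeed $\mu(u)=0$ for every strip by Lemma \ref{lemma:gradings-formula}, so $n_{0,0}$ lowers the chain dimension by one, and since $\mathbb{RP}^1$ only has cohomology in degrees $0$ and $1$ with these (trivial) coefficients, $[\mathbb{RP}^1]$ is the sole possibly-non-annihilated class. Therefore, to conclude $HF\cong 0$ it suffices to show $d_2([\mathbb{RP}^1])\neq 0$; this will kill both rank-one summands of $E_2$ simultaneously, forcing $E_3=0$ and hence $HF=0$.

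To compute $d_2([\mathbb{RP}^1])$ I would use the six explicit lowest-energy strips produced in Lemma \ref{lemma:case3-maps}. The regularity calculation above (the splitting $u^*T_X\cong \mathcal{O}(3)\oplus\mathcal{O}(-1)\oplus\mathcal{O}(-2)$ together with the $\mathbb{Z}_5$-action analysis showing the fixed cokernel vanishes) implies that each of these six strips is a transversely cut-out, isolated point of the moduli space. By Lemma \ref{lemma:case3-line_bundle} the local system $\Theta^-_{\mathbb{RP}^1}$ is trivial (since $r''=s''=0$) and $\mathbb{RP}^1$ is orientable, so $\Det(T\mathbb{RP}^1)\otimes\Theta^-_{\mathbb{RP}^1}$ is trivial, and under the identification $H^1(\mathbb{RP}^1;\mathbb{Q})\cong\mathbb{Q}$ the class $d_2([\mathbb{RP}^1])$ reduces to the total signed count of the six strips.

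Finally I would invoke Proposition \ref{prop:conjugation-main_prop} to constrain the signs. In this case $\mu(u)=0$ and, at every point $x\in\mathbb{RP}^1$, the eigenvalues of $dg_x$ are $1,\gamma,\gamma^2$, so $a_x''=b_x''=c_x''=0$. The formula then gives $\det(\rho_*)=+1$, so both strips in each conjugate pair contribute with the same sign. The six strips split into three conjugate pairs (the three rows of Lemma \ref{lemma:case3-maps}), so the signed count equals $2(\epsilon_1+\epsilon_2+\epsilon_3)$ for some $\epsilon_i\in\{\pm 1\}$. Since a sum of three elements of $\{\pm 1\}$ can never vanish, this count lies in $\{\pm 2,\pm 6\}$ and is in particular nonzero, completing the proof.

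The hard part is the orientation bookkeeping: setting up the canonical trivialization of $\Det(T\mathbb{RP}^1)\otimes \Theta^-_{\mathbb{RP}^1}$ and the fiber-product sign convention carefully enough that the simple parity argument above is genuinely rigorous requires faithfully unwinding the identifications of Sections \ref{section:local_systems} and \ref{section:conjugation-orientation}. No new computation beyond Propositions \ref{prop:conjugation-main_prop} and \ref{prop:conjugation-main_prop_2} is needed, but assembling these into the statement "contributions within a conjugate pair agree" requires care that $+1$ from Proposition \ref{prop:conjugation-main_prop} indeed translates into equality of the scalar signs attached to $ev_{+\infty}$-images in the trivialized local system.
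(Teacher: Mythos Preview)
Your proposal is correct and follows essentially the same route as the paper: reduce via the spectral sequence to computing $n_{0,0}([\mathbb{RP}^1])$, use the six regular lowest-energy strips of Lemma~\ref{lemma:case3-maps}, apply Proposition~\ref{prop:conjugation-main_prop} to see that conjugate pairs contribute with equal sign, and conclude by the parity argument that the signed count cannot vanish. Your explicit computation of the sign of $\rho_*$ (eigenvalues $1,\gamma,\gamma^2$, hence all double-primes zero) and your final enumeration $\{\pm 2,\pm 6\}$ are in fact more carefully stated than the paper's version, which records the possible values as $\pm 1[pt]$ or $\pm 3[pt]$; the discrepancy is immaterial since only nonvanishing matters.
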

\begin{proof}
By Lemma \ref{lemma:case3-maps} there are three pairs of conjugate holomorphic strips.
These strips are regular by the previous lemma, and by Proposition \ref{prop:conjugation-main_prop} each strip of a conjugate pair contributes the same sign to the Floer coboundary operator.
By the discussion at the beginning of this section, the only part of the coboundary operator that needs to be considered on the $E_2$ page is 
\begin{displaymath}
n_{0,0}:H^0(\rp^1;\qq)=\qq\to H^1(\rp^1;\qq)=\qq,
\end{displaymath}
and this map is determined by $n_{0,0}([\rp^1])$.
(Recall that $H^*$ is homology with cohomological grading.
In particular, $H^0$ contains the fundamental class.)
Since the strips are regular, $n_{0,0}([\rp^1])=ev_{+\infty*}([\mathcal M\times_{ev_{-\infty}}[\rp^1]])$, where $\mathcal M$ is the moduli space that consists of the six strips.
Since each strip in a conjugate pair has the same sign, it follows that $ev_{+\infty*}[\mathcal M\times_{ev_{-\infty}}[\rp^1]]$ is homologous to $\pm3[pt]$ or $\pm1[pt]$.
Either way $n_{0,0}\neq0$ and hence the $E_3$ term of the spectral sequence consists entirely of $0$'s.
\end{proof}

\subsection{The cases where $(-1)^{r''+s''}=-1$}
This includes four cases: $r=1,s=3$; $r=1,s=4$; $r=2,s=3$; and $r=2,s=4$.
By Lemma \ref{lemma:case3-line_bundle}, in these cases the line bundle $\Det(T\rp^1)\otimes \Theta_{\rp^1}^-$ over $\rp^1$ is non-trivial.
Since $\rp^1$ is orientable, it follows that
\begin{displaymath}
H^*(\rp^1;\Det(T\rp^1)\otimes\Theta_{\rp^1}^-;\qq)=0.
\end{displaymath}
(See \cite{hatcher} Example 3H.3 for the proof of this fact.
Remember that $H^*$ stands for homology, with cohomological degree grading.)
The spectral sequence for Floer cohomology then immediately proves
\begin{theorem}\label{thm:case3-main_thm2}
\begin{displaymath}
HF(L,gL;\Lambda_{nov})\cong 0.
\end{displaymath}
\end{theorem}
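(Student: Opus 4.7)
The plan is to show that the $E_2$ page of the Floer spectral sequence vanishes identically in these cases, from which the conclusion is immediate. The starting point will be Lemma \ref{lemma:case3-line_bundle}, which I would invoke to conclude that $\Theta_{\mathbb{RP}^1}^-$ is non-trivial whenever $(-1)^{r''+s''}=-1$. Since $\mathbb{RP}^1$ is orientable, $\Det(T\mathbb{RP}^1)$ is a trivial line bundle, and therefore the local system
\begin{displaymath}
\Det(T\mathbb{RP}^1)\otimes\Theta_{\mathbb{RP}^1}^-
\end{displaymath}
is itself non-trivial on $\mathbb{RP}^1$.

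Next, I would appeal to the general fact (proven via the transfer argument applied to the orientation double cover, as in Hatcher Example 3H.3) that if $M$ is a connected orientable manifold and $L$ is a rank-one $\mathbb{Q}$-local system on $M$ with non-trivial monodromy, then $H^*(M;L;\mathbb{Q})=0$. Applied to $\mathbb{RP}^1$ with the non-trivial local system above, this yields
\begin{displaymath}
H^*\bigl(\mathbb{RP}^1;\Det(T\mathbb{RP}^1)\otimes\Theta_{\mathbb{RP}^1}^-;\mathbb{Q}\bigr)=0.
\end{displaymath}

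Finally, by the spectral sequence of Theorem \ref{thm:floer_cohomology-spectral_sequence}, the $E_2$ page computing $HF(L,gL;\Lambda_{nov})$ is built from precisely these singular cohomology groups tensored with $gr_p(\mathcal F\Lambda_{0,nov})$ (the only intersection component is $R_h\cong\mathbb{RP}^1$, so there is only one summand to consider). Consequently $E_2=0$, all higher pages vanish, and $HF(L,gL;\Lambda_{nov})\cong 0$ follows at once. There is no real obstacle here: in contrast to the $r=s=1,2$ subcase, which required an explicit analysis of low-energy moduli spaces and cokernels, the non-orientability of the coefficient system in the present four subcases kills the entire complex before any holomorphic strip enters the picture. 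The only thing to double-check carefully is the identification of the grading shift $\tilde\mu(L,gL;\mathbb{RP}^1)$, but since the conclusion is vanishing rather than a graded identification, this subtlety plays no role.
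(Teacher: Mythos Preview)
Your proposal is correct and follows essentially the same route as the paper: invoke Lemma~\ref{lemma:case3-line_bundle} to see that the coefficient local system on $\mathbb{RP}^1$ is non-trivial, deduce that its (co)homology over $\mathbb{Q}$ vanishes (the paper cites the same Hatcher Example 3H.3), and conclude via the spectral sequence of Theorem~\ref{thm:floer_cohomology-spectral_sequence} that $E_2=0$. One small caution: the ``general fact'' you state---that $H^*(M;L;\mathbb{Q})=0$ for \emph{any} connected orientable manifold $M$ with a non-trivial rank-one $\mathbb{Q}$-local system---is false in that generality (e.g.\ a genus-two surface with a generic rank-one local system has $H^1\neq 0$, as Euler characteristic considerations force); it is, however, certainly true for $\mathbb{RP}^1\cong S^1$, which is all you need here.
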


\subsection{The remaining case: $r=3,s=4$}
Arguing as before, the degree 5 strips (if they exist) must be of the form
\begin{displaymath}
u(z)=[a_0z^5+b_0:5c_1z:10c_2z^3:a_3z^5+b_3:a_4z^5+b_4]
\end{displaymath}
where not all of $b_0,b_3,b_4$ are $0$ and not both of $c_1$ and $c_2$ are $0$ (by Lemma \ref{lemma:strips-polynomials}).
Without loss of generality assume that $b_0=1$ and either $c_1=\pm1$ or $c_2=\pm1$.
If, for example, $c_1=1$ then the coefficients must satisfy the system of equations
\begin{eqnarray*}
a_0^5+a_3^5+a_4^5&=&0,\\
a_0^4+a_3^4b_3+a_4^4b_4 &=&0,\\
a_0^3+c_2^5+a_3^3b_3^2+a_4^3b_4^2 &=& 0,\\
a_0^2+a_3^2b_3^3+a_4^2b_4^3 &=& 0,\\
a_0+1+a_3b_3^4+a_4b_4^4 &=& 0,\\
1+b_3^5+b_4^4 &=& 0.
\end{eqnarray*}
These equations can be ``simplified'' by entering them into a computer and using Groebner bases.
For example, entering the following commands into Magma will do this:
\begin{verbatim}
R<a0,c2,a3,b3,a4,b4>:=PolynomialRing(RationalField(),6);
I:=ideal<R|[a0^5+a3^5+a4^5,a0^4+a3^4*b3+a4^4*b4,
a0^3+c2^5+a3^3*b3^2+a4^3*b4^2,a0^2+a3^2*b3^3+
a4^2*b4^3,a0+1+a3*b3^4+a4*b4^4,1+b3^5+b4^5]>;
GroebnerBasis(I);
\end{verbatim}
However the output is prodigious and not at all ``simplified''.
It seems, then, that this case is out of reach of the techniques developed in this paper.
Investigation is left to further research.

\section{Case (4): $L\cap gL\cong\rp^1\amalg \rp^0$}
Assume without loss of generality that $g=g_1^rg_2^r$ where $0<r\leq 4$.
Then $L\cap gL=\rp^1\amalg\rp^0$, where the $\rp^1$ component is
\begin{displaymath}
\rp^1=\sett{[X_0:0:0:X_3:X_4]\in X}
\end{displaymath}
and the $\rp^0$ component is
\begin{displaymath}
\rp^0=\sett{[0:1:-1:0:0]}.
\end{displaymath}

If $u$ is a strip that starts and stops in the same component, then for dimension reasons (that is, arguing as in Theorem \ref{thm:case1-main_thm}) the only way it can have an effect on the Floer coboundary operator is if it starts and stops in $\rp^1$, in which case it has an effect on the map $H^0(\rp^1)\to H^1(\rp^1)$ in the spectral sequence.

If $u$ is a strip that starts in $\rp^1$ and stops in $\rp^0$ then by Lemmas \ref{lemma:gradings-formula} and \ref{lemma:real_lagrangians-angles} the Maslov index of $u$ is
\begin{displaymath}
\frac{2}{5}(3(5-r)'-2r')=\left\{
\begin{array}{ll}
1&,\ r=1,\\
-1&,\ r=2,\\
2&,\ r=3,\\
0&,\ r=4.
\end{array}
\right.
\end{displaymath}
If $S$ is a chain in $\rp^1$ then
\begin{displaymath}
\dim n_{0,0}(S)=\dim(S)+0-1+\mu(u)-1.
\end{displaymath}
If $n_{0,0}(S)\neq 0$ then $0=\dim n_{0,0}(S)$, so $\dim(S)=2-\mu(u)$.
$\dim(S)$ is $1$ or $0$.
If $\dim(S)=1$, then $n_{0,0}(S)$ has the possibility of being non-zero when $r=1$.
If $\dim(S)=0$, then $n_{0,0}(S)$ has the possibility of being non-zero when $r=3$.

Going the other way, if $u$ is a strip that starts in $\rp^0$ and ends in $\rp^1$ then the Maslov index of $u$ is the negative of that listed above.
Also, if $S$ is a chain in $\rp^0$, then
\begin{displaymath}
\dim n_{0,0}(S)=\dim(S)+1-0+\mu(u)-1=\mu(u).
\end{displaymath}
Thus $n_{0,0}(S)$ has the possibility of being non-zero if $r=2$ or $r=4$.

In summary, the possible non-zero maps between $H^*(\rp^1)$ and $H^*(\rp^0)$ are
\begin{itemize}
\item $r=1$: $H^0(\rp^1)\to H^0(\rp^0)$,
\item $r=2$: $H^0(\rp^0)\to H^0(\rp^0)$,
\item $r=3$: $H^1(\rp^1)\to H^0(\rp^0)$, and
\item $r=4$: $H^0(\rp^0)\to H^1(\rp^1)$.
\end{itemize}

Consequently, in each case there are two moduli spaces that need to be described:
the moduli space of strips that start and stop on $\rp^1$, and either the moduli space of strips that connect $\rp^1$ to $\rp^0$ or vice versa.
A priori many different stable maps enter into these particular moduli spaces.
For example the moduli space of strips that start on $\rp^1$ and end on $\rp^0$ could potentially include a stable map with two strip components: one strip connecting $\rp^1$ to $\rp^1$ followed by another strip connecting $\rp^1$ to $\rp^0$.
However, if attention is restricted to the lowest energy strips, this behavior cannot happen.
That is, the lowest energy strips can only have one domain component.
It is these lowest energy strips that will be described and used to determine the operator on the $E_2$ page of the spectral sequence.

The moduli spaces will be examined in detail in a moment, but first a lemma:
\begin{lemma}\label{lemma:case4-line_bundle}
The line bundle $\Theta_{\rp^1}^-$ is trivial.
\end{lemma}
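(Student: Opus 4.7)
The plan is to mimic the strategy of Lemma \ref{lemma:case2-line_bundle} and Lemma \ref{lemma:case3-line_bundle}, adapting the eigenspace decomposition to account for the fact that here the non-trivial eigenspace has rank $2$.

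First, I would fix a generator $\sigma : S^1 \to \rp^1$ of $\pi_1(\rp^1)$ (for instance $\sigma(\theta) = [\cos\pi\theta : 0:0:\sin\pi\theta : -(\cos^5\pi\theta + \sin^5\pi\theta)^{1/5}]$). Over $\sigma$, I examine $dg$ acting on $T_{\sigma(\theta)}X$. By the computation in the proof of Lemma \ref{lemma:real_lagrangians-angles} (case $4$, $\rp^1$ component), the eigenvalues are $1, \gamma^r, \gamma^r$, and the $1$-eigenspace intersected with $TL$ is $T\rp^1$. Thus $\sigma^*TL$ splits as $E_0 \oplus F$, where $E_0 \cong \sigma^*T\rp^1$ is a line bundle and $F$ is the rank-$2$ real subbundle sitting inside the $\gamma^r$-eigenspace.

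The key observation is that $F$ is orientable: since $TL$ is trivial (indeed $L \cong \rp^3$ admits a trivialization), $\Det(TL)|_{\sigma} \cong \Det(E_0) \otimes \Det(F)$ is trivial, and $\Det(E_0) \cong \sigma^*\Det(T\rp^1)$ is trivial because $\rp^1$ is orientable. Hence $\Det(F)$ is trivial.

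Next, exactly as in Lemma \ref{lemma:case2-line_bundle}, I would set $\lambda_\theta(t) = dg_\theta^t \cdot T_{\sigma(\theta)}L$, which splits as $\lambda_\theta^0(t) \oplus \lambda_\theta^F(t)$, giving the decomposition
\begin{displaymath}
\Det(\dbar_{\lambda, Z_-}) \cong \Det(\dbar_{\lambda^0, Z_-}) \otimes \Det(\dbar_{\lambda^F, Z_-}).
\end{displaymath}
The first factor is trivial since the boundary conditions are constant ($dg$ acts as identity on $E_0$), so the relevant kernel and cokernel vanish. For the second factor, I would split into two cases according to whether $r \in \{1,2\}$ or $r \in \{3,4\}$. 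In the first case the operator $\dbar_{\lambda^F_\theta, Z_-}$ can be homotoped (uniformly in $\theta$) to an operator with vanishing kernel and cokernel, so $\Det(\dbar_{\lambda^F, Z_-})$ is trivial. In the second case, the operator has trivial kernel and $2$-dimensional cokernel that is canonically (in $\theta$) identified with the fiber $F_\theta$; hence $\Det(\dbar_{\lambda^F, Z_-}) \cong \Det(F)$, which is trivial by the first step. In both subcases $\Det(\dbar_{\lambda, Z_-})$ is trivial over $\sigma$, and by the identification (as in the end of the proof of Lemma \ref{lemma:case2-line_bundle}) $\sigma^*\Theta_{\rp^1}^- \cong \Det(\dbar_{\lambda, Z_-})$, so $\Theta_{\rp^1}^-$ is trivial.

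The only non-routine step is the case $r \in \{3,4\}$, where one must actually identify the cokernel bundle with $F$; this uses the fact that both $\gamma^r$-eigenvectors rotate by the same angle, so the cokernels of the two individual $1$-dimensional rotating Cauchy--Riemann problems fit together into the rank-$2$ bundle $F$ without any twisting coming from the ordering of eigenvectors. I do not expect serious difficulty here since this is a direct extension of the corresponding step in Lemma \ref{lemma:case2-line_bundle}.
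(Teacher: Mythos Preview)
Your proposal is correct and follows essentially the same approach as the paper: both split $TL|_{\rp^1}$ into the $1$-eigenspace $T\rp^1$ and the rank-$2$ $\gamma^r$-eigenbundle, observe that the latter is orientable because $TL$ and $T\rp^1$ are, and then invoke the argument of Lemma \ref{lemma:case2-line_bundle}. The paper's proof is terser, simply stating the orientability of $E_r$ and then writing ``arguing as in the proof of Lemma \ref{lemma:case2-line_bundle}'', whereas you spell out the case split $r\in\{1,2\}$ versus $r\in\{3,4\}$ and the identification of the cokernel bundle with $F$; but this is exactly the content being deferred.
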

\begin{proof}
The eigenvalues of the $dg$ action on $T_xX$ for $x\in\rp^1$ are $1,\gamma^r,\gamma^r$.
Let $E_0$ denote the subbundle of $TL|\rp^1$ corresponding to the $1$-eigenspace, and let $E_r$ denote the subbundle corresponding to the $\gamma^r$-eigenspace.
Then $E_0=T\rp^1$, and hence $TL|\rp^1$ splits as
\begin{displaymath}
TL|\rp^1\cong T\rp^1\oplus E_r.
\end{displaymath}
It follows that $E_r$ is orientable.
Arguing as in the proof of Lemma \ref{lemma:case2-line_bundle}, it follows that $\Theta_{\rp^1}^-$ is orientable.
\end{proof}
It follows that
\begin{displaymath}
H^*(\rp^1;\Det(T\rp^1)\otimes\Theta_{\rp^1}^-)\cong H^*(\rp^1).
\end{displaymath}
Hence the local systems can be ignored in this section.


\subsection{Strips from $\rp^1$ to $\rp^1$}\label{section:case4-rp1_strips}
In this section, all the lowest energy strips that start and stop in the $\rp^1$ component will be found (for all the cases).
It will be seen that these strips have energy $E_0/2$ (i.e. they are given by degree 5 polynomals), which, as will be seen later, is greater than the lowest energy strips that go between the $\rp^0$ and $\rp^1$ components.
Therefore an exact description of these strips is not really needed.
However, they provide nice examples of the bubbling phenomenon and Gromov's compactness theorem, so it seems worthwhile to spend some time describing them.

If $u=[u_0:u_1:u_2:u_3:u_4]$ is a strip from $\rp^1$ to $\rp^1$, then not all of $u_0(0),u_3(0),u_4(0)$ can be $0$.
Therefore, by Lemma \ref{lemma:strips-polynomials}, the $u_i$'s are of the form
\begin{eqnarray*}
u_0(z)&=&p_0(z^5),\\
u_1(z)&=& z^kp_1(z^5),\\
u_2(z)&=& z^kp_2(z^5),\\
u_3(z)&=&p_3(z^5),\\
u_4(z)&=&p_4(z^5),
\end{eqnarray*}
where $0<k\leq 4$ is such that $\gamma^k=\gamma^{2r}$.
\begin{lemma}
$u$ is a lowest energy strip if and only if $u$ is of the form
\begin{displaymath}
u(z)=[a(z^5\pm 1):dz^k:-dz^k:b(z^5\pm 1):c(z^5\pm 1)],
\end{displaymath}
where $a^5+b^5+c^5=0$, not all of $a,b,c$ are $0$, and $d>0$.
\end{lemma}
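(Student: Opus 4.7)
The plan is to identify the lowest-energy strips explicitly, combining Lemma~\ref{lemma:strips-polynomials} with a direct algebraic analysis of the defining equation $\sum u_i^5 = 0$. First I would pin down the polynomial degree of such a strip. By Lemma~\ref{lemma:strips-polynomials}, each of $u_0, u_3, u_4$ is a polynomial in $w = z^5$, while $u_1, u_2$ are $z^k$ times polynomials in $w$. The boundary condition $u(\infty) \in \rp^1$ forces $\deg u_1, \deg u_2 < \max(\deg u_0, \deg u_3, \deg u_4)$, and non-constancy of $u$ rules out the case that all components are constant. Hence the minimal degree of $u$ in $z$ is $5$, attained precisely when $u_i = a_i w + b_i$ is linear in $w$ for $i\in\{0,3,4\}$ and $u_j = d_j z^k$ is a scalar multiple of $z^k$ for $j\in\{1,2\}$; the corresponding energy is $E_0/2$ by Lemma~\ref{lemma:strips-energy}.

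Substituting these forms and collecting powers of $w$ produces the polynomial identity
$$(a_0 w+b_0)^5+(a_3 w+b_3)^5+(a_4 w+b_4)^5+(d_1^5+d_2^5)\,w^k=0.$$
Equating the coefficient of $w^j$ for $j\in\{0,1,\dots,5\}\setminus\{k\}$ gives the system $a_0^j b_0^{5-j}+a_3^j b_3^{5-j}+a_4^j b_4^{5-j}=0$. I would then argue that these five equations force the three linear polynomials $p_i := u_i$ ($i\in\{0,3,4\}$) to be real scalar multiples of a common linear polynomial $q(w)$. The cleanest route is a Vandermonde argument: writing $B_i = b_i^5$ and $\lambda_i = a_i/b_i$ when $b_i \neq 0$, the system becomes $\sum_i B_i \lambda_i^j = 0$ for five distinct values of $j$; if the three $\lambda_i$ were distinct, any three of these rows form a nonsingular generalized Vandermonde system, so all $B_i = 0$, but then every $b_i$ vanishes and $u(0)$ becomes undefined. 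Hence at least two $\lambda_i$ coincide, and a short case analysis (including the degenerate subcases where some $b_i = 0$) shows that all three $p_i$ share a common linear factor, i.e.~$p_i = A_i q(w)$ for real constants $A_i$.

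The identity then reduces to $(A_0^5+A_3^5+A_4^5)\,q^5 + (d_1^5+d_2^5)\,w^k = 0$. Since $q$ cannot vanish at $w=0$ (else $u(0)$ is undefined), $q^5$ has degree exactly $5$, whereas the other term has degree $k \leq 4$; comparing degrees forces $A_0^5+A_3^5+A_4^5 = 0$ and consequently $d_1^5+d_2^5 = 0$, i.e.~$d_2 = -d_1$. Using the $\rr$-translation $z \mapsto \lambda z$ of Lemma~\ref{lemma:strips-r_translation}, I can rescale the nonzero real root of $q$ to $\mp 1$, so that $q(w) = w\pm 1 = z^5 \pm 1$; the remaining sign in the overall real projective scaling then allows me to set $d := d_1 > 0$. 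The converse direction is immediate: any map of the stated form visibly satisfies the two symmetries of Lemma~\ref{lemma:strips-symmetries_converse} and sends both $0$ and $\infty$ into $\rp^1$. The main obstacle in this outline is the Vandermonde case analysis --- in particular verifying that the degenerate solution $p_4 = 0$, $p_3 = -p_0$ is correctly absorbed into the stated form by taking one of $a, b, c$ to equal zero.
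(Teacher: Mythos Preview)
Your overall strategy is correct and lands in the same place as the paper, but there is one genuine slip in the Vandermonde step. The assertion that ``any three of these rows form a nonsingular generalized Vandermonde system'' is false for real $\lambda_i$: for instance $\lambda_0=1$, $\lambda_3=-1$, $\lambda_4=2$ with exponents $0,2,4$ gives two identical columns. What saves you is that for every $k\in\{1,2,3,4\}$ the set $\{0,\dots,5\}\setminus\{k\}$ contains three \emph{consecutive} integers $m,m+1,m+2$; the corresponding $3\times 3$ minor has determinant $(\lambda_0\lambda_3\lambda_4)^m$ times the ordinary Vandermonde, which is nonzero once the $\lambda_i$ are distinct and nonzero. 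With that correction (and the degenerate cases $a_i=0$ or $b_i=0$ you already flag), your argument goes through. The subsequent case ``two $\lambda_i$ coincide'' also needs one more beat than you indicate: it forces $B_0=0$, hence $b_0=0$, and you must then feed this back into the full quintic identity to get $a_0=0$, i.e.\ $p_0\equiv 0$, before concluding that all three $p_i$ share a factor.

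By comparison, the paper does not invoke Vandermonde language at all. It normalises $b_0=1$, $a_0=\pm1$, disposes of the edge cases $a_i=0$, $b_i=0$, $a_i=b_i$ one by one, and in the generic case subtracts pairs of consecutive coefficient equations (using $j=3,4,5$ when $k\le 2$ and $j=0,1,2$ when $k\ge 3$) to obtain $a_3/b_3=a_4/b_4$, then closes with the two extreme equations $1+b_3^5+b_4^5=0$ and $1+c^5(b_3^5+b_4^5)=0$ to force $c=1$. This is exactly your Vandermonde minor computed by hand via row reduction, so the two arguments are the same in content; yours is packaged more uniformly, the paper's more explicitly. The endgame---$A_0^5+A_3^5+A_4^5=0$, $d_2=-d_1$, and normalising the root of $q$ to $\pm1$ via $\rr$-translation---matches.
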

\begin{proof}
Since $u$ ends in $\rp^1$, it must be true that
\begin{displaymath}
\max\sett{\deg(u_0),\deg(u_3),\deg(u_4)}>\max\sett{\deg(u_1),\deg(u_2)}.
\end{displaymath}
Therefore the lowest energy strips are of the form
\begin{displaymath}
u(z)=[a_0z^5+b_0:b_1z^k:b_2z^k:a_3z^5+b_3:a_4z^5+b_4].
\end{displaymath}
Not all of $b_0,b_3,b_4$ can be zero, so assume that $b_0\neq0$, and hence assume that $b_0=1$.
It is easy to see that $a_0\neq0$, hence by $\rr$ translation it may be assumed that $a_0=\pm 1$.
Assume that $a_0=1$; the case $a_0=-1$ is similar.
Therefore $u$ is of the form
\begin{displaymath}
u(z)=[z^5+1:b_1z^k:b_2z^k:a_3z^5+b_3:a_4z^5+b_4].
\end{displaymath}

Suppose $a_3=0$.
Then it is easy to see that $a_4=-1$, $b_4=-1$, and $b_1=-b_2$.
That is, $u$ is of the desired form.

The same result holds if $a_4=0$.
Likewise, the same result holds if $b_3$ or $b_4$ is $0$, as can be seen by letting $w=1/z$ and applying the same argument.
So it may be assumed that $a_3,a_4,b_3,b_4$ are all non-zero.

Now suppose $a_3=b_3$.
Then
\begin{eqnarray*}
1+a_3^5+a_4^5&=&0,\\
1+a_3^5+a_4^ib_4^{5-i}&=&0,\\
\end{eqnarray*}
where $i\neq k$.
This shows that $a_4^i(a_4^{5-i}-b_4^{5-i})=0$.
Thus $a_4=b_4$ and $u$ is of the desired form.
So it may be assumed also that $a_3\neq b_3$ and $a_4\neq b_4$.

If $k$ is 1 or 2 then
\begin{eqnarray*}
1+a_3^5+a_4^5&=&0,\\
1+a_3^4b_3+a_4^4b_4&=&0,\\
1+a_3^3b_3^2+a_4^3b_4^2&=&0.
\end{eqnarray*}
Subtracting the first two and second two equations leads to the equations
\begin{eqnarray*}
a_3^4(a_3-b_3)+a_4^4(a_4-b_4)&=&0,\\
a_3^3b_3(a_3-b_3)+a_4^3b_4(a_4-b_4)&=&0.
\end{eqnarray*}
Thus
\begin{displaymath}
\frac{a_3^4}{a_4^4}=-\frac{a_4-b_4}{a_3-b_3}=\frac{a_3^3b_3}{a_4^3b_4}
\end{displaymath}
and hence $\frac{a_3}{b_3}=\frac{a_4}{b_4}$.
Likewise, if $k$ is $3$ or $4$, then consideration of the $z^0$, $z^5$, and $z^{10}$ coefficients in the equation $u_0^5+\cdots+u_4^5=0$ will lead to the same conclusion.

Let $c=\frac{a_3}{b_3}=\frac{a_4}{b_4}$.
Then $u$ is of the form
\begin{displaymath}
u(z)=[z^5+1:b_1z^k:b_2z^k:b_3(cz^5+1):b_4(cz^5+1)].
\end{displaymath}
Then 
\begin{eqnarray*}
1+b_3^5+b_4^5&=&0,\\
1+c^5(b_3^5+b_4^5)&=&0.
\end{eqnarray*}
Thus $c=1$ and hence $a_3=b_3$, a contradiction.
\end{proof}

\begin{lemma}\label{lemma:case4-rp1_maps}
Let $\mathcal M(L,gL:\rp^1,\rp^1:E_0/2)$ denote the moduli space of maps in the previous lemma.
Then $\mathcal M(L,gL:\rp^1,\rp^1:E_0/2)$ has two components, $C_1$ and $C_2$, where
\begin{eqnarray*}
C_1 &=&\sett{[a(z^5+1):dz^k:-dz^k:b(z^5+1):c(z^5+1)]},\\
C_2 &=&\sett{[a(z^5-1):dz^k:-dz^k:b(z^5-1):c(z^5-1)]}.
\end{eqnarray*}
Moreover, $S^1\times(0,\infty)\cong C_i$ via the diffeomorphism
\begin{eqnarray*}
&((\cos\theta,\sin\theta),d)\mapsto&\\ & [\cos\theta(z^5\pm1):dz^k:-dz^k:\sin\theta(z^5\pm1):-(\cos^5\theta+\sin^5\theta)^{1/5}(z^5\pm1)].
\end{eqnarray*}
Furthermore, $C_1$ and $C_2$ are conjugates of each other.
\end{lemma}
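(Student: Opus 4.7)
By the previous lemma, every element of $\mathcal M(L,gL:\rp^1,\rp^1:E_0/2)$ is represented by a projective map of the form $u(z)=[a(z^5\pm 1):dz^k:-dz^k:b(z^5\pm 1):c(z^5\pm 1)]$ with $a,b,c,d\in\rr$, $a^5+b^5+c^5=0$, $(a,b,c)\neq 0$ and $d>0$. The discrete choice of sign $\pm$ in the factors $z^5\pm 1$ produces two subsets $C_1$ and $C_2$, and these are disjoint: a single projective map admitting both presentations would require a constant $\lambda\in\cc^*$ with $\lambda\cdot a(z^5+1)=a'(z^5-1)$, and matching the $z^5$-coefficient with the constant coefficient forces $a'=0$, and likewise $b'=c'=0$, contradicting $(a',b',c')\neq 0$.

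To construct the diffeomorphism $S^1\times(0,\infty)\to C_i$, I would first note that $(a,b)\neq(0,0)$, since $a=b=0$ together with $a^5+b^5+c^5=0$ would force $c=0$. The only residual ambiguity in the presentation is global projective rescaling $(a,b,c,d)\mapsto(\lambda a,\lambda b,\lambda c,\lambda d)$ with $\lambda\in\rr^*$; note that an $\rr$-translation $z\mapsto\mu z$ of the strip would turn $z^5\pm 1$ into $z^5\pm\mu^{-5}$, so normalizing the constant term to $\pm 1$ has already used up the translation freedom. The constraint $d>0$ restricts $\lambda$ to $\rr_{>0}$, and within each orbit there is a unique representative satisfying $a^2+b^2=1$, obtained by dividing through by $\sqrt{a^2+b^2}$. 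Writing $a=\cos\theta$, $b=\sin\theta$, the constraint determines $c=-(\cos^5\theta+\sin^5\theta)^{1/5}$, while $d>0$ remains a free parameter. Injectivity is immediate: any scalar $\lambda$ identifying two normalized representatives must satisfy $\lambda^2=1$, and $\lambda=-1$ would send $d>0$ to $-d<0$.

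For the final claim, I would compute $\tilde u(z)=g\tau u(\sigma\bar z)$ directly for $u\in C_1$. Using $\sigma^5=-1$ and $(\sigma\bar z)^k=\sigma^k\bar z^k$, conjugating coordinate-wise, and then applying $g=g_1^rg_2^r$ (which multiplies the second and third homogeneous coordinates by $\gamma^r$) yields
\[\tilde u(z)=[-a(z^5-1):\gamma^r\bar\sigma^k\,dz^k:-\gamma^r\bar\sigma^k\,dz^k:-b(z^5-1):-c(z^5-1)].\]
Since $k\equiv 2r\pmod 5$, the phase $\gamma^r\bar\sigma^k=e^{(2r-k)\pi i/5}$ is real: it equals $+1$ when $r\in\{1,2\}$ and $-1$ when $r\in\{3,4\}$. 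In the first case $\tilde u$ is already in the $C_2$ form with parameters $(-a,-b,-c,d)$; in the second, a projective rescaling by $-1$ brings $\tilde u$ into the $C_2$ form with parameters $(a,b,c,d)$. Either way $\tilde u\in C_2$, so $\rho_*$ exchanges $C_1$ and $C_2$. The main obstacle is just keeping track of the phases $\sigma^5$, $\sigma^k$, $\gamma^r$ and the overall projective sign needed to restore $d>0$; the case split on $r$ introduces a mild awkwardness but no conceptual difficulty.
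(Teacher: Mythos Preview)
Your proof is correct and follows essentially the same approach as the paper's: normalize $(a,b)$ to the unit circle to obtain the $S^1$ factor, and compute $\tilde u$ directly for the conjugation claim. You are in fact more careful than the paper in a couple of places: you explicitly argue disjointness of $C_1$ and $C_2$ and explain why the $\rr$-translation freedom is already absorbed by the normalization $z^5\pm 1$, and you track the phase $\gamma^r\bar\sigma^k$ and the resulting case split on $r$, whereas the paper simply writes the answer $[a(-z^5+1):dz^k:-dz^k:\ldots]$ without comment.
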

\begin{proof}
It suffices to show that if $a^5+b^5+c^5=0$ then there exists a unique $\theta\in[0,2\pi)$ such that the maps 
\begin{equation}\label{eq:case4-form_1}
[a(z^5+1):dz^k:-dz^k:b(z^5+1):c(z^5+1)]
\end{equation}
and
\begin{equation}\label{eq:case4-form_2}
[\cos\theta(z^5+1):dz^k:-dz^k:\sin\theta(z^5+1):-(\cos^5\theta+\sin^5\theta)^{1/5}(z^5+1)]
\end{equation}
are the same.

$a$ and $b$ cannot both be zero, hence (\ref{eq:case4-form_1}) is the same as
\begin{displaymath}
[\frac{a}{a^2+b^2}(z^5+1):dz^k:-dz^k:\frac{b}{a^2+b^2}(z^5+1):\frac{c}{a^2+b^2}(z^5+1)].
\end{displaymath}
Let $\theta$ be such that $\cos\theta=a/(a^2+b^2)$ and $\sin\theta=b/(a^2+b^2)$.
With this value of $\theta$, (\ref{eq:case4-form_2}) is the same map as (\ref{eq:case4-form_1}).
It is easy to see that this is the unique value of $\theta$ that works.

It remains to prove the final statement.
Suppose $u(z)=[a(z^5+1):dz^k:-dz^k:b(z^5+1):c(z^5+1)]$ is in $C_1$.
Then the conjugate of $u(z)$ is
\begin{displaymath}
\tilde u(z)=g\tau u(\sigma \bar z)=[a(-z^5+1):dz^k:-dz^k:b(-z^5+1):c(-z^5+1)],
\end{displaymath}
which is an element of $C_2$.
\end{proof}

Lemma \ref{lemma:case4-rp1_maps} implies that the moduli space $\mathcal M(L,gL:\rp^1,\rp^1:E_0/2)$ is not compact; the source of non-compactness is the parameter $d\in(0,\infty)$.
By Gromov's compactness theorem, these strips must degenerate into a stable map with multiple domain components as $d\to 0$ and $d\to \infty$.
The next two lemmas describe what these degenerations are.
\begin{lemma}
As $d\to0^+$, the one-parameter family of strips
\begin{displaymath}
u_d(z)=[a(z^5+1):dz^k:-dz^k:b(z^5+1):c(z^5+1)]
\end{displaymath}
converges to a constant strip with the disc $u_0(z)=[5az:\sigma^k:-\sigma^k:5bz:5cz]$ attached (recall that $\sigma=e^{2\pi i/10}$).
The disc bubbles off at the domain point $z=\sigma$.

Likewise, as $d\to0^+$, the one-parameter family of strips
\begin{displaymath}
u_d(z)=[a(z^5-1):dz^k:-dz^k:b(z^5-1):c(z^5-1)]
\end{displaymath}
converges to a constant strip with the disc $u_0(z)=[5az:1:-1:5bz:5cz]$ attached.
The disc bubbles off at the domain point $z=1$.

\end{lemma}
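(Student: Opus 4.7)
The plan is to invoke Gromov's compactness theorem and then identify the limit stable map as $d\to 0^+$ by explicit asymptotic analysis of the family $u_d$. Since $E(u_d)=E_0/2$ is constant and the Lagrangian boundary conditions are fixed, a Gromov limit exists; I claim it consists of the constant strip at $[a:0:0:b:c]\in\rp^1$ with a single disc bubble $u_0$ attached at $z=\sigma$. First, I would establish uniform convergence of $u_d$ to $[a:0:0:b:c]$ on any compact $K\subset\overline{S_0}\setminus\{\sigma\}$. On such $K$, $|z^5+1|$ is bounded below, so working in an affine chart whose denominator is a nonzero real combination of $a(z^5+1)$, $b(z^5+1)$, $c(z^5+1)$ (valid since not all of $a,b,c$ vanish), the two middle coordinates of $u_d$ are $O(d)$ while the others are $O(1)$. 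Because $a^5+b^5+c^5=0$ with $a,b,c\in\rr$, the limit point lies in $\rp^1\subset L\cap gL$, giving the constant strip component.

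Second, I would extract the bubble by rescaling at $\sigma$, the unique zero of $z^5+1$ in $\overline{S_0}$. Setting $z=\sigma+d\sigma^6 w$ and using $\sigma^5=-1$, $\sigma^{10}=1$, the Taylor expansion gives $(\sigma+d\sigma^6 w)^5+1=5\sigma^{10}\,dw+O(d^2)=5dw+O(d^2)$, hence
\begin{displaymath}
u_d(\sigma+d\sigma^6 w)=[5adw+O(d^2):d\sigma^k+O(d^2):-d\sigma^k+O(d^2):5bdw+O(d^2):5cdw+O(d^2)].
\end{displaymath}
Dividing homogeneously by $d$, this converges in $C^\infty_{\mathrm{loc}}$ to $u_0(w)=[5aw:\sigma^k:-\sigma^k:5bw:5cw]$. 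The rescaled domains exhaust a half-plane: the top arc $\sigma\rr_{\geq 0}$, parametrized by $z=s\sigma$, corresponds to $w=-(s-1)/d\in\rr$ (using $\sigma^{-5}=-1$), and a direction computation at an interior point places the limit domain in the upper half-plane. Thus $u_0$ is a holomorphic disc, and its boundary lies in $gL$: since $k\equiv 2r\pmod 5$ one has $\sigma^k\gamma^{-r}=\pm 1\in\rr$, so for $w\in\rr$ the point $u_0(w)$ has real coordinates after undoing the $g$-twist, while $(5aw)^5+(5bw)^5+(5cw)^5=5^5w^5(a^5+b^5+c^5)=0$ places $u_0$ in $X$.

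Third, $u_0$ extends to a degree-one holomorphic map $\cp^1\to X$ (a line in $X$) of total sphere energy $E_0$, hence disc energy $E_0/2$; together with the zero-energy constant strip this exhausts the available $E_0/2$. Since the minimum disc energy on $L$ or $gL$ is $E_0/2$, no further non-constant components can appear, and the constant strip is stable by virtue of the attached bubble. The second family is handled identically, with $z^5-1$ replacing $z^5+1$ and $z=1$ replacing $z=\sigma$; now $1$ lies on the bottom arc $\rr_{\geq 0}$ where the boundary condition is $L$, and the simpler rescaling $z=1+dw$ yields $u_0(w)=[5aw:1:-1:5bw:5cw]$ with boundary on $L$. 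The principal obstacle is ensuring that the rescaled maps really converge on the whole half-plane and that no energy escapes into additional bubbles or ghost components; both follow from the minimum disc energy combined with the sharp pointwise expansion above, but require a careful application of the Gromov compactness formalism.
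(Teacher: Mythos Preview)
Your proof is correct and follows essentially the same approach as the paper: rescale the domain near the energy-concentration point and compute the limiting map. Your substitution $z=\sigma+d\sigma^6 w$ is identical to the paper's $w=\gamma^2\lambda(z-\sigma)$ with $\lambda=d^{-1}$ (since $\sigma^{-6}=\sigma^4=\gamma^2$), and your expansion $(\sigma+d\sigma^6 w)^5+1=5dw+O(d^2)$ is exactly the computation the paper performs. The paper is considerably terser: it simply writes down the rescaling, expands, divides by $d$, and reads off the limit, without discussing convergence away from the bubble point, verifying the boundary condition of the bubble disc, or doing the energy accounting to preclude further components. Your additional arguments are all correct and make the proof more self-contained, but they are not strictly necessary given that the lemma is stated as an identification of the Gromov limit rather than a full compactness statement.
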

\begin{proof}
To see how the bubbles appear, the domain needs to be rescaled near the point where the bubble grows.
In the first case, let $w=\gamma^2\lambda(z-\sigma)$ where $\lambda>0$.
If $z$ is restricted to a small half-disc in $S_0$ with center $\sigma$ and boundary on $\sigma\cdot \rr_{\geq0}$, then $w$ is a point in a half-disc in the upper half-plane.
As $\lambda\to\infty$, this disc grows to fill out the entire upper half-plane.

In the first case, as a function of $w$ the maps $u_d$ are
\begin{eqnarray*}
u_d(w)&=&[a((\gamma^3\lambda^{-1}w+\sigma)^5+1):d(\gamma^3\lambda^{-1}w+\sigma)^k:-d(\gamma^3\lambda^{-1}w+\sigma)^k:\\
&&b((\gamma^3\lambda^{-1}w+\sigma)^5+1):c((\gamma^3\lambda^{-1}w+\sigma)^5+1)].
\end{eqnarray*}
Let $\lambda=d^{-1}$.
Dividing each homogeneous coordinate by $d$ and letting $d\to0$ gives
\begin{displaymath}
u_0(w)=[5aw:\sigma^k:-\sigma^k:5bw:5cw].
\end{displaymath}

Now consider the second case.
This time the reparameterization to use is $w=\lambda(z-1)$.
Then, as a function of $w$ the maps are
\begin{eqnarray*}
u_d(w)&=&[a((\lambda^{-1}w+1)^5+1):d(\lambda^{-1}w+1)^k:-d(\lambda^{-1}w+1)^k:\\
&&b((\lambda^{-1}w+1)^5+1):c((\lambda^{-1}w+1)^5+1)]. 
\end{eqnarray*}
Dividing each homogeneous coordinate by $d$ and letting $d\to 0^+$ gives
\begin{displaymath}
u_0(w)=[5aw:1:-1:5bw:5cw].
\end{displaymath}
\end{proof}

\begin{lemma}
As $d\to\infty$, the one-parameter family of strips
\begin{displaymath}
u_d(z)=[a(z^5+1):dz^k:-dz^k:b(z^5+1):c(z^5+1)]
\end{displaymath}
converges to the broken trajectory
\begin{displaymath}
u_\infty(z)=[a:z^k:-z^k:b:c]\#[az^{5-k}:1:-1:bz^{5-k}:cz^{5-k}].
\end{displaymath}

Likewise, the one-parameter family of strips
\begin{displaymath}
u_d(z)=[a(z^5-1):dz^k:-dz^k:b(z^5-1):c(z^5-1)]
\end{displaymath}
converges to the broken trajectory
\begin{displaymath}
u_\infty(z)=[-a:z^k:-z^k:-b:-c]\#[az^{5-k}:1:-1:bz^{5-k}:cz^{5-k}].
\end{displaymath}
\end{lemma}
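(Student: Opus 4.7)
The plan is to exhibit the two bubble strips by reparameterizing the domain at two different scales, in analogy with the proof of the previous lemma, except that here both limits are full strips rather than a disc attached to a constant strip. By Lemma \ref{lemma:strips-r_translation}, the rescaling $z\mapsto \lambda z$ on $S_0$ is equivalent to $\mathbb{R}$-translation of the strip by $s_0=(5/\pi)\log\lambda$; the two scales $\lambda=d^{-1/k}$ and $\lambda=d^{1/(5-k)}$ correspond to translations by $-\infty$ and $+\infty$ respectively as $d\to\infty$, which is precisely the geometric mechanism of strip breaking.

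At the first scale, setting $z=d^{-1/k}w$ gives
\[
u_d(d^{-1/k}w)=[a(d^{-5/k}w^5+1):w^k:-w^k:b(d^{-5/k}w^5+1):c(d^{-5/k}w^5+1)],
\]
which converges uniformly on compact subsets of $S_0$ to $[a:w^k:-w^k:b:c]$. At the second scale, noting that $d\lambda^k=\lambda^5=d^{5/(5-k)}$, one substitutes $z=d^{1/(5-k)}w$ and divides each homogeneous coordinate by $d^{5/(5-k)}$ to obtain
\[
[aw^5+ad^{-5/(5-k)}:w^k:-w^k:bw^5+bd^{-5/(5-k)}:cw^5+cd^{-5/(5-k)}],
\]
which converges to $[aw^5:w^k:-w^k:bw^5:cw^5]=[aw^{5-k}:1:-1:bw^{5-k}:cw^{5-k}]$. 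Both limit maps satisfy the symmetry hypotheses of Lemma \ref{lemma:strips-symmetries_converse} and hence come from holomorphic strips. The first has polynomial degree $k$, hence energy $kE_0/10$ by Lemma \ref{lemma:strips-energy}, and goes from $[a:0:0:b:c]\in \mathbb{R}P^1$ at $w=0$ to $[0:1:-1:0:0]\in \mathbb{R}P^0$ at $w=\infty$; the second has degree $5-k$, hence energy $(5-k)E_0/10$, and goes from $\mathbb{R}P^0$ back to $\mathbb{R}P^1$.

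Since the two pieces meet at the common point $[0:1:-1:0:0]\in \mathbb{R}P^0$ and their energies sum to $E_0/2=E(u_d)$, Gromov's compactness theorem forces the complete limit of $u_d$ to be exactly the broken trajectory $u_\infty$ of the lemma, with no additional bubbling possible at any intermediate scale $d^{\alpha}$, $-1/k<\alpha<1/(5-k)$. The second family, with $z^5-1$ replacing $z^5+1$, is handled by the identical argument: the second rescaling yields the same limit because the constant $\pm 1$ is absorbed into the vanishing $d^{-5/(5-k)}$ correction, while the first rescaling now produces $[-a:w^k:-w^k:-b:-c]$ since $a(d^{-5/k}w^5-1)\to -a$. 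The main step to verify is the energy accounting; once that is in place, no hard analysis beyond the explicit limits above is required, since the standard Gromov compactness statement for Bott--Morse moduli spaces then pins down the limit uniquely.
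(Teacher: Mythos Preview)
Your proof is correct and follows essentially the same approach as the paper: rescale the domain at two different rates to extract the two limit strips. Your version is in fact slightly cleaner, since you rescale directly by $z=d^{1/(5-k)}w$ rather than first inverting $z'=1/z$ and inverting back at the end, and you make explicit the energy accounting and appeal to Gromov compactness that the paper leaves implicit.
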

\begin{proof}
Consider first $u_d(z)=[a(z^5+1):dz^k:-dz^k:b(z^5+1):c(z^5+1)]$ near $z=0$.
Let $w=\lambda z$.
Then as a function of $w$, $u_d$ is
\begin{displaymath}
u_d(w)=[a(\lambda^{-5}w^5+1):d\lambda^{-k}w^k:-d\lambda^{-k}w^k:b(\lambda^{-5}w^5+1):c(\lambda^{-5}w^5+1)].
\end{displaymath}
Let $\lambda=d^{1/k}$, then as $d\to\infty$ this map converges to 
\begin{displaymath}
[a:w^k:-w^{-k}:b:c].
\end{displaymath}
To see what happens near $z=\infty$, first change coordinates by letting $z'=1/z$.
Then as a function of $z'$, $u_d$ is
\begin{displaymath}
u_d(z')=[a(1+z'^5):dz'^{5-k}:-dz'^{5-k}:b(1+z'^5):c(1+z'^5)].
\end{displaymath}
Now rescale by letting $w=\lambda z'$, so
\begin{displaymath}
u_d(w)=[a(1+\lambda^{-5}w^5):d\lambda^{k-5}w^{5-k}:-d\lambda^{k-5}w^{5-k}:b(1+\lambda^{-5}w^5):c(1+\lambda^{-5}w^5)].
\end{displaymath}
Let $\lambda=d^{1/(k-5)}$.
Then as $d\to\infty$ this map converges to
\begin{displaymath}
[a:w^{5-k}:-w^{5-k}:b:c].
\end{displaymath}
Since $w=\lambda z'=\lambda/z$, this is defined for $w$ near $\infty$.
To change coordinates back to near $0$, let $z=1/w$, and this is the map
\begin{displaymath}
[az^{5-k}:1:-1:bz^{5-k}:cz^{5-k}].
\end{displaymath}

The case for the other family of strips is similar.
\end{proof}

Now that the strips from $\rp^1$ to $\rp^1$ have been found, the cases will be handled on an individual basis.

\subsection{The case $r=1$}
In this case, strips from $\rp^1$ to $\rp^1$ and from $\rp^1$ to $\rp^0$ need to be considered.
The strips from $\rp^1$ to $\rp^0$ have Maslov index 1.
Conjugation of strips changes orientation by
\begin{displaymath}
(-1)^{\frac{\mu}{2}+\frac{3}{2}(a_q''+b_q''+c_q''-a_p''-b_p''-c_p'')}.
\end{displaymath}
By Lemma \ref{lemma:real_lagrangians-angles}, 
\begin{displaymath}
a_q''+b_q''+c_q''-a_p''-b_p''-c_p''=3(5-r)''-2r''=3\cdot4''-2\cdot1''=3.
\end{displaymath}
The change in sign is therefore $-1$.
The Floer cohomology will not be calculated completely for this case.
However, it is easy to prove
\begin{theorem}\label{thm:case4-main_thm_1}
The Floer cohomology $HF(L,gL;\Lambda_{nov})$ contains a subspace isomorphic to $H^0(\rp^0;\Lambda_{nov})$.
In particular, the rank of $HF(L,gL;\Lambda_{nov})$ is at least $1$.
\end{theorem}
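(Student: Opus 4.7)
The plan is to exhibit the generator of $H^0(\rp^0;\Lambda_{nov})$ on the $E_2$ page of the spectral sequence of Theorem~\ref{thm:floer_cohomology-spectral_sequence} as surviving to $E_\infty$, and thereby embed $H^0(\rp^0;\Lambda_{nov})$ as a subspace of $HF(L,gL;\Lambda_{nov})$.

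First I would pin down the $E_2$ page. By Lemmas~\ref{lemma:gradings-absolute_formula} and~\ref{lemma:real_lagrangians-angles}, $\tilde\mu(L,gL;\rp^1)=0$ and $\tilde\mu(L,gL;\rp^0)=1$, so $H^0(\rp^1)$ sits in $CF$-degree $0$ while both $H^1(\rp^1)$ and $H^0(\rp^0)$ sit in $CF$-degree $1$, and all other $CF$-degrees vanish. The coefficient local systems are trivial: on $\rp^1$ by Lemma~\ref{lemma:case4-line_bundle}, and on the point $\rp^0$ automatically. Because $CF^{\geq 2}=0$, there is no room for any differential \emph{out of} $H^0(\rp^0)$ on any page of the spectral sequence, so its survival to $E_\infty$ is determined entirely by differentials \emph{into} it, which must come from $H^0(\rp^1)$ via Maslov-index-$1$ strips $\rp^1\to\rp^0$.

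Next I would show these incoming differentials vanish at every energy level by the conjugation sign argument from Section~\ref{section:conjugation-orientation}. Lemma~\ref{lemma:gradings-formula} pins $\mu=1$ for every strip $\rp^1\to\rp^0$, independent of the homotopy class, so $\mathcal M(L,gL:\rp^1,\rp^0:E)$ has virtual dimension zero for every $E$. Applying Proposition~\ref{prop:conjugation-main_prop} with the eigenvalue data $a_p''+b_p''+c_p''=0$ (eigenvalues $1,\gamma,\gamma$ on $\rp^1$) and $a_q''+b_q''+c_q''=3$ (eigenvalues $\gamma^4,\gamma^4,\gamma^4$ on $\rp^0$) from Lemma~\ref{lemma:real_lagrangians-angles}, the conjugation involution $\rho_*$ changes orientation by $(-1)^{1/2+9/2}=-1$. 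Proposition~\ref{prop:conjugation-main_prop_2} then gives $\#(\mathcal M(L,gL:\rp^1,\rp^0:E)\times_{\rp^1}[\rp^1])=0$ at every energy, so the induced differential on each $E_r$ hitting the $H^0(\rp^0)$ summand vanishes.

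Combined with the fact that no differential leaves this summand, the generator of $H^0(\rp^0)$ survives to $E_\infty$, and tensoring with the field $\Lambda_{nov}$ promotes this to a genuine submodule of $HF(L,gL;\Lambda_{nov})$ isomorphic to $H^0(\rp^0;\Lambda_{nov})$. The main obstacle I foresee is making precise that the conjugation sign argument extends uniformly across \emph{all} energy levels rather than just the lowest: this depends crucially on the fact that both $\mu$ and the eigenvalue data depend only on the endpoint components, so a single sign calculation handles every page of the spectral sequence at once.
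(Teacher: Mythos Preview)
Your proposal is correct and follows essentially the same approach as the paper: both rely on the conjugation involution reversing sign on $\mathcal M(L,gL:\rp^1,\rp^0)$ (via Propositions~\ref{prop:conjugation-main_prop} and~\ref{prop:conjugation-main_prop_2}) to kill the only possible differential into $H^0(\rp^0)$, together with a degree count to rule out differentials out of it. Your version is slightly more self-contained in that you compute $\tilde\mu$ for each component explicitly and argue via $CF^{\geq 2}=0$, whereas the paper invokes the dimension analysis carried out at the start of the Case~(4) section; the substance is the same.
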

\begin{proof}
By the previous discussion, the effect that $n_{0,0}$ has on $H^0(\rp^0)$ in the spectral sequence is determined by the moduli space of strips $\mathcal M(L,gL:\rp^1,\rp^0)$ that go from $\rp^1$ to $\rp^0$.
Moreover, for dimension reasons, the only part of $n_{0,0}$ that can be non-zero is the part it induces on
\begin{displaymath}
H^0(\rp^1)\to H^0(\rp^0).
\end{displaymath}
(Recall that $H^0(\rp^1)$ contains the fundamental class $[\rp^1]$.)
$\rho_*$ gives a $\z_2$ action on $\mathcal M(L,gL:\rp^1,\rp^0)$, and it reverses sign.
The virtual dimension of this moduli space is $0$, that is, after perturbation it will consist of a finite number of strips.
Therefore, by Proposition \ref{prop:conjugation-main_prop_2}, the part of $n_{0,0}$ that goes from $H^0(\rp^1)$ to $H^0(\rp^0)$ is $0$.
Hence $H^0(\rp^0)$ will survive in the spectral sequence.

\end{proof}

To calculate the Floer cohomology completely would require determining how the strips from $\rp^1$ to $\rp^1$ affect $H^*(\rp^1)$ in the spectral sequence.
Hence the first step would involve the strips from Section \ref{section:case4-rp1_strips}.
These strips are not regular, so the theory of Kuranishi structures and perturbations would have to enter, greatly complicating the picture.
This, therefore, is left to further research.
However, given that the lowest energy strips from $\rp^1$ to $\rp^1$ have explicit descriptions, it may be possible to explicitly carry out the perturbations.
Such an explicit description would be intersting, as I believe such a thing has never been done before.

\subsection{The case $r=2$}
In this case, strips that go from $\rp^1$ to $\rp^1$ and $\rp^0$ to $\rp^1$ need to be considered.
The lowest energy strips from $\rp^0$ to $\rp^1$ are of the form
\begin{displaymath}
u(z)=[a_0z:1:-1:a_3z:a_4z]
\end{displaymath}
where $a_0^5+a_3^5+a_4^5=0$.
These strips are regular:
In the sequence $0\to u^*\mathcal O\to u^*\mathcal O(1)^{\oplus 5}\to u^*T_{\cp^4}\to 0$, the second map is (assuming $a_0,a_3\neq0$)
\begin{displaymath}
\left[
\begin{array}{ccccc}
a_3 & 0 & 0 & -a_0 & 0\\
a_4 & 0 & 0 & 0 & -a_0\\
0 & 1 & 1 & 0 & 0\\
Z_1 & -a_0Z_0 & 0 & 0 & 0
\end{array}
\right].
\end{displaymath}
Therefore $u^*T_{\cp^4}\cong \mathcal O(1) \oplus \mathcal O(1)\oplus\mathcal O(1)\oplus\mathcal O(2)$.
In the sequence $0\to u^*T_X\to u^*T_{\cp^4}\to u^*N\to 0$ the second map is
\begin{displaymath}
\left[
\begin{array}{cccc}
-\frac{5a_3^4}{a_0}Z_0^4 & -\frac{5a_4^4}{a_0}Z_0^4 & 5Z_1^4 & 0
\end{array}
\right]
\end{displaymath}
and the first map is
\begin{displaymath}
\left[
\begin{array}{ccc}
0 & \frac{a_4^4}{a_0} & Z_1^4\\
0 & -\frac{a_3^4}{a_0} & 0\\
0 & 0 & \frac{a_3^4}{a_0}Z_1^4\\
1 & 0 & 0
\end{array}
\right],
\end{displaymath}
so $u^*T_X\cong \mathcal O(2)\oplus\mathcal O(1)\oplus\mathcal O(-3)$.
Therefore
\begin{displaymath}
 u^*T_X^*\cong \mathcal O(-2)\oplus\mathcal O(-1)\oplus\mathcal O(3).
\end{displaymath}
The holomorphic sections that vanish at $0$ and $\infty$ are $(0,0,Z_0Z_1^2)$ and $(0,0,Z_0^2Z_1)$.
$(0,0,Z_0^iZ_1^{3-i})$ lifts to $(0,0,\frac{a_0Z_0^iZ_1^{3-i}}{a_3^4Z_1^4},0)$, which pushes forward to 
$$
(0,\frac{a_0Z_0^{i}Z_1^{3-i}}{a_3^4Z_1^4},\frac{a_0Z_0^{i}Z_1^{3-i}}{a_3^4Z_1^4},0,0).
$$
$\lambda_u=\gamma^4$ so the $\z_5$ action is $(0,0,Z_0^iZ_1^{3-i})\mapsto\gamma^{i+1}(0,0,Z_0^iZ_1^{3-i})$.
Since $i=1$ or $2$ the fixed subspace is empty and the cokernel is $0$.

\begin{theorem}\label{thm:case4-main_thm_2}
\begin{displaymath}
HF^*(L, gL;\Lambda_{nov})\cong H^1(\rp^1;\Lambda_{nov}).
\end{displaymath}
\end{theorem}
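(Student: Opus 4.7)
The dimension bookkeeping preceding the theorem shows that in the spectral sequence of Theorem~\ref{thm:floer_cohomology-spectral_sequence}, the only potentially nonzero differential on the $E_2$-page (i.e., coming from strips of minimal energy $E_0/10$) is the map $d_2:H^0(\rp^0)\to H^0(\rp^1)$, induced by the moduli space $\mathcal M=\mathcal M(L,gL:\rp^0,\rp^1:E_0/10)$ of degree-one holomorphic strips
\[
u(z)=[a_0z:1:-1:a_3z:a_4z],\qquad a_0^5+a_3^5+a_4^5=0,\quad (a_0,a_3,a_4)\neq 0,
\]
which were already shown to be regular. My plan is to compute this map and show it is nonzero; all remaining differentials will then vanish for dimension reasons.

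The first step is to identify $\mathcal M$ geometrically: after quotienting by the $\rr$-translation of strips, which acts on the coefficients by positive rescaling, $\mathcal M$ is the unit real Fermat curve $\{a\in S^2:a_0^5+a_3^5+a_4^5=0\}$, and the evaluation map $ev_{+\infty}(a)=[a_0:0:0:a_3:a_4]$ realizes $\mathcal M\to\rp^1$ as a two-to-one cover. A direct calculation (analogous to the one in the proof of Lemma~\ref{lemma:case3-maps}) shows $\rho_*(u_{(a_0,a_3,a_4)})=u_{(-a_0,-a_3,-a_4)}$, so $\rho_*$ acts freely on $\mathcal M$ and swaps the two sheets. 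The sign of $\Det(\rho_*)$ is computed from Proposition~\ref{prop:conjugation-main_prop}: at the $\rp^0$-endpoint the eigenvalues of $dg$ are $\gamma^3,\gamma^3,\gamma^3$ (so $a_p''+b_p''+c_p''=3$), at the $\rp^1$-endpoint they are $1,\gamma^2,\gamma^2$ (so $a_q''+b_q''+c_q''=0$), and $\mu(u)=1$, giving exponent $\tfrac12+\tfrac32(0-3)=-4$, hence $\Det(\rho_*)=+1$. Since $\rho_*$ preserves orientation, swaps the two sheets, and the local system $\Det(T\rp^1)\otimes\Theta_{\rp^1}^-$ is trivial (Lemma~\ref{lemma:case4-line_bundle}), the two sheets are compatibly oriented, giving
\[
n_{0,0}([\mathrm{pt}]_{\rp^0})=ev_{+\infty*}[\mathcal M]=\pm 2\,[\rp^1].
\]
Over $\Lambda_{nov}$, where $T^{E_0/10}$ is invertible, this shows the piece of the differential sending $H^0(\rp^0)\otimes\Lambda_{nov}$ into $H^0(\rp^1)\otimes\Lambda_{nov}$ is an isomorphism.

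To finish, I would check that the remaining class $[\mathrm{pt}]_{\rp^1}\in H^1(\rp^1)$ is a permanent cycle: for every $\beta$ and every $R_{h'}\in\{\rp^1,\rp^0\}$, the fiber product $\mathcal M(L,gL:\rp^1,R_{h'}:\beta)\times_{\rp^1}[\mathrm{pt}]_{\rp^1}$ has virtual dimension $-1$ (when $R_{h'}=\rp^1$, since $\mu=0$) or $-3$ (when $R_{h'}=\rp^0$, since $\mu=-1$), so $n_{0,0}[\mathrm{pt}]_{\rp^1}=0$. Thus $HF(L,gL;\Lambda_{nov})$ is generated by $[\mathrm{pt}]_{\rp^1}$, yielding the claimed isomorphism with $H^1(\rp^1;\Lambda_{nov})$. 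The hard part will be the sign calculation above: if $\Det(\rho_*)$ turned out to be $-1$, then Proposition~\ref{prop:conjugation-main_prop_2} would force $ev_{+\infty*}[\mathcal M]=0$, and the Floer cohomology would have $\Lambda_{nov}$-rank $3$ rather than $1$; everything else is routine dimension counting.
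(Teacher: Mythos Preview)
Your proposal is correct and follows essentially the same route as the paper: identify the lowest-energy moduli space $\mathcal M\cong S^1$ of regular strips from $\rp^0$ to $\rp^1$, observe that $ev_{+\infty}$ is the double cover, conclude that the $E_2$-differential $H^0(\rp^0)\to H^0(\rp^1)$ is $\pm 2$, and then finish by the dimension count you give for $[\mathrm{pt}]_{\rp^1}$.

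The one difference is that you invoke Proposition~\ref{prop:conjugation-main_prop} to compute the sign of $\rho_*$, whereas the paper does not. This step is correct but unnecessary here: once you know $\mathcal M\cong S^1$ is \emph{connected} and that the local systems on both components are trivial (Lemma~\ref{lemma:case4-line_bundle}), $\mathcal M$ carries a global orientation and the pushforward under any two-fold cover $S^1\to\rp^1$ is automatically $\pm 2[\rp^1]$, independent of any involution. Your worry that $\Det(\rho_*)=-1$ could force $ev_{+\infty*}[\mathcal M]=0$ is therefore moot in this one-dimensional situation (and Proposition~\ref{prop:conjugation-main_prop_2} would not apply anyway, since its hypothesis requires virtual dimension $0$). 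The paper's argument is slightly shorter for this reason; your version trades that for a consistency check, since the antipodal map on an oriented $S^1$ is orientation-preserving and your computation confirms $+1$.
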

\begin{proof}
The $E_2$ term of the spectral sequence has three non-trivial terms: $H^1(\rp^1)$, $H^0(\rp^1)$, and $H^0(\rp^0).$
The maps that can possibly be non-zero are $$H^0(\rp^1)\to H^1(\rp^1)$$ and $$H^0(\rp^0)\to H^0(\rp^1).$$
Actually, the map $H^0(\rp^1)\to H^1(\rp^1)$ is $0$ in $E_2$ because it is defined using strips that go from $\rp^1$ to $\rp^1$, and these strips do not have lowest energy.
Therefore the only map to consider is the map $H^0(\rp^0)\to H^0(\rp^1)$; this map is defined (in $E_2$) using the strips described above.
These strips are regular; let $\mathcal M$ denote the moduli space of these strips, and note that $\mathcal M\cong S^1$.
The map $ev_{+\infty}:\mathcal M\to\rp^1$ is the double cover map $\mathcal M\cong S^1\to \rp^1$.
Therefore if $[p]\in\rp^0$ is the fundamental class, $$ev_{+\infty*}([p]\times_{ev_{-\infty}}\mathcal M)=\pm2[\rp^1].$$
Passing to the $E_3$ term by taking cohomology, it follows that $E_3$ contains a single non-zero group, namely $H^1(\rp^1)$.
The theorem then follows from the fact that there are no non-zero maps $H^1(\rp^1)\to H^1(\rp^1)$, for dimension reasons.
\end{proof}

\subsection{The case $r=3$}
In this case, strips that go from $\rp^1$ to $\rp^1$ and from $\rp^1$ to $\rp^0$ need to be considered.
The strips from $\rp^1$ to $\rp^0$ have Maslov index 2.
The dimension of the moduli space of these strips is $1$.
The lowest energy strips are easily seen to be of the form 
\begin{displaymath}
u(z)=[a_0:z:-z:a_3:a_4]
\end{displaymath}
where $a_0^5+a_3^5+a_4^5=0$.
If $\mathcal M$ denotes the moduli space of these strips, then $\mathcal M\cong S^1$.
Moreover, conjugation is rotation of $S^1$ by $\pi$, because
\begin{eqnarray*}
\tilde u(z)&=&g\overline{u(\sigma \bar z)}=g[a_0:\sigma^9z:-\sigma^9z:a_3:a_4]=[a_0:-z:z:a_3:a_4]\\&=&[-a_0:z:-z:-a_3:-a_4].
\end{eqnarray*}
These strips are regular:
In the sequence $0\to u^*\mathcal O\to u^*\mathcal O(1)^{\oplus 5}\to u^*T_{\cp^4}\to 0$, the second map is
\begin{displaymath}
\left[
\begin{array}{ccccc}
\frac{a_3}{a_0} & 0 & 0 & -1 & 0\\
\frac{a_4}{a_0} & 0 & 0 & 0 & -1\\
0 & 1 & 1 & 0 & 0\\
\frac{1}{a_0}Z_0 & -Z_1 & 0 & 0 & 0\\
\end{array}
\right],
\end{displaymath}
so $u^*T_{\cp^4}\cong \mathcal O(1) \oplus \mathcal O(1)\oplus\mathcal O(1)\oplus\mathcal O(2)$.
In the sequence $0\to u^*T_X\to u^*T_{\cp^4}\to u^*N\to 0$ the second map is
\begin{displaymath}
\left[
\begin{array}{cccc}
-5a_3^4Z_1^4 & -5a_4^4Z_1^4 & 5Z_0^4 & 0
\end{array}
\right]
\end{displaymath}
and the first map is
\begin{displaymath}
\left[
\begin{array}{ccc}
0 & -a_4^4 & Z_0^4\\
0 & a_3^4 & 0\\
0 & 0 & a_3^4Z_1^4\\
1 & 0 & 0
\end{array}
\right],
\end{displaymath}
so $u^*T_X\cong \mathcal O(2)\oplus\mathcal O(1)\oplus\mathcal O(-3)$.
Therefore
\begin{displaymath}
u^*T_X^*\cong \mathcal O(-2)\oplus\mathcal O(-1)\oplus\mathcal O(3).
\end{displaymath}
The holomorphic sections that vanish at $0$ and $\infty$ are $(0,0,Z_0Z_1^2)$ and $(0,0,Z_0^2Z_1)$.
$(0,0,Z_0^iZ_1^{3-i})$ lifts to $(0,0,\frac{Z_0^iZ_1^{3-i}}{a_3^4Z_1^4},0)$ which pushes forward to 
$
(0,\frac{Z_0^{i}Z_1^{3-i}}{a_3^4Z_1^4},\frac{Z_0^{i}Z_1^{3-i}}{a_3^4Z_1^4},0,0).
$
$\lambda_u=1$ so the $\z_5$ action is $(0,0,Z_0^iZ_1^{3-i})\mapsto\gamma^{i+1}(0,0,Z_0^iZ_1^{3-i})$.
Since $i=1$ or $2$ the fixed subspace is empty and the cokernel is $0$.

\begin{theorem}\label{thm:case4-main_thm_3}
\begin{displaymath}
HF^{*-2}(L,gL;\Lambda_{nov})\cong H^0(\rp^1;\Lambda_{nov}).
\end{displaymath}
That is, $HF$ is $0$ in all degrees except degree $-2$, where it has rank $1$.
\end{theorem}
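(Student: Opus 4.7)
The plan is to show that the spectral sequence of Theorem \ref{thm:floer_cohomology-spectral_sequence}, set up with $\lambda_0=E_0/10$, collapses after $\delta_2$ to a single rank-one group sitting in Floer degree $-2$. To set the stage, I would first compute the absolute indices. Take $\theta_L=0$ and $\theta_{gL}=12/5$, the lift of $\gamma^{12}=\gamma^{2(r+s+t+u)}$ dictated by Definition \ref{dfn:real_lagrangians-grading_convention}; using the angles $1/5$ at $\rp^1$ and $6/5$ at $\rp^0$ supplied by Lemma \ref{lemma:real_lagrangians-angles}, Lemma \ref{lemma:gradings-absolute_formula} then gives $\tilde\mu(L,gL;\rp^1)=-2$ and $\tilde\mu(L,gL;\rp^0)=0$. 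Since by Lemma \ref{lemma:case4-line_bundle} the local system on $\rp^1$ is trivial, the three non-zero terms on the $E_2$ page are
\[
E_2^{-2}=H^0(\rp^1),\qquad E_2^{-1}=H^1(\rp^1),\qquad E_2^0=H^0(\rp^0),
\]
each of rank one over $\qq$.

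The dimension analysis at the start of this section shows that the only candidate $\delta_2$ differentials are $H^0(\rp^1)\to H^1(\rp^1)$, coming from $\mu=0$ strips that stay in $\rp^1$, and $H^1(\rp^1)\to H^0(\rp^0)$, coming from $\mu=2$ strips from $\rp^1$ to $\rp^0$. The $\rp^1\to\rp^1$ strips have energy $E_0/2=5\lambda_0$ (Section \ref{section:case4-rp1_strips}), so they contribute only at higher pages; the entire content of $\delta_2$ therefore reduces to evaluating $n_{0,0}$ on a generator $[p]\in H^1(\rp^1)$ via the moduli space $\mathcal M$ of strips of the form $[a_0:z:-z:a_3:a_4]$ with $a_0^5+a_3^5+a_4^5=0$. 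Modding out by the $\rr$-action, which rescales $(a_0,a_3,a_4)$ by positive reals, identifies $\mathcal M$ with the double cover inside $S^2$ of the real Fermat quintic $\{X_0^5+X_3^5+X_4^5=0\}\subset\rp^2$; since this Fermat curve represents the generator of $H_1(\rp^2;\z_2)$, the double cover is connected and $\mathcal M\cong S^1$. The evaluation $ev_{-\infty}:\mathcal M\to\rp^1\subset L\cap gL$ is then the corresponding degree-two covering, and $ev_{+\infty}$ is constant on the single point of $\rp^0$. Regularity of these strips is already established in the excerpt.

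The count then reduces to determining whether the two preimages of $p$ under $ev_{-\infty}$, which are interchanged by the conjugation involution $\rho_*$, add or cancel. Proposition \ref{prop:conjugation-main_prop} is the tool for this: with $\mu(u)=2$, eigenvalues $\{1,\gamma^3,\gamma^3\}$ at $p\in\rp^1$ giving $a_p''+b_p''+c_p''=2$, and eigenvalues $\{\gamma^2,\gamma^2,\gamma^2\}$ at $q\in\rp^0$ giving $a_q''+b_q''+c_q''=0$, the conjugation sign is $(-1)^{\tfrac{1}{2}\cdot 2+\tfrac{3}{2}(0-2)}=(-1)^{-2}=+1$. Thus $\rho_*$ preserves orientation, the two preimages contribute with equal signs, and $n_{0,0}([p])=\pm 2\,[\rp^0]$, a $\qq$-basis of $H^0(\rp^0)$. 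Therefore $\delta_2:H^1(\rp^1)\to H^0(\rp^0)$ is an isomorphism, and both $E_2^{-1}$ and $E_2^0$ die on passage to $E_3$.

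To finish, any subsequent $\delta_r$ with $r\geq 3$ starting at the surviving class $E_\infty^{-2}=H^0(\rp^1)$ must land in a group that has already been killed, so the spectral sequence degenerates and yields the claimed identification $HF^{-2}(L,gL;\Lambda_{nov})\cong H^0(\rp^1;\Lambda_{nov})$ with all other Floer degrees zero. The genuinely non-trivial step in this argument is the sign computation via Proposition \ref{prop:conjugation-main_prop}: had it come out to $-1$ rather than $+1$, Proposition \ref{prop:conjugation-main_prop_2} would force $\delta_2=0$ and leave additional rank in Floer cohomology, as happens in Theorem \ref{thm:case4-main_thm_1}.
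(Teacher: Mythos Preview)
Your proof is correct and follows essentially the same approach as the paper's: identify the three rank-one groups on $E_2$, observe that the $\rp^1\to\rp^1$ strips have higher energy so that $\delta_2$ is governed entirely by the regular moduli space $\mathcal M\cong S^1$ of degree-one strips from $\rp^1$ to $\rp^0$, compute the conjugation sign as $+1$ via Proposition~\ref{prop:conjugation-main_prop} so that the two preimages under the double cover $ev_{-\infty}$ add to $\pm 2$, and conclude that only $H^0(\rp^1)$ survives to $E_\infty$. You supply a bit more detail than the paper does---the explicit Floer gradings and the connectedness of $\mathcal M$ via the $\z_2$-homology class of the real Fermat quintic---while the paper additionally notes that conjugation acting as rotation by $\pi$ on $\mathcal M\cong S^1$ gives a geometric reason for orientation preservation.
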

\begin{proof}
The $E_2$ term of the spectral sequence has three non-trivial terms: $H^1(\rp^1)$, $H^0(\rp^1)$, and $H^0(\rp^0)$.
The maps that can possibly be non-zero are $H^1(\rp^1)\to H^0(\rp^1)$ and $H^1(\rp^1)\to H^0(\rp^0)$.
Actually, the map $H^1(\rp^1)\to H^0(\rp^1)$ is $0$ in $E_2$ because it is defined using strips that go from $\rp^1$ to $\rp^1$, and these strips do not have lowest energy.
Therefore the only map to consider is the map $H^1(\rp^1)\to H^0(\rp^0)$.
This map is defined using the strips described above.
The map $ev_{-\infty}:\mathcal M\to\rp^1$ is the double cover map $\mathcal M\cong S^1\to \rp^1$.
Therefore if $p\in\rp^1$ is any point then $ev_{-\infty}^{-1}(p)$ consists of a conjugate pair of strips.
Conjugation preserves orientation because
\begin{displaymath}
(-1)^{\frac{1}{2}\mu(u)+\frac{3}{2}(a_q''+b_q''+c_q''-a_p''-b_p''-c_p'')}=(-1)^{1+\frac{3}{2}(-2)}=+1.
\end{displaymath}
(This can also be seen because conjugation is rotation by $\pi$.)
The strips are regular, so $n_{0,0}([p])=\pm 2[\rp^0]$.
Therefore, passing to the $E_3$ term by taking homology, it follows that $E_3$ contains a single non-zero group, namely $H^0(\rp^1)$.
The theorem then follows from the fact that in the spectral sequence there are no non-zero maps $H^0(\rp^1)\to H^0(\rp^1)$, for dimension reasons.
\end{proof}

\subsection{The case $r=4$}
In this case, strips from $\rp^1$ to $\rp^1$ and $\rp^0$ to $\rp^1$ need to be considered.
The strips from $\rp^0$ to $\rp^1$ have Maslov index $0$.
The $E_2$ page of the spectral sequence has three non-zero terms, namely $H^1(\rp^1)$, $H^0(\rp^1)$, and $H^0(\rp^0)$.
Moreover, the possibly non-zero maps are
\begin{displaymath}
H^1(\rp^1)\to H^0(\rp^1)
\end{displaymath}
and
\begin{displaymath}
H^0(\rp^0)\to H^1(\rp^1).
\end{displaymath}
By Proposition \ref{prop:conjugation-main_prop}, the conjugation map $\rho_*:\mathcal M(L,gL:\rp^0,\rp^1)\to\mathcal M(L,gL:\rp^0,\rp^1)$ changes orientation by
\begin{displaymath}
(-1)^{\frac{0}{2}+\frac{3}{2}(2r''-3(5-r)'')}=-1.
\end{displaymath}
Since the virtual dimension of $\mathcal M(L,gL:\rp^0,\rp^1)$ is $0$, Proposition \ref{prop:conjugation-main_prop_2} implies that the map $H^0(\rp^0)\to H^1(\rp^1)$ is zero (on all pages of the spectral sequence).
This proves
\begin{theorem}\label{thm:case4-main_thm_4}
$HF(L,gL;\Lambda_{nov})$ has a subspace isomorphic to $H^0(\rp^0;\Lambda_{nov})$.
In particular, the rank of $HF(L,gL;\Lambda_{nov})$ is at least 1.
\end{theorem}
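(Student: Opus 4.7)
The plan is to apply the spectral sequence of Theorem \ref{thm:floer_cohomology-spectral_sequence}, identify its $E_2$ page in this case, and then use the conjugation trick (Propositions \ref{prop:conjugation-main_prop} and \ref{prop:conjugation-main_prop_2}) to show that the summand $H^0(\rp^0)$ is never hit by a differential and never sent to anything nonzero. Since the filtration on the Novikov ring is by energy, differentials on every page $E_r$ are counts of strips with bottom boundary on $L$ and top on $gL$; the conjugation vanishing statement is insensitive to energy, so an argument that kills one page actually kills the corresponding component of $n_{0,0}$ at the chain level.

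First I would write out the $E_2$ page. By Lemma \ref{lemma:case4-line_bundle} the local system $\Det(T\rp^1)\otimes\Theta^-_{\rp^1}$ is trivial, and the local system on the point $\rp^0$ is trivial for dimensional reasons, so the three nonzero terms are $H^0(\rp^1)$, $H^1(\rp^1)$, and $H^0(\rp^0)$, graded by Lemma \ref{lemma:gradings-absolute_formula} with shifts $\tilde\mu(L,gL;\rp^1)$ and $\tilde\mu(L,gL;\rp^0)$ (which one computes from Lemma \ref{lemma:real_lagrangians-angles} and Definition \ref{dfn:real_lagrangians-grading_convention} to be equal in this case, so $H^0(\rp^1)$ and $H^0(\rp^0)$ sit in the same total degree and $H^1(\rp^1)$ is one degree higher). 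A dimension count using Proposition \ref{prop:moduli_spaces-dimension}, together with Lemma \ref{lemma:gradings-formula} and Lemma \ref{lemma:real_lagrangians-angles} (which give $\mu(u)=0$ for strips in either direction between $\rp^1$ and $\rp^0$ when $r=4$), leaves only two candidate nonzero differentials: $H^1(\rp^1)\to H^0(\rp^1)$ from $\rp^1\to \rp^1$ strips, and $H^0(\rp^0)\to H^1(\rp^1)$ from $\rp^0\to\rp^1$ strips. No differential can land in $H^0(\rp^0)$ at any page, because there is simply nothing of the appropriate degree to serve as source.

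Next I would kill the differential $H^0(\rp^0)\to H^1(\rp^1)$. The virtual dimension of $\mathcal M(L,gL:\rp^0,\rp^1:\beta)$ is $0$ by the index formula. By Proposition \ref{prop:conjugation-main_prop}, the conjugation involution $\rho_*$ on this moduli space multiplies orientation by
\[
(-1)^{\tfrac{1}{2}\mu(u)+\tfrac{3}{2}(a_q''+b_q''+c_q''-a_p''-b_p''-c_p'')},
\]
which, using the eigenvalues $\gamma^{5-r}$ (at $p\in\rp^0$) and $\gamma^r,\gamma^r,1$ (at $q\in\rp^1$) supplied by the proof of Lemma \ref{lemma:real_lagrangians-angles}, evaluates to $(-1)^{3(r''-(5-r)'')}=-1$ for $r=4$. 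Proposition \ref{prop:conjugation-main_prop_2} then forces the total signed count $n_{0,0}([\rp^0])$ to vanish for every fixed energy, hence at the chain level. Consequently the differential out of $H^0(\rp^0)$ is zero on every page of the spectral sequence.

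Putting it all together, $H^0(\rp^0)$ is a permanent cycle and is never a boundary, so it survives to $E_\infty$, producing a subspace of $HF(L,gL;\Lambda_{nov})$ isomorphic to $H^0(\rp^0;\Lambda_{nov})$, whose rank is $1$. The main (only) nontrivial step is the conjugation argument; the rest is dimension bookkeeping. The one subtlety to handle with care is that one really needs the vanishing of $n_{0,0}([\rp^0])$ at the chain level (not just on $E_2$), which is exactly what Proposition \ref{prop:conjugation-main_prop_2} delivers once the sign is computed.
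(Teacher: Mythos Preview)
Your proposal is correct and follows essentially the same approach as the paper: identify the $E_2$ page, note that nothing can map into $H^0(\rp^0)$ by degree reasons, and kill the differential $H^0(\rp^0)\to H^1(\rp^1)$ using Propositions \ref{prop:conjugation-main_prop} and \ref{prop:conjugation-main_prop_2}. One small slip: with the eigenvalues you quote ($\gamma^{5-r}$ three times at $p$ and $\gamma^r,\gamma^r,1$ at $q$) the exponent in the sign should be $\tfrac{3}{2}(2r''-3(5-r)'')$ rather than $3(r''-(5-r)'')$, though both evaluate to $3$ at $r=4$ so the conclusion is unaffected.
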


To complete the calculation of the Floer cohomology, the strips from $\rp^1$ to $\rp^1$ need to be considered.
As in the case for $r=1$, it turns out that these strips are not regular, and hence Kuranishi structures need to be used.
This is left to further research.

\section{Case (5): $L\cap gL\cong \rp^0$}
In this case, $L\cap gL$ consists of a single point.

  \begin{theorem}\label{thm:case5-main_thm}
  $$HF^*(L,gL;\Lambda_{nov})\cong H^{*+r''+s''+t''}(\rp^0;\Lambda_{nov}).$$
That is, the Floer cohomology is isomorphic to the singular cohomology of $\rp^0\cong L\cap gL$.
  \end{theorem}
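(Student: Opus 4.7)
The plan is to observe that, because $L\cap gL$ consists of a single point, the $E_2$ page of the spectral sequence of Theorem \ref{thm:floer_cohomology-spectral_sequence} is concentrated in a single cohomological degree, so all differentials must vanish for trivial grading reasons. Concretely, $R_h=\rp^0$ is the only component and $H^j(\rp^0;\Det(T\rp^0)\otimes\Theta^-_{\rp^0})$ is $\qq$ when $j=0$ and zero otherwise, so the only nonzero column of the $E_2$ page sits in the cohomological degree $k=\tilde\mu(L,gL;\rp^0)$. Since the coboundary $\delta_r$ raises total cohomological degree by $1$, its target group is always zero; hence $E_2=E_\infty$ and, after tensoring with $\Lambda_{nov}$, we obtain a graded isomorphism $HF^{*}(L,gL;\Lambda_{nov})\cong H^{*-\tilde\mu(L,gL;\rp^0)}(\rp^0;\Lambda_{nov})$.

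What remains is to compute the absolute index $\tilde\mu(L,gL;\rp^0)$ and to show it equals $-(r''+s''+t'')$. For this I would invoke Lemma \ref{lemma:gradings-absolute_formula}, which gives
\begin{displaymath}
\tilde\mu(L,gL;\rp^0)=\theta_L-\theta_{gL}+2\aangle{L}{gL}{\rp^0}.
\end{displaymath}
By Definition \ref{dfn:real_lagrangians-grading_convention} one has $\theta_L=0$ and $\theta_{gL}$ determined by $e^{2\pi i\theta_{gL}}=\gamma^{2(r+s+t)}$, while Lemma \ref{lemma:real_lagrangians-angles} gives $\aangle{L}{gL}{\rp^0}=(r'+s'+t')/5$. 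Using the relations $r=r'+\tfrac{5}{2}r''$, etc., a short direct computation yields $\tilde\mu(L,gL;\rp^0)=-(r''+s''+t'')$, which is an integer as expected.

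As a sanity check one can note that the differentials must vanish not only at the $E_2$ page but for direct geometric reasons as well: by Lemma \ref{lemma:gradings-formula} every holomorphic strip $u$ from $\rp^0$ to itself has $\mu(u)=0$, so by Proposition \ref{prop:moduli_spaces-dimension} the corresponding moduli space has negative virtual dimension and hence contributes nothing to $n_{0,0}$. This geometric argument is redundant given the spectral-sequence degree argument, but it is reassuring and requires none of the heavier machinery of Part II.

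I do not expect any genuine obstacle in this case: the only substantive input is the absolute-index computation, which is purely linear-algebraic once the formulas from Section \ref{section:gradings-bott_morse} and Lemma \ref{lemma:real_lagrangians-angles} are in hand. Thus the proof proposal is simply to combine the spectral sequence with the index calculation and conclude.
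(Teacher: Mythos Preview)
Your proposal is correct and follows essentially the same approach as the paper's proof. The paper's argument is extremely terse: it notes that since $L\cap gL$ has a single component every strip has Maslov index $0$, that $\rp^0$ has no nonzero cohomology in two consecutive degrees, and then refers to the degree argument of Theorem \ref{thm:case1-main_thm}. Your version spells out the spectral-sequence reasoning more carefully and, unlike the paper's proof, actually carries out the absolute-index computation $\tilde\mu(L,gL;\rp^0)=-(r''+s''+t'')$ needed for the grading shift in the statement; this computation is entirely parallel to the one the paper does in the proof of Theorem \ref{thm:case2-main_thm}.
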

\begin{proof}
First, note that $L\cap gL$ consists of a single component, hence the Maslov index of any strip is $0$.
Second, note that $L\cap gL$ does not have non-zero singular cohomology in any two consecutive degrees.
With these two facts in hand, the proof is exactly the same as the proof of Theorem \ref{thm:case1-main_thm}.
\end{proof}

\section{Case (6): $L\cap gL=\rp^0\amalg\rp^0$}
In this case $g=g_i^rg_j^rg_k^t$.
Let $R_p$ be the component where the $i,j,k$ coordinates are $0$, and let $R_q$ be the other component.
Without loss of generality, assume $i=1,j=2,k=3$, so
\begin{eqnarray*}
R_p&=&\sett{[1:0:0:0:-1]},\\
R_q&=&\sett{[0:1:-1:0:0]}.
\end{eqnarray*}
This case breaks down into several subcases:
First, there are some cases where the Floer cohomology can be calculated purely from degree considerations.
Second, there are some cases where the Floer differential can be shown to be zero because the conjugation involution reverses sign.
Finally, there are the remaining cases, where the moduli spaces need to be calculated and analyzed.
Some of these final cases can be done, and some cannot.

The gradings do not conform to a nice formula in this case, so they will be ignored.
(They can, however, still be computed in the same way as before.)
\subsection{Degree considerations}

\begin{lemma}\label{lemma:case6-maslov_index}
If $u$ is a strip from $R_p$ to $R_q$ then the Maslov index of $u$ is
\begin{equation}
\begin{array}{ccccc}
& t=1 & t=2 & t=3 & t=4 \\
r=1 & * & 0 & 1 & 0 \\
r=2 & -1 & * & -1 & -1 \\
r=3 & 1 & 1 & * & 1\\
r=4 & 0 & -1 & 0 & *
\end{array}.
\end{equation}
If $u$ goes from $R_q$ to $R_p$ then the Maslov index is $(-1)$ times the entry listed in the chart.
\end{lemma}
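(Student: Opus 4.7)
\medskip

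\noindent\textbf{Proof proposal.} The plan is to reduce the computation entirely to Lemma \ref{lemma:gradings-formula} and Lemma \ref{lemma:real_lagrangians-angles}, and then verify the sixteen entries of the table by arithmetic. Since $L$ and $gL$ are graded Lagrangians of constant phase, Lemma \ref{lemma:gradings-formula} gives
\begin{displaymath}
\mu(u)=2\bigl(\aangle{L}{gL}{R_q}-\aangle{L}{gL}{R_p}\bigr)
\end{displaymath}
for any strip $u$ from $R_p$ to $R_q$, and the opposite sign for strips going the other direction. So the task reduces to inserting the values of the two angles into this formula.

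Next I would read off the two angles directly from case (6) of Lemma \ref{lemma:real_lagrangians-angles}. Specializing to $g=g_1^rg_2^rg_3^t$ (so $i=1,j=2,k=3$), the component $R_p=\{[1:0:0:0:-1]\}$ where the first three homogeneous coordinates vanish has angle $\aangle{L}{gL}{R_p}=(2r'+t')/5$, and the other component $R_q=\{[0:1:-1:0:0]\}$ has angle $\aangle{L}{gL}{R_q}=(2(5-r)'+(t-r)')/5$, where the prime is defined by $k'=k$ for $k\in\{0,1,2\}$ and $k'=k-5/2$ for $k\in\{3,4\}$ (extended to arbitrary integers by first reducing mod $5$). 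Plugging in gives
\begin{displaymath}
\mu(u)=\tfrac{2}{5}\bigl(2(5-r)'+(t-r)'-2r'-t'\bigr).
\end{displaymath}

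The remainder is pure bookkeeping: for each of the twelve pairs $(r,t)$ with $1\leq r,t\leq 4$ and $r\neq t$, compute the four primed quantities $r'$, $(5-r)'$, $t'$, $(t-r)'$ (reducing $t-r$ mod $5$ to land in $\{1,2,3,4\}$ first), substitute, and check agreement with the table. For instance at $(r,t)=(1,3)$ one has $r'=1$, $(5-r)'=4'=3/2$, $t'=3'=1/2$, $(t-r)'=2'=2$, giving $\tfrac{2}{5}(3+2-2-\tfrac{1}{2})=1$; at $(r,t)=(2,4)$ one has $r'=2$, $(5-r)'=3'=1/2$, $t'=4'=3/2$, $(t-r)'=2'=2$, giving $\tfrac{2}{5}(1+2-4-\tfrac{3}{2})=-1$; and so on through all entries. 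The final statement about strips from $R_q$ to $R_p$ is immediate from swapping the roles of the endpoints in the formula from Lemma \ref{lemma:gradings-formula}.

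There is no real obstacle here: the hardest part is merely keeping track of the conventions $k\mapsto k'$ correctly when $t-r$ is negative, which is handled by reducing modulo $5$ before applying the prime. Everything else is a direct application of results proved in Part II.
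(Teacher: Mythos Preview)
Your proposal is correct and follows exactly the approach the paper takes: the paper's proof consists of the single sentence ``This follows from Lemmas \ref{lemma:gradings-formula} and \ref{lemma:real_lagrangians-angles},'' and you have simply unpacked that citation into the explicit formula $\mu(u)=\tfrac{2}{5}\bigl(2(5-r)'+(t-r)'-2r'-t'\bigr)$ and verified entries. One tiny wording slip: for $R_p=[1:0:0:0:-1]$ it is the coordinates $X_1,X_2,X_3$ (those indexed by $i,j,k$) that vanish, not literally ``the first three,'' but your computation uses the correct point and angle.
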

\begin{proof}
This follows from Lemmas \ref{lemma:gradings-formula} and \ref{lemma:real_lagrangians-angles}.
\end{proof}

Arguing as before, this lemma allows the calculation of some Floer cohomologies from degree considerations:
\begin{theorem}\label{thm:case6-main_thm_1}
If $r=1,t=2$ or $r=1,t=4$ or $r=4,t=1$ or $r=4,t=3$ then
$$
HF(L,gL;\Lambda_{nov})\cong H(\rp^0\amalg\rp^0;\Lambda_{nov}).
$$
\end{theorem}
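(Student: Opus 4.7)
The plan is to show that in these four cases the Floer differential $n_{0,0}$ vanishes for purely cohomological grading reasons, from which the theorem follows at once. Since $R_p$ and $R_q$ are both single points, each is simply connected, so the local systems $\Det(TR_h)\otimes\Theta_{R_h}^-$ are automatically trivial, and $C(R_h)\cong\Lambda_{0,nov}$ is concentrated in the single cohomological degree $\tilde\mu(L,gL;R_h)$ by the grading convention (\ref{eq:a_infinity-grading_2}). Thus $CF(L,gL)=C(R_p)\oplus C(R_q)$ has total rank $2$ over $\Lambda_{0,nov}$ and is supported in at most two cohomological degrees.

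The first step is to verify that these two degrees actually coincide. By Lemma~\ref{lemma:real_lagrangians-angles}(6) the angles at the two intersection points are $\aangle{L}{gL}{R_p}=(2r'+t')/5$ and $\aangle{L}{gL}{R_q}=(2(5-r)'+(t-r)')/5$, and a direct arithmetic check gives both equal to $4/5$ for $(r,t)\in\{(1,2),(4,1)\}$ and both equal to $7/10$ for $(r,t)\in\{(1,4),(4,3)\}$. By Lemma~\ref{lemma:gradings-absolute_formula}, $\tilde\mu(L,gL;R_h)=\theta_L-\theta_{gL}+2\aangle{L}{gL}{R_h}$, and the term $\theta_L-\theta_{gL}$ is independent of the component, so the angle coincidence upgrades to $\tilde\mu(L,gL;R_p)=\tilde\mu(L,gL;R_q)$. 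Equivalently, by Lemma~\ref{lemma:gradings-formula}, every strip between the two components carries Maslov index zero.

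Writing $d$ for the common cohomological degree, one has $CF^k(L,gL)=0$ unless $k=d$, while $n_{0,0}\colon CF^k\to CF^{k+1}$ raises cohomological degree by one. Hence $n_{0,0}\equiv 0$ tautologically, so $HF(L,gL;\Lambda_{0,nov})=CF(L,gL)=\Lambda_{0,nov}^{\oplus 2}$, and tensoring with $\Lambda_{nov}$ yields the claimed isomorphism with $H(\rp^0\amalg\rp^0;\Lambda_{nov})$. The only genuine work is the arithmetic angle verification in the four listed cases; no analytic or moduli-theoretic input is needed. This is precisely why the theorem sits under the heading ``degree considerations,'' in contrast with the remaining cases of $L\cap gL\cong\rp^0\amalg\rp^0$ where strips of nonzero Maslov index force an actual count.
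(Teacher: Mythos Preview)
Your argument is correct and is essentially the paper's own proof: the paper simply cites the precomputed Maslov index table (Lemma~\ref{lemma:case6-maslov_index}) to observe that all strips have Maslov index $0$ in these four cases, whence ``there are no Maslov index $1$ strips'' and the spectral sequence degenerates. Your version unwinds one level further by verifying directly from Lemma~\ref{lemma:real_lagrangians-angles} that the angles at $R_p$ and $R_q$ coincide, hence the absolute indices coincide, hence $n_{0,0}$ vanishes for degree reasons---but since $\mu(u)=\tilde\mu(R_{h'})-\tilde\mu(R_h)$ by Lemma~\ref{lemma:gradings-formula}, ``Maslov index $0$'' and ``same absolute degree'' are literally the same statement for point components, so the two arguments are identical.
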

\begin{proof}
The Floer coboundary operator counts Maslov index 1 strips, and in these cases there are none.
Therefore the spectral sequence degenerates at $E_2$ and the Floer cohomology is isomorphic to the singular cohomology.
  \end{proof}

\subsection{Involution considerations}

The cases where the conjugation involution reverses sign are also easy to calculate.
\begin{theorem}\label{thm:case6-main_thm_2}
If $r=2,t=3$ or $r=3,t=1$ or $r=2,t=4$ or $r=3,t=2$ then
$$
HF(L,gL;\Lambda_{nov})\cong H(\rp^0\amalg\rp^0;\Lambda_{nov}).
$$
\end{theorem}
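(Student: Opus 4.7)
The strategy is to show that in each of the four listed cases the Floer differential $n_{0,0}$ vanishes by means of the sign--reversing involution $\rho_*$ of Section \ref{section:conjugation-orientation}. Since $L\cap gL=R_p\amalg R_q$ is zero--dimensional and the local systems $\Det(TR_h)\otimes \Theta_{R_h}^-$ over a point are automatically trivial, the $E_2$ page of the spectral sequence of Theorem \ref{thm:floer_cohomology-spectral_sequence} is simply $H(\rp^0\amalg\rp^0;\Lambda_{nov})$, and only strips of Maslov index $1$ can contribute to any differential. Strips from $R_p$ to $R_p$ (or $R_q$ to $R_q$) have $\mu(u)=0$ by Lemma \ref{lemma:gradings-formula}, so the corresponding moduli spaces have virtual dimension $-1$ and contribute nothing generically. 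By Lemma \ref{lemma:case6-maslov_index}, in each of the four cases there is precisely one direction in which $\mu(u)=+1$: namely $R_q\to R_p$ for $(r,t)\in\{(2,3),(2,4)\}$ and $R_p\to R_q$ for $(r,t)\in\{(3,1),(3,2)\}$. It is enough to show that $\rho_*$ reverses the orientation of the corresponding moduli space $\mathcal M(L,gL:R_h,R_{h'})$.

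First I will record the eigenvalue data of $dg$ at $R_p$ and $R_q$ extracted from the proof of Lemma \ref{lemma:real_lagrangians-angles}: for $g=g_1^r g_2^r g_3^t$ the eigenvalues on $T_pX$ are $\gamma^r,\gamma^r,\gamma^t$ and on $T_qX$ they are $\gamma^{5-r},\gamma^{5-r},\gamma^{t-r\bmod 5}$. From this the quantities $a_x'',b_x'',c_x''$ are immediate, and Proposition \ref{prop:conjugation-main_prop} reduces the sign of $\Det(\rho_*)$ to the explicit expression
\[
(-1)^{\frac{1}{2}\mu(u)+\frac{3}{2}(a_q''+b_q''+c_q''-a_p''-b_p''-c_p'')}.
\]
A direct evaluation in each of the four cases (with $\mu(u)=\pm 1$ as dictated by the above direction) gives $-1$; for instance in the case $r=2,t=3$ with the strip $R_q\to R_p$, one has $(a_p''+b_p''+c_p''-a_q''-b_q''-c_q'')=1-2=-1$ and $\mu(u)=1$, so the exponent is $\tfrac{1}{2}+\tfrac{3}{2}(-1)=-1$. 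The other three cases are analogous.

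Having established the sign, I will apply Proposition \ref{prop:conjugation-main_prop_2}. Its hypotheses hold: the moduli space has virtual dimension $\dim S+\dim R_{h'}-\dim R_h+\mu(u)-1=0$, the line bundle $\Det(TR_h)\otimes\Theta_{R_h}^-$ is trivial because $R_h$ is a point, and $\rho_*$ acts with sign $-1$ as just computed. Consequently the contribution of $\mathcal M(L,gL:R_h,R_{h'}:E)$ to $n_{0,0}$ vanishes at every energy level $E$. Since this holds for the unique direction carrying Maslov index $1$ strips, $n_{0,0}$ is identically zero, so every differential $\delta_r$ in the spectral sequence vanishes and the spectral sequence degenerates at $E_2$. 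Therefore $HF(L,gL;\Lambda_{nov})\cong H(\rp^0\amalg\rp^0;\Lambda_{nov})$, as claimed.

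The main obstacle is really just the bookkeeping of signs: one must keep track of the $a_x'',b_x'',c_x''$ conventions and verify the sign computation in all four cases. Once that is done, invoking Propositions \ref{prop:conjugation-main_prop} and \ref{prop:conjugation-main_prop_2} closes the argument without any analysis of the actual moduli spaces of strips.
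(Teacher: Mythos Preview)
Your proposal is correct and follows the same approach as the paper: the paper's proof simply states that $\rho_*$ reverses orientation by Proposition \ref{prop:conjugation-main_prop} together with Lemmas \ref{lemma:real_lagrangians-angles} and \ref{lemma:case6-maslov_index}, and then invokes Proposition \ref{prop:conjugation-main_prop_2}. You have merely expanded this by explicitly isolating the unique Maslov index $1$ direction in each case and carrying out the sign arithmetic, which the paper leaves implicit.
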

\begin{proof}
The involution $\rho_*:\mathcal M(L,gL)\to\mathcal M(L,gL)$ reverses orientation in these cases by Proposition \ref{prop:conjugation-main_prop} and Lemmas \ref{lemma:real_lagrangians-angles} and \ref{lemma:case6-maslov_index}.
The theorem then follows from Proposition \ref{prop:conjugation-main_prop_2}.
\end{proof}

\subsection{The remaining cases}
The remaining possibilities to investigate are $r=1,t=3$; $r=4,t=2$; $r=2,t=1$ and $r=3,t=4$.
Actually, by symmetry it can be reduced to the cases $r=4,t=2$ and $r=2,t=1$.
For example, if $g=g_1^3g_2^3g_3^4$, then $g$ is equivalent to $g_0^2g_4^2g_3^1$, which is the case $r=2,t=1$.

\subsection{The case $r=2,t=1$}
Without loss of generalizty assume that $g=g_1^2g_2^2g_3$.
By Lemma \ref{lemma:case6-maslov_index}, the Maslov index 1 strips are the strips that go from $R_q$ to $R_p$.
If $u=[u_0:u_1:u_2:u_3:u_4]$ is such a s strip then $u_1(z)\neq0$, so by Lemma \ref{lemma:strips-polynomials} the $u_i$'s are of the form
\begin{eqnarray*}
u_0&=&zp_0(z^5),\\
u_1&=&p_1(z^5),\\
u_2&=&p_2(z^5),\\
u_3&=&z^3p_3(z^5),\\
u_4&=&zp_4(z^5).
\end{eqnarray*}
Moreover, since the strip starts in $R_q$ and ends in $R_p$ it must satisfy
\begin{itemize}
\item the constant terms of $u_1$ and $u_2$ cannot both be $0$, and
\item $\max\sett{\deg u_0,\deg u_3,\deg u_4}>\max\sett{\deg u_1,\deg u_2}$.
\end{itemize}
Therfore the lowest energy solutions must be of the form
\begin{displaymath}
u(z)=[a_0z:b_1:b_2:a_3z^3:a_4z].
\end{displaymath}
The only solutions of this form that also satisfy the quintic equation are easily seen to be (up to $\rr$ translation)
\begin{displaymath}
z\mapsto [\pm z:1:-1:0:\mp z].
\end{displaymath}
These maps are conjugates of each other.
Indeed, if $u(z)=[z:1:-1:0:-z]$ then
\begin{displaymath}
\tilde u(z)=g\tau u(\sigma\bar z)=[\sigma^{-1}z:\gamma^2:-\gamma^2:0:-\sigma^{-1}z]=[-z:1:-1:0:z].
\end{displaymath}

The next step is to calculate the cokernels of these maps.
By symmetry, it suffices to calculate the cokernel of $u$.
Consider the sequence $0\to u^*\mathcal O(1)^{\oplus 5}\to u^*T_{\cp^4}\to 0$.
The first map is given by $u$ and hence is
\begin{displaymath}
\left[
\begin{array}{c}
Z_0\\Z_1\\-Z_1\\0\\-Z_0
\end{array}
\right].
\end{displaymath}
The second map is given by the relations among these polynomials and hence is
\begin{displaymath}
\left[
\begin{array}{ccccc}
1 & 0 & 0 & 0 & 1\\
0 & 1 & 1 & 0 & 0\\
0 & 0 & 0 & 1 & 0\\
Z_1 & -Z_0 & 0 & 0 & 0
\end{array}
\right].
\end{displaymath}
Therefore $u^*T_{\cp^4}$ splits as
\begin{displaymath}
u^*T_{\cp^4}\cong \mathcal O(1)\oplus\mathcal O(1)\oplus\mathcal O(1)\oplus\mathcal O(2).
\end{displaymath}

Now consider the sequence $0\to u^*T_X\to u^*T_{\cp^4}\to N\to 0$.
If the last map is denoted $[\begin{array}{cccc}a_1 & a_2 & a_3 & a_4\end{array}]$, then the $a_i$'s satisfy
\begin{eqnarray*}
a_1+Z_1a_4 &=& 5Z_0^4,\\
a_2-Z_0a_4 &=& 5Z_1^4,\\
a_2 &=& 5Z_1^4,\\
a_3 &=& 0,\\
a_1 &=& 5Z_0^4.
\end{eqnarray*}
Therefore
\begin{displaymath}
[\begin{array}{cccc}a_1 & a_2 & a_3 & a_4\end{array}]=[\begin{array}{cccc}5Z_0^4 & 5Z_1^4 & 0 & 0\end{array}].
\end{displaymath}
The first map is given by the relations among these polynomials and hence is
\begin{displaymath}
\left[
\begin{array}{ccc}
0 & 0 & Z_1^4\\
0 & 0 & -Z_0^4\\
1 & 0 & 0\\
0 & 1 & 0
\end{array}
\right].
\end{displaymath}
Comparing degrees shows that $u^*T_X$ splits as
\begin{displaymath}
u^*T_X\cong \mathcal O(1)\oplus\mathcal O(1)\oplus\mathcal O(-3).
\end{displaymath}

The next step is to determine the $\z_5$ action on holomorphic sections of $u^*T_X^*$.
A basis for sections that vanish at $0$ and $\infty$ is $(0,0,Z_0Z_1^2),(0,0,Z_0^2Z_1)$.
The section $(0,0,Z_0^iZ_1^{3-i})$ pulls back to the section 
\begin{displaymath}
(\frac{Z_o^iZ_1^{3-i}}{Z_1^4},0,0,0)
\end{displaymath}
of $u^*T_{\cp^4}$, which then pushes forward to the section
\begin{displaymath}
(\frac{Z_0^iZ_1^{3-i}}{Z_1^4},0,0,0,\frac{Z_o^iZ_1^{3-i}}{Z_1^4})
\end{displaymath}
of $u^*\mathcal O(1)^{\oplus 5}$.
If $\lambda_uu(\gamma z)=g^2u(z)$, then $\lambda_u=\gamma^4$.
Thus, by Lemma \ref{lemma:case3-action}, the $\z_5$ action is
\begin{displaymath}
(\frac{Z_0^iZ_1^{3-i}}{Z_1^4},0,0,0,\frac{Z_o^iZ_1^{3-i}}{Z_1^4})\mapsto(\gamma^{4-i}\frac{Z_o^iZ_1^{3-i}}{Z_1^4},0,0,0,\gamma^{4-i}\frac{Z_o^iZ_1^{3-i}}{Z_1^4}).
\end{displaymath}
That is, as sections of $u^*T_X^*$, the action is
\begin{displaymath}
(0,0,Z_0^iZ_1^{3-i})\mapsto \gamma^{4-i}(0,0,Z_0^iZ_1^{3-i}).
\end{displaymath}
Hence the $\z_5$ fixed point subspace is $0$ (because $i$ is 1 or 2).

In summary:
\begin{lemma}
The lowest energy strips are
\begin{displaymath}
z\mapsto[\pm z:1:-1:0:\mp z].
\end{displaymath}
These two strips are conjugates of each other.
Moreover, the cokernels of the linearzed $\dbar$ operators are $0$.
\end{lemma}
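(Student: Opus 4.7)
The plan is to follow the template established for the earlier cokernel calculations (case (3) and parts of case (4)), combining the polynomial characterization of strips from Lemma \ref{lemma:strips-polynomials} with the cokernel description of Proposition \ref{prop:cokernel-main_prop}. First I would use Lemma \ref{lemma:case6-maslov_index} to observe that Maslov index $1$ strips must go from $R_q$ to $R_p$. Writing $u=[u_0:\cdots:u_4]$ and noting that $u_1(0)\neq 0$ or $u_2(0)\neq 0$ (since the strip starts at $R_q$), I would invoke Lemma \ref{lemma:strips-polynomials} with $j=1$ (or $j=2$) to pin down the exponents $k_i+2a_i$ of each $u_i$, producing the form $u_0=zp_0(z^5),\ u_1=p_1(z^5),\ u_2=p_2(z^5),\ u_3=z^3p_3(z^5),\ u_4=zp_4(z^5)$.

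Next I would impose the two endpoint conditions: the constants of $u_1,u_2$ cannot both vanish (so that $u(0)\in R_q$), and the maximum degree among $u_0,u_3,u_4$ must strictly exceed the maximum among $u_1,u_2$ (so that $u(\infty)\in R_p$). For the \emph{lowest energy} (degree 5 sphere, by Lemma \ref{lemma:strips-energy}), this forces $u=[a_0z:b_1:b_2:a_3z^3:a_4z]$. Substituting into the Fermat equation and matching coefficients of each power of $z$ yields a small system whose only real solutions are, up to $\rr$-translation, $u(z)=[\pm z:1:-1:0:\mp z]$. The conjugation statement is then a direct computation: applying $\tilde u(z)=g\tau u(\sigma\bar z)$ to one strip produces the other, using $\sigma^{-1}=\gamma^{-1/2}$ together with the $g$-action to absorb the phases.

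For the cokernel, the plan is to determine the splitting type of $u^*TX$ via the Euler and normal bundle sequences as in case (3). From the map $[Z_0:Z_1:-Z_1:0:-Z_0]^T$ one reads off $u^*T_{\cp^4}\cong\mathcal O(1)^{\oplus 3}\oplus \mathcal O(2)$, and then solving for the normal map $(5Z_0^4,5Z_1^4,0,0)$ yields $u^*TX\cong \mathcal O(1)\oplus \mathcal O(1)\oplus\mathcal O(-3)$, hence $u^*T_X^*\cong \mathcal O(-2)\oplus\mathcal O(-1)\oplus\mathcal O(3)$. By Proposition \ref{prop:cokernel-main_prop} the cokernel is the subspace of $H^0(\cp^1,u^*T_X^*)$ vanishing at $0$ and $\infty$ and fixed by the $\z_5$ and $\z_2$ actions. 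The vanishing condition leaves the two-dimensional space spanned by $(0,0,Z_0^iZ_1^{3-i})$ for $i=1,2$.

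The final step is to compute the $\z_5$ action explicitly. Lifting each basis element through $u^*T_{\cp^4}^*\to u^*\mathcal O(1)^{*\oplus 5}$ and applying Lemma \ref{lemma:case3-action} with the scalar $\lambda_u$ determined by $\lambda_u u(\gamma z)=g^2u(z)$ (here $\lambda_u=\gamma^4$), one finds the action sends $(0,0,Z_0^iZ_1^{3-i})\mapsto \gamma^{4-i}(0,0,Z_0^iZ_1^{3-i})$. Since $i\in\{1,2\}$, neither eigenvalue is $1$, so the fixed subspace is trivial and the cokernel vanishes. The main obstacle is bookkeeping: verifying the specific exponent in the $\z_5$-action requires carefully tracking the pullback/pushforward through the two exact sequences and the choice of $\lambda_u$, but it is a direct computation analogous to the one carried out in case (3) for $r=1,s=2$, and I expect no conceptual difficulty beyond the algebra.
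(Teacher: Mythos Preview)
Your proposal is correct and follows essentially the same approach as the paper: the polynomial form from Lemma \ref{lemma:strips-polynomials}, the endpoint constraints, the Euler and normal bundle sequences, and the $\z_5$-action computation via Lemma \ref{lemma:case3-action} with $\lambda_u=\gamma^4$ all match the paper's argument step for step. Two small slips to fix: the lowest-energy sphere here has degree $1$ (not $5$), since solving the Fermat relation forces $a_3=0$; and your dual splitting should read $u^*T_X^*\cong\mathcal O(-1)\oplus\mathcal O(-2)\oplus\mathcal O(3)$ (consistent with $u^*T_X\cong\mathcal O(1)\oplus\mathcal O(2)\oplus\mathcal O(-3)$), though only the $\mathcal O(3)$ factor matters for the cokernel and your conclusion is unaffected.
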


\begin{theorem}\label{thm:case6-main_thm_3}
$$HF(L,gL;\Lambda_{nov})=0.$$
\end{theorem}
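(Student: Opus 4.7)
The plan is to use the explicit description of the lowest-energy moduli space in the preceding lemma to show that the Floer differential on the $E_2$ page of the spectral sequence of Theorem \ref{thm:floer_cohomology-spectral_sequence} is non-zero, thereby killing both generators and forcing $HF=0$. Choose $\lambda_0=E_0/10$; then the $E_2$ page has only two generators, the fundamental classes $[R_p]$ and $[R_q]$, because $R_p$ and $R_q$ are points (so the local systems $\Det(TR_h)\otimes\Theta_{R_h}^{-}$ are trivial). The relevant piece of $\delta_2$ is the map $[R_q]\mapsto n_{0,0}([R_q])$, which at lowest order in $T$ is determined by counting the two Maslov-index-$1$ regular strips $z\mapsto[\pm z:1:-1:0:\mp z]$ exhibited in the preceding lemma.

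Next I would compute the sign of $\rho_*$ on this moduli space using Proposition \ref{prop:conjugation-main_prop}. For these strips, $p=u(-\infty,\cdot)\in R_q=\{[0:1:-1:0:0]\}$ (since in the $S_0$-picture $s=-\infty$ corresponds to $z=0$) and $q=u(+\infty,\cdot)\in R_p=\{[1:0:0:0:-1]\}$. The eigenvalues of $dg$ are $\gamma^2,\gamma^2,\gamma$ at $R_p$ (giving $a_q''+b_q''+c_q''=0$) and $\gamma^3,\gamma^3,\gamma^4$ at $R_q$ (giving $a_p''+b_p''+c_p''=3$), computed as in the proof of Lemma \ref{lemma:real_lagrangians-angles}. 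Combined with $\mu(u)=1$ from Lemma \ref{lemma:case6-maslov_index}, Proposition \ref{prop:conjugation-main_prop} yields
\begin{displaymath}
\mathrm{sign}(\Det(\rho_*)) = (-1)^{\frac{1}{2}(1)+\frac{3}{2}(0-3)} = (-1)^{-4} = +1.
\end{displaymath}

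Because $\rho_*$ preserves orientation, the two conjugate strips contribute to $n_{0,0}([R_q])$ with the same sign, each contributing $\pm 1$ since they are regular. Thus $n_{0,0}([R_q]) = \pm 2\,[R_p]\,T^{E_0/10}$ modulo higher powers of $T$, which is non-zero in $\Lambda_{nov}$. On $E_2$ this means $\delta_2:[R_q]\mapsto \pm 2\,[R_p]\,T^{E_0/10}$ is an isomorphism of rank-one $\Lambda_{nov}$-modules (up to unit), so both classes die; hence $E_3=0$, and the spectral sequence converges to $HF(L,gL;\Lambda_{nov})=0$.

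The only real obstacle to worry about would be failure of regularity of the two strips (which would require a Kuranishi perturbation and could a priori permute or cancel the two branches in subtle ways), but the cokernel calculation preceding the theorem has already verified regularity; so the count of zero-dimensional moduli is simply a signed count of two points, and the argument reduces to the elementary sign calculation above.
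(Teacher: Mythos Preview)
Your proposal is correct and follows essentially the same approach as the paper: use the lemma's two regular conjugate strips, invoke Proposition \ref{prop:conjugation-main_prop} to see they contribute with equal sign, conclude the $E_2$ differential is $\pm 2$, hence $E_3=0$. You supply the explicit eigenvalue and sign computation that the paper leaves implicit, but the structure of the argument is the same.
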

\begin{proof}
By the previous discussion, the two strips in the previous lemma determine the differential in the $E_2$ term of the spectral sequence.
The two strips are conjugates of each other, and they contribute the same sign by Proposition \ref{prop:conjugation-main_prop}, so the differential is $\pm 2$.
Either way, the cohomology is $0$, and hence the $E_3$ term of the spectral sequence is $0$.
\end{proof}

\subsection{The case $r=4,t=2$}
The first part of the calculation follows in the same manner as before.
However, it turns out that the strips are not regular so Kuranishi spaces need to be used.
Therefore, the consideration of this case is left to further research.

\appendix       
\part{Appendix }
\section{Kuranishi structures}\label{section:appendix}
This appendix contains a very brief review of Kuranishi spaces.
It is included mainly to fix notation; readers are referred to the appendix of \cite{fooo} for a more serious discussion.
\subsection{Definitions}
Let $\mathcal M$ be a compact Hausdorff topological space.
A Kuranishi chart on $\mathcal M$ consists of the data $(V,E,\Gamma,\psi,s)$, where
\begin{enumerate}
\item[(kc1)] $V$ is a smooth manifold which may have boundaries or corners,
\item[(kc2)] $E$ is a real vector space,
\item[(kc3)] $\Gamma$ is a finite group that acts smoothly and effectively on $V$, and on $E$ linearly,
\item[(kc4)] $s$ is a $\Gamma$-equivariant section of the bundle $E\times V\to V$, and
\item[(kc5)] $\psi:s^{-1}(0)/\Gamma\to\mathcal M$ is a homeomorphism onto its image.
\end{enumerate}
$s$ is called the Kuranishi map and $E$ is called the obstruction bundle.

A Kuranishi structure on $\mathcal M$ consists of a collection $\sett{(V_p,E_p,\Gamma_p,\psi_p,s_p)}_{p\in\mathcal P}$ of Kuranishi charts on $\mathcal M$ such that
\begin{enumerate}
\item[(ks1)] $\cup_{p\in\mathcal P}\psi_p(s_p^{-1})(0)/\Gamma_p)=\mathcal M$,
\item[(ks2)] the number $\dim V_p-\dim E_p$ is the same for all $p$, and
\item[(ks3)] the charts staisfy certain compatibility conditions (called coordinate changes).
\end{enumerate}
The common number $\dim V_p-\dim E_p$ is called the virtual dimension of $\mathcal M$.

A good coordinate system on $\mathcal M$ is a Kuranishi structure $\sett{(V_p,E_p,\Gamma_p,\psi_p,s_p)}_{p\in\mathcal P}$ with $\mathcal P$ finite and a partial order $<$ on $\mathcal P$ such that the following condition holds: if $q<p$ and $\Image\psi_p\cap \Image \psi_q\neq\emptyset$, then there exists 
\begin{enumerate}
\item[(gc1)] a $\Gamma_q$ invariant open subset $V_{pq}$ in $V_q$ such that $$\psi_q^{-1}(\Image \psi_p)\subset V_{pq}/\Gamma_q,$$
\item[(gc2)] an injective group homomorphism $h_{pq}:\Gamma_q\to\Gamma_p$,
\item[(gc3)] an $h_{pq}$-equivariant smooth embedding $\phi_{pq}:V_{pq}\to V_p$ such that the induced map $\underline \phi_{pq}:V_{pq}/\Gamma_q\to V_p/\Gamma_p$ is injective,
\item[(gc4)] an $h_{pq}$-equivariant embedding $\hat\phi_{pq}:E_q\times V_{pq}\to E_p\times V_p$ of vector bundles which covers $\phi_{pq}$ and satisfies
$$\hat\phi_{pq}\circ s_q=s_p\circ\phi_{pq},\,\psi_q=\psi_p\circ\underline\phi_{pq}.$$
\end{enumerate}
Moreover, if $r<q<p$ and $$\Image \psi_p\cap\Image\psi_q\cap\Image\psi_r\neq\emptyset,$$ then there exists
\begin{enumerate}
\item[(gc5)] $\gamma_{pqr}\in\Gamma_p$ such that
\begin{displaymath}
h_{pq}\circ h_{qr}=\gamma_{pqr}\cdot h_{pr}\cdot \gamma_{pqr}^{-1},\,\phi_{pq}\circ\phi_{qr}=\gamma_{pqr}\cdot\phi_{qr},\,\hat\phi_{pq}\circ\hat\phi_{qr}=\gamma_{pqr}\cdot\hat\phi_{pr}.
\end{displaymath}
\end{enumerate}
These conditions imply, in particular, that a good coordinate system satisfies the compatibility conditions (ks3).

Now suppose $\mathcal M$ is a Kuranishi space with a fixed good coordinate system.
Identify a neighborhood of $\phi_{pq}(V_{pq})$ in $V_p$ with a neighborhood of the zero section of the normal bundle $N_{V_{pq}}V_p\to V_{pq}$.
Then the differential of the Kuranishi map $s_p$ along the fiber direction defines an $h_{pq}$-equivariant bundle homomorphism
\begin{displaymath}
d_{\textrm{fiber}}s_p:N_{V_{pq}}V_p\to E_p\times V_{pq}.
\end{displaymath}
\begin{definition}
The Kuranishi space $\mathcal M$ has a tangent bundle if $d_{\textrm{fiber}}s_p$ induces a bundle isomorphism 
\begin{equation}\label{eq:appendix-tangent_bundle}
N_{V_{pq}}V_p\cong \frac{E_p\times V_{pq}}{\hat\phi_{pq}(E_q\times V_{pq})}
\end{equation}
as $\Gamma_q$-equivariant bundles on $V_{pq}$.
\end{definition}
\begin{definition}
If $\mathcal M$ is a Kuranishi space with tangent bundle, then $\mathcal M$ is orientable if, for every $p\in\mathcal P$, there exists a trivialization of 
\begin{displaymath}
\Det(E_p^*)\otimes \Det(TV_p)
\end{displaymath}
which is compatible with the isomorphism (\ref{eq:appendix-tangent_bundle}) and whose homotopy class is preserved by the $\Gamma_p$ action.
\end{definition}

\subsection{Involutions of Kuranishi spaces}\label{section:appendix-involution}
Let $\mathcal M$ be a Kuranishi space.
An automorphism of $\mathcal M$ is a homeomorphism $\phi:\mathcal M\to\mathcal M$ such that the following condition holds:
Let $p\in\mathcal M$, and let $p'=\phi(p)$. 
Then, for the Kuranishi neighborhoods $(V_p,E_p,\Gamma_p,\psi_p,s_p)$ and $(V_{p'},E_{p'},\psi_{p'},s_{p'})$ of $p$ and $p'$, there exists $\rho_p:\Gamma_p\to\Gamma_{p'}$, $\phi_p:V_p\to V_{p'}$, and $\hat\phi_p:E_p\to E_{p'}$ such that
\begin{enumerate}
\item $\rho_p$ is an isomorphism,
\item $\phi_p$ is a $\rho_p$-equivariant diffeomorphism,
\item $\hat\phi_p$ is a $\rho_p$-equivariant bundle isomorphism which covers $\phi_p$,
\item $s_{p'}\circ\phi_p=\hat\phi_p\circ s_p$,
\item $\psi_{p'}\circ\underline\phi_p=\phi\circ\psi_p$, where $\underline\phi_p:s_p^{-1}(0)/\Gamma_p\to s_{p'}^{-1}(0)/\Gamma_{p'}$ is the homeomorphism induced by $\phi_p$, and
\item additional compatibility conditions with the coordinate changes are required.
\end{enumerate}

\begin{definition}
An involution of a Kuranishi space $\mathcal M$ is a homeomorphism $\phi:\mathcal M\to\mathcal M$ such that $\phi^2=id$ and $\phi$ lifts to an automorphism of $\mathcal M$.
\end{definition}

An involution of $\mathcal M$ is the same thing as a $\z_2$ action on $\mathcal M$.
\begin{lemma}[\cite{foooasi} Lemma 7.6]
If a finite group $G$ acts on $\mathcal M$, then $\mathcal M/G$ has a Kuranishi structure.

If $\mathcal M$ has a tangent bundle and the action preserves it, then the quotient space has a tangent bundle.
If $\mathcal M$ is oriented and the action preserves the orientation, then the quotient has an orientation.
\end{lemma}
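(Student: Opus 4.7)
\bigskip

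\noindent\textbf{Proof proposal.}
The plan is to build a Kuranishi structure on $\mathcal M/G$ by combining, at each orbit, the local isotropy group of the original Kuranishi chart with the stabilizer of the orbit in $G$. Let $\pi:\mathcal M\to\mathcal M/G$ denote the quotient map, and fix a point $[p]\in\mathcal M/G$ together with a representative $p\in\mathcal M$. Let $G_p\subset G$ be the stabilizer of $p$. By hypothesis, each $g\in G_p$ lifts to an automorphism of the Kuranishi chart $(V_p,E_p,\Gamma_p,\psi_p,s_p)$; that is, there are data $(\rho_g,\phi_g,\hat\phi_g)$ as in the definition of an automorphism, with $\phi_g:V_p\to V_p$, $\hat\phi_g:E_p\to E_p$ covering $\phi_g$, and $\rho_g:\Gamma_p\to\Gamma_p$ an isomorphism. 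These data are unique only up to composition with elements of $\Gamma_p$, so $G_p$ does not act on $V_p$ on the nose; instead one gets a group extension
\begin{displaymath}
1\to \Gamma_p\to \tilde\Gamma_p\to G_p\to 1,
\end{displaymath}
where $\tilde\Gamma_p$ is the subgroup of pairs $(\phi,\hat\phi)$ of a diffeomorphism of $V_p$ and a bundle isomorphism of $E_p$ covering it that preserve $s_p$ and lift either an element of $\Gamma_p$ or an element of $G_p$ acting on $\mathcal M$.

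First I would define the Kuranishi chart on $\mathcal M/G$ at $[p]$ to be $(V_p,E_p,\tilde\Gamma_p,\tilde\psi_p,s_p)$, where $\tilde\psi_p:s_p^{-1}(0)/\tilde\Gamma_p\to\mathcal M/G$ is the composition $\pi\circ\psi_p$ after passing to the quotient by the $G_p$-action on the image of $\psi_p$. One checks that $\tilde\psi_p$ is a homeomorphism onto its image (an open neighborhood of $[p]$) using that $G$ acts by homeomorphisms and that distinct $G$-orbits through $\psi_p(s_p^{-1}(0)/\Gamma_p)$ are separated by shrinking the chart. The virtual dimension $\dim V_p-\dim E_p$ is unchanged, so condition (ks2) holds.

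Next I would upgrade the coordinate changes. Given $q<p$ with compatible charts via $(V_{pq},h_{pq},\phi_{pq},\hat\phi_{pq})$, the $G$-action is compatible with the inclusions $\phi_{pq}$ because $G$ lifts to an automorphism of the whole Kuranishi structure (condition (6) in the definition of automorphism). In particular, for any $g$ mapping the image of a point in $V_{pq}$ into the same chart family one gets intertwining relations that allow one to extend $h_{pq}$ to a homomorphism $\tilde h_{pq}:\tilde\Gamma_q\to\tilde\Gamma_p$ and to make $\phi_{pq},\hat\phi_{pq}$ equivariant for the extended groups. After shrinking $V_{pq}$ equivariantly this yields a coordinate change between the new charts, verifying (gc1)--(gc5). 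Because each $\phi_{pq}$ is left unchanged at the level of manifolds, the differential-of-Kuranishi-map isomorphism (\ref{eq:appendix-tangent_bundle}) persists, so if $\mathcal M$ has a tangent bundle and the $G$-action preserves it, then $\mathcal M/G$ does too. Similarly, a trivialization of $\Det(E_p^*)\otimes\Det(TV_p)$ that is preserved by $\Gamma_p$ and by the lifts of elements of $G_p$ (which is what ``$G$ preserves the orientation'' means) descends to a trivialization preserved by $\tilde\Gamma_p$, giving an orientation on $\mathcal M/G$.

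The main obstacle is the construction and globalization of the group extension $\tilde\Gamma_p$: the lift of $g\in G_p$ to $(\phi_g,\hat\phi_g)$ is not canonical, so one must define $\tilde\Gamma_p$ intrinsically (as above, as the group of all chart-automorphisms lying over $\Gamma_p\cup G_p$), check that it is a finite group that fits into the stated extension, and verify that the extension is functorial enough to propagate through the coordinate changes. Once this is set up cleanly, the tangent bundle and orientation statements are formal: they follow from the fact that all new data (the $V_p$, the $E_p$, the Kuranishi maps, and the normal-bundle identifications) are literally the old data, only the structure group has been enlarged.
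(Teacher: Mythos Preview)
The paper does not prove this lemma: it is simply cited from \cite{foooasi} (their Lemma 7.6) and stated without argument, so there is no proof in the paper to compare against. Your sketch is the standard construction and is essentially what one finds in the cited reference: enlarge the local isotropy group $\Gamma_p$ to an extension $\tilde\Gamma_p$ by the stabilizer $G_p$, keep the same $V_p$, $E_p$, $s_p$, and push the footprint map through the quotient $\pi$.

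One small point worth tightening in your write-up: the Kuranishi chart axiom (kc3) requires the group to act \emph{effectively} on $V_p$. Your extension $\tilde\Gamma_p$ need not act effectively a priori (some lift of a nontrivial $g\in G_p$ could act trivially on $V_p$), so strictly speaking you should pass to the quotient of $\tilde\Gamma_p$ by the kernel of its action on $V_p$ before declaring it to be the new isotropy group. This is routine but should be said. Otherwise your outline, including the remark that the tangent-bundle and orientation statements are formal once the enlarged group is in place, is on target.
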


\subsection{Multisections and the virtual fundamental class}
Let $(V,E,\Gamma,\psi,s)$ be a Kuranishi chart.
Let $S^n(E)$ denote the quotient of $E^n$ by the $S_n$ (symmetric group on $n$-letters) action that permutes coordinates.
That is, two tuples $(x_1,\ldots,x_n)$ and $(y_1,\ldots,y_n)$ of $E^n$ are equal in $S^n(E)$ if and only if there exists a permutation $\sigma\in S_n$ such that
$$x_1=y_{\sigma(1)},\ldots,x_n=y_{\sigma(n)}.$$
\begin{definition}
An $n$-multisection $s$ of $E\times V\to V$ is a $\Gamma-equivariant$ map $s:V\to S^n(E)$.
$s$ is liftable if there exists a map $\tilde s=(\tilde s_1,\ldots,\tilde s_n):V\to E^n$ whose image in $S^n(E)$ is $s$.
(The lift $\tilde s$ is not required to be $\Gamma$-equivariant.)
The $\tilde s_i$'s are called the branches of the mutlisection.
\end{definition} 

If $s$ is an $n$-multisection and $m$ is a positive integer, then an $nm$-multisection is defined by composing $s$ with the function $S^n(E)\to S^{nm}(E)$ that sends
\begin{displaymath}
[(x_1,\ldots,x_n)]\in S^n(E)\mapsto [(x_1,\ldots,x_1,\ldots,x_n,\ldots,x_n)]\in S^{nm}(E),
\end{displaymath}
where each $x_i$ is repeated $m$ times.
If $s$ is an $n$-multisection and $s'$ is an $m$-multisection, then $s$ and $s'$ are equivalent if their images as $nm$-multisections are the same.

If $s$ is liftable, then $s$ is transverse to the zero section if each of its branches is transverse to the zero section.
A family of multisections $s_\epsilon$ converges to $s$ if there exists an $n$ and a family of $n$-multisections $s_\epsilon^n$ that converges to an $n$-multisection $s^n$, and $s_\epsilon^n$ are equivalent to $s_\epsilon$ and $s^n$ is equivalent to $s$.

Now let $\mathcal M$ be a Kuranishi space with a good Kuranishi coordinate system $$\sett{(V_p,E_p,\Gamma_p,\psi_p,s_p)}_{p\in\mathcal P}.$$
A multisection $\mathfrak s=(s_p')$ on $\mathcal M$ is a collection of multisections, where $s_p'$ is defined on $(V_p,E_p,\Gamma_p,\psi_p,s_p)$, that satisfy a certain compatibility condition (they have to be equivalent on overlaps of Kuranishi charts).
It can be shown that the Kuranishi map $\sett{s_p}$ is a multisection on $\mathcal M$ (with a single branch).

The main application of multisections is the construction of the virtual fundamental class.
First, some preliminary results are needed.
\begin{theorem}[\cite{fooo} Theorem A1.23]\label{thm:appendix-multisections}
Let $\mathcal M$ be a Kuranishi space with tangent bundle. 
Then there exists a system of multisections $\mathfrak s_\epsilon=\sett{s_{p,\epsilon}'}$ that converges to $\sett{s_p}$ (the Kuranishi map) and such that $s_{\epsilon,p}'$ is transverse to the zero section for all $\epsilon>0$.
\end{theorem}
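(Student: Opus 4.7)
The plan is to construct the multisections chart by chart, working inductively along the partial order of a good coordinate system. First, I would replace the given Kuranishi structure by an equivalent good coordinate system $\sett{(V_p,E_p,\Gamma_p,\psi_p,s_p)}_{p\in\mathcal P}$ with $\mathcal P$ finite and partially ordered, together with coordinate change data $(h_{pq},\phi_{pq},\hat\phi_{pq})$ satisfying (gc1)--(gc5). This reduction is standard and allows the inductive argument to close up over a finite set of charts.

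The core difficulty, and the reason single sections will not suffice, is the $\Gamma_p$-action on $E_p$: a $\Gamma_p$-equivariant section need not admit an equivariant transverse perturbation, because $\Gamma_p$ may act non-trivially on $E_p$ with fixed loci of positive dimension. The way around this is to pass to multisections. Given a small $\Gamma_p$-invariant open $U\subset V_p$ on which we wish to perturb $s_p$, I would choose an arbitrary smooth perturbation $\tilde t:U\to E_p$ (not required to be equivariant) which, together with $s_p$, is transverse to $0$. The $n$-multisection
\begin{displaymath}
s'_{p,\epsilon}(x)=\bigl[\bigl(s_p(x)+\epsilon\cdot\gamma_1\tilde t(\gamma_1^{-1}x),\ldots,s_p(x)+\epsilon\cdot\gamma_n\tilde t(\gamma_n^{-1}x)\bigr)\bigr]\in S^n(E_p),
\end{displaymath}
where $\Gamma_p=\sett{\gamma_1,\ldots,\gamma_n}$, is then $\Gamma_p$-equivariant by construction, converges to $s_p$ as $\epsilon\to 0$, and each branch is transverse to the zero section for generic $\tilde t$ by Sard's theorem.

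Next I would organize this perturbation along the partial order. Let $p_1<p_2<\cdots<p_N$ be a linear extension of the partial order. Inductively assume transverse multisections $s'_{p_i,\epsilon}$ have been constructed for $i<k$, compatible with the coordinate changes among them. To construct $s'_{p_k,\epsilon}$, I would first \emph{pull forward} the already-constructed multisections from the charts $p_i<p_k$ via the embeddings $\hat\phi_{p_kp_i}$: the existence of a tangent bundle, expressed by the isomorphism (\ref{eq:appendix-tangent_bundle}), is precisely what ensures that transverse branches on $V_{p_kp_i}$ extend to transverse branches on a neighborhood in $V_{p_k}$ (the normal direction is absorbed by the quotient $E_{p_k}/\hat\phi_{p_kp_i}E_{p_i}$, and transversality in the base plus the fiberwise isomorphism gives transversality in the extended chart). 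Away from the image of these embeddings I would apply the symmetrization construction above on a $\Gamma_{p_k}$-invariant open set, and then patch the two using a $\Gamma_{p_k}$-invariant partition of unity applied branchwise; multiplying the number of branches by a common factor if necessary, the patched object is a well-defined multisection equivalent to each piece on overlaps.

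The main obstacle is verifying that the inductive patching genuinely yields a compatible multisection in the sense of a good coordinate system --- the compatibility (gc5) involves the cocycle elements $\gamma_{pqr}$ and requires that when three charts overlap, the multisections from two different induction routes are equivalent, not merely transverse. Here one exploits that equivalence of multisections is detected at the level of their $S^n(E)$-valued images, so passing to a common refinement (replacing $n$-multisections by $nm$-multisections) and carefully tracking the $\Gamma$-actions through the equivariant embeddings $\hat\phi_{pq}$ gives the required compatibility. Once this is done for all $k\leq N$, the resulting system $\mathfrak s_\epsilon=\sett{s'_{p,\epsilon}}$ is transverse branch-by-branch and converges to $\sett{s_p}$ as $\epsilon\to 0$. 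The compactness of $\mathcal M$, already built into the good coordinate system, ensures that finitely many inductive steps suffice.
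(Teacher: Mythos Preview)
The paper does not prove this theorem: it is quoted verbatim from \cite{fooo} (Theorem A1.23 there) and stated in the appendix without argument, as part of the background review of Kuranishi structures. So there is no proof in the paper to compare your proposal against.

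That said, your sketch is the standard approach used in \cite{fooo}: pass to a good coordinate system, symmetrize a non-equivariant transverse perturbation over $\Gamma_p$ to obtain an equivariant multisection, and then induct along the partial order, using the tangent-bundle isomorphism (\ref{eq:appendix-tangent_bundle}) to propagate transversality from smaller charts to larger ones. The one point you identify as the ``main obstacle'' --- compatibility on triple overlaps via the cocycle elements $\gamma_{pqr}$ --- is indeed where the bookkeeping in \cite{fooo} is concentrated, and your outline of how to handle it (passing to common refinements of the branch count) is correct in spirit. If you want to flesh this out into a full proof you should consult \cite{fooo} directly, since the paper under review does not supply the details.
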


Let $Y$ be a topological space.
A collection of continuous maps $f_p:V_p\to Y$ is called a strongly continuous map if $f_p\circ\phi_{pq}=f_q$ on $V_{pq}$.
A strongly continuous map induces a continuous map $f:\mathcal M\to Y$.

Now suppose $\sett{f_p}:\mathcal M\to Y$ is a strongly continuous map.
Then a chain in $Y$, called the virtual fundamental chain, can be constructed from the maps $\sett{f_p}$ and the multisection $\mathfrak s_\epsilon$ in Theorem \ref{thm:appendix-multisections}.
Namely, for each multisection $s_{p,\epsilon}'$, let $\tilde s_{p,\epsilon,i}'$ be the branches of $s_{p,\epsilon}$.
Let
\begin{eqnarray*}
\tilde s_{p,\epsilon}^{'-1}(0)&=&\set{y\in V_p}{\textrm{there exists a branch of $s_{p,\epsilon}'$ that vanishes at $y$}},\\
s_{p,\epsilon}^{'-1}(0)&=&\tilde s_{p,\epsilon}^{'-1}(0)/\Gamma_p.
\end{eqnarray*}
Let 
\begin{displaymath}
\mathfrak s_\epsilon^{-1}(0)_{set}=\bigcup_{p\in\mathcal P}s_{p,\epsilon}^{'-1}(0)/\sim,
\end{displaymath}
where $x\in s_{q,\epsilon}^{'-1}(0)\sim \underline\phi_{pq}(x)\in s_{p,\epsilon}^{'-1}(0)$.

For each $p\in\mathcal P$, choose $n_p$ and a lift
\begin{displaymath}
\tilde s_{p,\epsilon}'=(\tilde s'_{p,\epsilon,1},\ldots,\tilde s'_{p,\epsilon,n_p}):V_p\to E_p^{n_p}
\end{displaymath}
that represents $s_{p,\epsilon}'$.
Let $\pi:V_p\to V_p/\Gamma_p$ be the quotient map, and for $y\in \tilde s_{p,\epsilon}^{'-1}(0)$ let
\begin{displaymath}
val_p(\pi(y))=\#\set{i}{\tilde s_{p,\epsilon,i}'(y)=0}.
\end{displaymath}
By equivariance, $val_p$ does not depend on the choice of lift $y$ of $\pi(y)$.
Moreover, it is independent of the choice of representative of the multisection.

\begin{lemma}[\cite{fooo} Lemma A1.26]
For a generic choice of $\mathfrak s_{\epsilon}$, there exists a triangulation of $\mathfrak s_{\epsilon}^{-1}(0)_{set}$ such that for each simplex $\Delta_a^m$ in the triangulation, there exists a $p_a$ such that $\Delta_a^m\subset s_{p_a,\epsilon}^{'-1}(0)$ and $val_{p_a}$ is constant on the interior of $\Delta_a^m$.
\end{lemma}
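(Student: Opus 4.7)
The plan is to combine a genericity argument with classical triangulation theory for stratified spaces, adapted to the orbifold setting. First, I would exploit the freedom in choosing $\mathfrak s_\epsilon$ from Theorem \ref{thm:appendix-multisections}: a generic choice makes every branch $\tilde s_{p,\epsilon,i}'$ transverse to the zero section of $E_p \times V_p \to V_p$, so each $(\tilde s_{p,\epsilon,i}')^{-1}(0) \subset V_p$ is a smooth submanifold. Moreover, I would arrange that the branches meet pairwise (and in higher multiplicities) transversely, and that these intersections meet the images of the coordinate-change embeddings $\phi_{pq}$ transversely as well. Such mutual transversality holds on an open dense set of multisections, so these conditions can be imposed simultaneously without disturbing the approximation $\mathfrak s_\epsilon \to \sett{s_p}$.

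With these transversality properties in hand, each preimage $\tilde s_{p,\epsilon}^{'-1}(0) = \bigcup_i (\tilde s_{p,\epsilon,i}')^{-1}(0)$ is a finite union of smooth submanifolds meeting transversely, hence a Whitney-stratified subset of $V_p$ whose natural strata are indexed by subsets $I \subset \{1,\ldots,n_p\}$: the open stratum corresponding to $I$ is the locus where exactly the branches with indices in $I$ vanish. By the very definition of $val_p$, the function is constant equal to $|I|$ on each such stratum. The $\Gamma_p$ action permutes these strata, so after passing to the quotient $s_{p,\epsilon}^{'-1}(0)$ one still obtains a stratification (in the orbifold sense) on which $val_p$ is locally constant.

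Next I would triangulate, proceeding inductively over the partial order on the finite index set $\mathcal P$ from the good coordinate system. For the minimal $p$'s, I would invoke the classical theorem that a compact Whitney-stratified space admits a $\Gamma_p$-equivariant smooth triangulation refining any given stratification; the $\Gamma_p$-equivariance ensures that this descends to $s_{p,\epsilon}^{'-1}(0)$. When moving to a larger $p$, the pieces of the triangulation already defined on neighboring $s_{q,\epsilon}^{'-1}(0)$ with $q<p$ pull back via $\underline\phi_{pq}$ to closed subcomplexes inside $s_{p,\epsilon}^{'-1}(0)$; extending a triangulation from a closed subcomplex to the ambient stratified space is again standard, and compatibility of $val$ across coordinate changes follows from the bundle isomorphism (\ref{eq:appendix-tangent_bundle}), which identifies vanishing branches on $V_{pq}$ with vanishing branches on $V_p$ in the directions transverse to $\phi_{pq}(V_{pq})$.

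The main obstacle is the bookkeeping at this inductive step: one must ensure that the equivariant triangulations constructed on different charts glue consistently under the coordinate changes and that the resulting subdivision on the quotient $\mathfrak s_\epsilon^{-1}(0)_{set}$ still has the required property on $val_{p_a}$. The transversality of the branches to the coordinate-change embeddings, together with the fact that the partial order $<$ on $\mathcal{P}$ has finite length, allows this gluing to be carried out in finitely many steps, each of which is a routine (if tedious) application of equivariant piecewise-linear topology. Once the inductive construction terminates, for any simplex $\Delta_a^m$ one simply picks $p_a$ to be the smallest index with $\Delta_a^m \subset s_{p_a,\epsilon}^{'-1}(0)$, and the constancy of $val_{p_a}$ on the interior of $\Delta_a^m$ is automatic from the stratification it refines.
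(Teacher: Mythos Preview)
The paper does not give its own proof of this lemma: it is stated with the citation \cite{fooo} Lemma A1.26 and no argument follows. The appendix is expository and simply quotes this result in order to set up the definition of the virtual fundamental chain. So there is no ``paper's proof'' to compare your proposal against.

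That said, your sketch is a reasonable outline of the argument one finds in the Fukaya--Oh--Ohta--Ono literature. The key ingredients---generic mutual transversality of the branches, the resulting Whitney stratification of $\tilde s_{p,\epsilon}^{'-1}(0)$ indexed by vanishing sets, equivariant triangulation of stratified spaces subordinate to the strata, and induction over the partial order on $\mathcal P$---are indeed the standard ones. One small remark: your appeal to the tangent-bundle isomorphism (\ref{eq:appendix-tangent_bundle}) to match $val$ across coordinate changes is not really needed, since the lemma only asserts that for each simplex there \emph{exists} some $p_a$ with $val_{p_a}$ constant on its interior, not that the various $val_p$ agree on overlaps. The genuine technical burden is exactly where you locate it, in the inductive gluing of equivariant triangulations across coordinate changes, and that is precisely the part that the cited reference carries out in detail.
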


Let $d$ be the virtual dimension of $\mathcal M$.
Then a weight $mul_{\Delta_a^d}$ can be assigned to each $d$-dimensional simplex in the triangulation as follows:
Fix a $y\in V_{p_a}$ such that $\pi(y)$ is in the interior of $\Delta_a^d$, where $\pi:V_{p_a}\to V_{p_a}/\Gamma_a$ is the quotient map.
For each $i$ with $\tilde s_{p_a,\epsilon,i}'(y)=0$, define $\epsilon_i$ to be $1$ or $-1$ as follows:
Since $\mathfrak s_\epsilon$ is transverse to $0$, it follows that the map
\begin{displaymath}\label{eq:appendix-multisection_orientation_1}
d_y\tilde s_{p_a,\epsilon,i}':T_yV_{p_a}\to E_{p_a}
\end{displaymath}
is surjective.
Using the trivialization of $\Det(E_{p_a}^*)\otimes \Det(T_yV_{p_a})$  given by the orientation on $\mathcal M$, this map assigns an orientation to 
\begin{displaymath}\label{eq:appendix-multisection_orientation_2}
d_ys_{p_a,\epsilon,i}^{-1}(0).
\end{displaymath}
Let $\epsilon_i=+1$ if this orientation agrees with the orientation of $T_y\Delta_a^d$ coming from the triangulation, and let $\epsilon_i=-1$ otherwise.
Then let
\begin{displaymath}
mul_{\Delta_a^d}=\frac{\sum {\epsilon_i}}{n_{p_a}}\in\qq.
\end{displaymath}
The sum is over all $i$ such that $\tilde s'_{p_a,\epsilon,i}(y)=0$.
Notice that the lift $y$ of $\pi(y)$ is fixed in this formulation, and the formula for the multiplicity agrees with that give in \cite{fo} on page 948.
If $y$ is allowed to vary over all lifts of $\pi(y)$, and the sum is adjusted accordingly, the multiplicity formula becomes
\begin{displaymath}
mul_{\Delta_a^d}=\frac{\sum {\epsilon_i}}{n_{p_a}\cdot\# \Gamma_{p_a}}\in\qq,
\end{displaymath}
as given in \cite{fooo}, Volume 2 on page 442.

\begin{definition}
The virtual fundamental chain of $\mathcal M$ is the singular chain
\begin{displaymath}
f_*[\mathcal M]=\sum_a \textrm{mul}_{\Delta_a}f_{p_a,*}[\Delta_a].
\end{displaymath}
\end{definition}

%
%
\bibliographystyle{plain}
\bibliography{first-bib}

\noindent{Department of Mathematics, Kansas State University\\
galston@math.ksu.edu
}

\end{document}